
\let\chooseClass0   

\newcounter{whichClass}
\ifx\chooseClass0
\setcounter{whichClass}{0}
\fi
\ifx\chooseClass1
\setcounter{whichClass}{1}
\fi
\ifx\chooseClass2
\setcounter{whichClass}{2}
\fi
\ifx\chooseClass3
\setcounter{whichClass}{3}
\fi

\ifcase\value{whichClass}
\documentclass[a4paper,11pt]{article}
\setlength{\textwidth}{166mm}
\setlength{\textheight}{260mm}
\setlength{\hoffset}{-18mm}
\setlength{\voffset}{-32mm}
\or
\documentclass{tac}
\setlength{\textwidth}{155.5mm}
\setlength{\textheight}{210mm}
\or
\documentclass[a4paper,onecolumn,superscriptaddress,10pt,shorttitle=Categories enriched over multicategories]{compositionalityarticle}
\pdfoutput=1
\or
\documentclass[12pt]{amsart}
\providecommand\texorpdfstring[2]{#1}
\fi

\usepackage[utf8]{inputenc}
\usepackage{amsmath}
\usepackage{amsfonts}
\usepackage{amssymb}
\usepackage{fancybox}
\usepackage{graphicx}

\ifcase\value{whichClass}
\usepackage{hyperref}
\or
\usepackage[colorlinks=true]{hyperref}
\hypersetup{allcolors=[rgb]{0.1,0.1,0.4}}
\or
\usepackage{hyperref}
\or
\usepackage{hyperref}
\fi

\usepackage[scr=true]{rsfso}
\usepackage{stmaryrd}
\usepackage{t-angles}
\usepackage{varioref}
\usepackage{verbatim}

\ifcase\value{whichClass}
\usepackage{amsthm}
\newtheoremstyle{boldhead}
{\topsep}
{\topsep}
{\slshape}
{}
{\bfseries}
{.}
{ }
{\thmname{#1}\thmnumber{ #2}\thmnote{ (#3)}}

\newtheoremstyle{nonumber}
{\topsep}
{\topsep}
{\slshape}
{}
{\bfseries}
{.}
{ }
{\thmname{#1}\thmnote{ #3}}

\newtheoremstyle{boldremark}
{\topsep}
{\topsep}
{\upshape}
{}
{\bfseries}
{.}
{ }
{\thmname{#1}\thmnumber{ #2}\thmnote{ (#3)}}

\theoremstyle{nonumber}
\newtheorem{theorem*}[]{Theorem}

\swapnumbers

\theoremstyle{boldhead}
\newtheorem{theorem}[subsubsection]{Theorem}
\newtheorem{claim}[subsubsection]{Claim}
\newtheorem{corollary}[subsubsection]{Corollary}
\newtheorem{lemma}[subsubsection]{Lemma}
\newtheorem{proposition}[subsubsection]{Proposition}

\theoremstyle{boldremark}
\newtheorem*{acknowledgement}{Acknowledgement}
\newtheorem*{assumption}{Assumption}
\newtheorem{conjecture}[subsubsection]{Conjecture}
\newtheorem{definition}[subsubsection]{Definition}
\newtheorem{describe}[subsubsection]{Description}
\newtheorem{example}[subsubsection]{Example}
\newtheorem{examples}[subsubsection]{Examples}
\newtheorem{exercise}[subsubsection]{Exercise}
\newtheorem{question}[subsubsection]{Question}
\newtheorem{remark}[subsubsection]{Remark}
\or
\newtheoremrm{exercise}{Exercise}
\newtheoremrm{question}{Question}
\or
\usepackage{amsthm}
\newtheoremstyle{boldhead}
{\topsep}
{\topsep}
{\slshape}
{}
{\bfseries}
{.}
{ }
{\thmname{#1}\thmnumber{ #2}\thmnote{ (#3)}}

\newtheoremstyle{nonumber}
{\topsep}
{\topsep}
{\slshape}
{}
{\bfseries}
{.}
{ }
{\thmname{#1}\thmnote{ #3}}

\newtheoremstyle{boldremark}
{\topsep}
{\topsep}
{\upshape}
{}
{\bfseries}
{.}
{ }
{\thmname{#1}\thmnumber{ #2}\thmnote{ (#3)}}

\theoremstyle{boldhead}
\newtheorem{theorem}[subsubsection]{Theorem}
\newtheorem{corollary}[subsubsection]{Corollary}
\newtheorem{lemma}[subsubsection]{Lemma}
\newtheorem{proposition}[subsubsection]{Proposition}

\theoremstyle{boldremark}
\newtheorem{definition}[subsubsection]{Definition}
\newtheorem{example}[subsubsection]{Example}
\newtheorem{exercise}[subsubsection]{Exercise}
\newtheorem{question}[subsubsection]{Question}
\newtheorem{remark}[subsubsection]{Remark}
\or
\usepackage{amsthm}
\newtheoremstyle{boldhead}
{\topsep}
{\topsep}
{\slshape}
{}
{\bfseries}
{.}
{ }
{\thmname{#1}\thmnumber{ #2}\thmnote{ (#3)}}

\newtheoremstyle{nonumber}
{\topsep}
{\topsep}
{\slshape}
{}
{\bfseries}
{.}
{ }
{\thmname{#1}\thmnote{ #3}}

\newtheoremstyle{boldremark}
{\topsep}
{\topsep}
{\upshape}
{}
{\bfseries}
{.}
{ }
{\thmname{#1}\thmnumber{ #2}\thmnote{ (#3)}}

\theoremstyle{nonumber}
\newtheorem{theorem*}[]{Theorem}

\swapnumbers

\theoremstyle{boldhead}
\newtheorem{theorem}[subsubsection]{Theorem}

\newtheorem{corollary}[subsubsection]{Corollary}
\newtheorem{lemma}[subsubsection]{Lemma}
\newtheorem{proposition}[subsubsection]{Proposition}

\theoremstyle{boldremark}
\newtheorem*{acknowledgement}{Acknowledgement}

\newtheorem{definition}[subsubsection]{Definition}

\newtheorem{example}[subsubsection]{Example}

\newtheorem{exercise}[subsubsection]{Exercise}

\newtheorem{remark}[subsubsection]{Remark}
\fi

\usepackage[sans]{dsfont}
\newcommand{\1}{{\mathds 1}}

\message{You may get an up to date version of Paul Taylor's diagrams.sty}
\message{from `Paul Taylor diagrams' web page.}
\message{Without diagrams you can not process this file!}
\usepackage{diagrams}
\diagramstyle[height=2em,balance,righteqno,PostScript=dvips,nohug]

\vstretch 70

\def\rhaha{\raise.20ex\hbox{$\rightharpoonup$}\kern-1em\lower.20ex\hbox{$\rightharpoondown$}}
\def\lhaha{\raise.20ex\hbox{$\leftharpoonup$}\kern-1em\lower.20ex\hbox{$\leftharpoondown$}}
\def\dhaha{\kern.01em\downharpoonleft\kern-.26em\downharpoonright\kern.01em}
\def\uhaha{\kern.01em\upharpoonleft\kern-.26em\upharpoonright\kern.01em}
\newarrowhead{twoharpoons}\rhaha\lhaha\dhaha\uhaha

\newarrow{BTo}----{boldlittlevee}
\newarrow{DashTo}{}{dash}{}{dash}{->}
\newarrow{Epi}----{triangle}
\newarrow{Id}===={twoharpoons}
\newarrow{MapsTo}{mapsto}---{->}
\newarrow{Mono}{boldhook}---{->}
\newarrow{TTo}----{->}
\newarrow{Twoar}===={=>}

\newlength{\mylabelwidths}
\setlength{\mylabelwidths}{0.9em}
\newenvironment{myitemize}{\begin{list}{}{%
			\setlength{\labelwidth}{\mylabelwidths}%
			\setlength{\leftmargin}{\mylabelwidths}\addtolength{\leftmargin}{0.5em}%
			\setlength{\itemsep}{-0.0\baselineskip}}}
	{\end{list}}

\newlength{\texthigh}
\setlength{\texthigh}{\textheight}
\addtolength{\texthigh}{-2pt}
\newlength{\textwids}
\setlength{\textwids}{\textwidth}
\addtolength{\textwids}{-2pt}

\numberwithin{equation}{subsection}

\newcommand{\CC}{{\mathbb C}}
\newcommand{\FF}{{\mathbb F}}
\newcommand{\KK}{{\mathbb K}}
\newcommand{\LL}{{\mathbb L}}
\newcommand{\NN}{{\mathbb N}}
\newcommand{\RR}{{\mathbb R}}
\newcommand{\UU}{{\mathbb U}}
\newcommand{\ZZ}{{\mathbb Z}}

\newcommand{\ainf}[1]{$A_\infty$\nobreakdash-\hspace{0pt}}

\newcommand{\CAT}{\text-\mathcal{C}at}
\newcommand{\Cat}{\mathcal{C}at}
\newcommand{\ca}{{\mathcal A}}
\newcommand{\cb}{{\mathcal B}}
\newcommand{\cc}{{\mathcal C}}
\newcommand{\cD}{{\mathcal D}}
\newcommand{\cE}{{\mathcal E}}
\newcommand{\cF}{{\mathcal F}}

\newcommand{\cK}{{\mathcal K}}

\newcommand{\cO}{{\mathcal O}}
\newcommand{\cp}{{\mathcal P}}

\newcommand{\cS}{{\mathcal S}}

\newcommand{\cW}{{\mathcal W}}
\newcommand{\cx}{{\mathcal X}}
\newcommand{\cY}{{\mathcal Y}}

\newcommand{\lsmCat}{lsm\mathcal{C}at}
\newcommand{\mcC}{{\mathsf C}}
\newcommand{\mcV}{{\mathsf V}}

\newcommand{\n}[1]{\nobreakdash-\hspace{0pt}}
\newcommand{\op}{{\operatorname{op}}}
\newcommand{\pr}{\textup{pr}}
\newcommand{\Qu}{{\text-\mathcal{Q}u}}
\newcommand{\mcSet}{\mathsf{Set}}
\newcommand{\Set}{\mathcal{S}et}

\newcommand{\sk}{{\mathsf{sk}}}
\newcommand{\sMCat}{s\mathcal{MC}at}
\newcommand{\sS}[2]{\vphantom{#2}#1 #2}
\newcommand{\tdt}{\otimes\dots\otimes}
\newcommand{\VCat}{{\mcV\text-\mathcal{C}at}}

\newcommand{\Vect}{\textup{-Vect}}
\newcommand{\VQu}{{\mcV\text-\mathcal{Q}u}}
\newcommand{\XX}{{\mathsf{X\mkern-8mu X}}}

\let\boxt\boxtimes
\let\cA\ca
\let\con\triangledown
\let\cs\cS
\let\cv\cV
\let\ge\geqslant

\let\le\leqslant
\let\eps\varepsilon
\let\lto\xleftarrow
\let\mb\mathbf
\let\mcv\mcV
\let\ovl\overline
\let\rto\xrightarrow
\let\tens\otimes
\let\sss\scriptstyle

\let\und\underline
\let\wh\widehat
\let\:\colon

\DeclareMathOperator{\Col}{Col}

\DeclareMathOperator{\ev}{ev}

\DeclareMathOperator{\id}{id}
\DeclareMathOperator{\Id}{Id}
\DeclareMathOperator{\im}{Im}
\DeclareMathOperator{\inj}{in}
\DeclareMathOperator{\Ker}{Ker}
\DeclareMathOperator{\mcShort}{\mathsf{Short}}
\DeclareMathOperator{\mcsnS}{\mathsf{snS}}
\DeclareMathOperator{\Mor}{Mor}
\DeclareMathOperator{\ML}{ML}
\DeclareMathOperator{\Ob}{Ob}

\DeclareMathOperator{\cProp}{cProp}

\DeclareMathOperator{\sh}{sh}
\DeclareMathOperator{\Short}{\mathbf{Short}}
\DeclareMathOperator{\snS}{\mathbf{snS}}
\DeclareMathOperator{\src}{src}
\DeclareMathOperator{\tgt}{tgt}

\newcommand{\corolref}[1]{Corollary~\ref{#1}}
\newcommand{\defref}[1]{Definition~\ref{#1}}
\newcommand{\exaref}[1]{Example~\ref{#1}}

\newcommand{\figref}[1]{Figure~\ref{#1}}
\newcommand{\lemref}[1]{Lemma~\ref{#1}}
\newcommand{\propref}[1]{Proposition~\ref{#1}}
\newcommand{\remref}[1]{Remark~\ref{#1}}
\newcommand{\secref}[1]{Section~\ref{#1}}
\newcommand{\thmref}[1]{Theorem~\ref{#1}}

\ifcase\value{whichClass}
\author{Volodymyr Lyubashenko%
	\footnote{\textit{Email address:} lub@imath.kiev.ua}}
\or
\author{Volodymyr Lyubashenko}
\address{Institute of Mathematics, NAS Ukraine\\
	3 Tereshchenkivska st., Kyiv, 01024, Ukraine\\
\or
	\author{Volodymyr Lyubashenko}
	\affiliation{Institute of Mathematics, NAS Ukraine, 3 Tereshchenkivska st., Kyiv, 01024, Ukraine}
\or
\author{Volodymyr Lyubashenko}
\address{Institute of Mathematics, NAS Ukraine\\
	3 Tereshchenkivska st., Kyiv, 01024, Ukraine\\
\fi
	
\title{Categories enriched over symmetric closed multicategories}

\begin{document}

\ifcase\value{whichClass}
\maketitle
\begin{abstract}
	We construct a machine which takes as input a locally small symmetric closed complete multicategory $\mathsf V$.
	And its output is again a locally small symmetric closed complete multicategory $\mathsf V\text-\mathcal{C}at$, the multicategory of small $\mathsf V$-categories and multi-entry $\mathsf V$-functors.
	An example of such $\mathsf V$ is provided by short spaces (vector spaces with a system of seminorms) and short maps.
	When the ground multicategory $\mathsf V$ is $\mathsf{Set}$ we obtain strict 2-categories and their surroundings by iterating twice the construction of categories.
	\footnote{\textit{Key words:} closed multicategories; complete multicategories; categories enriched in a multicategory; multi-entry functors.\\
		2020 \textit{Mathematics Subject Classification:} 18M65}
\end{abstract}
\or
\copyrightyear{2023}
\keywords{closed multicategories, complete multicategories, categories enriched in a multicategory, multi-entry functors}
\amsclass{18M65}
\eaddress{lub@imath.kiev.ua}
\maketitle
\begin{abstract}
	We construct a machine which takes as input a locally small symmetric closed complete multicategory $\mathsf V$.
	And its output is again a locally small symmetric closed complete multicategory $\mathsf V\text-\mathcal{C}at$, the multicategory of small $\mathsf V$-categories and multi-entry $\mathsf V$-functors.
	An example of such $\mathsf V$ is provided by short spaces (vector spaces with a system of seminorms) and short maps.
	When the ground multicategory $\mathsf V$ is $\mathsf{Set}$ we obtain strict 2-categories and their surroundings by iterating twice the construction of categories.
\end{abstract}
\or
\email{lub@imath.kiev.ua}
\maketitle
\begin{abstract}
	We construct a machine which takes as input a locally small symmetric closed complete multicategory $\mathsf V$.
	And its output is again a locally small symmetric closed complete multicategory $\mathsf V\text-\mathcal{C}at$, the multicategory of small $\mathsf V$-categories and multi-entry $\mathsf V$-functors.
	An example of such $\mathsf V$ is provided by short spaces (vector spaces with a system of seminorms) and short maps.
	When the ground multicategory $\mathsf V$ is $\mathsf{Set}$ we obtain strict 2-categories and their surroundings by iterating twice the construction of categories.
	\footnote{\textit{Key words:} closed multicategories; complete multicategories; categories enriched in a multicategory; multi-entry functors.\\
		2020 \textit{Mathematics Subject Classification:} 18M65}
\end{abstract}
\or
\email{lub@imath.kiev.ua}
\maketitle
\begin{abstract}
	We construct a machine which takes as input a locally small symmetric closed complete multicategory $\mathsf V$.
	And its output is again a locally small symmetric closed complete multicategory $\mathsf V\text-\mathcal{C}at$, the multicategory of small $\mathsf V$-categories and multi-entry $\mathsf V$-functors.
	An example of such $\mathsf V$ is provided by short spaces (vector spaces with a system of seminorms) and short maps.
	When the ground multicategory $\mathsf V$ is $\mathsf{Set}$ we obtain strict 2-categories and their surroundings by iterating twice the construction of categories.
	\footnote{\textit{Key words:} closed multicategories; complete multicategories; categories enriched in a multicategory; multi-entry functors.\\
		2020 \textit{Mathematics Subject Classification:} 18M65}
\end{abstract}
\fi

\allowdisplaybreaks[1]
\ifx\chooseClass1
\thirdleveltheorems
\fi

\section{Introduction}
A complete multicategory $\mcv$ is a multicategory (=colored operad) which has all small products and all equalizers.
Warning: to say that the underlying category $\mcv_1$ has all small products and all equalizers is not enough.
One has to take into account the multicategory structure (Definitions \ref{def-multicategory-small-products} and \ref{def-equalizers}).
In fact, we view multicategories as monoidal categories for which the monoidal product does not exist.
Instead of monoidal products finite sequences of objects are used as an input.
Hence, conditions for products and equalizers have to be written for a finite sequence of objects, not only for a single object.
This point of view is supported by an adjunction between symmetric multicategories and colored props, see \secref{sec-Adjunction-multicategories-props}.
We assume also that $\mcv$ is a closed multicategory (that with internal homs, see around \eqref{eq-phi(XYZ)-iso}).
This notion was defined by Lambek \cite[p.~106]{LambekJ:dedsc2} (see also \cite[Definition~4.7]{BesLyuMan-book} for enriched case).
Furthermore, we assume that $\mcv$ is a symmetric multicategory (see the beginning of \secref{sec-ground-category-features}).

We start with a symmetric closed complete multicategory $\mcv$.
There is a technical notion of a small $\mcv$\n-quiver, which is a small quiver where instead of set of arrows between two vertices an object of $\mcv$ is used (\defref{def-V-quivers}).
A multi-entry $\mcv$\n-quiver morphism has several $\mcv$\n-quivers as a source and one as target (\defref{def-multi-entry-V-quiver-morphism}).
Collection of such morphisms is a symmetric multicategory $\VQu$ (\propref{pro-symmetric-multicategory-multi-entry-quiver-morphisms}).

However, what we really need are small $\mcv$\n-categories -- $\mcv$\n-quivers equipped with composition and identity morphisms (\defref{def-V-categories}).
Using composition we construct the evaluation multi-entry $\mcv$\n-quiver morphism in \propref{pro-und-VQu} and \defref{def-evVQu}.
Previously mentioned features (completeness and closedness of $\mcv$ and composition in the target) are used to define internal hom -- certain end in $\mcv$, which replaces the set of natural transformations.
When dealing with $\mcv$\n-categories, we use multi-entry $\mcv$\n-functors instead of multi-entry $\mcv$\n-quiver morphisms (\defref{def-multi-entry-V-functor}).
They form a symmetric multicategory $\VCat$ (\propref{pro-locally-small-symmetric-multicategory}).
The multi-entry $\mcv$\n-functors are identified with \(\FF\mcv\)\n-functors \(\boxt^{i\in I}\ca_i\to\cb\) (\propref{pro-multi-entry-V-functors}), where \(\FF\mcv\) is the colored prop associated with the symmetric multicategory $\mcv$ (\propref{pro-adjunction-FU}).
We define also natural $\mcv$\n-transformations (\defref{def-Natural-V-transformation}) and show that their set can be recovered from the internal hom (\propref{pro-natural-V-transformations}).

In the case of $\mcv$\n-categories the evaluation morphism is a multi-entry $\mcv$\n-functor (\propref{con-evVCat-VCat}).
Furthermore, the symmetric multicategory $\VCat$ is closed (\propref{pro-VCat-closed}).

We prove that the multicategory $\VCat$ has small products (\propref{pro-VCat-has-products}).
It also has equalizers (\propref{pro-VCat-has-equalizers}), thus, it is complete.
All mentioned results are summarized in

\ifcase\value{whichClass}
\begin{theorem*}[\ref{thm-machine}]
	Let $\mcv$ be a locally small symmetric closed complete multicategory.
	Then so is $\VCat$, the multicategory of small $\mcv$\n-categories and multi-entry $\mcv$\n-functors.
\end{theorem*}
\or
\subsubsection*{\bf \thmref{thm-machine}.}
	Let $\mcv$ be a locally small symmetric closed complete multicategory.
	Then so is $\VCat$, the multicategory of small $\mcv$\n-categories and multi-entry $\mcv$\n-functors.
\subparagraph*{}
\or
\subsubsection*{\bf \thmref{thm-machine}.}
Let $\mcv$ be a locally small symmetric closed complete multicategory.
Then so is $\VCat$, the multicategory of small $\mcv$\n-categories and multi-entry $\mcv$\n-functors.
\subparagraph*{}
\or
\begin{theorem*}[\ref{thm-machine}]
	Let $\mcv$ be a locally small symmetric closed complete multicategory.
	Then so is $\VCat$, the multicategory of small $\mcv$\n-categories and multi-entry $\mcv$\n-functors.
\end{theorem*}
\fi

We deduce whiskerings from the closed multicategory structure of $\VCat$ in \secref{sec-Compositions-whiskerings}.
The example of representable multicategory $\mcv$ is discussed in \secref{sec-Representable-multicategories}.
The examples of categories and strict 2\n-categories are presented in \secref{sec-Strict-2-categories}.

An example of such multicategory $\mcv$ is provided by short spaces (vector spaces over $\RR$ or $\CC$ with a system of seminorms) and short maps.
Seminorms are indexed by an element of a commutative partially ordered monoid $\LL$.
Further conditions on $\LL$ are listed in \secref{sec-Short-spaces}.
There is symmetric multicategory $\mcShort_\LL$ with short spaces as objects.
Morphisms are short multilinear maps (see \defref{def-Multicategory-Short}).
This multicategory is closed (\propref{pro-Short-closed}).
The internal hom object is a vector space of multilinear maps.
The symmetric multicategory $\mcShort_\LL$ has products (\propref{pro-multicategory-Short-has-small-products}) and kernels (equalizers) (\propref{pro-kernel}).
Summing up, the multicategory \(\mcShort_\LL\) is complete (\corolref{cor-Short-complete}).

We do not include explicitly in the definition the action of symmetric groups on symmetric multicategories.
So we have to deduce it in \corolref{cor-Action-symmetric-groups-symmetric-multicategory}.
Further interplay between the action of symmetric groups and the compositions in a symmetric multicategory is described in \propref{pro-equivariance-property}.

\tableofcontents

\ifcase\value{whichClass}
\begin{acknowledgement}
\or
\subsubsection*{Acknowledgement}
\or
\subsubsection*{Acknowledgement}
\or
\subsubsection*{Acknowledgement}
\fi
I thank Paul Taylor for writing a package `diagrams.sty', which I use a lot.
I am cordially grateful to the staff of the University of Zurich and of the Swiss Federal institute of Technology in Zurich who created an excellent environment for work.
I am grateful for clarifying discussions to Prof. Dr. Anna Beliakova, to Prof. Dr. Alberto Cattaneo and to Prof. Dr. Giovanni Felder.
All this was due to the generosity of the National Centre of Competence in Research SwissMAP of the Swiss National Science Foundation (grant number 205607), to whom I express my deep gratitude.
The author is grateful for the financial support within the program of support for priority research and technical (experimental) developments of the Section of Mathematics of the NAS of Ukraine for 2022-2023 Project "Innovative methods in the theory of differential equations, computational mathematics and mathematical modeling", No. 7/1/241 (State Registration No. 0122U000670).
I am really grateful to the Armed Forces of Ukraine who gave me the possibility to work quietly on the subject.
\ifx\chooseClass0
\end{acknowledgement}
\fi

\subsection{Conventions}
We work with a locally small closed symmetric multicategory $\mcV$ in the sense of \cite[Definitions 3.7, 4.7]{BesLyuMan-book}.
Locally small means that \(\mcV\bigl((X_i)_{i\in I};Y\bigr)\) are small.

When we write \(\mcV((X_i)_{i\in I};Y)\), we mean that $I$ is an object of $\cO_\sk$, the skeletal category of finite totally ordered sets with objects \(\mb n=\{1<2<\dots<n\}\), $n\ge0$, whose morphisms are non-decreasing maps.
A subset $J\subset I$ means a monomorphism in $\cO_\sk$.
We freely use the notation style of \cite{BesLyuMan-book}.
We use also the skeletal category $\cS_\sk$ of finite totally ordered sets, \(\Ob\cS_\sk=\Ob\cO_\sk\cong\NN\), whose morphisms are \emph{all} maps \(\mb n\to\mb m\) (ignoring the ordering).
Let \(f\:I\to J\in\cs_\sk\).
An element \(j\in J\) is a monomorphism \(\dot j\:\mb1\to J\) (\(1\mapsto j\)).
Its preimage \(f^{-1}(j)\) is the monomorphism \(\iota\:\mb k\to I\in\cO_\sk\), \(k=|f^{-1}(j)|\), which is the pullback of $\dot j$ along $f$ in the category \(\cs_\sk\)
\begin{diagram}
f^{-1}(j) \SEpbk=\mb k &\rTTo^p &\mb1
\\
\dMono<\iota &&\dMono>{\dot j}
\\
I &\rTTo^f &J
\end{diagram}

\subsection{Lax symmetric monoidal categories and functors: recollection}
We reproduce definition of lax symmetric monoidal categories from \cite[Definition~2.5]{BesLyuMan-book} (see also \cite[Definition~1.2.14]{Lyu-sqHopf} for symmetric monoidal categories and \cite{DayStreet-subst}, \cite[Definition~3.1.1]{math.CT/0305049}) in a simplified form.
Namely, instead of considering all finite sets we contend ourselves with the category \(\cS_\sk\)of finite ordinals \(\mb n=\{1<\dots<n\}\) and arbitrary maps of those.
The adjective `lax' is chosen so that a multicategory \(\wh\cv\) could be produced from a lax symmetric monoidal category $\cv$.
This agrees with \cite{BesLyuMan-book}, however, some authors including Day and Street \cite{DayStreet-subst} and Leinster \cite[Definition~3.1.1]{math.CT/0305049} would use `oplax' instead.

\begin{definition}\label{def-Monoidal-cat}
A \emph{lax symmetric monoidal category}\index{lax symmetric monoidal category} $(\cv,\tens_\cv^I,\lambda_\cv^f)$ consists of the following data:
	\begin{enumerate}
		\item A category $\cv$.
		\item A functor $\tens^I=\tens_\cv^I\:\cv^I\to\cv$, for every set $I\in\Ob\cs_\sk$.
		In particular, a map \(\tens_\cv^I\:\prod_{i\in I}\cv(X_i,Y_i)\to\cv(\tens^{i\in I}X_i,\tens^{i\in I}Y_i)\) is given.
		It is required that $\tens^{\mb1}=\tens_\cv^{\mb1}\:\cv^{\mb1}\to\cv$ is the identification of \(\cv^{\mb1}\) and $\cv$.
		
		For a map $f\:I\to J$ in $\Mor\cS_\sk$ introduce a functor $\tens^f=\tens^f_\cv\:\cv^I\to\cv^J$ which to a function $X\:I\to\Ob\cv$, $i\mapsto X_i$ assigns the function \(J\to\Ob\cv\), \(j\mapsto\tens^{i\in f^{-1}(j)}X_i\). The linear order on $f^{-1}(j)$ is induced by the embedding \(f^{-1}(j)\hookrightarrow I\).
		The functor $\tens^f_\cv\:\cv^I\to\cv^J$ acts on morphisms via the map
		\[ \prod_{i\in I}\cv(X_i,Y_i) \rTTo^\sim \prod_{j\in J}\prod_{i\in f^{-1}j}\cv(X_i,Y_i) \rTTo^{\prod_{j\in J}\tens^{f^{-1}j}} \prod_{j\in J} \cv(\tens^{i\in f^{-1}j}X_i,\tens^{i\in f^{-1}j}Y_i).
		\]
		
		\item A morphism of functors
\[ \lambda^f\: \tens^I \to \tens^J\circ\tens^f\: \cv^I \to \cv, \qquad \lambda^f\:\tens^{i\in I}X_i\to\tens^{j\in J}\tens^{i\in f^{-1}j}X_i,
\]
		for every map $f\:I\to J$ in $\Mor\cS_\sk$.
	\end{enumerate}
	These data are subject to the following axioms:
	\begin{enumerate}
		\renewcommand{\labelenumi}{(\roman{enumi})}
		\item for all sets \(I\in\Ob\cS_\sk\) \(\lambda^{\id_I}=\id\) and \(\lambda^{I\to\mb1}=\id\);
		
		\item for any pair of composable maps $I \rTTo^f J \rTTo^g K$ from $\cS_\sk$ the following equation holds:
		\begin{diagram}[h=2.2em,LaTeXeqno]
			\tens^{i\in I}X_i & \rTTo^{\lambda^f}
			& \tens^{j\in J}\tens^{i\in f^{-1}j}X_i \\
			\dTTo<{\lambda^{f\centerdot g}} & = & \dTTo>{\lambda^g} \\  
			\tens^{k\in K}\tens^{i\in f^{-1}g^{-1}k}X_i
			& \rTTo^{\tens^{k\in K}\lambda^{f|\:f^{-1}g^{-1}k\to g^{-1}k}}
			& \tens^{k\in K}\tens^{j\in g^{-1}k}\tens^{i\in f^{-1}j}X_i
			\label{eq-lambda-fg-non-enriched}
		\end{diagram}
	\end{enumerate}
	A symmetric monoidal category is a lax one for which all $\lambda^f$ are isomorphisms.
	A symmetric strict monoidal category \((\cv,\tens_\cv^I,\lambda_\cv^f)\) is lax symmetric monoidal one where \(\lambda_\cv^f\:\tens_\cv^I\to\tens_\cv^f\cdot\tens_\cv^J\) are identity morphisms for all isotonic maps \(f\:I\to J\).
A \emph{colax symmetric monoidal category}\index{colax symmetric monoidal category} (also called \emph{multitensor} in \cite{0803.3594,1209.2776}) is the above with the direction of arrows \(\lambda^f\) reversed, so they are morphisms of functors
\[ \lambda^f\: \tens^J\circ\tens^f \to\tens^I\: \cv^I \to\cv, \qquad \lambda^f\: \tens^{j\in J}\tens^{i\in f^{-1}j}X_i \to\tens^{i\in I}X_i.
\]
\end{definition}

\begin{definition}[cf. Definition~2.6 of \cite{BesLyuMan-book}]\label{def-Monoidal-fun}
A \emph{lax symmetric monoidal functor}\index{lax symmetric monoidal functor} between lax symmetric monoidal categories
	\[ (F,\phi^I)\: (\cc,\tens_\cc^I,\lambda_\cc^f) \to (\cD,\tens_\cD^I,\lambda_\cD^f)
	\]
	consists of
	\begin{enumerate}
		\renewcommand{\labelenumi}{\roman{enumi})}
		\item a functor $F\:\cc\to\cD$,
		\item a functorial morphism for each set $I\in\Ob\cs$
		\[ \phi^I\: \tens_\cD^I\circ F^I \to F\circ\tens_\cc^I\: \cc^I \to \cD, \qquad \phi^I\: \tens_\cD^{i\in I}FX_i \to F\tens_\cc^{i\in I}X_i,
		\]
	\end{enumerate}
	such that \(\phi^{\mb1}=\bigl(\tens^{\mb1}FX=FX=F\tens^{\mb1}X\bigr)=\id\), and for every map $f\:I\to J$ of $\cs_\sk$ and all families \((X_i)_{i\in I}\) of objects of $\cc$ the following equation holds:
	\begin{diagram}[h=2.2em]
		\tens_\cD^{i\in I}FX_i && \rTTo^{\phi^I} && F\tens_\cc^{i\in I}X_i \\
		\dTTo<{\lambda_\cD^f} && = && \dTTo<{F\lambda_\cc^f} \\
		\tens_\cD^{j\in J}\tens_\cD^{i\in f^{-1}j}FX_i
		& \rTTo^{\tens_\cD^{j\in J}\phi^{f^{-1}j}}
		& \tens_\cD^{j\in J}F\tens_\cc^{i\in f^{-1}j}X_i & \rTTo^{\phi^J}
		& F\tens_\cc^{j\in J}\tens_\cc^{i\in f^{-1}j}X_i
	\end{diagram}
	A lax symmetric monoidal functor \((F,\phi^I)\) is \emph{strict} if all \(\phi^I=\id\).
\end{definition}

The category of lax symmetric monoidal categories with lax symmetric monoidal functors as morphisms is denoted \(\lsmCat\).

There is also an appropriate definition of a morphism of lax symmetric monoidal functors \cite[Definition~2.7]{BesLyuMan-book}.
It is proven in \cite[Proposition~1.2.15]{Lyu-sqHopf} that the 2\n-categories of symmetric strict monoidal categories in the above sense and of symmetric strict monoidal categories in conventional sense (aka permutative categories \cite[Definition~3.1]{zbMATH05054351}, topological version is in \cite[Definition~1]{MAY1978225}) are isomorphic when we consider strict symmetric monoidal functors.
In particular, there is a correspondence assigning to each permutative category \(P=(P,\tens,\1,c)\) a symmetric strict monoidal category \(P^\clubsuit=(P,\tens^I,\lambda^f)\) with \(\tens^\varnothing=\dot\1\), \(\tens^I\) = iterated $\tens$, $\lambda^f=\id$ if the map $f\:I\to J\in\cS_\sk$ is order preserving.
If $f\:I\to I\in\cS_\sk$ is a bijection, then \(\lambda^f\:\tens^{i\in I}X_i\to\tens^{i\in I}X_{f^{-1}i}\) is an element of the symmetric group generated by \(1^{\tens a}\tens c\tens1^{\tens b}\).
The general map \(I\to K\in\cs_\sk\) can be presented as $fg$ where $f\:I\to I$ is a bijection and $g\:I\to K$ is order preserving.
Then \(\lambda^{fg}\) can be found from \eqref{eq-lambda-fg-non-enriched} as the composition
\[ \tens^{i\in I}X_i \rTTo^{\lambda^f} \tens^{i\in I}X_{f^{-1}i} =\tens^{k\in K}\tens^{i\in g^{-1}k}X_{f^{-1}i} \rTTo^{\tens^{k\in K}(\lambda^{f|\:f^{-1}g^{-1}k\to g^{-1}k})^{-1}} \tens^{k\in K}\tens^{i\in f^{-1}g^{-1}k}X_i.
\]
Being an isomorphism of 2\n-categories $\text-^\clubsuit$ is also isomorphism of categories.

\subsection{Multicategories: recollection}\label{sec-ground-category-features}
By \cite[Definition~3.7]{BesLyuMan-book} the structure maps of symmetric multicategory $\mcv$ are the following.
This is an intermediate notion between the ordinary definition of symmetric multicategory\index{symmetric multicategory} and Leinster's notion of fat symmetric multicategories \cite[Definition~A.2.1]{math.CT/0305049}.
Of course, it is equivalent to both, being a skeletal version of Leinster's notion.
\begin{myitemize}
\item[---] for each map \(\phi\:I\to J\) from \(\cS_\sk\) and objects \(X_i,Y_j,Z\in\Ob\mcV\), \(i\in I\), \(j\in J\), the composition map 
\[ \mu_\phi\: \bigl[ \prod_{j\in J}\mcV\bigl((X_i)_{i\in\phi^{-1}(j)};Y_j\bigr) \bigr] \times\mcv\bigl((Y_j)_{j\in J};Z\bigr) \to \mcv\bigl((X_i)_{i\in I};Z\bigr);
\]
\item[---] for each object \(X\in\Ob\mcv\) the identity -- an element\index{$1_X$ -- identity element of a multicategory} \(1_X\in\mcv(X;X)\).
\end{myitemize}


The above data have to satisfy the associativity equation and two unitality equations, see \cite[Definition~3.7]{BesLyuMan-book}.

\begin{myitemize}
\item[---] (Associativity) For each pair of composable maps \(I\rto\phi J\rto\psi K\) from $\cs_\sk$, the diagram shown 
\vpageref{dia-assoc-mu-multi} commutes.
	\begin{figure}
		\begin{center}
			\boldmath
			\resizebox{!}{.93\texthigh}{\rotatebox{90}{%
					\begin{diagram}[nobalance,inline,height=2.6em,thick]
&& \hspace*{-18em} \bigl[ \prod_{j\in J} \mcv\bigl((X_i)_{i\in\phi^{-1}j};Y_j\bigr)\bigr] \times \bigl[ \prod_{k\in K} 	\mcv\bigl((Y_j)_{j\in\psi^{-1}k};Z_k\bigr) \bigr] \times \mcv\bigl((Z_k)_{k\in K};W\bigr)  
\\
& \ldBTo_{\cong} & 
\\
\bigl[ \prod_{k\in K} \bigl( \bigl[ \prod_{j\in\psi^{-1}k} \mcv\bigl((X_i)_{i\in\phi_k^{-1}j};Y_j\bigr) \bigr] \times \mcv\bigl((Y_j)_{j\in\psi^{-1}k};Z_k\bigr) \bigr) \bigr] \times \mcv\bigl((Z_k)_{k\in K};W\bigr)
&& \dBTo>{1\times\mu_\psi}
\\
						\\
\dBTo<{(\prod_{k\in K}\mu_{\phi_k})\times1}
&& \hspace*{-5em} \bigl[ \prod_{j\in J} \mcv\bigl((X_i)_{i\in\phi^{-1}j};Y_j\bigr) \bigr] \times \mcv\bigl((Y_j)_{j\in J};W\bigr)
\\
						\\
\bigl[ \prod_{k\in K} \mcv\bigl((X_i)_{i\in(\phi\psi)^{-1}(k)};Z_k\bigr) \bigr] \times \mcv\bigl((Z_k)_{k\in K};W\bigr) 
&& \dBTo>{\mu_\phi} 
\\
& \rdBTo^{\mu_{\phi\psi}} & 
						\\
&& \mcv\bigl((X_i)_{i\in I};W\bigr)
					\end{diagram}
\hphantom{A}}}
		\end{center}
		\caption{Associativity in multicategories
			\label{dia-assoc-mu-multi}}
	\end{figure}
Here \(\phi_k=\phi|_{(\phi\psi)^{-1}(k)}\:(\phi\psi)^{-1}(k)\to\psi^{-1}(k)\), \(k\in K\), and \(\psi^{-1}(k)\) is understood as the pullback of the diagram \(\mb1=\{k\}\hookrightarrow K\lto\psi J\).
We define an operation \(\sqcup\:\cs_\sk\times\cs_\sk\to\cs_\sk\), \((\mb m,\mb n\mapsto\mb{m+n})\) (addition of finite ordinals) in an obvious way on morphisms.
Thus, the set \(I\sqcup J\) is a disjoint union of sets $I$ and $J$.
For all \(i\in I\) and \(j\in J\) we have $i<j$ in \(I\sqcup J\), and the embeddings \(I\hookrightarrow I\sqcup J\hookleftarrow J\) are increasing.
We often replace the commutative diagram \vpageref{dia-assoc-mu-multi} with its scheme:
\begin{gather*}
\prod_{j\in J} \mcv\bigl((X_i)_{i\in\phi^{-1}j};Y_j\bigr) \qquad \prod_{k\in K} \mcv\bigl((Y_j)_{j\in\psi^{-1}k};Z_k\bigr) \qquad \mcv\bigl((Z_k)_{k\in K};W\bigr)
\\
\begin{tanglec}
\hh \hstep \id \Step \id \Step \id 
	\\
\ffbox4{\prod_{k\in K}\mu_{\phi_k}} \step \id \hstep 
	\\
\hh \step[1.5] \id \step[3] \id 
	\\
\hh \step[1.5] \ffbox4{\mu_{\phi\psi}}
	\\
\hh \step[1.5] \id
\end{tanglec}
\;=\;
\begin{tanglec}
\hh	\id \step[1.5] \id \step \id 
	\\
\hh \hstep \id \step \ffbox2{\mu_\psi}
	\\
\hh	\id \Step \id \hstep
	\\
\hh	\ffbox3{\mu_\phi} \hstep
	\\
\hh	\id \hstep
\end{tanglec}
\\
\mcv\bigl((X_i)_{i\in I};W\bigr)
\end{gather*}
This scheme has to be read downwards.
Strings refer to (Cartesian products of) sets of morphisms, parallel strings refer to Cartesian products of sets, rectangles (coupons) denote (composition) maps.
This style of notations was used by Bartlett \cite{1409.2148}, who applied it to symmetric monoidal bicategories.

\item[---] (Identity) For \(\phi=\triangledown\:I\to\mb1\) the equation
\begin{equation}
\bigl[ \mcv((X_i)_{i\in I};Z) \rTTo^{1\times\dot1_Z} \mcv((X_i)_{i\in I};Z)\times\mcv(Z;Z) \rTTo^{\mu_{\triangledown\:I\to \mb1}} \mcv((X_i)_{i\in I};Z) \bigr] = \id
		\label{eq-axiom-unit-multi1etaZ}
\end{equation}
holds true.
	If \(\phi=\id\:I\to I\), then the equation
\begin{equation}
\bigl[ \mcv((X_i)_{i\in I};Z) \rTTo^{(\prod_{i\in I}\dot1_{X_i})\times1} \bigl(\prod_{i\in I}\mcv(X_i;X_i)\bigr)\times\mcv((X_i)_{i\in I};Z) \rTTo^{\mu_{\id_I}} \mcv((X_i)_{i\in I};Z) \bigr] = \id
		\label{eq-axiom-unit-multi2}
\end{equation}
	holds true.
\end{myitemize}

Here \(\dot1_Z\:\mb1\to\mcv(Z;Z)\), \(1\mapsto1_Z\), distinguishes the element $1_Z$.
In the following we omit the isomorphism at diagram \vpageref{dia-assoc-mu-multi}.
Thus, we do not distinguish sets $A\times B$ and $B\times A$.
This is done for the sake of economy of space.
Naturally, one can insert the flip symmetry \(A\times B\to B\times A\) wherever appropriate.

Recall \cite[p.~106]{LambekJ:dedsc2} (see also \cite[Definition~4.7]{BesLyuMan-book} for $\cv$\n-multicategories) that a plain multicategory $\mcv$ is \emph{closed}\index{closed multicategory} if for any collection \(((X_i)_{i\in I},Z)\), \(I\in\Ob\cs_\sk\), of objects of $\mcv$ there is an object \(\und\mcv((X_i)_{i\in I};Z)\) of $\mcv$ and an evaluation element
\[ \ev_{(X_i)_{i\in I};Z} \in \mcv\bigl((X_i)_{i\in I},\und\mcv((X_i)_{i\in I};Z);Z\bigr),
\]
such that the composition
\begin{multline}
\varphi_{(X_i)_{i\in I};(Y_j)_{j\in J};Z} =\bigl\{
\mcV\bigl((Y_j)_{j\in J};\und\mcV((X_i)_{i\in I};Z)\bigr) \rTTo^{\dot1_{X_1}\times\dots\times\dot1_{X_I}\times\id\times\dot\ev_{(X_i)_{i\in I};Z}}
\\
\bigl[\prod_{i\in I}\mcV(X_i;X_i)\bigr]\times\mcV\bigl((Y_j)_{j\in J};\und\mcV\bigl((X_i)_{i\in I};Z\bigr)\bigr)\times\mcV\bigl((X_i)_{i\in I},\und\mcV\bigl((X_i)_{i\in I};Z\bigr);Z\bigr)
\\
\rTTo^{\mu_{\id\sqcup\con\:I\sqcup J\to I\sqcup\mb1}} \mcV\bigl((X_i)_{i\in I},(Y_j)_{j\in J};Z\bigr) \bigr\}
\label{eq-phi(XYZ)-iso}
\end{multline}
is bijective for an arbitrary sequence \((Y_j)_{j\in J}\), \(J\in\Ob\cs_\sk\), of objects of $\mcV$.

Let $g\:(X_i)_{i\in I}\to Z$ be a morphism in a closed symmetric multicategory $\mcv$.
Generalizing the previous notation denote by \(\dot{g}\:()\to\und\mcv((X_i)_{i\in I};Z)\) the morphism \(\varphi_{(X_i)_{i\in I};();Z}^{-1}(g)\in\mcv(;\und\mcv((X_i)_{i\in I};Z))\).
Equation~\eqref{eq-phi(XYZ)-iso} for \(J=\varnothing\) implies that
\begin{multline}
\bigl[\prod_{i\in I}\mcV(X_i;X_i)\bigr] \times\mcV\bigl(;\und\mcV\bigl((X_i)_{i\in I};Z\bigr)\bigr) \times\mcV\bigl((X_i)_{i\in I},\und\mcV\bigl((X_i)_{i\in I};Z\bigr);Z\bigr)
\rTTo^{\mu_{\inj_1\:I\hookrightarrow I\sqcup\mb1}} \mcV\bigl((X_i)_{i\in I};Z\bigr),
\\
\bigl((1_{X_i})_{i\in I},\dot g,\ev_{(X_i)_{i\in I};Z}\bigr) \mapsto g.
\label{eq-mu-in1I-I1}
\end{multline}

\begin{definition}\label{def-multicategory-small-products}
A multicategory $\mcv$ has small products if the underlying category $\mcv_1$ has small products \(\pr_j\:\prod_{k\in J}M_k\to M_j\in\mcv\), \(j\in J\in\Set\), and for each family of morphisms \(\bigl(f_j\:(X_i)_{i\in I}\to M_j\in\mcv\bigr)_{j\in J}\) there is a unique morphism \(f\:(X_i)_{i\in I}\to\prod_{j\in J}M_j\in\mcv\) such that for all \(j\in J\)
\[ f_j =\bigl[ (X_i)_{i\in I} \rto f \prod_{j\in J}M_j \rTTo^{\pr_j} M_j \bigr].
\]
\end{definition}

For $I=\mb1$ this property is equivalent to \(\prod_{j\in J}M_j\) being a product in ordinary category $\mcv_1$.
In the following we \textbf{assume} that the multicategory $\mcv$ has small products.


\begin{definition}\label{def-equalizers}
A multicategory $\mcv$ has equalizers (of pairs of parallel morphisms) if for all pairs \(A\pile{\rTTo^f\\ \rTTo_g}B\in\mcv\) there is an object $K$ and a morphism \(e\:K\to A\) which is an equalizer of \((f,g)\) in ordinary category \(\mcv_1\) and, moreover, for each morphism \(h\:(X_i)_{i\in I}\to A\in\mcv\) such that \(h\centerdot f=h\centerdot g\) there exists a unique \(q\:(X_i)_{i\in I}\to K\) such that \(h=q\centerdot e\):
\begin{diagram}[h=1.8em]
&&K
\\
&\ruTTo^q &\dTTo>e
\\
(X_i)_{i\in I} &\rTTo_h &A &\pile{\rTTo^f\\ \rTTo_g} &B
\end{diagram}
\end{definition}

The equalizer for ordinary category \(\mcv_1\) is a particular case for $I=\mb1$.
In the following we \textbf{assume} that the multicategory $\mcv$ has equalizers.

\begin{corollary}
Let multicategory $\mcv$ have products and equalizers.
For any diagram \(J\to\mcv_1\), \(j\mapsto M_j\) (\(\mcv_1\) is an ordinary category $\mcv$), the limit \(\lim(J\to\mcv_1)\in\Ob\mcv\) satisfies also: for any morphism \(h=(h_j)\:(X_i)_{i\in I}\to\prod_{j\in J}M_j\) such that for all \(j\to k\in J\) the equation holds
\[ h_k =\bigl[ (X_i)_{i\in I} \rTTo^{h_j} M_j \to M_k \bigr]
\]
there exists a unique morphism \(g\:(X_i)_{i\in I}\to\lim(J\to\mcv_1)\) such that
\[ h =\bigl[ (X_i)_{i\in I} \rto g \lim(J\to\mcv_1) \to \prod_{j\in\Ob J}M_j \bigr].
\]
\end{corollary}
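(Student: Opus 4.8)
The plan is to realise $\lim(J\to\mcv_1)$ by the classical ``equalizer of two maps between products'' construction, and then to invoke the \emph{multi-entry} forms of the product and equalizer universal properties, exactly as recorded in Definitions~\ref{def-multicategory-small-products} and~\ref{def-equalizers}, in order to factor the cone $h$.

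First I would form the two products $P=\prod_{j\in\Ob J}M_j$ and $Q=\prod_{(j\to k)\in\Mor J}M_k$, both of which exist because $J$ is small and $\mcv$ has small products. I then introduce two morphisms $u,v:P\to Q\in\mcv_1$, specified by their projections: for every arrow $j\to k$ of $J$,
\[ u\centerdot\pr_{(j\to k)}=\pr_k, \qquad v\centerdot\pr_{(j\to k)}=\bigl[ P\rTTo^{\pr_j}M_j\to M_k \bigr], \]
where $M_j\to M_k$ is the image of $j\to k$ under the diagram $J\to\mcv_1$. By the standard argument in an ordinary category, the equalizer $e:K\to P$ of the pair $(u,v)$ in $\mcv_1$ is a limit of $J\to\mcv_1$, so I may take $\lim(J\to\mcv_1)=K$ with structure morphism to $\prod_{j\in\Ob J}M_j$ equal to $e$; this equalizer exists since $\mcv$ has equalizers.

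Next, given a morphism $h:(X_i)_{i\in I}\to P$ with components $h_j=h\centerdot\pr_j$ subject to $h_k=h_j\centerdot(M_j\to M_k)$ for all $j\to k$, I would verify $h\centerdot u=h\centerdot v$ as morphisms $(X_i)_{i\in I}\to Q$. It suffices to compare their composites with the projections $\pr_{(j\to k)}$, since by the uniqueness clause of \defref{def-multicategory-small-products} a morphism with multi-entry source into a product is determined by those composites. Indeed
\[ h\centerdot u\centerdot\pr_{(j\to k)}=h\centerdot\pr_k=h_k, \qquad h\centerdot v\centerdot\pr_{(j\to k)}=h_j\centerdot(M_j\to M_k), \]
and the two right-hand sides agree by the compatibility hypothesis on $h$; hence $h\centerdot u=h\centerdot v$.

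Finally, because $e$ equalizes $(u,v)$ and $h\centerdot u=h\centerdot v$, the multi-entry equalizer property of \defref{def-equalizers} (applied with $A=P$, $B=Q$) delivers a unique $g:(X_i)_{i\in I}\to\lim(J\to\mcv_1)$ with $h=g\centerdot e$, which is precisely the required factorisation through $e:\lim(J\to\mcv_1)\to\prod_{j\in\Ob J}M_j$. I do not anticipate a genuine obstacle: the whole point is that Definitions~\ref{def-multicategory-small-products} and~\ref{def-equalizers} are phrased with multi-entry sources so that the textbook ``limit = equalizer of products'' proof transfers word for word, the only two substantive appeals being product-uniqueness (to obtain $h\centerdot u=h\centerdot v$) and the equalizer factorisation (to produce $g$).
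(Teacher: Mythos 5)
Your proof is correct. The paper states this corollary without any proof (it passes immediately to the definition of completeness), and your argument is exactly the standard one it implicitly relies on -- the same ``limit as equalizer of two maps between products'' construction the author later invokes via Mac\,Lane V.2.2 when proving completeness of $\mcShort_\LL$. The two substantive steps you identify are the right ones: the uniqueness clause of \defref{def-multicategory-small-products} to get $h\centerdot u=h\centerdot v$ from equality of all projections, and the multi-entry factorisation clause of \defref{def-equalizers} to produce the unique $g$.
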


When the above holds, we say that multicategory $\mcv$ is complete\index{complete multicategory} and \textbf{assume} this from now on.

For morphisms of symmetric multicategories one may take symmetric multifunctors, see \cite[Definition~3.14]{BesLyuMan-book}.
More generally, one may take $k$\n-linear symmetric multifunctors of Elmendorf and Mandell \cite[Definition~2.6]{zbMATH05655952}, which are multifunctors with $k$ inputs and one output.
These were generalised to morphisms and bilinear maps of props by Hackney and Robertson \cite[Proposition~34]{zbMATH06469262}.

\section{About \texorpdfstring{$\mcV$}V-categories}
\subsection{Adjunction between symmetric multicategories and colored props}\label{sec-Adjunction-multicategories-props}
\begin{proposition}[{\cite[Theorem~4.2]{zbMATH05655952}, \cite[Proposition~11]{zbMATH06469262}, see also \cite[Theorem~2.3.2]{0809.2161}, \cite[Proposition~9.2]{0906.0015}}]\label{pro-adjunction-FU}
There is an adjunction between symmetric multicategories \(\sMCat\) and colored props \(\cProp\)
	\[ \FF\: \sMCat \leftrightarrows \cProp\: \UU.
	\]
\end{proposition}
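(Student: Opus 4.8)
The plan is to exhibit both functors explicitly and then produce the adjunction bijection together with its naturality. The right adjoint $\UU$ is the easy direction: a colored prop $P$ is a symmetric strict monoidal category whose objects form the free monoid on a set $C$ of colours, and I would set $\UU P$ to be the multicategory with colour set $C$ and hom-sets $\UU P((X_i)_{i\in I};Y)=P(\tens^{i\in I}X_i,Y)$, i.e.\ the morphisms of $P$ with a single-object target. The composition maps $\mu_\phi$ of $\UU P$ are read off from composition and the strict monoidal product of $P$, while the symmetry of $P$ supplies the symmetric-group actions; the associativity datum of \figref{dia-assoc-mu-multi} and the unit equations \eqref{eq-axiom-unit-multi1etaZ}, \eqref{eq-axiom-unit-multi2} for $\UU P$ then follow from the corresponding laws in $P$. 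On morphisms of props (strict symmetric monoidal functors preserving the generating colours) $\UU$ acts by restriction, so its functoriality is immediate.

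For the left adjoint I would build the free prop $\FF\mcv$ on a symmetric multicategory $\mcv$. Its objects are the finite sequences of colours of $\mcv$ (the free monoid), and a morphism $(X_i)_{i\in I}\to(Y_j)_{j\in J}$ is an equivalence class of data consisting of a map $\phi:I\to J$ in $\cs_\sk$ together with a family $\bigl(f_j\in\mcv((X_i)_{i\in\phi^{-1}(j)};Y_j)\bigr)_{j\in J}$, two such families being identified when they differ by the permutations dictated by the symmetric-group action of $\mcv$. Horizontal composition along $I\rto\phi J\rto\psi K$ is given block-by-block by the maps $\mu_\phi$ of $\mcv$, the monoidal product is concatenation of sequences placed side by side, and the symmetry is induced by the bijections (shuffles) of $\cs_\sk$. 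The key observation justifying this description is that, since every generator has a single output and no duplication or deletion of wires is available, each single-target strand of a prop morphism grafts, via the multicategory composition, into one multimorphism of $\mcv$. The unit $\eta:\mcv\to\UU\FF\mcv$ then sends a multimorphism $f:(X_i)_{i\in I}\to Y$ to its class as a morphism with one-element target $J=\mb1$.

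Given any multifunctor $G:\mcv\to\UU P$, I would define its mate $\bar G:\FF\mcv\to P$ on objects by extending $G$ monoidally, and on a representative $(\phi,(f_j)_{j\in J})$ by letting $\bar G(\phi,(f_j))$ be $\tens^{j\in J}G(f_j)$ post-composed with the permutation of $P$ determined by $\phi$. The two facts to check are that $\bar G$ is well defined on equivalence classes and strictly symmetric monoidal; both reduce to the equivariance of $G$ and the interaction of the $\mu_\phi$ with the symmetry. One then verifies that $\bar G$ is the unique strict symmetric monoidal functor with $\UU\bar G\circ\eta=G$. Conversely $H\mapsto\UU H\circ\eta$ recovers a multifunctor from a prop morphism $H:\FF\mcv\to P$, and the two assignments are mutually inverse and natural in $\mcv$ and $P$, yielding $\cProp(\FF\mcv,P)\cong\sMCat(\mcv,\UU P)$.

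The technical heart, and the step I expect to cost the most, is verifying that horizontal composition in $\FF\mcv$ and the passage $G\mapsto\bar G$ are well defined modulo the identifications by symmetric-group actions: this is precisely where the associativity of \figref{dia-assoc-mu-multi} and the compatibility of the $\mu_\phi$ with permutations are indispensable, and where a careless choice of representatives would break functoriality. Since the statement is classical, in practice I would only indicate these verifications and defer the full equivariance bookkeeping to the cited sources.
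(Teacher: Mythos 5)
Your proposal is correct in substance but follows a genuinely different route from the paper's, and in fact the route the paper explicitly sets out to avoid: right after the statement the author remarks that ``in all cited sources the definition of symmetric multicategories uses explicit action of symmetric groups. We use a different definition and give a different proof.'' In the paper's framework a symmetric multicategory carries no primitive $\Sigma_n$\nobreakdash-action; all of the symmetry is encoded in having composition maps $\mu_\phi$ for \emph{arbitrary} maps $\phi\in\cS_\sk$, and the group action $r_\sigma$ is only \emph{derived} later (\corolref{cor-Action-symmetric-groups-symmetric-multicategory}). Consequently the paper's $\FF\mcv$ has hom-sets that are plain coproducts $\coprod_{\phi:I\to J\in\cS_\sk}\prod_{j\in J}\mcv((X_i)_{i\in\phi^{-1}j};Y_j)$ with \emph{no} equivalence relation: indexing by all set-maps $\phi$ already gives canonical representatives, so the quotient you impose is unnecessary (and, in this framework, not even immediately available, since the action it quotients by would first have to be constructed). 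What your approach buys is direct contact with the cited sources and a familiar picture of grafting wires; what it costs is exactly the ``equivariance bookkeeping'' you defer at the end, which is the step the paper's formulation eliminates by design --- the paper instead pays by verifying directly that the $\lambda^f$ (families of identities indexed by the shuffles $\sigma(g\centerdot f)$) satisfy the lax symmetric monoidal axioms, and by establishing the bijection $\theta:\sMCat(\mcv,\UU P)\cong\cProp(\FF\mcv,P)$ through the forced formula for $G^\phi$ in terms of $F=G^{\triangledown}$, rather than through a unit $\eta$ and a well-definedness check on equivalence classes. Two smaller points: your description of $\UU P$ agrees with the paper's, but the paper routes it through $P\mapsto P^\clubsuit\mapsto\wh{P^\clubsuit}$ so that the multicategory axioms for $\mu_f$ come for free rather than being checked by hand; and the permutation $\lambda_P^\phi$ in the formula for $\bar G(\phi,(f_j))$ must be \emph{pre}-composed (it reorders the source $(X_i)_{i\in I}$ into the blocked form $((X_i)_{i\in\phi^{-1}j})_{j\in J}$ before $\tens^{j\in J}G(f_j)$ is applied), not post-composed as written.
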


It seems that in all cited sources the definition of symmetric multicategories uses explicit action of symmetric groups.
We use a different definition and give a different proof.

\begin{proof}
As any prop, the constructed \(\FF\mcv\) has the monoid of objects \((\Ob\FF\mcv,\tens)=(\Ob\mcv)^*\), the monoid (with the operation $\tens$) freely generated by \(\Ob\mcv\).
	Objects of \(\FF\mcv\) are denoted \(\tens^{i\in I}X_i=(X_i)_{i\in I}\), \(I\in\cS_\sk\).
	
	The morphism sets are
	\[ \FF\mcv\bigl((X_i)_{i\in I},(Y_j)_{j\in J}\bigr) =\coprod_{\phi\:I\to J\in\cS_\sk} \prod_{j\in J} \mcv\bigl((X_i)_{i\in\phi^{-1}j};Y_j\bigr).
	\]
	The composition is
	\begin{multline*}
	\FF\mcv\bigl((X_i)_{i\in I},(Y_j)_{j\in J}\bigr) \times \FF\mcv\bigl((Y_j)_{j\in J},(Z_k)_{k\in K}\bigr)
	\\
	\cong \coprod_{I\rto\phi J\rto\psi K\in\cS_\sk} \prod_{k\in K} \bigl\{\bigl[\prod_{j\in\psi^{-1}k}\mcv\bigl((X_i)_{i\in\phi^{-1}j};Y_j\bigr)\bigr]\times\mcv\bigl((Y_j)_{j\in\psi^{-1}k};Z_k\bigr)\bigr\}
	\\
	\rTTo^{\coprod_{(\phi,\psi)\mapsto\phi\centerdot\psi}\prod_{k\in K}\mu_{\phi|\:\phi^{-1}\psi^{-1}k\to\psi^{-1}k}} \hspace*{-1.5em} \coprod_{\xi\:I\to K\in\cS_\sk}\prod_{k\in K}\mcv\bigl((X_i)_{i\in\xi^{-1}k};Z_k\bigr) =\FF\mcv\bigl((X_i)_{i\in I},(Z_k)_{k\in K}\bigr).
	\end{multline*}
	Its associativity on summand indexed by \(I\rto\phi J\rto\psi K\rto\xi L\) follows from equation at \figref{dia-assoc-mu-multi} written for maps \(\phi^{-1}\psi^{-1}\xi^{-1}l\rto{\phi|}\psi^{-1}\xi^{-1}l\rto{\psi|}\xi^{-1}l\), \(l\in L\).
	
	The identity morphism $1$ in \(\FF\mcv\bigl((X_i)_{i\in I},(X_i)_{i\in I}\bigr)\) is \((1_{X_i})_{i\in I}\in\prod_{i\in I}\mcv(X_i;X_i)\) indexed by the identity map $\id_I$.
	The right unit property of $1$ on the summand indexed by \(\phi\:I\to J\) follows from equation~\eqref{eq-axiom-unit-multi1etaZ} for \(\triangledown\:\phi^{-1}j\to\mb1\), \(j\in J\).
	The left unit property of $1$ on the summand indexed by \(\phi\:I\to J\) follows from equation~\eqref{eq-axiom-unit-multi2} for \(\id\:\phi^{-1}j\to\phi^{-1}j\), \(j\in J\).
	
	The tensor multiplication on objects is the concatenation.
	On morphisms the tensor multiplication $\tens^K$ is the map (determined by maps \(I\rto fK\lto gJ\in\cO_\sk\))
	\begin{multline*}
	\tens^K\: \prod_{k\in K}\FF\mcv\bigl((X_i)_{i\in f^{-1}k},(Y_j)_{j\in g^{-1}k}\bigr) 
	\cong \coprod_{(\phi_k\:f^{-1}k\to g^{-1}k)_{k\in K}} \prod_{k\in K} \prod_{j\in g^{-1}k} \mcv\bigl((X_i)_{i\in\phi_k^{-1}j};Y_j\bigr)
	\\
	\rMono^{\;\;\coprod_{(\phi_k)\mapsto\phi}1}
	\coprod_{\xi\:I\to J\in\cS_\sk} \prod_{j\in J} \mcv\bigl((X_i)_{i\in\xi^{-1}j};Y_j\bigr) =\FF\mcv\bigl((X_i)_{i\in I},(Y_j)_{j\in J}\bigr),
	\end{multline*}
	where \(\phi\:I\to J\) is the only map, which satisfies the condition \(\phi|_{f^{-1}k}=\phi_k\).
	All such maps $\phi$ are characterized by the condition \((I\rto\phi J\rto gK)=f\).
	We shall see that the tensor multiplication is strictly associative.
	
	The unit object $\1$ (the image of \(\tens^{\mb0}\)) is the empty sequence \(()=()_\varnothing\).
	The left and the right unitors for this unit object are identity maps.
	We are going to prove that \((\FF\mcv,\tens,\1)\) is a strict monoidal category.
	
	Let \(h\:K\to J\in\cs_\sk\).
	The set \(\coprod_{j\in J}h^{-1}j=\{(j,k)\in J\times K\mid h(k)=j\}\) has a lexicographic ordering (for all \(k,k'\in K\) inequality $hk<hk'$ implies \((hk,k)<(hk',k')\), and if $hk=hk'$, then $k<k'$ implies \((hk,k)<(hk',k')\)).
	It follows that the map
	\[ t(h) =\bigl( \coprod_{j\in J}h^{-1}j =\{(j,k)\mid h(k)=j\} \subset J\times K \rTTo^{\pr_1} J \bigr)
	\]
	preserves the ordering.
	On the other hand, the map
	\[ \bigl( \coprod_{j\in J}h^{-1}j =\{(j,k)\mid h(k)=j\} \subset J\times K \rTTo^{\pr_2} K \bigr)
	\]
	is a bijection.
	Inverse to it bijection is denoted \(\sigma(h)\:K\to\coprod_{j\in J}h^{-1}j\).
	We adopt the point of view on this bijection as a permutation of elements of \(\{1<2<\dots<n\}=K\), sending $k\in K$ to $k\in K$, but the second $K$ has a different total ordering.
	Or we could view \(\sigma(h)\) as a self-bijection \(K\to K\), \(k\mapsto\sum_{j<h(k)}|h^{-1}j|+|\{k'\le k\mid h(k')=h(k)\}|\), but we shall not do it.
	Clearly,
	\begin{equation}
	\bigl( K \rTTo^{\sigma(h)} \coprod_{j\in J}h^{-1}j \rTTo^{t(h)} J \bigr) =h.
	\label{eq-sigma-t}
	\end{equation}
	For any colored prop $P$ the identity~\eqref{eq-lambda-fg-non-enriched} can be applied to the pair \((\sigma(h),t(h))\) from \eqref{eq-sigma-t}.
	Since \(\sigma(h)|\:h^{-1}j=\sigma(h)^{-1}t(h)^{-1}j\to t(h)^{-1}j=h^{-1}j\) is an order-preserving bijection, it is the identity map.
	Hence, equation~\eqref{eq-lambda-fg-non-enriched} can be written as \(\lambda_P^h\centerdot\tens^J1=\lambda_P^{\sigma(h)}\centerdot1\).
	We conclude that \(\lambda_P^h=\lambda_P^{\sigma(h)}\).
	
	In order to make \(\FF\mcv\) a lax symmetric monoidal category in the sense of \defref{def-Monoidal-cat} we assume given maps \(K\rto gI\rto fJ\), where \(g\in\cO_\sk\) and \(f\in\cs_\sk\).
	And we exhibit a natural transformation \(\lambda^f\:(X_k)_{k\in K}=\tens^{i\in I}(X_k)_{k\in g^{-1}i}\to\tens^{j\in J}\tens^{i\in f^{-1}j}(X_k)_{k\in g^{-1}i}=\bigl((X_k)_{k\in g^{-1}f^{-1}j}\bigr)_{j\in J}\).
	This is a morphism in \(\FF\mcv\) indexed by bijection $\sigma(g\centerdot f)\:K\to\coprod_{j\in J}g^{-1}f^{-1}j$.
	The element \(\lambda^f\in\prod_{j\in J}\prod_{k\in g^{-1}f^{-1}j}\mcv(X_k;X_k)\) is \(\lambda^f=\bigl((1_{X_k})_{k\in g^{-1}f^{-1}j}\bigr)_{j\in J}\).
	
	Naturality of \(\lambda^f\), \(f\in\cs_\sk\), amounts to commutative square
	\begin{diagram}[LaTeXeqno]
		(X_k)_{k\in K} &\rTTo^{\lambda^f} &\bigl((X_k)_{k\in g^{-1}f^{-1}j}\bigr)_{j\in J}
		\\
		\dTTo<{\tens^{i\in I}u_i} &&\dTTo>{\tens^{j\in J}\tens^{i\in f^{-1}j}u_i}
		\\
		(Y_l)_{l\in L} &\rTTo^{\lambda^f} &\bigl((Y_l)_{l\in h^{-1}f^{-1}j}\bigr)_{j\in J}
		\label{dia-XXYY-lambda-f}
	\end{diagram}
	for each pair of maps \(g,h\in\cO_\sk\) from
	\begin{diagram}[h=0.9em]
		K
		\\
		&\rdTTo^g
		\\
		\dTTo<\phi &&I &\rTTo^f &J
		\\
		&\ruTTo_h 
		\\
		L 
	\end{diagram}
	and all collections of morphisms \(u_i\:(X_k)_{k\in g^{-1}i}\to(Y_l)_{l\in h^{-1}i}\).
	Assume that $u_i$ is indexed by $\phi_i\:g^{-1}i\to h^{-1}i$.
	There is a unique map \(\phi\:K\to L\) such that \(\phi|_{g^{-1}i}=\phi_i\).
	Necessarily \(\phi\centerdot h=g\).
	Hence, \(u_i=(v_l)_{l\in h^{-1}i}\in\prod_{l\in h^{-1}i}\mcv\bigl((X_k)_{k\in\phi^{-1}l};Y_l\bigr)\).
	The diagram, formed by indexing maps for diagram~\eqref{dia-XXYY-lambda-f}
	\begin{diagram}[h=2.3em]
		K &\rTTo^{\sigma(g\centerdot f)} &\coprod_{j\in J}g^{-1}f^{-1}j
		\\
		\dTTo<\phi &&\dTTo>{\coprod_{j\in J}\phi|_{g^{-1}f^{-1}j}}
		\\
		L &\rTTo^{\sigma(h\centerdot f)} &\coprod_{j\in J}h^{-1}f^{-1}j
	\end{diagram}
	commutes, since both compositions map $k\in K$ to the same \(f(gk)=f(h\phi k)\).
	This is the only diagonal map of this square, independently of the ordering of source and target.
	One can verify that the diagonal map in \eqref{dia-XXYY-lambda-f}, represented by the family \(\bigl((v_l)_{l\in h^{-1}f^{-1}j}\bigr)_{j\in J}=\bigl((u_i)_{i\in f^{-1}j}\bigr)_{j\in J}\), equals the composition in the left--bottom path due to unitality \eqref{eq-axiom-unit-multi1etaZ} of multicategory $\mcv$, and equals the composition in the top--right path due to unitality property~\eqref{eq-axiom-unit-multi2}.
	Therefore, \eqref{dia-XXYY-lambda-f} commutes and \(\lambda^f\) is natural.
	
	Assume given maps \(L\rto hI\rto fJ\rto gK\), \(h\in\cO_\sk\), \(f,g\in\cs_\sk\).
	All vertices of the diagram
	\begin{diagram}[h=2.2em,LaTeXeqno]
		L &\rTTo^{\sigma(h\centerdot f)} &\coprod_{j\in J}h^{-1}f^{-1}j
		\\
		\dTTo<{\sigma(hfg)} &&\dTTo>{\coprod_{\sigma(g)}1}
		\\
		\coprod_{k\in K}h^{-1}f^{-1}g^{-1}k &\rTTo^{\coprod_{k\in K}\sigma(h\centerdot f|\:h^{-1}f^{-1}g^{-1}k\to g^{-1}k)} &\coprod_{k\in K}\coprod_{j\in g^{-1}k}h^{-1}f^{-1}j
		\label{dia-IIII}
	\end{diagram}
	are $L$ with various total orderings.
	All arrows map $i$ to $i$.
	Therefore diagram~\eqref{dia-IIII} commutes.
	Also diagram~\eqref{eq-lambda-fg-non-enriched} commutes, since \(1\centerdot 1=1\).
	
	In particular, \(\lambda^{\id_I}\:(X_i)_{i\in I}\to(X_i)_{i\in I}\), \(\lambda^{\id_I}=(1_{X_i})_{i\in I}\in\prod_{i\in I}\mcv(X_i;X_i)\), that is, \(\lambda^{\id_I}\) is the identity morphism of \((X_i)_{i\in I}\).
	Similarly, \(\lambda^{\triangledown\:I\to\mb1}\:(X_i)_{i\in I}\to(X_i)_{i\in I}\), is indexed by \(\sigma(\triangledown)=\id_I\) and \(\lambda^{\triangledown\:I\to\mb1}\:(X_i)_{i\in I}\to(X_i)_{i\in I}\), hence, \(\lambda^{\triangledown}=(1_{X_i})_{i\in I}\in\prod_{i\in I}\mcv(X_i;X_i)\) is the identity map.
	Summing up, \((\FF\mcv,\tens^I,\lambda^f)\) is a lax symmetric monoidal category.
	
	Furthermore, if \(f\in\cO_\sk\), then \(\lambda^f\:(X_k)_{k\in K}\to(X_k)_{k\in K}\), determined by \(K\rto gI\rto fJ\in\cO_\sk\) is indexed by $\id_K$ and equals \((1_{X_k})_{k\in K}\in\prod_{k\in K}\mcv(X_k;X_k)\).
	Therefore, \(\lambda^f=\id\) if $f$ preserves ordering.
	Thus \(\FF\mcv\) is a colored prop.
	
	In particular, it is symmetric with the symmetry \(c\:(X_i)_{i\in I}\sqcup(Y_j)_{j\in J}\to(Y_j)_{j\in J}\sqcup(X_i)_{i\in I}\) lying in the summand indexed by the block-wise permutation \(\sigma\:I\sqcup J\to J\sqcup I\).
	For \(1\le k\le|I|+|J|\)
	\[ \sigma(k)=
	\begin{cases}
	|J|+k, &\text{ for } k\le |I|,
	\\
	k-|I|, &\text{ for } k> |I|.
	\end{cases}
	\]
	The symmetry is \(\bigl((1_{Y_j})_{j\in J},(1_{X_i})_{i\in I}\bigr)\in\bigl[\prod_{j\in J}\mcv(Y_j;Y_j)\bigr]\times\bigl[\prod_{i\in I}\mcv(X_i;X_i)\bigr]\).
	
	The above construction being functorial, we get a functor \(\FF\:\sMCat\to\cProp\), where the latter category has symmetric strict monoidal functors \(F\:P\to Q\) as morphisms such that \(\Ob F\:\Ob P=(\Col P)^*\to(\Col Q)^*=\Ob Q\) is the morphism \((\Col F)^*\) of monoids induced by a map \(\Col F\:\Col P\to\Col Q\).
	
A functor \(\UU\:\cProp\to\sMCat\) is the composition which goes through lax symmetric monoidal categories \(\lsmCat\)
	\[ \cProp \rto{\text-^\clubsuit} \lsmCat \rto{\wh{\text-}} \sMCat,
	\]
	where the last functor is constructed in \cite[Proposition~3.22]{BesLyuMan-book}.
	On object (prop) $P$ the functor $\UU$ takes the value with \(\Ob\UU P=\Col P\), \(\UU P\bigl((X_i)_{i\in I};Y\bigr)=P\bigl((X_i)_{i\in I};Y\bigr)\), the units \(1_X\in P(X;X)\) and the composition
	\begin{multline*}
	\mu_f =\bigl\{ \bigl[ \prod_{j\in J} P\bigl((X_i)_{i\in f^{-1}j};Y_j\bigr) \bigr] \times P\bigl((Y_j)_{j\in J};Z\bigr) \rTTo^{\dot\lambda^f\times\tens^J\times1}
	\\
	P\bigl((X_i)_{i\in I};((X_i)_{i\in f^{-1}j})_{j\in J}\bigr) \times P\bigl(((X_i)_{i\in f^{-1}j})_{j\in J};(Y_j)_{j\in J}\bigr) \times P\bigl((Y_j)_{j\in J};Z\bigr)
	\\
	\rTTo^{\text{composition}} P\bigl((X_i)_{i\in I};Z\bigr) \bigr\}
	\end{multline*}
	for an arbitrary map \(f\:I\to J\in\cs_\sk\).
	Here \(\lambda^f\) is that of \(P^\clubsuit\).
	
	What is the natural bijection \(G\in\cProp(\FF\mcv,P)\cong\sMCat(\mcv,\UU P)\ni F\)?
	(Multi)\hspace{0pt}functors from the both sides have as the mapping on objects the same map \(\Ob F=\Ob G\:\Ob\mcv\to\Col P\), \(X\mapsto FX\) which we fix now.
	An element $F$ in the right hand side is the collection of mappings \(F_{(X_i)_{i\in I};Y}\:\mcv\bigl((X_i)_{i\in I};Y\bigr)\to P\bigl((FX_i)_{i\in I};FY\bigr)\) such that \((1^\mcv_X)F_{X;X}=1^P_{FX}\) and for any mapping \(f\:I\to J\)
	\begin{diagram}[h=2.6em,nobalance,bottom,LaTeXeqno]
		&&\bigl[ \prod_{j\in J} P\bigl((FX_i)_{i\in f^{-1}j};FY_j\bigr) \bigr] \times P\bigl((FY_j)_{j\in J};FZ\bigr)
		\\
		&\ruTTo[hug]_{[\prod_JF]\times F} &\dTTo>{\dot\lambda^f\times\tens^J\times1}
		\\
		\begin{array}{c}
			\bigl[ \prod_{j\in J} \mcv\bigl((X_i)_{i\in f^{-1}j};Y_j\bigr) \bigr]
			\\
			\times \mcv\bigl((Y_j)_{j\in J};Z\bigr)
		\end{array}
		&&\hspace*{-3.3em}
		\begin{array}{c}
			P\bigl((FX_i)_{i\in I};((FX_i)_{i\in f^{-1}j})_{j\in J}\bigr)
			\\[2pt]
			\times P\bigl(((FX_i)_{i\in f^{-1}j})_{j\in J};(FY_j)_{j\in J}\bigr) \times P\bigl((FY_j)_{j\in J};FZ\bigr)
		\end{array}
		\\
		\dTTo<{\mu^\mcv_f} &= &\dTTo>{\text{composition}}
		\\
		\mcv\bigl((X_i)_{i\in I};Z\bigr) &\rTTo^{F_{(X_i)_{i\in I};Z}} &P\bigl((FX_i)_{i\in I};FZ\bigr)
		\label{dia-VVV-PPPPPP}
	\end{diagram}
	An element $G$ in the left hand side is the collection of mappings
	\[ G^\phi\: \prod_{j\in J}\mcv\bigl((X_i)_{i\in\phi^{-1}j};Y_j\bigr) \to P\bigl((GX_i)_{i\in I};(GY_j)_{j\in J}\bigr),
	\]
	where mapping \(\phi\:I\to J\) runs over \(\cs_\sk\), such that $G$ is strictly compatible with the composition, the identities, the tensor products and \(\lambda^f\).
	
	Compatibility of $G$ with the tensor product \(\tens^J\), transformation \(\lambda^\phi\) and composition imply that the following diagram commutes:
	\begin{diagram}
		\prod_{j\in J} \FF\mcv\bigl((X_i)_{i\in\phi^{-1}j};Y_j\bigr) &\rTTo^{\prod_{j\in J}G^{\triangledown\:\phi^{-1}j\to\mb1}_{(X_i)_{i\in\phi^{-1}j};Y_j}} &\prod_{j\in J} P\bigl((GX_i)_{i\in\phi^{-1}j};GY_j\bigr)
		\\
		\dTTo<{\dot\lambda_{\FF\mcv}^\phi\times\tens_{\FF\mcv}^J}
		\\
		\FF\mcv\bigl((X_i)_{i\in I};((X_i)_{i\in\phi^{-1}j})_{j\in J}\bigr) \times \FF\mcv\bigl(((X_i)_{i\in\phi^{-1}j})_{j\in J};(Y_j)_{j\in J}\bigr) \hspace*{-13em} &&\dTTo>{\dot\lambda_P^\phi\times\tens_P^J}
		\\
		\dTTo<{\text{composition}_{\FF\mcv}} && \hspace*{-12.5em} P\bigl((GX_i)_{i\in I};((GX_i)_{i\in\phi^{-1}j})_{j\in J}\bigr) \times P\bigl(((GX_i)_{i\in\phi^{-1}j})_{j\in J};(GY_j)_{j\in J}\bigr)
		\\
		&&\dTTo>{\text{composition}_P}
		\\
		\FF\mcv\bigl((X_i)_{i\in I};(Y_j)_{j\in J}\bigr) &\rTTo^G &P\bigl((GX_i)_{i\in I};(GY_j)_{j\in J}\bigr)
	\end{diagram}
	Maps \(G^{\triangledown\:I\to\mb1}_{(X_i)_{i\in\phi^{-1}j};Y_j}\) are identified with \(F_{(X_i)_{i\in I};Y}\).
	As we are going to see this assignment determines all maps $G$ in a unique way.
	With this identification in mind we rewrite the above diagram as
	\begin{diagram}
		\prod_{j\in J} \mcv\bigl((X_i)_{i\in\phi^{-1}j};Y_j\bigr) &\rTTo^{\prod_{j\in J}F_{(X_i)_{i\in\phi^{-1}j};Y_j}} &\prod_{j\in J} P\bigl((GX_i)_{i\in\phi^{-1}j};GY_j\bigr)
		\\
		\dTTo<{\dot\lambda_{\FF\mcv}^\phi\times\tens_{\FF\mcv}^J}
		\\
		\bigl[ \prod_{j\in J}\prod_{i\in\phi^{-1}j}\mcv(X_i;X_i) \bigr]	\times \prod_{j\in J} \mcv\bigl((X_i)_{i\in\phi^{-1}j};Y_j\bigr) \hspace*{-5em} &&\dTTo>{\dot\lambda_P^\phi\times\tens_P^J}
		\\
		\dTTo<{\prod_{j\in J}\mu_{\id\:\phi^{-1}j\to\phi^{-1}j}} && \hspace*{-13em} P\bigl((GX_i)_{i\in I};((GX_i)_{i\in\phi^{-1}j})_{j\in J}\bigr) \times P\bigl(((GX_i)_{i\in\phi^{-1}j})_{j\in J};(GY_j)_{j\in J}\bigr)
		\\
		&&\dTTo>{\text{composition}_P}
		\\
		\prod_{j\in J} \mcv\bigl((X_i)_{i\in\phi^{-1}j};Y_j\bigr) &\rTTo^{G^\phi} &P\bigl((GX_i)_{i\in I};(GY_j)_{j\in J}\bigr)
	\end{diagram}
	Here the summand \(\prod_{j\in J}\prod_{i\in\phi^{-1}j}\mcv(X_i;X_i)\) is indexed by \(\sigma(\phi)\:I\to\coprod_{j\in J}\phi^{-1}j\).
	The summand \(\prod_{j\in J} \mcv\bigl((X_i)_{i\in\phi^{-1}j};Y_j\bigr)\) in the middle row is indexed by \(t(\phi)\:\coprod_{j\in J}\phi^{-1}j\to J\).
	Hence, the summand \(\prod_{j\in J} \mcv\bigl((X_i)_{i\in\phi^{-1}j};Y_j\bigr)\) in the bottom row is indexed by \((I\rTTo^{\sigma(\phi)} \coprod_{j\in J}\phi^{-1}j\rTTo^{t(\phi)} J)=\phi\) by \eqref{eq-sigma-t}.
	The left column composes to \(\id\) due to unitality~\eqref{eq-axiom-unit-multi2}.
	Therefore, for general \(\phi\:I\to J\in\cs_\sk\) we must have
	\begin{multline}
	G^\phi =\bigl[ \prod_{j\in J} \mcv\bigl((X_i)_{i\in\phi^{-1}j};Y_j\bigr) \rTTo^{\prod_{j\in J}F_{(X_i)_{i\in\phi^{-1}j};Y_j}} \prod_{j\in J} P\bigl((FX_i)_{i\in\phi^{-1}j};FY_j\bigr) \rTTo^{\dot\lambda_P^\phi\times\tens^J} 
	\\
	P\bigl((FX_i)_{i\in I};((FX_i)_{i\in\phi^{-1}j})_{j\in J}\bigr) \times P\bigl(((FX_i)_{i\in\phi^{-1}j})_{j\in J};(FY_j)_{j\in J}\bigr)
	\\
	\rTTo^{\text{composition}} P\bigl((FX_i)_{i\in I};(FY_j)_{j\in J}\bigr) \bigr].
	\label{eq-G-phi}
	\end{multline}
	
	Let us check that \eqref{dia-VVV-PPPPPP} and unitality are the only conditions imposed on $F$ by conditions on $G$.
	
	First of all we check that $G$ is compatible with tensor product $\tens^K$ due to ansatz~\eqref{eq-G-phi}, see diagram for \(f=(I\rto\phi J\rto gK)\), \(\phi\in\cs_\sk\), \(f,g\in\cO_\sk\)
	\[ \!
	\begin{diagram}[h=2.6em,inline]
	\prod_{k\in K} \prod_{j\in g^{-1}k} \mcv\bigl((X_i)_{i\in\phi^{-1}j};Y_j\bigr) &\rTTo^{\hspace*{-1.6em}\prod_{k\in K}\prod_{j\in g^{-1}k}F_{(X_i)_{i\in\phi^{-1}j};Y_j}\hspace*{-1em}} &\prod_{k\in K} \prod_{j\in g^{-1}k} P\bigl((FX_i)_{i\in\phi^{-1}j};FY_j\bigr)
	\\
	\dTTo<\cong &\rdTTo(2,4)[hug]_{\prod_{k\in K}G^{\phi|\:f^{-1}k\to g^{-1}k}} = \ldDashTo(2,4)~{\hspace*{5em}} &\dTTo~{\prod_{k\in K}\dot\lambda_P^{\phi|\:f^{-1}k\to g^{-1}k}\times\tens^{g^{-1}k}}
	\\
	\prod_{j\in J} \mcv\bigl((X_i)_{i\in\phi^{-1}j};Y_j\bigr) &&\hspace*{-4.5em}
	\begin{array}{r}
	\prod_{k\in K}\bigl[P\bigl((FX_i)_{i\in f^{-1}k};((FX_i)_{i\in\phi^{-1}j})_{j\in g^{-1}k}\bigr)
	\\
	\times P\bigl(((FX_i)_{i\in\phi^{-1}j})_{j\in g^{-1}k};(FY_j)_{j\in g^{-1}k}\bigr) \bigr]
	\end{array}
	\\
	\dTTo~{\prod_{j\in J}F_{(X_i)_{i\in\phi^{-1}j};Y_j}} &\rdTTo(2,4)[hug]_{G^{\phi\:I\to J}} \ldDashTo(2,4)[hug]^{\hspace*{4em}\tens^K\times\tens^K} &\dTTo>{\prod_K\text{composition}}
	\\
	\prod_{j\in J} P\bigl((FX_i)_{i\in\phi^{-1}j};FY_j\bigr)
	&&\hspace*{-1em} \prod_{k\in K} P\bigl((FX_i)_{i\in f^{-1}k};(FY_j)_{j\in g^{-1}k}\bigr)
	\\
	\dTTo<{\dot\lambda_P^\phi\times\tens^J} &= &\dTTo>{\tens^K}
	\\
	\begin{array}{c}
	P\bigl((FX_i)_{i\in I};((FX_i)_{i\in\phi^{-1}j})_{j\in J}\bigr)
	\\
	\times P\bigl(((FX_i)_{i\in\phi^{-1}j})_{j\in J};(FY_j)_{j\in J}\bigr)
	\end{array}
	&\rTTo^{\text{composition}} &P\bigl((FX_i)_{i\in I};(FY_j)_{j\in J}\bigr)
	\end{diagram}
	\]
	Upper right and lower left quadrilaterals commute due to equations~\eqref{eq-G-phi}.
	We prove that parallelogram in the middle commutes by considering the exterior and adding to the exterior two dashed arrows: the obvious bijection between upper right corner and the third set on the left and the dashed arrow marked by \(\tens^K\times\tens^K\).
	The obtained lower right quadrilateral commutes since \(\tens^K\) is a functor.
	The dashed parallelogram commutes since
	\begin{align*}
	\tens^K\: \prod_{k\in K} P\bigl((Z_i)_{i\in f^{-1}k};((Z_i)_{i\in\phi^{-1}j})_{j\in g^{-1}k}\bigr) &\to P\bigl((Z_i)_{i\in I};((Z_i)_{i\in\phi^{-1}j})_{j\in J}\bigr),
	\\
	(\lambda_P^{\phi|\:f^{-1}k\to g^{-1}k})_{k\in K} &\mapsto \lambda_P^\phi,
	\end{align*}
	due to equation~\eqref{eq-lambda-fg-non-enriched}, which takes the form \(\lambda_P^f\centerdot(\tens^{k\in K}\lambda_P^{\phi|\:f^{-1}k\to g^{-1}k})=\lambda_P^\phi\centerdot\lambda_P^g\).
	Notice that \(\lambda_P^f\) and \(\lambda_P^g\) are identity maps since $P$ is strictly monoidal.
	The obtained diagram commutes.
	
	Let \(K\rto gI\rto fJ\), where \(g\in\cO_\sk\) and \(f\in\cs_\sk\).
	We are going to prove that 
\[ G\:\FF\mcv\bigl((X_k)_{k\in K},\bigl((X_k)_{k\in g^{-1}f^{-1}j}\bigr)_{j\in J}\bigr)\to P\bigl((FX_k)_{k\in K},\bigl((FX_k)_{k\in g^{-1}f^{-1}j}\bigr)_{j\in J}\bigr)
\]
sends \(\lambda_{\FF\mcv}^f\) to \(\lambda_P^f\).
	Since \(\lambda_{\FF\mcv}^f\) is indexed by \(\sigma(g\centerdot f)\) we compute
	\begin{multline*}
	G^{\sigma(g\centerdot f)} =\bigl[ \prod_{j\in J}\prod_{k\in g^{-1}f^{-1}j} \mcv\bigl(X_k;X_k\bigr) \rTTo^{\prod_{j\in J}\prod_{k\in g^{-1}f^{-1}j} F_{X_k;X_k}} \prod_{j\in J}\prod_{k\in g^{-1}f^{-1}j} P\bigl(FX_k;FX_k\bigr)
	\\
	\rTTo^{\dot\lambda_P^{\sigma(g\centerdot f)}\times\tens^{j\in J}\tens^{k\in g^{-1}f^{-1}j}}
	\\
	P\bigl((FX_k)_{k\in K};((FX_k)_{k\in g^{-1}f^{-1}j})_{j\in J}\bigr) \times P\bigl(((FX_k)_{k\in g^{-1}f^{-1}j})_{j\in J};((FX_k)_{k\in g^{-1}f^{-1}j})_{j\in J}\bigr)
	\\
	\hfill \rTTo^{\text{composition}} P\bigl((FX_k)_{k\in K};((FX_k)_{k\in g^{-1}f^{-1}j})_{j\in J}\bigr) \bigr], \hskip\multlinegap
	\\
	\lambda_{\FF\mcv}^f =\bigl((1_{X_k})_{k\in g^{-1}f^{-1}j}\bigr)_{j\in J} \mapsto \bigl((1_{FX_k})_{k\in g^{-1}f^{-1}j}\bigr)_{j\in J} \mapsto (\lambda_P^{\sigma(g\centerdot f)},1_{((FX_k)_{k\in g^{-1}f^{-1}j})_{j\in J}})
	\\
	\mapsto \lambda_P^{\sigma(g\centerdot f)}.
	\end{multline*}
	Recall that \(\lambda_P^{\sigma(g\centerdot f)}=\lambda_P^{g\centerdot f}\) as noticed below \eqref{eq-sigma-t}.
	Since \(g|\:g^{-1}f^{-1}j\to f^{-1}j\) is order-preserving, equation~\eqref{eq-lambda-fg-non-enriched} for the pair $(g,f)$ gives \(\lambda_P^{g\centerdot f}\centerdot\tens^J1=1\centerdot\lambda_P^f\).
	Hence, for any \(K\rto gI\rto fJ\), where \(g\in\cO_\sk\) and \(f\in\cs_\sk\), and any family \((Z_k)_{k\in K}\) of objects of $P$ there is an equality \(\lambda_{(Z_k)_{k\in K}}^{\sigma(g\centerdot f)}=\lambda_{((Z_k)_{k\in g^{-1}i})_{i\in I}}^f\:(Z_k)_{k\in K}\to((Z_k)_{k\in g^{-1}f^{-1}j})_{j\in J}\).
	We conclude that $G$ sends \(\lambda_{\FF\mcv}^f\) to \(\lambda_P^f\).
	
	Compatibility of $G$ with the composition follows from commutativity of the diagram
	\begin{diagram}[nobalance,h=1.4em]
		&&\hspace*{-3em} \bigl[\prod_{j\in J}\mcv\bigl((X_i)_{i\in\phi^{-1}j};Y_j\bigr)\bigr] \times \prod_{k\in K}\mcv\bigl((Y_j)_{j\in\psi^{-1}k};Z_k\bigr)
		\\
		&\ldTTo_\cong &
		\\
		\prod_{k\in K} \bigl\{\bigl[\prod_{j\in\psi^{-1}k}\mcv\bigl((X_i)_{i\in\phi^{-1}j};Y_j\bigr)\bigr]\times\mcv\bigl((Y_j)_{j\in\psi^{-1}k};Z_k\bigr)\bigr\} \hspace*{-3em} &&\dTTo>{G^\phi\times G^\psi}
		\\
		\\
		\dTTo<{\prod_{k\in K}\mu_{\phi|\:\phi^{-1}\psi^{-1}k\to\psi^{-1}k}} &&\hspace*{-4.5em} P\bigl((GX_i)_{i\in I};(GY_j)_{j\in J}\bigr) \times P\bigl((GY_j)_{j\in J};(GZ_k)_{k\in K}\bigr)
		\\
		\\
		\prod_{k\in K} \mcv\bigl((X_i)_{i\in\phi^{-1}\psi^{-1}k};Z_k\bigr) &&\dTTo>{\text{composition}}
		\\
		&\rdTTo_{G^{\phi\centerdot\psi}} &
		\\
		&&P\bigl((GX_i)_{i\in I};(GZ_k)_{k\in K}\bigr)
	\end{diagram}
	for arbitrary maps \(I\rto\phi J\rto\psi K\in\cS_\sk\).
	Using ansatz~\eqref{eq-G-phi} we rewrite this diagram as
	\begin{diagram}[nobalance,h=1.4em]
		&&\hspace*{-2.5em} \bigl[\prod_{j\in J}\mcv\bigl((X_i)_{i\in\phi^{-1}j};Y_j\bigr)\bigr] \times \prod_{k\in K}\mcv\bigl((Y_j)_{j\in\psi^{-1}k};Z_k\bigr)
		\\
		&\ldTTo_\cong &
		\\
		\prod_{k\in K} \bigl\{\bigl[\prod_{j\in\psi^{-1}k}\mcv\bigl((X_i)_{i\in\phi^{-1}j};Y_j\bigr)\bigr]\times\mcv\bigl((Y_j)_{j\in\psi^{-1}k};Z_k\bigr)\bigr\} \hspace*{-2.5em} &&\dTTo~{\prod_{j\in J}F_{(X_i)_{i\in\phi^{-1}j};Y_j}\times\prod_{k\in K}F_{(Y_j)_{j\in\psi^{-1}k};Z_k}\hspace*{-2.35em}}
		\\
		\dTTo<{\prod_{k\in K}\mu_{\phi|\:\phi^{-1}\psi^{-1}k\to\psi^{-1}k}} &&\hspace*{-3em} \bigl[\prod_{j\in J}P\bigl((U_i)_{i\in\phi^{-1}j};V_j\bigr)\bigr] \times \prod_{k\in K}P\bigl((V_j)_{j\in\psi^{-1}k};W_k\bigr)
		\\
		\prod_{k\in K} \mcv\bigl((X_i)_{i\in\phi^{-1}\psi^{-1}k};Z_k\bigr) &&\dTTo>{\dot\lambda_P^\phi\times\tens^J\times\dot\lambda_P^\psi\times\tens^K}
		\\
		\dTTo<{\prod_{k\in K}F_{(X_i)_{i\in\phi^{-1}\psi^{-1}k};Z_k}} &&\hspace*{-8.6em}
		\begin{array}{c}
			P\bigl((U_i)_{i\in I};((U_i)_{i\in\phi^{-1}j})_{j\in J}\bigr) \times P\bigl(((U_i)_{i\in\phi^{-1}j})_{j\in J};(V_j)_{j\in J}\bigr) \times 
			\\[2pt]
			P\bigl((V_j)_{j\in J};((V_j)_{j\in\psi^{-1}k})_{k\in K}\bigr) \times P\bigl(((V_j)_{j\in\psi^{-1}k})_{k\in K};(W_k)_{k\in K}\bigr)
		\end{array}
		\\
		\prod_{k\in K} P\bigl((U_i)_{i\in\phi^{-1}\psi^{-1}k};W_k\bigr) &&\dTTo>{\text{composition}}
		\\
		\dTTo<{\dot\lambda_P^{\phi\centerdot\psi}\times\tens^K}	&&P\bigl((U_i)_{i\in I};(W_k)_{k\in K}\bigr)
		\\
		&\ruTTo^{\text{composition}}
		\\
		P\bigl((U_i)_{i\in I};((U_i)_{i\in\phi^{-1}\psi^{-1}k})_{k\in K}\bigr) \times P\bigl(((U_i)_{i\in\phi^{-1}\psi^{-1}k})_{k\in K};(W_k)_{k\in K}\bigr) \hspace*{-9em}
	\end{diagram}
	where we denote \(U_i=FX_i\), \(V_j=FY_j\), \(W_k=FZ_k\).
	The above diagram reduces to several equations~\eqref{dia-VVV-PPPPPP} (one for each map \(\phi|\:\phi^{-1}\psi^{-1}k\to\psi^{-1}k\), $k\in K$) and the following diagram which uses only structure maps of $P$:
	\begin{diagram}[nobalance,h=1.4em,bottom,LaTeXeqno]
		\prod_{k\in K} \bigl\{\bigl[\prod_{j\in\psi^{-1}k}P\bigl((U_i)_{i\in\phi^{-1}j};V_j\bigr)\bigr]\times P\bigl((V_j)_{j\in\psi^{-1}k};W_k\bigr)\bigr\} \hspace*{-2.5em} &&\hspace*{17em}
		\\
		&\rdTTo~\cong
		\\
		\dTTo~{\hspace*{-5em}\prod_{k\in K}\{\dot\lambda^{\phi|\:\phi^{-1}\psi^{-1}k\to\psi^{-1}k}\times\tens^{\psi^{-1}k}\times1\}} &&\hspace*{-4em} \bigl[\prod_{j\in J}P\bigl((U_i)_{i\in\phi^{-1}j};V_j\bigr)\bigr] \times \prod_{k\in K}P\bigl((V_j)_{j\in\psi^{-1}k};W_k\bigr)
		\\
		\begin{array}{c}
			\prod_{k\in K} \bigl\{ P\bigl((U_i)_{i\in\phi^{-1}\psi^{-1}k};((U_i)_{i\in\phi^{-1}j})_{j\in\psi^{-1}k}\bigr) \times
			\\[2pt]
			P\bigl(((U_i)_{i\in\phi^{-1}j})_{j\in\psi^{-1}k};(V_j)_{j\in\psi^{-1}k}\bigr) \times P\bigl((V_j)_{j\in\psi^{-1}k};W_k\bigr) \bigr\}
		\end{array}
		\hspace*{-4em}
		&&\dTTo>{\dot\lambda_P^\phi\times\tens^J\times\dot\lambda_P^\psi\times\tens^K}
		\\
		\dTTo<{\prod_K\text{composition}} &&\hspace*{-9em}
		\begin{array}{c}
			P\bigl((U_i)_{i\in I};((U_i)_{i\in\phi^{-1}j})_{j\in J}\bigr) \times P\bigl(((U_i)_{i\in\phi^{-1}j})_{j\in J};(V_j)_{j\in J}\bigr)\times 
			\\[2pt]
			P\bigl((V_j)_{j\in J};((V_j)_{j\in\psi^{-1}k})_{k\in K}\bigr) \times P\bigl(((V_j)_{j\in\psi^{-1}k})_{k\in K};(W_k)_{k\in K}\bigr)
		\end{array}
		\\
		\prod_{k\in K}P\bigl((U_i)_{i\in\phi^{-1}\psi^{-1}k};W_k\bigr) &&\dTTo>{\text{composition}}
		\\
		\dTTo<{\dot\lambda_P^{\phi\centerdot\psi}\times\tens^K}	&&P\bigl((U_i)_{i\in I};(W_k)_{k\in K}\bigr)
		\\
		&\ruTTo^{\text{composition}}
		\\
		P\bigl((U_i)_{i\in I};((U_i)_{i\in\phi^{-1}\psi^{-1}k})_{k\in K}\bigr) \times P\bigl(((U_i)_{i\in\phi^{-1}\psi^{-1}k})_{k\in K};(W_k)_{k\in K}\bigr) \hspace*{-8em}
		\label{dia-PPUUVVWW}
	\end{diagram}
	In order to prove its commutativity consider morphisms \(f_j\:(U_i)_{i\in\phi^{-1}j}\to V_j\), \(g_k\:(V_j)_{j\in\psi^{-1}k}\to W_k\) of $P$.
	Diagram~\eqref{dia-PPUUVVWW} is equivalent to commutativity of exterior of
	\begin{diagram}
		(U_i)_{i\in I} &\rTTo^{\lambda^\phi} &((U_i)_{i\in\phi^{-1}j})_{j\in J} &\rTTo^{\tens^{j\in J}f_j} &(V_j)_{j\in J}
		\\
		\dTTo<{\lambda^{\phi\centerdot\psi}} &= &&= &\dTTo>{\lambda^\psi}
		\\
		((U_i)_{i\in\phi^{-1}\psi^{-1}k})_{k\in K} &&\dTTo<{\lambda^\psi} &&((V_j)_{j\in\psi^{-1}k})_{k\in K}
		\\
		&\rdTTo_{\tens^{k\in K}\lambda^{\phi|\:\phi^{-1}\psi^{-1}k\to\psi^{-1}k}} &&\ruTTo^{\tens^{k\in K}\tens^{j\in\psi^{-1}k}f_j} &\dTTo>{\tens^{k\in K}g_k}
		\\
		&&\bigl(((U_i)_{i\in\phi^{-1}j})_{j\in J}\bigr)_{k\in K} &&(W_k)_{k\in K}
	\end{diagram}
	These equations hold due to equation~\eqref{eq-lambda-fg-non-enriched} and naturality of \(\lambda^\psi\).
	Thus a natural map 
\[ \theta\:\sMCat(\mcv,\UU P)\to\cProp(\FF\mcv,P), \quad F\mapsto G,
\]
is constructed.
	
	For \(\phi=\triangledown\:I\to\mb1\) we have \(G^\triangledown=F_{(X_i)_{i\in I};Y}\).
	Hence, the map $\theta$ is injective.
	It is also surjective, as obligatory formula~\eqref{eq-G-phi} shows.
	Therefore, $\theta$ is a natural bijection.
\end{proof}

As on any free monoid there is a length function \(l\:\Ob\FF\mcv=(\Ob\mcv)^*\to\NN\) on objects of \(\FF\mcv\).
Thus, \(\Ob\mcv=\{A\in\Ob\FF\mcv\mid l(X)=1\}\).

\subsection{Multicategory of \texorpdfstring{$\mcV$}V-quivers}
\begin{definition}\label{def-V-quivers}
Let $\mcv$ be a plain multicategory.
A small $\mcv$\n-quiver $\ca$ is
	\begin{myitemize}
		\item[---] a small set $\Ob\ca$ of objects;
\item[---] for each pair of objects \((X,Y)\) of $\ca$ an object \(\ca(X,Y)\) of $\mcv$, that is, an object \(\ca(X,Y)\in\Ob\FF\mcv\) such that \(l(\ca(X,Y))=1\).
	\end{myitemize}
\end{definition}

\begin{definition}\label{def-multi-entry-V-quiver-morphism}
Let $\mcV$ be a locally small multicategory.
Let $\cb$, $\ca_i$, $i\in I\in\cO_\sk$, be small $\mcv$\n-quivers.
A multi-entry $\mcv$\n-quiver morphism\index{morphism of quivers enriched in a multicategory} \(F\:(\ca_i)_{i\in I}\to\cb\) is
	\begin{myitemize}
		\item[---] a function \(F=\Ob F\:\Ob\ca_1\times\dots\times\Ob\ca_I\to\Ob\cb\);
\item[---] a collection of elements \(F=F_{(A_i),(D_i)}\in\mcv\bigl((\ca_i(A_i,D_i))_{i\in I};\cb((A_i)_{i\in I}F,(D_i)_{i\in I}F)\bigr)\).
	\end{myitemize}
\end{definition}

The small set of multi-entry $\mcv$\n-quiver morphisms \((\ca_i)_{i\in I}\to\cb\) is denoted
\[ \VQu\bigl((\ca_i)_{i\in I};\cb\bigr)= \hspace*{-2em} \bigsqcup_{F\:\prod_{i\in I}\Ob\ca_i\to\Ob\cb} \; \bigsqcap_{(A_i,D_i\in\Ob\ca_i)_{i\in I}} \hspace*{-2em} \mcv\bigl((\ca_i(A_i,D_i))_{i\in I};\cb((A_i)_{i\in I}F,(D_i)_{i\in I}F)\bigr).
\]

\begin{proposition}\label{pro-symmetric-multicategory-multi-entry-quiver-morphisms}
Let $\mcV$ be a locally small (symmetric) multicategory.
Small $\mcv$\n-quivers and multi-entry $\mcv$\n-quiver morphisms form a locally small (symmetric) multicategory $\VQu$.
\end{proposition}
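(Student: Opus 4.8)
The plan is to carry every piece of the multicategory structure of $\mcV$ over to $\VQu$ "pointwise" over pairs of tuples of objects, so that each axiom for $\VQu$ becomes an instance of the corresponding axiom for $\mcV$. First I would define the composition. Given a map $\phi:I\to J$ in $\cs_\sk$, small $\mcV$\n-quivers $\ca_i$ ($i\in I$), $\cb_j$ ($j\in J$), $\cc$, and morphisms $F_j:(\ca_i)_{i\in\phi^{-1}(j)}\to\cb_j$, $G:(\cb_j)_{j\in J}\to\cc$, the composite $\mu_\phi((F_j)_{j\in J};G):(\ca_i)_{i\in I}\to\cc$ has object function
\[ \prod_{i\in I}\Ob\ca_i \cong \prod_{j\in J}\prod_{i\in\phi^{-1}(j)}\Ob\ca_i \xrightarrow{\prod_{j\in J}\Ob F_j} \prod_{j\in J}\Ob\cb_j \xrightarrow{\Ob G} \Ob\cc, \]
where the first map is the canonical reindexing bijection attached to the partition $I=\bigsqcup_{j\in J}\phi^{-1}(j)$ (the blocks carrying the order induced from $I$, as in the Conventions). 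For a fixed pair of tuples $(A_i)_{i\in I}$, $(D_i)_{i\in I}$, the element of the composite at this pair is produced by the composition $\mu_\phi$ of $\mcV$ from the family $\bigl((F_j)_{(A_i),(D_i)}\bigr)_{j\in J}$ and the element of $G$ at the images of the two tuples under the $\Ob F_j$. The identity $1_\ca$ has object function $\id_{\Ob\ca}$ and element $1_{\ca(A,D)}\in\mcV(\ca(A,D);\ca(A,D))$ at each pair $(A,D)$.

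Next I would verify the associativity and two unitality axioms. The object-function layer composes by ordinary composition of maps of sets, with all reindexing bijections controlled purely by the cartesian-product structure on $\Set$; so at that layer associativity and unitality are automatic. The content sits in the element layer, and there, at each fixed choice of source and target tuples, the verification is \emph{verbatim} an axiom of $\mcV$. Concretely, associativity of $\mu$ in $\VQu$ for a composable pair $I\rto\phi J\rto\psi K$ reduces, one pair of object tuples at a time, to the commuting diagram of \figref{dia-assoc-mu-multi} for $\mcV$; and the two unit equations \eqref{eq-axiom-unit-multi1etaZ} and \eqref{eq-axiom-unit-multi2} for $\VQu$ reduce pointwise to the same equations in $\mcV$.

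The symmetric case then needs no further work. Because I have defined $\mu_\phi$ for \emph{every} map $\phi\in\cs_\sk$ directly from the $\mcV$\n-level $\mu_\phi$ (not merely for order-preserving $\phi$), the symmetric structure is inherited automatically, the equivariance present in $\mcV$ passing to $\VQu$ block by block. Local smallness is read straight off the displayed formula for $\VQu\bigl((\ca_i)_{i\in I};\cb\bigr)$: it is a disjoint union, indexed by the small set of object functions $\prod_{i\in I}\Ob\ca_i\to\Ob\cb$, of a small product (over the small set of source/target tuples) of the small hom-sets $\mcV\bigl((\ca_i(A_i,D_i))_{i\in I};\cb(\dots)\bigr)$, hence small.

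I expect the only real obstacle to be organizational: one must keep the reindexing bijections $\prod_{i\in I}\Ob\ca_i\cong\prod_{j\in J}\prod_{i\in\phi^{-1}(j)}\Ob\ca_i$ mutually coherent across the threefold composites occurring in associativity, so that the two sides of \figref{dia-assoc-mu-multi} really are being compared at one and the same pair of object tuples. Once that object-function bookkeeping is seen to be the usual associative and unital composition in $\Set$, every remaining equality is a pointwise instance of an identity already holding in $\mcV$, and no genuinely new computation is required.
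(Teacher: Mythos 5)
Your proposal is correct and follows essentially the same route as the paper: the composite is defined by applying $\mu_\phi^\mcV$ pointwise to the elements $F^j_{(A_i),(D_i)}$ and $G$ at the image tuples (the paper's formula \eqref{eq-FGH-quivers}), associativity is checked by fixing a pair of object tuples and recognizing the diagram of \figref{dia-assoc-mu-multi} for $\mcV$, the identity is $\id_{\Ob\ca}$ together with $1_{\ca(A,A)}$, and the symmetric case and local smallness come for free from defining $\mu_\phi$ for all $\phi\in\cs_\sk$ and from the displayed coproduct-of-products formula. No substantive difference from the paper's argument.
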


\begin{proof}
Let \(\phi\:I\to J\in\cO_\sk\) (resp. \(\phi\:I\to J\in\cS_\sk\)).
Let \((\ca_i)_{i\in I}\), \((\cb_j)_{j\in J}\), $\cc$ be (families of) small $\mcv$\n-quivers.
Let \(F^j\:(\ca_i)_{i\in\phi^{-1}j}\to\cb_j\), $j\in J$, \(G\:(\cb_j)_{j\in J}\to\cc\) be multi-entry quiver morphisms.
We construct another multi-entry quiver morphism \(H\:(\ca_i)_{i\in I}\to\cc\) with
\begin{myitemize}
	\item[---] \(H=\Ob H\:(A_i)_{i\in I}\mapsto\bigl((A_i)_{i\in\phi^{-1}j}F^j\bigr)_{j\in J}G\).
	\item[---] \(H=H_{(A_i),(E_i)}\:(\cA_i(A_i,E_i))_{i\in I}\to\cc((A_i)_{i\in I}H,(E_i)_{i\in I}H)\) obtained from
\end{myitemize}
\begin{multline}
\mu^\mcv_\phi\: \prod_{j\in J} \mcv\bigl((\ca_i(A_i,E_i))_{i\in\phi^{-1}j};\cb_j((A_i)_{i\in\phi^{-1}j}F^j,(E_i)_{i\in\phi^{-1}j}F^j)\bigr) \times
	\\
\mcv\bigl((\cb_j((A_i)_{i\in\phi^{-1}j}F^j,(E_i)_{i\in\phi^{-1}j}F^j))_{j\in J};\cc(((A_i)_{i\in\phi^{-1}j}F^j)_{j\in J}G,((E_i)_{i\in\phi^{-1}j}F^j)_{j\in J}G)\bigr)
	\\
	\hfill \to \mcv\bigl((\ca_i(A_i,E_i))_{i\in I};\cc((A_i)_{i\in I}H,(E_i)_{i\in I}H)\bigr), \hskip\multlinegap
	\\
	\bigl((F^j_{(A_i)_{i\in\phi^{-1}j},(E_i)_{i\in\phi^{-1}j}})_{j\in J},G_{((A_i)_{i\in\phi^{-1}j}F^j)_{j\in J},((E_i)_{i\in\phi^{-1}j}F^j)_{j\in J}}\bigr) \mapsto H_{(A_i),(E_i)}.
\label{eq-FGH-quivers}
\end{multline}
This assignment is in fact a component of the map
\[ \mu_\phi^\VQu\: \bigl[ \prod_{j\in J}\VQu((\ca_i)_{i\in\phi^{-1}j};\cb_j) \bigr] \times\VQu((\cb_j)_{j\in J};\cc) \to \VQu((\ca_i)_{i\in I};\cc).
\]
Equivalently, it is a component of the map
\begin{multline*}
\bigsqcup_{(F^j\:\prod_{i\in\phi^{-1}j}\Ob\ca_i\to\Ob\cb_j)_{j\in J},G\:\prod_{j\in J}\Ob\cb_j\to\Ob\cc}
\\
\bigl(\prod_{j\in J} \; \prod_{(A_i,E_i\in\Ob\ca_i)_{i\in\phi^{-1}j}} \mcv\bigl((\ca_i(A_i,E_i))_{i\in\phi^{-1}j};\cb_j((A_i)_{i\in\phi^{-1}j}F^j,(E_i)_{i\in\phi^{-1}j}F^j)\bigr)\bigr)
\\
\times \bigl(\prod_{(B_j,D_j\in\Ob\cb_j)_{j\in J}} \mcv\bigl((\cb_j(B_j,D_j))_{j\in J};\cc((B_j)_{j\in J}G,(D_j)_{j\in J}G)\bigr)\bigr)
\\
\to \bigsqcup_{H\:\prod_{i\in I}\Ob\ca_i\to\Ob\cc} \; \prod_{(A_i,E_i\in\Ob\ca_i)_{i\in I}} \mcv\bigl((\ca_i(A_i,E_i))_{i\in I};\cb((A_i)_{i\in I}H,(E_i)_{i\in I}H)\bigr).
\end{multline*}

Let \((\ca_i)_{i\in I}\), \((\cb_j)_{j\in J}\), \((\cc_k)_{k\in K}\), $\cD$ be (families of) small $\mcv$\n-quivers, where \(I,J,K\in\Ob\cO_\sk\).
Let \(I\rto\phi J\rto\psi K\) be mappings in $\cO_\sk$ (in $\cs_\sk$).
Let \(F^j\:(\ca_i)_{i\in\phi^{-1}j}\to\cb_j\), $j\in J$, \(G^k\:(\cb_j)_{j\in\psi^{-1}k}\to\cc\), $k\in K$,  \(H\:(\cc_k)_{k\in K}\to\cD\) be multi-entry quiver morphisms.
Fix objects $A_i$, $E_i$ of $\ca_i$, $i\in I$.
Expanding entries of the associativity equation for $\VQu$ using \eqref{eq-FGH-quivers} we get diagram at \figref{dia-assoc-mu-multi} for \(X_i=\ca_i(A_i,E_i)\), \(Y_j=\cb_j\bigl((A_i)_{i\in\phi^{-1}j}F^j,(E_i)_{i\in\phi^{-1}j}F^j\bigr)\),
\begin{gather*}
Z_k=\cc_k\bigl(((A_i)_{i\in\phi^{-1}j}F^j)_{j\in\psi^{-1}k}G^k,((E_i)_{i\in\phi^{-1}j}F^j)_{j\in\psi^{-1}k}G^k\bigr),
\\
W=\cD\bigl((((A_i)_{i\in\phi^{-1}j}F^j)_{j\in\psi^{-1}k}G^k)_{k\in K}H,(((E_i)_{i\in\phi^{-1}j}F^j)_{j\in\psi^{-1}k}G^k)_{k\in K}H\bigr).
\end{gather*}
Therefore, for composition in $\VQu$ the associativity holds.

Define the identity $\mcv$\n-quiver morphism \(\Id\:\ca\to\ca\) with the identity map \(\id\:\Ob\ca\to\Ob\ca\) and \(1_{\ca(A,A)}\in\mcv\bigl(\ca(A,A);\ca(A,A)\bigr)\).
Clearly, both equations for identities are satisfied, hence, $\VQu$ is a (symmetric) multicategory.
\end{proof}

\subsection{\texorpdfstring{$\mcV$}V-categories}\label{sec-Bicategories-V-categories}
In mathematical literature there are at least two different notions called categories enriched in bicategories.
Let us consider categories\index{category enriched in a multicategory} enriched in multicategories.
This notion seems to appear for the first time in \cite[\S1, (MLC 4)]{Linton:multiYoneda}, translated to a modern language in \cite[\S2]{1802.07538}.
We use the definition of Leinster \cite[Example~2.2.1.iii]{Leinster:math.CT/9901139}, \cite[Example (2), page 399]{zbMATH01724909}.
First of all we show that categories enriched in multicategories are a particular case of $T$\n-algebras for some monad $T$.

Assume that $\mcv$ is a locally small symmetric multicategory.
According to \cite[Theorem~4.2]{zbMATH05655952}, \cite[Proposition~11]{zbMATH06469262} or \propref{pro-adjunction-FU} there is a colored prop $\cp=\FF\mcv$ associated with it.
Its free cocompletion \(\wh\cp=\und\Cat(\cp^\op,\Set)\) is a monoidally cocomplete category.
The monoidal product is the Day convolution \cite{Day-closed-categories-functors}
\begin{gather*}
	(P\:\cp^\op\to\Set,Q\:\cp^\op\to\Set) \mapsto P\tens Q\:\cp^\op\to\Set,
	\\
	(P\tens Q)(X) =\int^{Y,Z\in\cp} \cp(X,Y\tens Z) \times P(Y) \times Q(Z).
\end{gather*}
The Yoneda full embedding is a monoidal functor by the density formula which is called ninja Yoneda Lemma in \cite[Proposition~2.2.1]{1501.02503}.
Note that the Day convolution is compatible with the colimits in any argument.

The category \(\wh\cp\Qu\) has a full subcategory \(\wh\cp\Qu_S\) of \(\wh\cp\)\n-quivers with the set of objects $S$.
Such \(\wh\cp\)\n-quivers are functions \(S^2\to\Ob\wh\cp\).

The category \(\wh\cp\Qu_S\) has a monoidal structure.
The monoidal multiplication is
\begin{equation}
	\bigl(\bigotimes^{i\in I}\ca_i\bigr)(A,B) =\coprod_{(X_i\in S)_{i\in I-1}}^{X_0=A,X_I=B} \bigotimes^{i\in I} \bigl[\ca_i(X_{i-1},X_i)\bigr].
	\label{eq-AiAB}
\end{equation}
In particular, the unit object is
\[ \1^{\wh\cp\Qu_S}(A,B) =(\tens^\varnothing)(A,B) =
\begin{cases}
	\1^{\wh\cp}, &\text{ if }A=B,
	\\
	\emptyset, &\text{ if }A\ne B,
\end{cases}
\]
where \(\emptyset\) (\(\emptyset(P)=\varnothing\)) is the initial object of \(\wh\cp\).

Functor \eqref{eq-AiAB} preserves colimits in each argument.
This allows to write down the isomorphism \(\lambda_{\wh\cp\Qu_S}^\phi\:\tens^I\ca_i\to\tens^{j\in J}\tens^{i\in\phi^{-1}}\ca_i\) in \(\wh\cp\Qu_S\) for each order preserving map \(\phi\:I\to J\in\cO_\sk\).
It is
\begin{multline*}
	\coprod_{(X_i\in S)_{i\in I-1}}^{X_0=A,X_I=B} \bigotimes^{i\in I} \bigl[\ca_i(X_{i-1},X_i)\bigr] \rTTo^{\coprod\lambda_{\wh\cp}^\phi}_\cong \coprod_{(X_i\in S)_{i\in I-1}}^{X_0=A,X_I=B} \bigotimes^{j\in J} \bigotimes^{i\in\phi^{-1}j} \bigl[\ca_i(X_{i-1},X_i)\bigr]
	\\
	\rTTo^\cong \coprod_{(Y_j\in S)_{i\in J-1}}^{Y_0=A,Y_J=B} \bigotimes^{j\in J} \coprod_{(X_i\in S)_{i\in\phi^{-1}j}^{i<\max\phi^{-1}j}}^{X_{\min\phi^{-1}j}=Y_{j-1},X_{\max\phi^{-1}j}=Y_j} \bigotimes^{i\in\phi^{-1}j} \bigl[\ca_i(X_{i-1},X_i)\bigr].
\end{multline*}
Axioms i), ii) of \defref{def-Monoidal-cat} for \(\lambda_{\wh\cp\Qu_S}\) follow from those for \(\lambda_{\wh\cp}\).

Using this monoidal structure we define a functor
\[ T_S\:\wh\cp\Qu_S\to\wh\cp\Qu_S, \qquad Q\mapsto T_SQ =\coprod_{I\in\Ob\cO_\sk} Q^{\tens I} =\coprod_{n\ge0} Q^{\tens n}.
\]

\begin{proposition}
	The functor $T_S$ is a monad.
	$T_S$\n-algebras are precisely categories $\cc$ enriched in \(\wh\cp\) with \(\Ob\cc=S\).
\end{proposition}

\begin{proof}
	Since $\tens$ in \(\wh\cp\Qu_S\) commutes with coproducts, we have for \(A,B\in S\)
\begin{equation*}
	(T_S^2Q)(A,B) =\coprod_{J\in\cO_\sk} \bigotimes^{j\in J} \coprod_{I_j\in\cO_\sk} \bigotimes^{i\in I_j} Q \cong \coprod_{\phi\:I\to J\in\cO_\sk} \bigotimes^{i\in I} Q,
\end{equation*}
where \(I=\bigsqcup_{j\in J}I_j\) and \(\phi\) is the projection to the indexing set.
The multiplication \(m_S\:T_S^2\to T_S\) sends the summand indexed by \(\phi\:I\to J\in\cO_\sk\) to the summand indexed by $I$ identically.
The unit \(\eta_S\:\Id\to T_S\) sends \(Q\) to the summand indexed by $\mb1$ identically.
This shows that \((T_S,m_S,\eta_S)\) is a monad.

Clearly, algebras over this monad are identified with categories $\cc$ enriched in \(\wh\cp\) with the set of objects \(\Ob\cc=S\).
The composition \(\cc\tens\cc\to\cc\) is the restriction of the action \(T_S\cc\to\cc\) to \(\cc^{\tens2}\) and the unit \(\1\to\cc\) comes from the restriction of the action to \(\cc^{\tens0}\).
\end{proof}

The functors $T_S$ glue into a single functor \(T\:\wh\cp\Qu\to\wh\cp\Qu\) such that
\begin{diagram}[LaTeXeqno]
	\wh\cp\Qu_S &\rTTo^{T_S} &\wh\cp\Qu_S
	\\
	\dMono &= &\dMono
	\\
	\wh\cp\Qu &\rTTo^T &\wh\cp\Qu
	\label{dia-monad-T}
\end{diagram}
On morphisms \(F\:\ca\to\cb\) the functor $T$ is defined as \(TF=\coprod_{I\in\cO_\sk}F^{\tens I}\:\coprod_{I\in\cO_\sk}\ca^{\tens I}\to\coprod_{I\in\cO_\sk}\cb^{\tens I}\), where
\begin{multline*}
	F^{\tens I} =\biggl(\bigotimes^{i\in I}\bigl[\ca(X_{i-1},X_i)\bigr] \rTTo^{\tens^{i\in I}F_{X_{i-1},X_i}} \bigotimes^{i\in I}\bigl[\cb(FX_{i-1},FX_i)\bigr]
	\\
	\rTTo^{\iota_{Y_i=FX_i}} \coprod_{(Y_i\in\Ob\cb)_{i\in I-1}}^{Y_0=FA,Y_I=FB} \bigotimes^{i\in I} \bigl[\cb(Y_{i-1},Y_i)\bigr] \biggr)_{(X_i\in\Ob\ca)_{i\in I-1}}^{X_0=A,X_I=B}.
\end{multline*}
Moreover, the monad structures on $T_S$ glue together into a single monad structure on $T$.

$T$\n-algebras on the underlying $\cp$\n-quiver $\cc$ are defined as $T$\n-algebras such that the underlying \(\wh\cp\)\n-quiver factors through the Yoneda map \(\Ob\cY\:\Ob\cp\to\Ob\wh\cp\).
The object set \(\Ob\cp=(\Col\cp)^*=(\Ob\mcv)^*\) has a word length function \(l\:\Ob\cp\to\NN\).
\(\mcv\)\n-categories $\cc$ are distinguished among the latter kind of $T$\n-algebras by the requirement that the underlying quiver \((\Ob\cc)^2\to\Ob\cp\) takes values in the subset \(\{X\in\Ob\cp\mid l(X)=1\}\).
Equivalent and detailed definition of \(\mcv\)\n-categories is given below.

\begin{definition}\label{def-V-categories}
Let $\mcv$ be a plain multicategory.
A small $\mcv$\n-category $\cc$ is a small $\FF\mcv$\n-category $\cc$ with \(\cc(X,Y)\in\Ob\FF\mcv\) satisfying \(l(\cc(X,Y))=1\).
In detail, it is
\begin{myitemize}
\item[---] a small set $\Ob\cc$ of objects;
\item[---] for each pair of objects \((X,Y)\) of $\cc$ an object \(\cc(X,Y)\) of $\mcv$;
\item[---] for each triple of objects \((X,Y,Z)\) of $\cc$ a morphism \(\kappa_{X,Y,Z}\:\cc(X,Y),\cc(Y,Z)\to\cc(X,Z)\in\mcv\) -- the composition;
\item[---] for any object \(X\) of $\cc$ a morphism \(\id_X\:()\to\cc(X,X)\in\mcv\) -- the identity morphism\index{$\id_X$ -- identity element of an enriched category}
\end{myitemize}
such that
\begin{myitemize}
\item[---] for each quadruple of objects \((W,X,Y,Z)\) of $\cc$ the associativity holds:
\begin{diagram}[h=1.8em,LaTeXeqno]
\cc(W,X),\cc(X,Y),\cc(Y,Z) &\rTTo^{1,\kappa_{X,Y,Z}} &\cc(W,X),\cc(X,Z)
\\
\dTTo<{\kappa_{W,X,Y},1} &= &\dTTo>{\kappa_{W,X,Z}}
\\
\cc(W,Y),\cc(Y,Z) &\rTTo^{\kappa_{W,Y,Z}} &\cc(W,Z)
\label{dia-assoc-V-cat}
\end{diagram}
\item[---] for each pair of objects \((X,Y)\) of $\cc$
\begin{align}
\bigl[ \cc(X,Y) \rTTo^{\id_X,1} \cc(X,X),\cc(X,Y) \rTTo^{\kappa_{X,X,Y}} \cc(X,Y) \bigr] &=1,
\label{eq-idX1k1}
\\
\bigl[ \cc(X,Y) \rTTo^{1,\id_Y} \cc(X,Y),\cc(Y,Y) \rTTo^{\kappa_{X,Y,Y}} \cc(X,Y) \bigr] &=1.
\label{eq-1idYk1}
\end{align}
\end{myitemize}
\end{definition}

In detail \eqref{dia-assoc-V-cat} means equation $tr=lb(=\kappa_{W,X,Y,Z})$ where
\begin{multline}
\mcv\bigl(\cc(W,X);\cc(W,X)\bigr) \times \mcv\bigl(\cc(X,Y),\cc(Y,Z);\cc(X,Z)\bigr) \times \mcv\bigl(\cc(W,X),\cc(X,Z);\cc(W,Z)\bigr) 
\\
\hfill \rTTo^{\mu_{\mathsf{IV}\:\mb3\to\mb2}} \mcv\bigl(\cc(W,X),\cc(X,Y),\cc(Y,Z);\cc(W,Z)\bigr), \hskip\multlinegap
\\
(1_{\cc(W,X)},\kappa_{X,Y,Z},\kappa_{W,X,Z}) \mapsto tr,
\label{eq-V(CC)V(CCC)V(CCC)-V(CCCC)}
\end{multline}
\begin{multline}
\mcv\bigl(\cc(W,X),\cc(X,Y);\cc(W,Y)\bigr) \times \mcv\bigl(\cc(Y,Z);\cc(Y,Z)\bigr) \times \mcv\bigl(\cc(W,Y),\cc(Y,Z);\cc(W,Z)\bigr) 
	\\
\hfill \rTTo^{\mu_{\mathsf{VI}\:\mb3\to\mb2}} \mcv\bigl(\cc(W,X),\cc(X,Y),\cc(Y,Z);\cc(W,Z)\bigr), \hskip\multlinegap
	\\
(\kappa_{W,X,Y},1_{\cc(Y,Z)},\kappa_{W,Y,Z}) \mapsto lb.
\label{eq-V(CCC)V(CC)V(CCC)-V(CCCC)}
\end{multline}

%

\begin{proposition}\label{pro-und-VQu}
Let $\mcv$ be a locally small symmetric closed complete multicategory.
The symmetric multicategory $\VQu$ is equipped with the following.
Let \((\ca_i)_{i\in I}\), \(I\in\Ob\cs_\sk\), be a family of small $\mcv$\n-quivers.
Let $\cc$ be a small $\mcv$\n-category.
Then there is a small $\mcv$\n-category \(\und\VQu((\ca_i)_{i\in I};\cc)\) and a distinguished evaluation element
\[ \ev^\VQu_{(\ca_i)_{i\in I};\cc} \in \VQu\bigl((\ca_i)_{i\in I},\und\VQu((\ca_i)_{i\in I};\cc);\cc\bigr).
\]
\end{proposition}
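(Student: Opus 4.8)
The plan is to realize $\und\VQu((\ca_i)_{i\in I};\cc)$ as the $\mcv$\n-category of multi-entry quiver morphisms and their ``natural transformations'', the latter being an end in $\mcv$. I take
\[ \Ob\und\VQu\bigl((\ca_i)_{i\in I};\cc\bigr) =\VQu\bigl((\ca_i)_{i\in I};\cc\bigr), \]
the set of multi-entry $\mcv$\n-quiver morphisms (a small set, by the local smallness of $\VQu$ established in \propref{pro-symmetric-multicategory-multi-entry-quiver-morphisms}). For $F,G:(\ca_i)_{i\in I}\to\cc$ I define the hom-object as the equalizer
\[ \und\VQu\bigl((\ca_i)_{i\in I};\cc\bigr)(F,G) =\eq\bigl[ P_{F,G}\rightrightarrows Q_{F,G}\bigr], \]
where
\[ P_{F,G} =\prod_{(A_i)_{i\in I}} \cc\bigl((A_i)_{i\in I}F,(A_i)_{i\in I}G\bigr), \]
\[ Q_{F,G} =\prod_{(A_i)_{i\in I},(D_i)_{i\in I}} \und\mcv\bigl((\ca_i(A_i,D_i))_{i\in I};\cc((A_i)_{i\in I}F,(D_i)_{i\in I}G)\bigr). \]
Both products exist because $\mcv$ is complete, the internal homs $\und\mcv$ exist because $\mcv$ is closed, and the equalizer exists by completeness; this object plays the role of $\int_{(A_i)}\cc((A_i)F,(A_i)G)$.

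The two parallel maps encode the dinaturality (naturality) condition. On the factor indexed by $((A_i),(D_i))$ the first map composes the component $F_{(A_i),(D_i)}:(\ca_i(A_i,D_i))_{i\in I}\to\cc((A_i)F,(D_i)F)$ with the projection $\pr_{(D_i)}:P_{F,G}\to\cc((D_i)F,(D_i)G)$ via the composition $\kappa$ of $\cc$; the second composes the projection $\pr_{(A_i)}:P_{F,G}\to\cc((A_i)F,(A_i)G)$ with $G_{(A_i),(D_i)}:(\ca_i(A_i,D_i))_{i\in I}\to\cc((A_i)G,(D_i)G)$, again via $\kappa$. In each case the resulting family of morphisms \((\ca_i(A_i,D_i))_{i\in I}\to\cc((A_i)F,(D_i)G)\), parametrized by $P_{F,G}$, is transposed through the closedness bijection $\varphi$ of \eqref{eq-phi(XYZ)-iso} into a map $P_{F,G}\to\und\mcv((\ca_i(A_i,D_i))_{i\in I};\cc((A_i)F,(D_i)G))$. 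The equalizer of these two maps selects precisely the natural families.

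For the $\mcv$\n-category structure I define vertical composition $\kappa^{\und\VQu}_{F,G,H}$ componentwise by the composition $\kappa$ of $\cc$ on each factor $\cc((A_i)F,(A_i)H)$, and the identity $\id_F:()\to\und\VQu(\dots)(F,F)$ with components $\id_{(A_i)F}$. To see that these maps factor through the respective equalizers (i.e.\ that a vertical composite and an identity are again natural) is a diagram chase using associativity \eqref{dia-assoc-V-cat} and unitality \eqref{eq-idX1k1}, \eqref{eq-1idYk1} of $\cc$ together with the closedness transposition; the universal property of the equalizer then yields the maps. Since every hom-object is, via the equalizer inclusion, a subobject of a product, the associativity and unit axioms for $\und\VQu((\ca_i)_{i\in I};\cc)$ reduce, after composing with each projection $\pr_{(A_i)}$, to the corresponding axioms of $\cc$, and so hold.

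Finally I construct the evaluation quiver morphism. On objects $\ev^\VQu$ sends $((A_i)_{i\in I},F)\mapsto(A_i)_{i\in I}F$. Its component at $((A_i),F),((D_i),G)$,
\[ \ev^\VQu \in\mcv\bigl((\ca_i(A_i,D_i))_{i\in I},\und\VQu(\dots)(F,G);\cc((A_i)_{i\in I}F,(D_i)_{i\in I}G)\bigr), \]
is obtained by feeding the end projection $\pr_{(D_i)}$ (composed with the equalizer inclusion) into the second slot of $\kappa$ after $F_{(A_i),(D_i)}$; the wedge condition shows this agrees with the symmetric expression built from $\pr_{(A_i)}$ and $G_{(A_i),(D_i)}$. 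By \defref{def-multi-entry-V-quiver-morphism} a multi-entry quiver morphism is just such an object function together with such a family of components — there are no further axioms to impose — so $\ev^\VQu$ is a well-defined element of $\VQu((\ca_i)_{i\in I},\und\VQu((\ca_i)_{i\in I};\cc);\cc)$. \textbf{The main obstacle} is the verification that vertical composition and identities land in the equalizers: this is the only place where the $\cc$\n-axioms and the closedness bijection must be combined in a nontrivial diagram chase, the remaining verifications being formal consequences of the universal properties of products and equalizers.
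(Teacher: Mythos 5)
Your proposal is correct and follows essentially the same route as the paper: objects are the multi-entry quiver morphisms, the hom-object is the end realized as the equalizer of the two transposed ``dinaturality'' maps (the paper's $\beta$ and $\gamma$), composition and identities are induced from $\cc$ and shown to factor through the equalizer using associativity and unitality of $\cc$, and $\ev^\VQu$ is built from the end projections and $\kappa$, with the two symmetric expressions agreeing by the wedge condition. The only difference is presentational: the paper packages the equalizer-factorization argument as an explicit commutative-diagram computation with named elements $a=b=c$, whereas you describe it as a diagram chase, but the mathematical content is the same.
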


\begin{proof}
Let \((\ca_i)_{i\in I}\), \(I\in\Ob\cs_\sk\), be a family of small $\mcv$\n-quivers.
Let $\cc$ be a small $\mcv$\n-category.
Define a small $\mcv$\n-quiver \(\und\VQu\bigl((\ca_i)_{i\in I};\cc\bigr)\) with
	\begin{myitemize}
		\item[---] \(\Ob\und\VQu\bigl((\ca_i)_{i\in I};\cc\bigr)=\VQu\bigl((\ca_i)_{i\in I};\cc\bigr)\);
\item[---] \(\und\VQu\bigl((\ca_i)_{i\in I};\cc\bigr)(F,G)=\) the object of $\mcv$\n-transformations \(F\to G\:(\ca_i)_{i\in I} \to\cc=\) the enriched end in $\mcv$
\[ \int_{(A_i\in\ca_i)_{i\in I}}\cc\bigl((A_i)_{i\in I}F,(A_i)_{i\in I}G\bigr)
\]
similar to \cite[\S~2.1]{KellyGM:bascec}, the equalizer in multicategory $\mcv$ of the pair of morphisms
	\end{myitemize}
	\begin{equation}
		\prod_{(A_i\in\ca_i)_{i\in I}} \hspace*{-1.1em} \cc\bigl((A_i)_{i\in I}F,(A_i)_{i\in I}G\bigr)
		\pile{\rTTo^{(\pr_{(D_i)}\centerdot\beta)} \\ \rTTo_{(\pr_{(A_i)}\centerdot\gamma)}} \hspace*{-0.3em}
		\prod_{(A_i,D_i\in\ca_i)_{i\in I}} \hspace*{-1.5em} \und\mcV\bigl((\ca_i(A_i,D_i))_{i\in I};\cc((A_i)_{i\in I}F,(D_i)_{i\in I}G)\bigr),
		\label{eq-int-c(FE-GE)-VQu}
	\end{equation}
	where \(\beta\:\cc\bigl((D_i)_{i\in I}F,(D_i)_{i\in I}G\bigr)\to\und{\mcV}\bigl((\ca_i(A_i,D_i))_{i\in I}; \cc((A_i)_{i\in I}F,(D_i)_{i\in I}G)\bigr)\) is adjunct to $\beta^\dagger$, obtained via
	\begin{multline}
		\mu_{\mathsf{\triangledown I}}\: \mcv\bigl((\ca_i(A_i,D_i))_{i\in I}; \cc((A_i)_{i\in I}F,(D_i)_{i\in I}F)\bigr)
		\\
		\times \mcv\bigl(\cc((D_i)_{i\in I}F,(D_i)_{i\in I}G);\cc((D_i)_{i\in I}F,(D_i)_{i\in I}G)\bigr)
		\\
\times \mcv\bigl(\cc((A_i)_{i\in I}F,(D_i)_{i\in I}F),\cc((D_i)_{i\in I}F,(D_i)_{i\in I}G); \cc((A_i)_{i\in I}F, (D_i)_{i\in I}G)\bigr) 
		\\
		\to \mcv\bigl((\ca_i(A_i,D_i))_{i\in I},\cc((D_i)_{i\in I}F,(D_i)_{i\in I}G);\cc((A_i)_{i\in I}F, (D_i)_{i\in I}G)\bigr),
		\\
		(F_{(A_i),(D_i)},1_{\cc((D_i)_{i\in I}F,(D_i)_{i\in I}G)},\centerdot) \mapsto \beta^\dagger,
\label{eq-beta-dagger}
	\end{multline}
	and \(\gamma\:\cc\bigl((A_i)_{i\in I}F,(A_i)_{i\in I}G\bigr)\to\und{\mcV}\bigl((\ca_i(A_i,D_i))_{i\in I}; \cc((A_i)_{i\in I}F,(D_i)_{i\in I}G)\bigr)\) is adjunct to \(\gamma^\dagger\), obtained via
	\begin{multline}
		\mu_{\mathsf{\triangledown I\centerdot X}}\: 
		\mcv\bigl(\cc((A_i)_{i\in I}F,(A_i)_{i\in I}G);\cc((A_i)_{i\in I}F,(A_i)_{i\in I}G)\bigr)
		\\
		\times\mcv\bigl((\ca_i(A_i,D_i))_{i\in I}; \cc((A_i)_{i\in I}G,(D_i)_{i\in I}G)\bigr)
		\\
		\times \mcv\bigl(\cc((A_i)_{i\in I}F,(A_i)_{i\in I}G),\cc((A_i)_{i\in I}G,(D_i)_{i\in I}G); \cc((A_i)_{i\in I}F, (D_i)_{i\in I}G)\bigr) 
		\\
		\to \mcv\bigl((\ca_i(A_i,D_i))_{i\in I},\cc((A_i)_{i\in I}F,(A_i)_{i\in I}G);\cc((A_i)_{i\in I}F, (D_i)_{i\in I}G)\bigr),
		\\
		(1_{\cc((A_i)_{i\in I}F,(A_i)_{i\in I}G)},G_{(A_i),(D_i)},\centerdot) \mapsto \gamma^\dagger.
\label{eq-gamma-dagger}
	\end{multline}
	Here \(\mathsf{\triangledown I}\:\mb{n+1}\to\mb2\), \(\mathsf{\triangledown I\centerdot X}=\bigl[\mb{n+1}\rto{\mathsf{\triangledown I}}\mb2\rTTo^{(12)} \mb2\bigr]\) (we read pictures from top to bottom).
	Notice that we may use Proposition from \ref{pro-mu-psi-r-sigma-mu-phi}.

\begin{definition}\label{def-evVQu}
	Define a multi-entry $\mcv$\n-quiver morphism
	\begin{align}
		\ev^\VQu\: (\ca_i)_{i\in I}, \und\VQu\bigl((\ca_i)_{i\in I};\cc\bigr) &\longrightarrow \cc \notag
		\\
		\bigl((A_i)_{i\in I};F\bigr) &\longmapsto (A_i)_{i\in I}F
\label{eq-ev-VQu-F(A)}
	\end{align}
	\begin{multline*}
		(\ca_i(A_i,D_i))_{i\in I}, \und\VQu\bigl((\ca_i)_{i\in I};\cc\bigr)(F,G) \rTTo^{(1)_I,\Xi}
		\\
\bigl(\ca_i(A_i,D_i)\bigr)_{i\in I},\und\mcV\bigl((\ca_i(A_i,D_i))_{i\in I};\cc((A_i)_{i\in I}F,(D_i)_{i\in I}G)\bigr) \rto{\ev^\mcv} \cc((A_i)_{i\in I}F,(D_i)_{i\in I}G),
	\end{multline*}
	where the morphism $\Xi$ is the diagonal in the \emph{commutative} square
\[ \hspace*{-3em}
\begin{diagram}[inline]
		\und\VQu\bigl((\ca_i)_{i\in I};\cc\bigr)(F,G) &\rTTo^{p_{(D_i)_{i\in I}}} &\cc((D_i)_{i\in I}F,(D_i)_{i\in I}G)
		\\
		\dTTo<{p_{(A_i)_{i\in I}}} &\rdTTo^\Xi &\dTTo>\beta
		\\
		\cc((A_i)_{i\in I}F,(A_i)_{i\in I}G) &\rTTo^\gamma &\und\mcV\bigl((\ca_i(A_i,D_i))_{i\in I};\cc((A_i)_{i\in I}F,(D_i)_{i\in I}G)\bigr)
\end{diagram}
\]
\end{definition}	

\begin{proof}[Detalisation]
Composing this diagram with \(\ev^\mcv\) as above, we obtain the morphism $\ev^\VQu$ as the diagonal in the commutative square
\begin{diagram}[LaTeXeqno,bottom]
	(\ca_i(A_i,D_i))_{i\in I},\und\VQu\bigl((\ca_i)_{i\in I};\cc\bigr)(F,G) &\rTTo^{(1)_I,p_{(D_i)_{i\in I}}} &(\ca_i(A_i,D_i))_{i\in I}, \cc((D_i)_{i\in I}F,(D_i)_{i\in I}G)
	\\
	\dTTo<{(1)_I,p_{(A_i)_{i\in I}}} &\rdTTo^{\ev^\VQu} &\dTTo>{\beta^\dagger}
	\\
	(\ca_i(A_i,D_i))_{i\in I},\cc((A_i)_{i\in I}F,(A_i)_{i\in I}G) &\rTTo^{\gamma^\dagger} &\cc((A_i)_{i\in I}F,(D_i)_{i\in I}G)
	\label{dia-Ai-VQu-B}
\end{diagram}
It is given below on the left.
Applying the associativity property at \figref{dia-assoc-mu-multi} for maps \(I\sqcup\mb1\rto\id I\sqcup\mb1\rto{\mathsf{\triangledown I}} \mb2\) we rewrite this expression getting the one on the right:
\begin{gather*}
\begin{array}{c}
	\prod_{i\in I}\mcv\bigl(\ca_i(A_i,D_i);\ca_i(A_i,D_i)\bigr) \times \mcv\bigl(\und\VQu\bigl((\ca_i)_{i\in I};\cc\bigr)(F,G);\cc((D_i)_{i\in I}F,(D_i)_{i\in I}G)\bigr)
	\\
	\times \mcv\bigl((\ca_i(A_i,D_i))_{i\in I}; \cc((A_i)_{i\in I}F,(D_i)_{i\in I}F)\bigr)
	\times \mcv\bigl(\cc((D_i)_{i\in I}F,(D_i)_{i\in I}G);\cc((D_i)_{i\in I}F,(D_i)_{i\in I}G)\bigr)
	\\
\times \mcv\bigl(\cc((A_i)_{i\in I}F,(D_i)_{i\in I}F),\cc((D_i)_{i\in I}F,(D_i)_{i\in I}G); \cc((A_i)_{i\in I}F, (D_i)_{i\in I}G)\bigr) 
\end{array}
\\
\vstretch 35
\begin{tanglec}
	\id \hstep \id \step[1.5] \id \step
	\\
	\id \hstep \id \hstep \ffbox2{\mu_{\mathsf{\triangledown I}}}
	\\
	\id \hstep \id \step[1.5] \id \step
	\\
	\ffbox3{\mu_{1_{I\sqcup\mb1}}} \step
	\\
	\id \step
\end{tanglec}
=\;
\begin{tanglec}
	\id \step[2.5] \id \step[1.5] \id 
	\\
	\ffbox2{\mu_{1_I}} \hstep \ffbox2{\mu_{1_{\mb1}}} \hstep \id \step 
	\\
	\id \step[2.5] \id \step[1.5] \id 
	\\
	\ffbox5{\mu_{\mathsf{\triangledown I}}}
	\\
	\id
\end{tanglec}
\\
\mcv\bigl((\ca_i(A_i,D_i))_{i\in I},\und\VQu\bigl((\ca_i)_{i\in I};\cc\bigr)(F,G);\cc((A_i)_{i\in I}F,(D_i)_{i\in I}G)\bigr)
\end{gather*}
On elements we have
\begin{diagram}[LaTeXeqno,h=1.4em]
&&\hspace*{-7em} \bigl((1_{\ca_i(A_i,D_i)})_{i\in I},p_{(D_i)_{i\in I}},F_{(A_i),(D_i)},1_{\cc((D_i)_{i\in I}F,(D_i)_{i\in I}G)},\centerdot\bigr) \hspace*{-7em} &&
\\
&\ldMapsTo &&\rdMapsTo &
\\
\bigl((1_{\ca_i(A_i,D_i)})_{i\in I},p_{(D_i)_{i\in I}},\beta^\dagger\bigr) &&&&\bigl(F_{(A_i),(D_i)},p_{(D_i)_{i\in I}},\centerdot\bigr)
\\
&\rdMapsTo &&\ldMapsTo &
\\
&&\ev^\VQu &&
\label{eq-ev-VQu-E}
\end{diagram}

Looking at another path of commutative diagram \eqref{dia-Ai-VQu-B} we get another presentation of $\ev^\VQu$ given below on the left.
Applying the associativity property at \figref{dia-assoc-mu-multi} for maps \(I\sqcup\mb1\rto\id I\sqcup\mb1\rto{\mathsf{\triangledown I\centerdot X}} \mb2\) we rewrite this expression on the right:
\begin{gather*}
\begin{array}{c}
	\prod_{i\in I}\mcv\bigl(\ca_i(A_i,D_i);\ca_i(A_i,D_i)\bigr) \times \mcv\bigl(\und\VQu\bigl((\ca_i)_{i\in I};\cc\bigr)(F,G);\cc((A_i)_{i\in I}F,(A_i)_{i\in I}G)\bigr)
	\\
	\times \mcv\bigl(\cc((A_i)_{i\in I}F,(A_i)_{i\in I}G);\cc((A_i)_{i\in I}F,(A_i)_{i\in I}G)\bigr)
	\times \mcv\bigl((\ca_i(A_i,D_i))_{i\in I}; \cc((A_i)_{i\in I}G,(D_i)_{i\in I}G)\bigr)
	\\
	\times \mcv\bigl(\cc((A_i)_{i\in I}F,(A_i)_{i\in I}G),\cc((A_i)_{i\in I}G,(D_i)_{i\in I}G);\cc((A_i)_{i\in I}F, (D_i)_{i\in I}G)\bigr)
\end{array}
\\
\vstretch 35
\begin{tanglec}
	\id \hstep \id \step[1.5] \id \step
	\\
	\id \hstep \id \hstep \ffbox2{\mu_{\mathsf{\triangledown I\centerdot X}}}
	\\
	\id \hstep \id \step[1.5] \id \step
	\\
	\ffbox3{\mu_{1_{I\sqcup\mb1}}} \step
	\\
	\id \step
\end{tanglec}
=\;
\begin{tanglec}
	\id \step[2.5] \id \step[1.5] \id 
	\\
	\ffbox2{\mu_{1_{\mb1}}} \hstep \ffbox2{\mu_{1_I}} \hstep \id \step 
	\\
	\id \step[2.5] \id \step[1.5] \id 
	\\
	\ffbox5{\mu_{\mathsf{\triangledown I\centerdot X}}}
	\\
	\id
\end{tanglec}
\\
\mcv\bigl((\ca_i(A_i,D_i))_{i\in I},\und\VQu\bigl((\ca_i)_{i\in I};\cc\bigr)(F,G);\cc((A_i)_{i\in I}F,(D_i)_{i\in I}G)\bigr)
\end{gather*}
On elements we have
\begin{diagram}[LaTeXeqno,h=1.4em]
&&\hspace*{-7em} \bigl((1_{\ca_i(A_i,D_i)})_{i\in I},p_{(A_i)_{i\in I}},1_{\cc((A_i)_{i\in I}F,(A_i)_{i\in I}G)},G_{(A_i),(D_i)},\centerdot\bigr) \hspace*{-7em} &&
	\\
	&\ldMapsTo &&\rdMapsTo &
	\\
\bigl((1_{\ca_i(A_i,D_i)})_{i\in I},p_{(A_i)_{i\in I}},\gamma^\dagger\bigr) &&&&\bigl(p_{(A_i)_{i\in I}},G_{(A_i),(D_i)},\centerdot\bigr)
	\\
	&\rdMapsTo &&\ldMapsTo &
	\\
	&&\ev^\VQu &&
\label{eq-ev-VQu-A}
\end{diagram}
Thus, \eqref{eq-ev-VQu-E} and \eqref{eq-ev-VQu-A} are giving the same element \(\ev^\VQu\).
\end{proof}

There is a composite map
\begin{multline}
	\VQu\bigl((\cb_j)_{j\in J};\und\VQu((\ca_i)_{i\in I};\cc)\bigr) \rTTo^{[\prod_{i\in I}\dot1_{\ca_i}]\times1\times\dot\ev^\VQu_{(\ca_i)_{i\in I};\cc}}
	\\
	\bigl[\prod_{i\in I}\VQu(\ca_i;\ca_i)\bigr]\times\VQu\bigl((\cb_j)_{j\in J};\und\VQu((\ca_i)_{i\in I};\cc)\bigr)
	\times\VQu\bigl((\ca_i)_{i\in I},\und\VQu((\ca_i)_{i\in I};\cc);\cc\bigr)
	\\
	\rTTo^{\mu^\VQu_{\id\sqcup\triangledown\:I\sqcup J\to I\sqcup\mb1}} \VQu\bigl((\ca_i)_{i\in I},(\cb_j)_{j\in J};\cc\bigr)
	\label{eq-phi(XYZ)-iso-VQu}
\end{multline}
for an arbitrary sequence \((\cb_j)_{j\in J}\), \(J\in\Ob\cs_\sk\), of $\mcv$\n-quivers.
	
Consider an element \(f\:(\cb_j)_{j\in J}\to\und\VQu\bigl((\ca_i)_{i\in I};\cc\bigr)\in\VQu\):
\begin{diagram}[w=1em,nobalance,LaTeXeqno]
	f\: &(\cb_j)_{j\in J} &\rTTo &\und\VQu\bigl((\ca_i)_{i\in I};\cc\bigr) \hspace*{-3em}
	\\
	&(B_j)_{j\in J} &\rMapsTo &(B_j)_{j\in J}f\: &(\ca_i)_{i\in I} &\rTTo &\cc
	\\
	&&&&(A_i)_{i\in I} &\rMapsTo &(A_i)_{i\in I}(B_j)_{j\in J}f
	\\
	&&&&\hspace*{-10.6em} (B_j)_{j\in J}f_{(A_i),(D_i)}\: (\ca_i(A_i,D_i))_{i\in I} &\rTTo &\cc((A_i)_{i\in I}(B_j)_{j\in J}f,(D_i)_{i\in I}(B_j)_{j\in J}f)
\label{dia-f-(B)-VQu((A)C)}
\end{diagram} 
\begin{multline}
	(\cb_j(B_j,E_j))_{j\in J} \to \und\VQu\bigl((\ca_i)_{i\in I};\cc\bigr) \bigl((B_j)_{j\in J}f,(E_j)_{j\in J}f\bigr)
	\\
	=\int_{(A_i\in\ca_i)_{i\in I}}\cc((A_i)_{i\in I}(B_j)_{j\in J}f,(A_i)_{i\in I}(E_j)_{j\in J}f).
\label{eq-(B)-VQu((A)C)((B)f(E)f)}
\end{multline}

\begin{lemma}
Map \eqref{eq-(B)-VQu((A)C)((B)f(E)f)} admits two presentations described below as \eqref{eq-(B)f-V(p(D))} and \eqref{eq-V(p(A))-(E)f}.
\end{lemma}

\begin{proof}
Apply composition \eqref{eq-phi(XYZ)-iso-VQu} to \eqref{eq-(B)-VQu((A)C)((B)f(E)f)} for this $f$.
We get
\begin{multline*}
\mcV\bigl((\cb_j(B_j,E_j))_{j\in J};\und\VQu((\ca_i)_{i\in I};\cc)((B_j)_{j\in J}f,(E_j)_{j\in J}f)\bigr) \rTTo^{[\prod_{i\in I}\dot1_{\ca_i(A_i,D_i)}]\times1\times\dot\ev^\VQu_{(\ca_i)_{i\in I};\cc}}
	\\
\bigl[\prod_{i\in I}\mcV(\ca_i(A_i,D_i);\ca_i(A_i,D_i))\bigr]
\times \mcV\bigl((\cb_j(B_j,E_j))_{j\in J};\und\VQu((\ca_i)_{i\in I};\cc)((B_j)_{j\in J}f,(E_j)_{j\in J}f)\bigr) \times
\\
\mcV\bigl((\ca_i(A_i,D_i))_{i\in I},\und\VQu((\ca_i)_{i\in I};\cc)((B_j)_{j\in J}f,(E_j)_{j\in J}f);
\cc((A_i)_{i\in I}(B_j)_{j\in J}f,(D_i)_{i\in I}(E_j)_{j\in J}f)\bigr)
	\\
\rTTo^{\mu^\mcV_{\id\sqcup\triangledown\:I\sqcup J\to I\sqcup\mb1}}
\mcV\bigl((\ca_i(A_i,D_i))_{i\in I},(\cb_j(B_j,E_j))_{j\in J};\cc((A_i)_{i\in I}(B_j)_{j\in J}f,(D_i)_{i\in I}(E_j)_{j\in J}f)\bigr).
\end{multline*}
Notice that \([(A_i)_{i\in I},(B_j)_{j\in J}f]\ev=(A_i)_{i\in I}(B_j)_{j\in J}f\).
Substituting \eqref{eq-ev-VQu-E} for \(\ev^\VQu\) we get the first map in the following diagram.
Apply associativity condition at \figref{dia-assoc-mu-multi} to maps \(I\sqcup J \rTTo^{1\sqcup\triangledown} I\sqcup\mb1\rto{\mathsf{\triangledown I}} \mb2\).
One obtains the second map.
Using the unitality of $\mcv$ we reduce map \eqref{eq-phi(XYZ)-iso-VQu} applied to \eqref{eq-(B)-VQu((A)C)((B)f(E)f)} for $f$ to the third map in
\begin{gather}
\mcV\bigl((\cb_j(B_j,E_j))_{j\in J};\und\VQu((\ca_i)_{i\in I};\cc)((B_j)_{j\in J}f,(E_j)_{j\in J}f)\bigr)
\label{eq-(B)f-V(p(D))}
\\
\begin{tanglec}
	\hh \id \step[6]
	\\
\ffbox6{\prod_{i\in I}\!\dot1_{\ca_i(A_i,D_i)}} \hstep \id \hstep \ffbox7{\dot{(B_j)_{j\in J}f}_{(A_i),(D_i)}} \hstep \ffbox3{\dot p_{(D_i)_{i\in I}}\!} \hstep \ffbox1{\dot\centerdot}
	\\
	\hh \step[2.5] \id \step[3.5] \id \step[4] \id \step[5.5] \id \step[2.5] \id
	\\
	\hh \step[3] \id \step[3.5] \id \step[4] \id \step[4.5] \ffbox4{\mu_{\mathsf{\triangledown I}}}
	\\
	\hh \step \id \step[3.5] \id \step[4] \id \step[6.5] \id
	\\
	\hh \step \ffbox{16}{\mu_{1\sqcup\triangledown\:I\sqcup J\to I\sqcup\mb1}}
	\\
	\hh \step \id
\end{tanglec}
\;=\;
\begin{tanglec}
	\hh \id \step[6]
	\\
\ffbox6{\prod_{i\in I}\!\dot1_{\ca_i(A_i,D_i)}} \hstep \id \hstep \ffbox7{\dot{(B_j)_{j\in J}f}_{(A_i),(D_i)}} \hstep \ffbox3{\dot p_{(D_i)_{i\in I}}\!} \hstep \ffbox1{\dot\centerdot}
	\\
	\step[2.5] \id \step[3.5] \XX \step[7.5] \id \step[2.5] \id
	\\
	\hh \step[1.5] \ffbox5{\mu_{1_I}} \step \ffbox9{\mu_{\triangledown\:J\to\mb1}} \step[1.5] \id
	\\
	\hh \step[4] \id \step[8] \id \step[6] \id
	\\
	\hh \step[4] \ffbox{15}{\mu_{\triangledown\triangledown\:I\sqcup J\to\mb2}}
	\\
	\hh \step[4] \id
\end{tanglec}
\notag
\\
=\;
\begin{tanglec}
	\hh \step[6] \id
	\\
	\ffbox7{\dot{(B_j)_{j\in J}f}_{(A_i),(D_i)}} \hstep \ffbox7{\mcv((1)_J;\dot p_{(D_i)_{i\in I}})} \hstep \ffbox1{\dot\centerdot}
	\\
	\hh \step[3] \id \step[7.5] \id \step[4.5] \id
	\\
	\hh \step[3] \ffbox{13}{\mu_{\triangledown\triangledown\:I\sqcup J\to\mb2}}
	\\
	\hh \step[3] \id
\end{tanglec}
\notag
\\
\mcV\bigl((\ca_i(A_i,D_i))_{i\in I},(\cb_j(B_j,E_j))_{j\in J};\cc((A_i)_{i\in I}(B_j)_{j\in J}f,(D_i)_{i\in I}(E_j)_{j\in J}f)\bigr) \notag
\end{gather}
\\

On the other hand, substituting \eqref{eq-ev-VQu-A} for \(\ev^\VQu\) we get the first map in the following diagram.
Using \figref{dia-assoc-mu-multi} for maps \(I\sqcup J\rTTo^{1\sqcup\triangledown} I\sqcup\mb 1\rTTo^{\triangledown\mathsf{I\centerdot X}} \mb2\) we rewrite this composition as the second map.
Using the unitality of $\mcv$ we reduce map \eqref{eq-phi(XYZ)-iso-VQu} applied to \eqref{eq-(B)-VQu((A)C)((B)f(E)f)} for $f$ to the third map in
\begin{gather}
\mcV\bigl((\cb_j(B_j,E_j))_{j\in J};\und\VQu((\ca_i)_{i\in I};\cc)((B_j)_{j\in J}f,(E_j)_{j\in J}f)\bigr)
\label{eq-V(p(A))-(E)f}
\\
\begin{tanglec}
	\hh \id \step[6]
	\\
\ffbox6{\prod_{i\in I}\!\dot1_{\ca_i(A_i,D_i)}} \hstep \id \hstep \ffbox3{\dot p_{(D_i)_{i\in I}}\!} \hstep \ffbox7{\dot{(E_j)_{j\in J}f}_{(A_i),(D_i)}} \hstep \ffbox1{\dot\centerdot}
	\\
	\step[2.5] \id \step[3.5] \id \step[3] \XX \step[7] \id
	\\
	\hh \step[3] \id \step[3.5] \id \step[2.5] \ffbox{10}{\mu_{\mathsf{\triangledown I}}}
	\\
	\hh \id \step[3.5] \id \step[7.5] \id \Step
	\\
	\hh \ffbox{12}{\mu_{1\sqcup\triangledown\:I\sqcup J\to I\sqcup\mb1}} \Step
	\\
	\hh \id \Step
\end{tanglec}
\;=\;
\begin{tanglec}
	\hh \id \step[6]
	\\
\ffbox6{\prod_{i\in I}\!\dot1_{\ca_i(A_i,D_i)}} \hstep \id \hstep \ffbox3{\dot p_{(D_i)_{i\in I}}\!} \hstep \ffbox7{\dot{(E_j)_{j\in J}f}_{(A_i),(D_i)}} \hstep \ffbox1{\dot\centerdot}
	\\
	\step[4] \XX \Step \id \step[4.5] \ne2 \step[4.5] \id
	\\
	\step[3] \ne2 \Step \XX \step[2.5] \ne2 \step[6.5] \id
	\\
	\hh \step[1.5] \ffbox5{\mu_{\triangledown\:J\to\mb1}} \step \ffbox3{\mu_{1_I}} \step[7.5] \id
	\\
	\hh \step[4] \id \step[5] \id \step[9] \id
	\\
	\hh \step[4] \ffbox{15}{\mu_{(\triangledown\sqcup\triangledown)\centerdot\mathsf{X}\:I\sqcup J\to\mb2}}
	\\
	\hh \step[4] \id
\end{tanglec}
\notag
\\
=\;
\begin{tanglec}
	\hh \step[6] \id
	\\
	\ffbox7{\mcv((1)_J;\dot p_{(D_i)_{i\in I}})} \hstep \ffbox7{\dot{(E_j)_{j\in J}f}_{(A_i),(D_i)}} \hstep \ffbox1{\dot\centerdot}
	\\
	\hh \step[3] \id \step[7.5] \id \step[4.5] \id
	\\
	\hh \step[3] \ffbox{13}{\mu_{(\triangledown\sqcup\triangledown)\centerdot\mathsf{X}\:I\sqcup J\to\mb2}}
	\\
	\hh \step[3] \id
\end{tanglec}
\notag
\\
\mcV\bigl((\ca_i(A_i,D_i))_{i\in I},(\cb_j(B_j,E_j))_{j\in J};\cc((A_i)_{i\in I}(B_j)_{j\in J}f,(D_i)_{i\in I}(E_j)_{j\in J}f)\bigr) \notag
\end{gather}
	\\
This element equals element \eqref{eq-(B)f-V(p(D))}.
\end{proof}

\begin{lemma}
Let $\mcv$ be a locally small symmetric closed complete multicategory.
The $\mcv$\n-subquiver \(\und\VQu((\ca_i)_{i\in I};\cc)\) embedded via 
\[ \iota\:\und\VQu((\ca_i)_{i\in I};\cc)(F,G)\hookrightarrow\prod_{(A_i\in\ca_i)}\cc((A_i)_{i\in I}F,(A_i)_{i\in I}G)
\]
is a $\mcv$\n-subcategory.
\end{lemma}

\begin{proof}
The vertical composition of objects of $\mcV$\n-transformations \(\und\VQu((\ca_i)_{i\in I},\cc)(F,G)\) comes from the composition in $\cc$:
\begin{equation}
\begin{diagram}[h=2.4em,inline]
\und\VQu((\ca_i)_{i\in I};\cc)(F,G),\und\VQu((\ca_i)_{i\in I};\cc)(G,H) &\rTTo^{\exists?\centerdot} &\und\VQu((\ca_i)_{i\in I},\cc)(F,H)
\\
\dTTo<{\iota,\iota} &= &\dTTo>\iota
\\
\prod_{(A_i\in\ca_i)_{i\in I}}\hspace*{-1em}\cc((A_i)_{i\in I}F,(A_i)_{i\in I}G), \hspace*{-0.8em}\prod_{(A_i\in\ca_i)_{i\in I}}\hspace*{-1em}\cc((A_i)_{i\in I}G,(A_i)_{i\in I}H) &\rTTo^{\exists!m} &\hspace*{-0.8em}\prod_{(A_i\in\ca_i)_{i\in I}}\hspace*{-1em}\cc((A_i)_{i\in I}F,(A_i)_{i\in I}H)
\\
\dTTo<{\pr_{(A_i)},\pr_{(A_i)}} &= &\dTTo>{\pr_{(A_i)}}
\\
\cc((A_i)_{i\in I}F,(A_i)_{i\in I}G),\cc((A_i)_{i\in I}G,(A_i)_{i\in I}H) &\rTTo^\centerdot &\cc((A_i)_{i\in I}F, (A_i)_{i\in I}H)
\end{diagram}
\label{dia-composition-undVCat}
\end{equation}

The multiplication $m$ exists due to the existence of products in multicategory $V$.
We have to prove the existence of the top arrow.
We use the abbreviation similar to that from Kelly's book \cite[\S~2.2]{KellyGM:bascec} \([(\ca_i)_{i\in I};\cc]=\und\VQu((\ca_i)_{i\in I};\cc)\).
First of all the exterior of the following diagram commutes
\[
\begin{diagram}[inline,h=2.2em]
&&\hspace*{-2em}[(\ca_i)_{i\in I};\cc](F,G),[(\ca_i)_{i\in I};\cc](G,H)\hspace*{-2em}
\\
&&\dTTo<{\iota,\iota}
\\
&&\hspace*{-8em}\prod_{(A_i\in\ca_i)_{i\in I}}\cc((A_i)_{i\in I}F,(A_i)_{i\in I}G),
\prod_{(A_i\in\ca_i)_{i\in I}}\cc((A_i)_{i\in I}G,(A_i)_{i\in I}H)\hspace*{-8em}
\\
&\ldTTo_{\pr_{(A_i)},\pr_{(A_i)}} &&\rdTTo_{\pr_{(D_i)},\pr_{(D_i)}} &
\\
\cc((A_i)_{i\in I}F,(A_i)_{i\in I}G),\cc((A_i)_{i\in I}G,(A_i)_{i\in I}H)\hspace*{-8em} &&&&\hspace*{-8em}\cc((D_i)_{i\in I}F,(D_i)_{i\in I}G),\cc((D_i)_{i\in I}G,(D_i)_{i\in I}H)
\\
\dTTo<\centerdot &= &\dTTo<m &= &\dTTo>\centerdot 
\\
\cc((A_i)_{i\in I}F,(A_i)_{i\in I}H) &\lTTo^{\pr_{(A_i)}} &\hspace*{-1em}\prod_{(A_i\in\ca_i)_{i\in I}}\hspace*{-1em}\cc((A_i)_{i\in I}F,(A_i)_{i\in I}H) &\rTTo^{\pr_{(D_i)}} &\cc((D_i)_{i\in I}F,(D_i)_{i\in I}H)
\\
&\rdTTo_\gamma &&\ldTTo_\beta &
\\
&&\hspace*{-3.5em}\und\mcV\bigl(((\ca_i(A_i,D_i)_{i\in I};\cc((A_i)_{i\in I}F,(D_i)_{i\in I}H)\bigr)\hspace*{-3.5em}
\end{diagram}
\]
In fact, it is adjoint to the equation $a=c$, where elements $a,b,c$ are introduced below.
\begin{multline*}
\mu_{\mathsf{III}}\: \mcV\bigl(((\ca_i(A_i,D_i)_{i\in I};\cc((A_i)_{i\in I}F,(D_i)_{i\in I}F)\bigr)
\times \mcv\bigl([(\ca_i)_{i\in I};\cc](F,G);\cc((D_i)_{i\in I}F,(D_i)_{i\in I}G)\bigr)
\\
\times \mcv\bigl([(\ca_i)_{i\in I};\cc](G,H);\cc((D_i)_{i\in I}G,(D_i)_{i\in I}H)\bigr) \times
\\
\mcv\bigl(\cc((A_i)_{i\in I}F,(D_i)_{i\in I}F),\cc((D_i)_{i\in I}F,(D_i)_{i\in I}G),\cc((D_i)_{i\in I}G,(D_i)_{i\in I}H);
\cc((A_i)_{i\in I}F,(D_i)_{i\in I}H)\bigr)
\\
\to \mcv\bigl(((\ca_i(A_i,D_i)_{i\in I},[(\ca_i)_{i\in I};\cc](F,G),[(\ca_i)_{i\in I};\cc](G,H);\cc((A_i)_{i\in I}F,(D_i)_{i\in I}H)\bigr),
\\
\hfill (F_{(A_i),(D_i)},p_{(D_i)},p_{(D_i)},\kappa_{(A_i)_{i\in I}F,(D_i)_{i\in I}F,(D_i)_{i\in I}G,(D_i)_{i\in I}H}) \mapsto a, \hskip\multlinegap
\\
\hskip\multlinegap \mu_{\mathsf{XI}}\: \mcv\bigl([(\ca_i)_{i\in I};\cc](F,G);\cc((A_i)_{i\in I}F,(A_i)_{i\in I}G)\bigr) 
\times \mcv\bigl(((\ca_i(A_i,D_i)_{i\in I};\cc((A_i)_{i\in I}G,(D_i)_{i\in I}G)\bigr) \hfill
\\
\times \mcv\bigl([(\ca_i)_{i\in I};\cc](G,H);\cc((D_i)_{i\in I}G,(D_i)_{i\in I}H)\bigr) \times
\\
\mcv\bigl(\cc((A_i)_{i\in I}F,(A_i)_{i\in I}G),\cc((A_i)_{i\in I}G,(D_i)_{i\in I}G),\cc((D_i)_{i\in I}G,(D_i)_{i\in I}H);
\cc((A_i)_{i\in I}F,(D_i)_{i\in I}H)\bigr)
\\
\to \mcv\bigl(((\ca_i(A_i,D_i)_{i\in I},[(\ca_i)_{i\in I};\cc](F,G),[(\ca_i)_{i\in I};\cc](G,H);\cc((A_i)_{i\in I}F,(D_i)_{i\in I}H)\bigr),
\\
\hfill (p_{(A_i)},G_{(A_i),(D_i)},p_{(D_i)},\kappa_{(A_i)_{i\in I}F,(A_i)_{i\in I}G,(D_i)_{i\in I}G,(D_i)_{i\in I}H}) \mapsto b, \hskip\multlinegap
\\
\mu_{(321)}\: \mcv\bigl([(\ca_i)_{i\in I};\cc](F,G);\cc((A_i)_{i\in I}F,(A_i)_{i\in I}G)\bigr) \times \mcv\bigl([(\ca_i)_{i\in I};\cc](G,H);\cc((A_i)_{i\in I}G,(A_i)_{i\in I}H)\bigr)
\\
\times \mcv\bigl(((\ca_i(A_i,D_i)_{i\in I};\cc((A_i)_{i\in I}H,(D_i)_{i\in I}H)\bigr) \times
\\
\mcv\bigl(\cc((A_i)_{i\in I}F,(A_i)_{i\in I}G),\cc((A_i)_{i\in I}G,(A_i)_{i\in I}H),\cc((A_i)_{i\in I}H,(D_i)_{i\in I}H);
\cc((A_i)_{i\in I}F,(D_i)_{i\in I}H)\bigr)
\\
\to \mcv\bigl(((\ca_i(A_i,D_i)_{i\in I},[(\ca_i)_{i\in I};\cc](F,G),[(\ca_i)_{i\in I};\cc](G,H);\cc((A_i)_{i\in I}F,(D_i)_{i\in I}H)\bigr),
\\
(p_{(A_i)},p_{(A_i)},H_{(A_i),(D_i)},\kappa_{(A_i)_{i\in I}F,(A_i)_{i\in I}G,(A_i)_{i\in I}H,(D_i)_{i\in I}H}) \mapsto c.
\end{multline*}
The elements \(\kappa_{(A_i)_{i\in I}F,(A_i)_{i\in I}G,(A_i)_{i\in I}H,(D_i)_{i\in I}H}\) refer to iterated composition in $\cc$.
Notice that actually $a=b=c$.
Equality between elements $a$, $b$, $c$ follows from the properties of \([(\ca_i)_{i\in I};\cc]=\und\VQu((\ca_i)_{i\in I};\cc)\).

The two (top) commutative squares imply that there is a unique arrow
\[ \centerdot\in\mcv\bigl([(\ca_i)_{i\in I};\cc](F,G),[(\ca_i)_{i\in I};\cc](G,H);[(\ca_i)_{i\in I};\cc](F,H)\bigr),
\]
denoted \(\exists?\) in diagram~\eqref{dia-composition-undVCat} in $\mcv$ which makes the diagram commutative.

Associativity of composition in $\cc$ implies associativity of composition $m$ in diagram~\eqref{dia-composition-undVCat}.
Hence the upper multiplication $\centerdot$ is associative as well.

The identity transformation \(\id_F\:()\to\und\VCat((\ca_i)_{i\in I};\cc)(F,F)\) is 
\[ \id_F=\bigl(\id_{(A_i)_{i\in I}F}\:()\to\cc((A_i)_{i\in I}F,(A_i)_{i\in I}F)\bigr)_{(A_i\in\ca_i)_{i\in I}}.
\]
It is a natural $\mcV$\n-transformation in the sense of \defref{def-Natural-V-transformation}, since the square
\begin{gather*}
(\ca_i(A_i,D_i))_{i\in I} \rto{F_{(A_i),(D_i)},\id_{(D_i)F}} \cc((A_i)_{i\in I}F,(D_i)_{i\in I}F), \cc((D_i)_{i\in I}F,(D_i)_{i\in I}F)
\\
\begin{diagram}[h=1.2em]
&\hspace*{18em} &
	\\
\dTTo<{\id_{(A_i)F},F_{(A_i),(D_i)}} &\rdTTo^{F_{(A_i),(D_i)}} &\dTTo>{\kappa_{(A_i)F,(D_i)F,(D_i)F}}
	\\
&\hspace*{18em} &
\end{diagram}
\\[-1em]
\cc((A_i)_{i\in I}F,(A_i)_{i\in I}F),\cc((A_i)_{i\in I}F,(D_i)_{i\in I}F) \rto{\kappa_{(A_i)F,(A_i)F,(D_i)F}} \cc((A_i)_{i\in I}F,(D_i)_{i\in I}F)
\end{gather*}
commutes.
The both triangles commute in $\mcv$ due to $\id$ being units of $\cc$.
\end{proof}
This proves \propref{pro-und-VQu}.
\end{proof}

\begin{example}
	Assume that $\cv$ is a complete closed symmetric monoidal category.
	For \(\mcv=\wh\cv\) (see \cite[Proposition~3.22]{BesLyuMan-book}) we get
	\begin{multline*}
		\beta^\dagger =\bigl[ \tens^{I\sqcup\mb1}[(\ca_i(A_i,D_i))_{i\in I},\cc((D_i)_{i\in I}F,(D_i)_{i\in I}G)]
		\\
		\rTTo^{F_{(A_i),(D_i)}\tens1} \cc((A_i)_{i\in I}F,(D_i)_{i\in I}F)\tens\cc((D_i)_{i\in I}F,(D_i)_{i\in I}G) \rto\centerdot \cc((A_i)_{i\in I}F,(D_i)_{i\in I}G) \bigr],
	\end{multline*}
	\begin{multline*}
		\gamma^\dagger =\bigl[ \tens^{I\sqcup\mb1}[(\ca_i(A_i,D_i))_{i\in I},\cc((A_i)_{i\in I}F,(A_i)_{i\in I}G)] \rTTo^{G_{(A_i),(D_i)}\tens1}
		\\
		\cc((A_i)_{i\in I}G,(D_i)_{i\in I}G)\tens\cc((A_i)_{i\in I}F,(A_i)_{i\in I}G)
		\\
\rto c \cc((A_i)_{i\in I}F,(A_i)_{i\in I}G)\tens\cc((A_i)_{i\in I}G,(D_i)_{i\in I}G) \rto\centerdot \cc((A_i)_{i\in I}F,(D_i)_{i\in I}G) \bigr].
	\end{multline*}
\end{example}

\begin{example}
\(\mcv=\wh\Set\), \(\VCat=\Cat\).
The quiver \(\und\VQu\bigl((\ca_i)_{i\in I};\cc\bigr)\) has
\begin{myitemize}
\item[---] \(\Ob\und\VQu\bigl((\ca_i)_{i\in I};\cc\bigr)=\VQu\bigl((\ca_i)_{i\in I};\cc\bigr)\);
\item[---] \(\und\VQu\bigl((\ca_i)_{i\in I};\cc\bigr)(F,G)=\int_{(A_i\in\ca_i)_{i\in I}}\cc((A_i)_{i\in I}F,(A_i)_{i\in I}G)\).
\end{myitemize}

\(g\in\VQu\bigl((\ca_i)_{i\in I},(\cb_j)_{j\in J};\cc\bigr)\) consists of
\begin{myitemize}
\item[---] a function \(g=\Ob g\:(\prod_{i\in I}\Ob\ca_i)\times(\prod_{j\in J}\Ob\cb_j)\to\Ob\cc\);
\item[---] elements \(g=g_{(A_i),(B_j),(D_i),(E_j)}\in\)
\[ \mcv\bigl((\ca_i(A_i,D_i))_{i\in I},(\cb_j(B_j,E_j))_{j\in J};\cc(((A_i)_{i\in I},(B_j)_{j\in J})g, ((D_i)_{i\in I},(E_j)_{j\in J})g)\bigr).
\]
\end{myitemize}

Consider an element \(f\:(\cb_j)_{j\in J}\to\und\VQu\bigl((\ca_i)_{i\in I};\cc\bigr)\in\VQu\) given by \eqref{dia-f-(B)-VQu((A)C)} and \eqref{eq-(B)-VQu((A)C)((B)f(E)f)}.
Map \eqref{eq-(B)-VQu((A)C)((B)f(E)f)} induces a map
\[ h_{(A_i)}\:(\cb_j(B_j,E_j))_{j\in J}\to\cc((A_i)_{i\in I}(B_j)_{j\in J}f,(A_i)_{i\in I}(E_j)_{j\in J}f).
\]
Let \(\alpha_i\in\ca_i(A_i,D_i)\), \(i\in I\), \(\beta_j\in\cb_j(B_j,E_j)\), \(j\in J\).
From the equality of compositions \eqref{eq-(B)f-V(p(D))} and \eqref{eq-V(p(A))-(E)f} we deduce that the square
\begin{diagram}
(A_i)_{i\in I}(B_j)_{j\in J}f &\rTTo^{(\alpha_i)(B_j)_{j\in J}f_{(A_i),(D_i)}} &(D_i)_{i\in I}(B_j)_{j\in J}f
\\
\dTTo<{(\beta_j)h_{(A_i)}} &&\dTTo>{(\beta_j)h_{(D_i)}}
\\
(A_i)_{i\in I}(E_j)_{j\in J}f &\rTTo^{(\alpha_i)(E_j)_{j\in J}f_{(A_i),(D_i)}} &(D_i)_{i\in I}(E_j)_{j\in J}f
\end{diagram}
commutes in $\cc$.
\end{example}

\subsection{Multicategory of \texorpdfstring{$\mcV$}V-categories}
$\mcv$\n-functors were defined in \cite[\S1, (MLC 4)]{Linton:multiYoneda}, translated to a modern language in \cite[\S2]{1802.07538}, and by Leinster \cite[Example~2.4.1.iii]{Leinster:math.CT/9901139}.
They can be recognised as $T$\n-algebra morphisms where $T$ comes from \eqref{dia-monad-T} (compare with \exaref{exa-I1}).
In fact, a morphism of $T$\n-algebras \(F\:(\cA,\xi)\to(\cb,\psi)\), that is a morphism of $\mcv$\n-quivers \(F\:\ca\to\cb\) which satisfies the equation
\begin{diagram}
	\ca T &\rTTo^\xi &\ca
	\\
	\dTTo<{FT} &= &\dTTo>F
	\\
	\cb T &\rTTo^\psi &\cb
\end{diagram}
can be described as a morphism of $\mcv$\n-quivers which satisfies \eqref{dia-AAABBB} and \eqref{eq-id-AAA-BAFAF}.

We shall use a version with several inputs, based on the exterior monoidal structure of \(\FF\mcv\Qu\) due to monoidal structure of the prop \(\FF\mcv\).

\begin{definition}\label{def-multi-entry-V-functor}
Let $\mcV$ be a locally small symmetric multicategory.
Let $\cb$, $\ca_i$, $i\in I$, be small $\mcv$\n-categories.
A multi-entry $\mcv$\n-functor\index{multi-entry functor} \(F\:(\ca_i)_{i\in I}\to\cb\) is an $\FF\mcv$\n-functor \(F\:\boxt^{i\in I}\ca_i\to\cb\).
\end{definition}

\begin{proposition}\label{pro-multi-entry-V-functors}
A multi-entry $\mcv$\n-functor \(F\:(\ca_i)_{i\in I}\to\cb\) is identified with the following data:
\begin{myitemize}
\item[---] a function \(F=\Ob F\:\Ob\ca_1\times\dots\times\Ob\ca_I\to\Ob\cb\);
\item[---] a collection of elements \(F=F_{(A_i),(E_i)}\in\mcv\bigl((\ca_i(A_i,E_i))_{i\in I};\cb((A_i)_{i\in I}F,(E_i)_{i\in I}F)\bigr)\);
\end{myitemize}
such that \(lb=tr\) where these elements come from
\begin{multline*}
\mu_{\triangledown\triangledown}\: \mcv\bigl((\ca_i(A_i,D_i))_{i\in I};\cb((A_i)_{i\in I}F,(D_i)_{i\in I}F)\bigr)
\times\mcv\bigl((\ca_i(D_i,E_i))_{i\in I};\cb((D_i)_{i\in I}F,(E_i)_{i\in I}F)\bigr)
\\
\times\mcv\bigl(\cb((A_i)_{i\in I}F,(D_i)_{i\in I}F),\cb((D_i)_{i\in I}F,(E_i)_{i\in I}F); \cb((A_i)_{i\in I}F, (E_i)_{i\in I}F)\bigr)
\\
\to \mcv\bigl((\ca_i(A_i,D_i))_{i\in I},(\ca_i(D_i,E_i))_{i\in I};\cb((A_i)_{i\in I}F, (E_i)_{i\in I}F)\bigr),
\\
\hfill (F_{(A_i),(D_i)},F_{(D_i),(E_i)},\centerdot) \mapsto lb, \hskip\multlinegap
\\
\hskip\multlinegap \mu_\chi\: \prod_{i\in I}\mcv\bigl(\ca_i(A_i,D_i),\ca_i(D_i,E_i);\ca_i(A_i,E_i)\bigr) \times\mcv\bigl((\ca_i(A_i,E_i))_{i\in I};\cb((A_i)_{i\in I}F, (E_i)_{i\in I}F)\bigr) \hfill
\\
\to \mcv\bigl((\ca_i(A_i,D_i))_{i\in I},(\ca_i(D_i,E_i))_{i\in I};\cb((A_i)_{i\in I}F, (E_i)_{i\in I}F)\bigr),
\\
((\kappa_{A_i,D_i,E_i})_{i\in I},F_{(A_i),(E_i)}) \mapsto tr.
\end{multline*}
Here
\begin{align}
\triangledown\triangledown &=
\begin{pmatrix}
1 &\dots &n &n+1 &\dots &2n
\\
1 &\dots &1 &2 &\dots &2
\end{pmatrix}
\:\mb{2n} \to \mb2,
\label{eq-triangledowntriangledown}
\\
\chi &=
\begin{pmatrix}
1 &2 &\dots &n &n+1 &n+2 &\dots &2n
	\\
1 &2 &\dots &n &1 &2 &\dots &n
\end{pmatrix}
\:\mb{2n} \to \mb n.
\label{eq-chi}
\end{align}
Another requirement is coherence with the units
\begin{equation}
\bigl[ () \rTTo^{(\id_{A_i})_{i\in I}} (\ca_i(A_i,A_i))_{i\in I} \rTTo^{F_{(A_i),(A_i)}} \cb((A_i)_{i\in I}F,(A_i)_{i\in I}F) \bigr] =\id_{(A_i)_{i\in I}F}.
\label{eq-coherence-with-units}
\end{equation}
\end{proposition}

\begin{proof}
An $\FF\mcv$\n-functor \(F\:\boxt^{i\in I}\ca_i\to\cb\) consists of a map \(F=\Ob F\:\prod_{i\in I}\Ob\ca_i\to\Ob\cb\) and a collection of elements 
\[ F=F_{(A_i),(E_i)}\in\mcv\bigl((\ca_i(A_i,E_i))_{i\in I};\cb((A_i)_{i\in I}F,(E_i)_{i\in I}F)\bigr).
\]
The \(\FF\mcv\)\n-functor has to satisfy the equation
	\begin{diagram}[nobalance,LaTeXeqno]
		(\ca_i(A_i,D_i))_{i\in I},(\ca_i(D_i,E_i))_{i\in I} \rto{\lambda^{\sh}} (\ca_i(A_i,D_i),\ca_i(D_i,E_i))_{i\in I} \rto{(\kappa)_I} (\ca_i(A_i,E_i))_{i\in I}
		\\
		\dTTo>{F_{(A_i),(D_i)},F_{(D_i),(E_i)}} \hspace*{8em} = \hspace*{8em} \dTTo<{F_{(A_i),(E_i)}}
		\\
		\cb((A_i)_{i\in I}F,(D_i)_{i\in I}F),\cb((D_i)_{i\in I}F,(E_i)_{i\in I}F) \rto{\kappa} \cb((A_i)_{i\in I}F,(E_i)_{i\in I}F)
		\label{dia-ADEsh-kappa}
	\end{diagram}
	where the shuffle \(\sh\:I\sqcup I\to I\sqcup I\) is given for \(I=\mb n\) by
	\[ \sh=
	\begin{pmatrix}
		1 &2 &\dots &n    &n+1 &n+2 &\dots &2n
		\\
		1 &3 &\dots &2n-1 &2   &4   &\dots &2n
	\end{pmatrix}
	\big\downarrow.
	\]
	The element \(\lambda^{\sh}=(1_{\ca_i(A_i,D_i)},1_{\ca_i(D_i,E_i)})_{i\in I}\) is indexed by $\sh$.
	The left--bottom path of diagram~\eqref{dia-ADEsh-kappa} gives
	\begin{multline*}
		\mu_{\triangledown\triangledown}\: \mcv\bigl((\ca_i(A_i,D_i))_{i\in I};\cb((A_i)_{i\in I}F,(D_i)_{i\in I}F)\bigr)
		\times\mcv\bigl((\ca_i(D_i,E_i))_{i\in I};\cb((D_i)_{i\in I}F,(E_i)_{i\in I}F)\bigr)
		\\
		\times\mcv\bigl(\cb((A_i)_{i\in I}F,(D_i)_{i\in I}F),\cb((D_i)_{i\in I}F,(E_i)_{i\in I}F); \cb((A_i)_{i\in I}F, (E_i)_{i\in I}F)\bigr)
		\\
		\to \mcv\bigl((\ca_i(A_i,D_i))_{i\in I},(\ca_i(D_i,E_i))_{i\in I};\cb((A_i)_{i\in I}F, (E_i)_{i\in I}F)\bigr),
		\\
		(F_{(A_i),(D_i)},F_{(D_i),(E_i)},\centerdot) \mapsto lb.
	\end{multline*}
The top--right path of diagram~\eqref{dia-ADEsh-kappa} gives the left path in the following diagram.
Apply the associativity equation at \figref{dia-assoc-mu-multi} for maps \(I\sqcup I \rto\sh I\sqcup I \rTTo^{\triangledown\triangledown\dots\triangledown} I\), whose composition is denoted $\chi$.
We get the right path in
\begin{equation*}
\hspace*{-1.2em}
\begin{diagram}[h=3.4em,inline,nobalance]
\begin{array}{c}
\prod_{i\in I}\bigl[\mcv\bigl(\ca_i(A_i,D_i);\ca_i(A_i,D_i)\bigr)
\\
\times\mcv\bigl(\ca_i(D_i,E_i);\ca_i(D_i,E_i)\bigr)\bigr] \times
\\
\bigl[\prod_{i\in I}\mcv\bigl(\ca_i(A_i,D_i),\ca_i(D_i,E_i);\ca_i(A_i,E_i)\bigr)\bigr]
\\
\times\mcv\bigl((\ca_i(A_i,E_i))_{i\in I};\cb((A_i)_{i\in I}F,(E_i)_{i\in I}F)\bigr)
\end{array}
&&\hphantom{\prod_{i\in I}\bigl[\mcv\bigl(\ca_i(A_i,D_i),\ca_i(D_i,E_i);\ca_i(A_i,E_i)\bigr)\bigr]}
\\
\dTTo<{1\times\mu_{\triangledown\triangledown\dots\triangledown\:2I\to I}} &\rdTTo^{\prod_I\mu_{\id_{\mb2}}\times1}
\\
\begin{array}{r}
\prod_{i\in I}\bigl[\mcv\bigl(\ca_i(A_i,D_i);\ca_i(A_i,D_i)\bigr) \hfill
\\
\times\mcv\bigl(\ca_i(D_i,E_i);\ca_i(D_i,E_i)\bigr)\bigr] \times
\\
\mcv\bigl((\ca_i(A_i,D_i),\ca_i(D_i,E_i))_{i\in I};\cb((A_i)_{i\in I}F,(E_i)_{i\in I}F)\bigr)
\end{array}
\hspace*{-0.7em} &&
\begin{array}{l}
\prod_{i\in I}\bigl[\mcv\bigl(\ca_i(A_i,D_i),\ca_i(D_i,E_i);\ca_i(A_i,E_i)\bigr)\bigr]
\\
\times\mcv\bigl((\ca_i(A_i,E_i))_{i\in I};\cb((A_i)_{i\in I}F,(E_i)_{i\in I}F)\bigr)
\end{array}
\\
&\rdTTo<{\mu_{\sh\:I\sqcup I\to I\sqcup I}} &\dTTo>{\mu_\chi}
\\
&&\hspace*{-7em} \mcv\bigl((\ca_i(A_i,D_i))_{i\in I},(\ca_i(D_i,E_i))_{i\in I};\cb((A_i)_{i\in I}F, (E_i)_{i\in I}F)\bigr)
\end{diagram}
\end{equation*}
We may consider only the right vertical arrow in
\begin{diagram}[nobalance,h=1.9em]
\bigl((1_{\ca_i(A_i,D_i)},1_{\ca_i(D_i,E_i)})_{i\in I},(\kappa_{A_i,D_i,E_i})_{i\in I},F_{(A_i),(E_i)}\bigr) &\rMapsTo &\bigl((\kappa_{A_i,D_i,E_i})_{i\in I},F_{(A_i),(E_i)}\bigr)
\\
\dMapsTo &&\dMapsTo
\\
\bigl((1_{\ca_i(A_i,D_i)},1_{\ca_i(D_i,E_i)})_{i\in I},?\bigr) &\rMapsTo &tr
\end{diagram}
Thus, equation~\eqref{dia-ADEsh-kappa} is the same as the equation $lb=tr$ discussed in the statement.
	
	Unitality condition for the \(\FF\mcv\)\n-functor \([\1\rto{\id}\boxt^{i\in I}\ca_i\rto F\cb]=\id\) in explicit form
	\begin{multline*}
\hspace*{-1em} \prod_{i\in I}\mcv\bigl(;\ca_i(A_i,A_i)\bigr)\! \times\! \mcv\bigl((\ca_i(A_i,A_i))_{i\in I};\cb((A_i)_{i\in I}F,(A_i)_{i\in I}F)\bigr)
\rTTo^{\mu_{\varnothing\to I}} \mcv\bigl(;\cb((A_i)_{i\in I}F,(A_i)_{i\in I}F)\bigr),
\\
\bigl((\id_{A_i})_{i\in I},F_{(A_i),(A_i)}\bigr) \mapsto \id_{(A_i)_{i\in I}F}
	\end{multline*}
coincides with unitality condition~\eqref{eq-coherence-with-units}.
\end{proof}

The small set of multi-entry $\mcv$\n-functors \((\ca_i)_{i\in I}\to\cb\) is denoted
\[ \VCat\bigl((\ca_i)_{i\in I};\cb\bigr)\subset\VQu\bigl((\ca_i)_{i\in I};\cb\bigr).
\]


\begin{example}\label{exa-varnothing}
Consider the particular case $I=\varnothing$.
What is a multi-entry $\mcv$\n-functor \(e\:()\to\cb\)?
By definition it consists of an object \(B\in\Ob\cb\), an element \(e\in\mcv\bigl(;\cb(B,B)\bigr)\) such that \(lb=tr\) where
\begin{align*}
\mu_{\varnothing\to\mb2}\: \mcv\bigl(;\cb(B,B)\bigr) \times \mcv\bigl(;\cb(B,B)\bigr) \times \mcv\bigl(\cb(B,B),\cb(B,B);\cb(B,B)\bigr) &\to \mcv\bigl(;\cb(B,B)\bigr),
\\
(e,e,\centerdot) &\mapsto lb,
\\
\mu_{\varnothing\to\varnothing}=\id\: \mcv\bigl(;\cb(B,B)\bigr) \to \mcv\bigl(;\cb(B,B)\bigr), e &\mapsto tr=e,
\end{align*}
(see \eqref{eq-axiom-unit-multi2} for $I=\varnothing$) (that is, $e$ is an idempotent) and \eqref{eq-coherence-with-units} holds.
The latter condition, \(e\mu_{\varnothing\to\varnothing}=\id_B\), fixes the value of $e$ as \(e=\id_B\).
Thus, \(\VCat(;\cb)\cong\Ob\cb\).
The multi-entry $\mcv$\n-functor corresponding to an object $B$ is denoted \(\ddot B\:()\to\cb\).
\end{example}

\begin{example}\label{exa-I1}
	Consider the particular case $I=\mb1$.
A $\mcv$\n-functor \(F\:\ca\to\cb\) is a multi-entry $\mcv$\n-functor with the set of entries indexed by $I=\mb1$.
Thus, it is
\begin{myitemize}
	\item[---] a function \(F=\Ob F\:\Ob\ca\to\Ob\cb\);
	\item[---] a collection of elements \(F=F_{A,E}\in\mcv\bigl(\ca(A,E);\cb(AF,EF)\bigr)\);
\end{myitemize}
such that \(lb=tr\) where these elements come from
\begin{multline*}
\mu_{\mathsf{II}}\: \mcv\bigl((\ca(A,D);\cb(AF,DF)\bigr) \times\mcv\bigl(\ca(D,E);\cb(DF,EF)\bigr)
\\
\times\mcv\bigl(\cb(AF,DF),\cb(DF,EF);\cb(AF,EF)\bigr)
\to \mcv\bigl(\ca(A,D),\ca(D,E);\cb(AF,EF)\bigr),
\\
\hfill (F_{A,D},F_{D,E},\kappa_{AF,DF,EF}) \mapsto lb, \hskip\multlinegap
\\
\hskip\multlinegap \mu_{\mathsf{V}}\: \mcv\bigl(\ca(A,D),\ca(D,E);\ca(A,E)\bigr)
\times\mcv\bigl(\ca(A,E);\cb(AF,EF)\bigr) \hfill
\\
\to \mcv\bigl(\ca(A,D),\ca(D,E);\cb(AF,EF)\bigr), \qquad
(\kappa_{A,D,E},F_{A,E}) \mapsto tr.
\end{multline*}
The equation $lb=tr$ is a commutative square in $\mcv$:
\begin{diagram}[LaTeXeqno]
\ca(A,D),\ca(D,E) &\rTTo^{\kappa_{A,D,E}} &\ca(A,E)
\\
\dTTo<{F_{A,D},F_{D,E}} &= &\dTTo>{F_{A,E}}
\\
\cb(AF,DF),\cb(DF,EF) &\rTTo^{\kappa_{AF,DF,EF}} &\cb(AF,EF)
\label{dia-AAABBB}
\end{diagram}
And, furthermore, coherence with units is required:
\begin{equation}
\bigl[ () \rTTo^{\id_A} \ca(A,A) \rTTo^{F_{A,A}} \cb(AF,AF) \bigr] =\id_{AF}.
\label{eq-id-AAA-BAFAF}
\end{equation}
\end{example}

\begin{proposition}\label{pro-locally-small-symmetric-multicategory}
Let $\mcV$ be a locally small symmetric multicategory.
Small $\mcv$\n-categories and multi-entry $\mcv$\n-functors form a locally small symmetric multicategory $\VCat$.
\end{proposition}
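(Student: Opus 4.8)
The plan is to realize $\VCat$ as a symmetric sub-multicategory of $\VQu$. By \propref{pro-symmetric-multicategory-multi-entry-quiver-morphisms} the collection $\VQu$ of small $\mcv$\n-quivers and multi-entry $\mcv$\n-quiver morphisms is already a locally small symmetric multicategory, and by \propref{pro-multi-entry-V-functors} every multi-entry $\mcv$\n-functor is in particular a multi-entry $\mcv$\n-quiver morphism, so $\VCat\bigl((\ca_i)_{i\in I};\cb\bigr)\subset\VQu\bigl((\ca_i)_{i\in I};\cb\bigr)$ and local smallness is inherited verbatim. In the skeletal formulation used here the symmetric structure is encoded entirely in the composition maps $\mu_\phi$ for arbitrary $\phi\in\cS_\sk$, so the associativity (\figref{dia-assoc-mu-multi}) and the two unitality laws \eqref{eq-axiom-unit-multi1etaZ}, \eqref{eq-axiom-unit-multi2} will descend to $\VCat$ automatically, provided one checks two closure statements: (a) each identity quiver morphism $\Id_\ca$ is an $\mcv$\n-functor, and (b) for every $\phi\in\cS_\sk$ the composition $\mu^\VQu_\phi$ carries $\mcv$\n-functors to $\mcv$\n-functors. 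Step (a) is immediate, since $\Id_\ca$ has $\Ob\Id=\id$ and $(\Id_\ca)_{A,E}=1_{\ca(A,E)}$, so the functoriality condition \eqref{dia-AAABBB} becomes $\kappa=\kappa$ and the unit coherence \eqref{eq-id-AAA-BAFAF} becomes $\id_A=\id_A$.

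The substance of the proof is step (b), and the economical route uses the defining description of \defref{def-multi-entry-V-functor}: a multi-entry $\mcv$\n-functor $(\ca_i)_{i\in I}\to\cb$ is precisely an $\FF\mcv$\n-functor $\boxt^{i\in I}\ca_i\to\cb$. Given $\mcv$\n-functors $F^j:(\ca_i)_{i\in\phi^{-1}j}\to\cb_j$, $j\in J$, and $G:(\cb_j)_{j\in J}\to\cc$, I would regard them as $\FF\mcv$\n-functors $F^j:\boxt^{i\in\phi^{-1}j}\ca_i\to\cb_j$ and $G:\boxt^{j\in J}\cb_j\to\cc$. Because $\FF\mcv$ is a symmetric strict monoidal category (\propref{pro-adjunction-FU}), the tensor product of $\FF\mcv$\n-categories is associative and symmetric, and the reindexing isomorphism $\lambda^\phi:\boxt^{i\in I}\ca_i\to\boxt^{j\in J}\boxt^{i\in\phi^{-1}j}\ca_i$ (which also carries the symmetry of $\phi$) is an isomorphism of $\FF\mcv$\n-categories. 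Hence the composite $\FF\mcv$\n-functor $G\circ(\boxt^{j\in J}F^j)\circ\lambda^\phi:\boxt^{i\in I}\ca_i\to\cc$ is again an $\FF\mcv$\n-functor, i.e. a multi-entry $\mcv$\n-functor. It then only remains to identify this $\FF\mcv$\n-functor composite with the underlying $\mcv$\n-quiver morphism $H$ produced by $\mu^\VQu_\phi$ in \eqref{eq-FGH-quivers}: on objects both send $(A_i)_{i\in I}$ to $\bigl((A_i)_{i\in\phi^{-1}j}F^j\bigr)_{j\in J}G$, and on hom-objects the match follows from the strict associativity of $\mu$ in $\mcv$ (\figref{dia-assoc-mu-multi}), together with the facts that $\boxt^{j\in J}F^j$ acts on hom-objects by $\tens^{j\in J}$ of the components of the $F^j$ and that $\lambda^\phi$ contributes exactly the reindexing assembled from the identities $1_{\ca_i(A_i,E_i)}$. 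This shows $\mu^\VQu_\phi$ restricts to $\VCat$, and hence that $\VCat$ is a symmetric sub-multicategory of $\VQu$.

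I expect the one genuinely laborious point to be this last identification in (b): reconciling the prop-level composite $G\circ(\boxt^{j\in J}F^j)\circ\lambda^\phi$, assembled from the symmetric monoidal structure of $\FF\mcv$ and the tensor product $\boxt$ of enriched categories, with the multicategory-level formula \eqref{eq-FGH-quivers} for $\mu^\VQu_\phi$, which is written directly in terms of the composition maps $\mu_\phi$ of $\mcv$. Once that identification is in place, functoriality of the composite is automatic from the $\FF\mcv$\n-functor viewpoint, the unit coherence \eqref{eq-coherence-with-units} for the composite follows from that of $G$ and the $F^j$ together with unitality in $\mcv$, and every remaining axiom of a symmetric multicategory descends by restriction from $\VQu$.
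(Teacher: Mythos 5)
Your proposal is correct, and its skeleton matches the paper's: the paper likewise defines $\mu^\VCat_\phi$ by the same formula \eqref{eq-FGH} as for quivers and then observes that associativity and unitality reduce to \figref{dia-assoc-mu-multi} and the unit axioms of $\mcv$, exactly as in \propref{pro-symmetric-multicategory-multi-entry-quiver-morphisms}. Where you genuinely diverge is at the one point the paper elides with ``one can check that this assignment is in fact a map $\mu^\VCat_\phi$'', namely the closure statement that the composite of multi-entry $\mcv$\n-functors again satisfies \eqref{dia-ADEsh-kappa} and \eqref{eq-coherence-with-units}. The paper's implicit route is a direct computation with the $\mu_\phi$'s of $\mcv$ (shuffling $\lambda^{\sh}$ past the $F^j$ and using associativity of $\mu$); you instead exploit \defref{def-multi-entry-V-functor} and \propref{pro-adjunction-FU} to realise the composite as $G\circ(\boxt^{j\in J}F^j)\circ\lambda^\phi$ in $\FF\mcv\text-\mathbf{Cat}$, so that closure is automatic once one knows that $\boxt$ of enriched functors and the constraint $\lambda^\phi$ are enriched functors over the strict symmetric monoidal $\FF\mcv$. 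That buys you functoriality and unit coherence for free, and lets associativity descend from $\VQu$ rather than being re-expanded; the price is precisely the reconciliation you flag, i.e.\ that the prop-level composite of $(\lambda^\phi,\tens^J F^j,G)$ lands in the summand indexed by $\triangledown:I\to\mb1$ and equals $\mu^\mcv_\phi\bigl((F^j_{\dots})_{j\in J},G_{\dots}\bigr)$ — this is exactly the formula for $\mu_f$ in $\UU P$ from the proof of \propref{pro-adjunction-FU} together with the unitality equations \eqref{eq-axiom-unit-multi1etaZ} and \eqref{eq-axiom-unit-multi2}, so it does close up. Both routes are sound; yours is more structural and fills the gap the paper leaves to the reader, while the paper's is more self-contained in that it never needs the monoidal structure on $\FF\mcv$\n-categories.
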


\begin{proof}
Let \(\phi\:I\to J\in\cS_\sk\).
Let \((\ca_i)_{i\in I}\), \((\cb_j)_{j\in J}\), $\cc$ be (families of) small $\mcv$\n-categories.
Let \(F^j\:(\ca_i)_{i\in\phi^{-1}j}\to\cb_j\), $j\in J$, \(G\:(\cb_j)_{j\in J}\to\cc\) be multi-entry functors.
Similarly to the $\mcv$\n-quiver case considered in \propref{pro-symmetric-multicategory-multi-entry-quiver-morphisms} we construct another multi-entry functor \(H\:(\ca_i)_{i\in I}\to\cc\) with
\begin{myitemize}
\item[---] \(H=\Ob H\:(A_i)_{i\in I}\mapsto\bigl((A_i)_{i\in\phi^{-1}j}F^j\bigr)_{j\in J}G\).
\item[---] \(H=H_{(A_i),(E_i)}\:(\cA_i(A_i,E_i))_{i\in I}\to\cc((A_i)_{i\in I}H,(E_i)_{i\in I}H)\) obtained from
\end{myitemize}
\begin{multline}
\mu^\mcv_\phi\: \prod_{j\in J} \mcv\bigl((\ca_i(A_i,E_i))_{i\in\phi^{-1}j};\cb_j((A_i)_{i\in\phi^{-1}j}F^j,(E_i)_{i\in\phi^{-1}j}F^j)\bigr) \times
\\
\mcv\bigl((\cb_j((A_i)_{i\in\phi^{-1}j}F^j,(E_i)_{i\in\phi^{-1}j}F^j))_{j\in J};\cc(((A_i)_{i\in\phi^{-1}j}F^j)_{j\in J}G,((E_i)_{i\in\phi^{-1}j}F^j)_{j\in J}G)\bigr)
\\
\hfill \to \mcv\bigl((\ca_i(A_i,E_i))_{i\in I};\cc((A_i)_{i\in I}H,(E_i)_{i\in I}H)\bigr), \hskip\multlinegap
\\
\bigl((F^j_{(A_i)_{i\in\phi^{-1}j},(E_i)_{i\in\phi^{-1}j}})_{j\in J},G_{((A_i)_{i\in\phi^{-1}j}F^j)_{j\in J},((E_i)_{i\in\phi^{-1}j}F^j)_{j\in J}}\bigr) \mapsto H_{(A_i),(E_i)}.
\label{eq-FGH}
\end{multline}
Let us check that this assignment is in fact a map 
\[ \mu_\phi^\VCat\: \bigl[ \prod_{j\in J}\VCat((\ca_i)_{i\in\phi^{-1}(j)};\cb_j) \bigr] \times\VCat((\cb_j)_{j\in J};\cc) \to \VCat((\ca_i)_{i\in I};\cc).
\]
The equality \(lb=tr\) for $H$ comes from
\begin{multline*}
\mu_{\triangledown\triangledown}\:
\mcv\bigl((\ca_i(A_i,D_i))_{i\in I};\cc(((A_i)_{i\in\phi^{-1}j}F^j)_{j\in J}G,((D_i)_{i\in\phi^{-1}j}F^j)_{j\in J}G)\bigr)\times
	\\
\mcv\bigl((\ca_i(D_i,E_i))_{i\in I};\cc(((D_i)_{i\in\phi^{-1}j}F^j)_{j\in J}G,((E_i)_{i\in\phi^{-1}j}F^j)_{j\in J}G)\bigr)\times
	\\
\mcv\bigl(\cc(((A_i)_{i\in\phi^{-1}j}F^j)_{j\in J}G,((D_i)_{i\in\phi^{-1}j}F^j)_{j\in J}G),\cc(((D_i)_{i\in\phi^{-1}j}F^j)_{j\in J}G,((E_i)_{i\in\phi^{-1}j}F^j)_{j\in J}G);
\\
\hfill \cc(((A_i)_{i\in\phi^{-1}j}F^j)_{j\in J}G,((E_i)_{i\in\phi^{-1}j}F^j)_{j\in J}G)\bigr) \hskip\multlinegap
	\\
\to \mcv\bigl((\ca_i(A_i,D_i))_{i\in I},(\ca_i(D_i,E_i))_{i\in I};\cc(((A_i)_{i\in\phi^{-1}j}F^j)_{j\in J}G,((E_i)_{i\in\phi^{-1}j}F^j)_{j\in J}G)\bigr)\bigr),
	\\
(H_{(A_i),(D_i)},H_{(D_i),(E_i)},\centerdot) \mapsto lb,
\end{multline*}
\begin{multline*}
\mu_\chi\: \prod_{i\in I}\mcv\bigl(\ca_i(A_i,D_i),\ca_i(D_i,E_i);\ca_i(A_i,E_i)\bigr)
	\\
\times\mcv\bigl((\ca_i(A_i,E_i))_{i\in I};\cc(((A_i)_{i\in\phi^{-1}j}F^j)_{j\in J}G,((E_i)_{i\in\phi^{-1}j}F^j)_{j\in J}G)\bigr)
	\\
\to \mcv\bigl((\ca_i(A_i,D_i))_{i\in I},(\ca_i(D_i,E_i))_{i\in I};\cc(((A_i)_{i\in\phi^{-1}j}F^j)_{j\in J}G,((E_i)_{i\in\phi^{-1}j}F^j)_{j\in J}G)\bigr),
	\\
((\kappa_{A_i,D_i,E_i})_{i\in I},H_{(A_i),(E_i)}) \mapsto tr.
\end{multline*}
The second expression in details comes from
\begin{multline}
\prod_{i\in I}\mcv\bigl(\ca_i(A_i,D_i),\ca_i(D_i,E_i);\ca_i(A_i,E_i)\bigr)
	\\
	\times\prod_{j\in J} \mcv\bigl((\ca_i(A_i,E_i))_{i\in\phi^{-1}j};\cb_j((A_i)_{i\in\phi^{-1}j}F^j,(E_i)_{i\in\phi^{-1}j}F^j)\bigr)
	\\
	\times \mcv\bigl((\cb_j((A_i)_{i\in\phi^{-1}j}F^j,(E_i)_{i\in\phi^{-1}j}F^j))_{j\in J};\cc(((A_i)_{i\in\phi^{-1}j}F^j)_{j\in J}G,((E_i)_{i\in\phi^{-1}j}F^j)_{j\in J}G)\bigr)
	\\
	\hskip\multlinegap \rTTo^{1\times\mu_\phi} \prod_{i\in I}\mcv\bigl(\ca_i(A_i,D_i),\ca_i(D_i,E_i);\ca_i(A_i,E_i))\bigr) \hfill
	\\
	\times\mcv\bigl((\ca_i(A_i,E_i))_{i\in I};\cc(((A_i)_{i\in\phi^{-1}j}F^j)_{j\in J}G,((E_i)_{i\in\phi^{-1}j}F^j)_{j\in J}G)\bigr)
	\\
	\hfill \rTTo^{\mu_\chi} \mcv\bigl((\ca_i(A_i,D_i))_{i\in I},(\ca_i(D_i,E_i))_{i\in I};\cc(((A_i)_{i\in\phi^{-1}j}F^j)_{j\in J}G,((E_i)_{i\in\phi^{-1}j}F^j)_{j\in J}G)\bigr), \hskip\multlinegap
	\\
	\hskip\multlinegap ((\kappa_{A_i,D_i,E_i})_{i\in I},(F^j_{(A_i)_{i\in\phi^{-1}j},(E_i)_{i\in\phi^{-1}j}})_{j\in J},G_{((A_i)_{i\in\phi^{-1}j}F^j)_{j\in J},((E_i)_{i\in\phi^{-1}j}F^j)_{j\in J}}) \hfill
	\\
	\mapsto ((\kappa_{A_i,D_i,E_i})_{i\in I},H_{(A_i),(E_i)}) \mapsto tr.
	\label{eq-equality-lb=tr-for-H}
\end{multline}
The first expression is presented by the left map in the following diagram.
Using \figref{dia-assoc-mu-multi} for maps \(I\sqcup I\rTTo^{\phi\sqcup\phi} J\sqcup J\rTTo^{\triangledown\triangledown} \mb2\) we rewrite this as the right map in
\begin{gather*}
\begin{array}{c}
	\prod_{j\in J} \mcv\bigl((\ca_i(A_i,D_i))_{i\in\phi^{-1}j};\cb_j((A_i)_{i\in\phi^{-1}j}F^j,(D_i)_{i\in\phi^{-1}j}F^j)\bigr)
	\\
\times \mcv\bigl((\cb_j((A_i)_{i\in\phi^{-1}j}F^j,(D_i)_{i\in\phi^{-1}j}F^j))_{j\in J}; 
\cc(((A_i)_{i\in\phi^{-1}j}F^j)_{j\in J}G,((D_i)_{i\in\phi^{-1}j}F^j)_{j\in J}G)\bigr) 
	\\
\times \prod_{j\in J}\mcv\bigl((\ca_i(D_i,E_i))_{i\in\phi^{-1}j};\cb_j((D_i)_{i\in\phi^{-1}j}F^j,(E_i)_{i\in\phi^{-1}j}F^j)\bigr)
	\\
\times \mcv\bigl((\cb_j((D_i)_{i\in\phi^{-1}j}F^j,(E_i)_{i\in\phi^{-1}j}F^j))_{j\in J}; 
\cc(((D_i)_{i\in\phi^{-1}j}F^j)_{j\in J}G,((E_i)_{i\in\phi^{-1}j}F^j)_{j\in J}G)\bigr) 
	\\
\times \mcv\bigl(\cc(((A_i)_{i\in\phi^{-1}j}F^j)_{j\in J}G,((D_i)_{i\in\phi^{-1}j}F^j)_{j\in J}G),
\cc(((D_i)_{i\in\phi^{-1}j}F^j)_{j\in J}G,((E_i)_{i\in\phi^{-1}j}F^j)_{j\in J}G);
	\\
	\hfill \cc(((A_i)_{i\in\phi^{-1}j}F^j)_{j\in J}G,((E_i)_{i\in\phi^{-1}j}F^j)_{j\in J}G)\bigr) \hfill
\end{array}
\\
\begin{tanglec}
\hh \id \step[1.5] \id \step[1.5] \id
\\
\hh \ffbox1{\mu_\phi} \hstep \ffbox1{\mu_\phi} \step \id \hstep
\\
\hh \id \step[1.5] \id \step[1.5] \id
\\
\hh \ffbox5{\mu_{\triangledown\triangledown\:I\sqcup I\to\mb2}}
\\
\hh \id
\end{tanglec}
=
\begin{tanglec}
\id \step \XX \step \id \step \id \step
	\\
\hh \hstep \id \step \id \hstep \ffbox5{\mu_{\triangledown\triangledown\:J\sqcup J\to\mb2}}
	\\
\hh \id \step \id \step[3] \id \Step
	\\
\hh \ffbox6{\mu_{\phi\sqcup\phi\:I\sqcup I\to J\sqcup J}} \Step
	\\
\hh \id \Step
\end{tanglec}
\\
\mcv\bigl((\ca_i(A_i,D_i))_{i\in I},(\ca_i(D_i,E_i))_{i\in I};\cc(((A_i)_{i\in\phi^{-1}j}F^j)_{j\in J}G,((E_i)_{i\in\phi^{-1}j}F^j)_{j\in J}G)\bigr)
\end{gather*}
On elements we have
\begin{diagram}[h=3em]
\begin{array}{r}
\bigl((F^j_{(A_i)_{i\in\phi^{-1}j},(D_i)_{i\in\phi^{-1}j}})_{j\in J}, \hfill
\\
G_{((A_i)_{i\in\phi^{-1}j}F^j)_{j\in J},((D_i)_{i\in\phi^{-1}j}F^j)_{j\in J}},
\\
(F^j_{(D_i)_{i\in\phi^{-1}j},(E_i)_{i\in\phi^{-1}j}})_{j\in J},
\\
G_{((D_i)_{i\in\phi^{-1}j}F^j)_{j\in J},((E_i)_{i\in\phi^{-1}j}F^j)_{j\in J}},\centerdot\bigr)
\end{array}
&\rMapsTo &
\begin{array}{l}
\bigl((F^j_{(A_i)_{i\in\phi^{-1}j},(D_i)_{i\in\phi^{-1}j}})_{j\in J},
\\
(F^j_{(D_i)_{i\in\phi^{-1}j},(E_i)_{i\in\phi^{-1}j}})_{j\in J}, \hfill
\\
\mu_{\triangledown\triangledown\:J\sqcup J\to\mb2}(G_{((A_i)_{i\in\phi^{-1}j}F^j)_{j\in J},((D_i)_{i\in\phi^{-1}j}F^j)_{j\in J}},
\\
\hfill G_{((D_i)_{i\in\phi^{-1}j}F^j)_{j\in J},((E_i)_{i\in\phi^{-1}j}F^j)_{j\in J}},\centerdot))
\end{array}
\\
\dMapsTo &&\dMapsTo
\\
(H_{(A_i),(D_i)},H_{(D_i),(E_i)},\centerdot) &\rMapsTo &lb
\end{diagram}

Using the condition on $G$ as a multi-entry functor we rewrite the above as the left map in the following diagram.
Using \figref{dia-assoc-mu-multi} for maps \(I\sqcup I\rTTo^{\phi\sqcup\phi} J\sqcup J\rTTo^\chi J\) we rewrite this as the right map in
\begin{gather*}
\begin{array}{c}
\prod_{j\in J} \bigl[\mcv\bigl((\ca_i(A_i,D_i))_{i\in\phi^{-1}j};\cb_j((A_i)_{i\in\phi^{-1}j}F^j,(D_i)_{i\in\phi^{-1}j}F^j)\bigr)
	\\
\times \mcv\bigl((\ca_i(D_i,E_i))_{i\in\phi^{-1}j};\cb_j((D_i)_{i\in\phi^{-1}j}F^j,(E_i)_{i\in\phi^{-1}j}F^j)\bigr) 
	\\
\times \mcv\bigl(\cb_j((A_i)_{i\in\phi^{-1}j}F^j,(D_i)_{i\in\phi^{-1}j}F^j),\cb_j((D_i)_{i\in\phi^{-1}j}F^j,(E_i)_{i\in\phi^{-1}j}F^j); \hfill
\\
\hfill c\cb_j((A_i)_{i\in\phi^{-1}j}F^j,(E_i)_{i\in\phi^{-1}j}F^j)\bigr) \bigr]
	\\
\times \mcv\bigl((\cb_j((A_i)_{i\in\phi^{-1}j}F^j,(E_i)_{i\in\phi^{-1}j}F^j))_{j\in J};
\cc(((A_i)_{i\in\phi^{-1}j}F^j)_{j\in J}G,((E_i)_{i\in\phi^{-1}j}F^j)_{j\in J}G)\bigr)
\end{array}
\\
\begin{tanglec}
\hh \id \hstep \id \step[1.5] \id \Step \id \step
\\
\hh \id \hstep \id \hstep \ffbox4{\mu_{\chi\:\!\!J\sqcup J\to J}}
\\
\hh \id \hstep \id \step[2.5] \id \Step
\\
\hh \ffbox4{\mu_{\phi\sqcup\phi}} \Step
\\
\hh \id \Step
\end{tanglec}
=\quad
\begin{tanglec}
\hh \hstep \id \step \id \step[7] \id \step[1.5] \id
	\\
\ffbox{10}{\prod_{j\in J}\mu_{\triangledown\triangledown\:\phi^{-1}j\sqcup\phi^{-1}j\to\mb2}} \hstep \id \hstep
	\\
\hh \step[4.5] \id \step[5.5] \id
	\\
\hh \step[4] \ffbox7{\mu_{(\phi\sqcup\phi)\centerdot\chi}}
	\\
\hh \step[4] \id
\end{tanglec}
\\
\mcv\bigl((\ca_i(A_i,D_i))_{i\in I},(\ca_i(D_i,E_i))_{i\in I};\cc(((A_i)_{i\in\phi^{-1}j}F^j)_{j\in J}G,((E_i)_{i\in\phi^{-1}j}F^j)_{j\in J}G)\bigr)
\end{gather*}
On elements we have
\begin{diagram}[h=2.8em,nobalance]
	\begin{array}{r}
\bigl((F^j_{(A_i)_{i\in\phi^{-1}j},(D_i)_{i\in\phi^{-1}j}}, \hfill
\\
F^j_{(D_i)_{i\in\phi^{-1}j},(E_i)_{i\in\phi^{-1}j}},\centerdot)_{j\in J},
\\
G_{((A_i)_{i\in\phi^{-1}j}F^j)_{j\in J}, ((E_i)_{i\in\phi^{-1}j}F^j)_{j\in J}}\bigr)
	\end{array}
	&\rMapsTo &\hspace*{-2em}
	\begin{array}{r}
\bigl(((\mu_{\triangledown\triangledown\:\phi^{-1}j\sqcup\phi^{-1}j\to\mb2})(F^j_{(A_i)_{i\in\phi^{-1}j},(D_i)_{i\in\phi^{-1}j}},
\\
F^j_{(D_i)_{i\in\phi^{-1}j},(E_i)_{i\in\phi^{-1}j}},\centerdot))_{j\in J},
\\
G_{((A_i)_{i\in\phi^{-1}j}F^j)_{j\in J}, ((E_i)_{i\in\phi^{-1}j}F^j)_{j\in J}}\bigr) 
	\end{array}
	\\
	\dMapsTo &&\dMapsTo
	\\
	\begin{array}{r}
\bigl((F^j_{(A_i)_{i\in\phi^{-1}j},(D_i)_{i\in\phi^{-1}j}})_{j\in J},(F^j_{(D_i)_{i\in\phi^{-1}j},(E_i)_{i\in\phi^{-1}j}})_{j\in J}, \hfill
\\
\mu_{\chi\:J\sqcup J\to J}((\centerdot)_{j\in J},G_{((A_i)_{i\in\phi^{-1}j}F^j)_{j\in J}, ((E_i)_{i\in\phi^{-1}j}F^j)_{j\in J}})\bigr)	\end{array}
 &\rMapsTo &lb
\end{diagram}

Using the conditions on $F^j$ being multi-entry functors we replace the element
\[ (\mu_{\triangledown\triangledown\:\phi^{-1}j\sqcup\phi^{-1}j\to\mb2}) (F^j_{(A_i)_{i\in\phi^{-1}j},(D_i)_{i\in\phi^{-1}j}},F^j_{(D_i)_{i\in\phi^{-1}j},(E_i)_{i\in\phi^{-1}j}},\centerdot)
\]
with \(\mu_\chi((\centerdot)_{i\in\phi^{-1}j},F^j_{(D_i)_{i\in\phi^{-1}j},(E_i)_{i\in\phi^{-1}j}})\):
\begin{multline*}
\prod_{j\in J} \bigl[ \prod_{i\in\phi^{-1}j} \mcv\bigl(\ca_i(A_i,D_i),\ca_i(D_i,E_i);\ca_i(A_i,E_i)\bigr)
\\
\times \mcv\bigl((\ca_i(A_i,E_i))_{i\in\phi^{-1}j};\cb_j((A_i)_{i\in\phi^{-1}j}F^j,(E_i)_{i\in\phi^{-1}j}F^j)\bigr) \bigr]
	\\
\times \mcv\bigl((\cb_j((A_i)_{i\in\phi^{-1}j}F^j,(E_i)_{i\in\phi^{-1}j}F^j))_{j\in J};\cc(((A_i)_{i\in\phi^{-1}j}F^j)_{j\in J}G,((E_i)_{i\in\phi^{-1}j}F^j)_{j\in J}G)\bigr)
	\\
\rTTo^{(\prod_{j\in J}\mu_{\chi\:\phi^{-1}j\sqcup\phi^{-1}j\to\phi^{-1}j})\times1}
\\
\prod_{j\in J} \mcv\bigl((\ca_i(A_i,D_i))_{i\in\phi^{-1}j},(\ca_i(D_i,E_i))_{i\in\phi^{-1}j}; \cb_j((A_i)_{i\in\phi^{-1}j}F^j,(E_i)_{i\in\phi^{-1}j}F^j)\bigr)
	\\
	\times \mcv\bigl((\cb_j((A_i)_{i\in\phi^{-1}j}F^j,(E_i)_{i\in\phi^{-1}j}F^j))_{j\in J}; \cc(((A_i)_{i\in\phi^{-1}j}F^j)_{j\in J}G,((E_i)_{i\in\phi^{-1}j}F^j)_{j\in J}G)\bigr)
	\\
	\hfill \rTTo^{\mu_{(\phi\sqcup\phi)\centerdot\chi}} \mcv\bigl((\ca_i(A_i,D_i))_{i\in I},(\ca_i(D_i,E_i))_{i\in I};\cc(((A_i)_{i\in\phi^{-1}j}F^j)_{j\in J}G,((E_i)_{i\in\phi^{-1}j}F^j)_{j\in J}G)\bigr)\bigr), \hskip\multlinegap
	\\
\bigl(((\centerdot)_{i\in\phi^{-1}j},F^j_{(A_i)_{i\in\phi^{-1}j},(E_i)_{i\in\phi^{-1}j}})_{j\in J},
G_{((A_i)_{i\in\phi^{-1}j}F^j)_{j\in J}, ((E_i)_{i\in\phi^{-1}j}F^j)_{j\in J}}\bigr) \mapsto
\\
\bigl((\mu_{\chi\:\phi^{-1}j\sqcup\phi^{-1}j\to\phi^{-1}j}((\centerdot)_{i\in\phi^{-1}j},F^j_{(A_i)_{i\in\phi^{-1}j},(E_i)_{i\in\phi^{-1}j}}))_{j\in J},
G_{((A_i)_{i\in\phi^{-1}j}F^j)_{j\in J}, ((E_i)_{i\in\phi^{-1}j}F^j)_{j\in J}}\bigr)
\mapsto lb.
\end{multline*}
We may apply \figref{dia-assoc-mu-multi} for maps \(I\sqcup I\rto\chi I\rto\phi J\).
Indeed, the compositions are equal
\[ (I\sqcup I\rto\chi I\rto\phi J) =(I\sqcup I\rTTo^{\phi\sqcup\phi} J\sqcup J\rto\chi J),
\]
and
\begin{diagram}[h=1.7em]
\phi^{-1}j\sqcup\phi^{-1}j &\rTTo^\chi &\phi^{-1}j
\\
\dMono &= &\dMono
\\
I\sqcup I &\rTTo^\chi &I
\end{diagram}
We get the composition of \eqref{eq-equality-lb=tr-for-H} with $lb$ as the final element.
We conclude that \(tr=lb\) for $H$.

Now let us check unitality condition \eqref{eq-coherence-with-units} for $H$.
The condition to verify is
\begin{multline*}
\bigl[ \prod_{i\in I} \mcv\bigl(;\ca_i(A_i,A_i)\bigr) \bigr] \times \mcv\bigl((\ca_i(A_i,A_i))_{i\in I};\cc((A_i)_{i\in I}H,(A_i)_{i\in I}H)\bigr)
\\
\rTTo^{\mu_{\varnothing\to I}} \mcv\bigl(;\cc((A_i)_{i\in I}H,(A_i)_{i\in I}H)\bigr), \quad
\bigl((\id_{A_i})_{i\in I},H_{(A_i)_{i\in I},(A_i)_{i\in I}}\bigr) \mapsto \id_{(A_i)_{i\in I}H}.
\end{multline*}
Providing details we get the left path in the following diagram.
This composition can be rewritten using \figref{dia-assoc-mu-multi} for maps \(\varnothing\to I\rto\phi J\).
We get the right path in
\[ \hspace*{-1em}
\begin{diagram}[h=3.2em,inline]
	\begin{array}{r}
\bigl[ \prod_{i\in I} \mcv\bigl(;\ca_i(A_i,A_i)\bigr) \bigr] \times \bigl[ \prod_{j\in J} \mcv\bigl((\ca_i(A_i,A_i))_{i\in\phi^{-1}j};\cb_j((A_i)_{i\in\phi^{-1}j}F^j,(A_i)_{i\in\phi^{-1}j}F^j)\bigr) \bigr] 
\\
\times \mcv\bigl((\cb_j((A_i)_{i\in\phi^{-1}j}F^j,(A_i)_{i\in\phi^{-1}j}F^j))_{j\in J};
\cc(((A_i)_{i\in\phi^{-1}j}F^j)_{j\in J}G,((A_i)_{i\in\phi^{-1}j}F^j)_{j\in J}G)\bigr)
	\end{array}
\hspace*{-18.6em}
	\\
\dTTo<{1\times\mu_\phi} &\rdTTo^{(\prod_{j\in J}\mu_{\varnothing\to\phi^{-1}j})\times1} &
	\\
	\begin{array}{l}
\bigl[ \prod_{i\in I} \mcv\bigl(;\ca_i(A_i,A_i)\bigr) \bigr] \times
\\
\mcv\bigl((\ca_i(A_i,A_i))_{i\in I};
\\
\cc(((A_i)_{i\in\phi^{-1}j}F^j)_{j\in J}G,((A_i)_{i\in\phi^{-1}j}F^j)_{j\in J}G)\bigr)
	\end{array}
	\hspace*{-0em} &&\hspace*{-0em}
	\begin{array}{l}
\bigl[ \prod_{j\in J} \mcv\bigl(;\cb_j((A_i)_{i\in\phi^{-1}j}F^j,(A_i)_{i\in\phi^{-1}j}F^j)\bigr) \bigr]
\\
\times \mcv\bigl((\cb_j((A_i)_{i\in\phi^{-1}j}F^j,(A_i)_{i\in\phi^{-1}j}F^j))_{j\in J};
\\
\cc(((A_i)_{i\in\phi^{-1}j}F^j)_{j\in J}G,((A_i)_{i\in\phi^{-1}j}F^j)_{j\in J}G)\bigr)
	\end{array}
	\\
&\rdTTo<{\mu_{\varnothing\to I}} &\dTTo>{\mu_{\varnothing\to J}}
	\\
&&\hspace*{-2em} \mcv\bigl(;\cc(((A_i)_{i\in\phi^{-1}j}F^j)_{j\in J}G,((A_i)_{i\in\phi^{-1}j}F^j)_{j\in J}G)\bigr)
\end{diagram}
\]
On elements we have
\begin{diagram}[h=2.2em]
	\begin{array}{r}
\bigl((\id_{A_i})_{i\in I},(F^j_{(A_i)_{i\in\phi^{-1}j},(A_i)_{i\in\phi^{-1}j}})_{j\in J},
\\
G_{((A_i)_{i\in\phi^{-1}j}F^j)_{j\in J},((A_i)_{i\in\phi^{-1}j}F^j)_{j\in J}}\bigr)
	\end{array}
	&\rMapsTo &
\begin{array}{l}
\bigl((\id_{(A_i)_{i\in\phi^{-1}j}F^j})_{j\in J},
\\
G_{((A_i)_{i\in\phi^{-1}j}F^j)_{j\in J},((A_i)_{i\in\phi^{-1}j}F^j)_{j\in J}}\bigr)
\end{array}
	\\
\dMapsTo &&\dMapsTo>!
	\\
\bigl((\id_{A_i})_{i\in I},H_{(A_i)_{i\in I},(A_i)_{i\in I}}\bigr) &\rMapsTo^? &\id_{((A_i)_{i\in\phi^{-1}j}F^j)_{j\in J}G}
\end{diagram}
due to the unitality of $F^j$ and $G$.
Unitality of $H$ is proven.

Let \((\ca_i)_{i\in I}\), \((\cb_j)_{j\in J}\), \((\cc_k)_{k\in K}\), $\cD$ be (families of) $\mcv$\n-categories, where \(I,J,K\in\Ob\cs_\sk\).
Let \(I\rto\phi J\rto\psi K\) be mappings (in $\cs_\sk$).
Let \(F^j\:(\ca_i)_{i\in\phi^{-1}j}\to\cb_j\), $j\in J$, \(G^k\:(\cb_j)_{j\in\psi^{-1}k}\to\cc\), $k\in K$,  \(H\:(\cc_k)_{k\in K}\to\cD\) be multi-entry functors.
Fix objects $A_i$, $E_i$ of $\ca_i$, $i\in I$.
Expanding entries of associativity equation for $\VCat$ using \eqref{eq-FGH} we get diagram at \figref{dia-assoc-mu-multi} for \(X_i=\ca_i(A_i,E_i)\), \(Y_j=\cb_j((A_i)_{i\in\phi^{-1}j}F^j,(E_i)_{i\in\phi^{-1}j}F^j)\),  \(Z_k=\cc_k(((A_i)_{i\in\phi^{-1}j}F^j)_{j\in\psi^{-1}k}G^k,((E_i)_{i\in\phi^{-1}j}F^j)_{j\in\psi^{-1}k}G^k)\),
\[ W=\cD((((A_i)_{i\in\phi^{-1}j}F^j)_{j\in\psi^{-1}k}G^k)_{k\in K}H,(((E_i)_{i\in\phi^{-1}j}F^j)_{j\in\psi^{-1}k}G^k)_{k\in K}H).
\]
Therefore, for composition in $\VCat$ the associativity holds.

Define the identity $\mcv$\n-functor \(\Id\:\ca\to\ca\) with the identity map \(\id\:\Ob\ca\to\Ob\ca\) and \(1_{\ca(A,A)}\in\mcv\bigl(\ca(A,A);\ca(A,A)\bigr)\).
Clearly, both equations for identities are satisfied, hence, $\VCat$ is a symmetric multicategory.
\end{proof}

\subsection{Natural \texorpdfstring{$\mcV$}V-transformations}
\begin{definition}\label{def-Natural-V-transformation}
Natural $\mcv$\n-transformation\index{natural transformation between multi-entry functors} \(\lambda\:F\to G\:(\ca_i)_{i\in I}\to\cc\) is a family \((\lambda_{A_1,\dots,A_I})_{(A_i\in\ca_i)}\), \(\lambda_{A_1,\dots,A_I}\in\mcv\bigl(;\cc((A_i)_{i\in I}F,(A_i)_{i\in I}G)\bigr)\), such that for all objects $A_i$, $D_i$ of $\ca_i$, $i\in I$, the square
	\begin{diagram}[h=2.4em]
		(\ca_i(A_i,D_i))_{i\in I} \rto{F_{(A_i),(D_i)},\lambda_{(D_i)}} \cc((A_i)_{i\in I}F,(D_i)_{i\in I}F),\cc((D_i)_{i\in I}F,(D_i)_{i\in I}G)
		\\
		\dTTo<{\lambda_{(A_i)},G_{(A_i),(D_i)}} \hspace*{26em} \dTTo>{\kappa_{(A_i)F,(D_i)F,(D_i)G}}
		\\
		\cc((A_i)_{i\in I}F,(A_i)_{i\in I}G),\cc((A_i)_{i\in I}G,(D_i)_{i\in I}G) \rto{\kappa_{(A_i)F,(A_i)G,(D_i)G}} \cc((A_i)_{i\in I}F,(D_i)_{i\in I}G)
	\end{diagram}
	commutes in $\mcv$.
	In detail, elements $b'$ and $g'$ of \(\mcv\bigl((\ca_i(A_i,D_i)_{i\in I};\cc((A_i)_{i\in I}F,(D_i)_{i\in I}G)\bigr)\) are equal, where
	\begin{multline}
\mu_{\mathsf{\triangledown\centerdot}}\: \mcv\bigl((\ca_i(A_i,D_i)_{i\in I}; \cc((A_i)_{i\in I}F,(A_i)_{i\in I}F)\bigr)\times \mcv\bigl(;\cc((D_i)_{i\in I}F,(D_i)_{i\in I}G)\bigr)
\\
\times\mcv\bigl(\cc((A_i)_{i\in I}F,(D_i)_{i\in I}F), \cc((D_i)_{i\in I}F,(D_i)_{i\in I}G);\cc((A_i)_{i\in I}F,(D_i)_{i\in I}G)\bigr) \to
\\
\mcv\bigl((\ca_i(A_i,D_i)_{i\in I};\cc((A_i)_{i\in I}F,(D_i)_{i\in I}G)\bigr), \quad (F_{(A_i),(D_i)},\lambda_{(D_i)},\kappa_{(A_i)F,(D_i)F,(D_i)G}) \mapsto b',
\label{eq-Fb38}
\end{multline}
\begin{multline}
\mu_{\mathsf{\centerdot\triangledown}}\: \mcv\bigl(;\cc((A_i)_{i\in I}F,(A_i)_{i\in I}G)\bigr)\times \mcv\bigl((\ca_i(A_i,D_i)_{i\in I};\cc((A_i)_{i\in I}G,(D_i)_{i\in I}G)\bigr)
\\
\times \mcv\bigl(\cc((A_i)_{i\in I}F,(A_i)_{i\in I}G),\cc((A_i)_{i\in I}G,(D_i)_{i\in I}G);\cc((A_i)_{i\in I}F,(D_i)_{i\in I}G)\bigr) \to
\\
\mcv\bigl((\ca_i(A_i,D_i)_{i\in I};\cc((A_i)_{i\in I}F,(D_i)_{i\in I}G)\bigr), \quad (\lambda_{(A_i)},G_{(A_i),(D_i)},\kappa_{(A_i)F,(A_i)G,(D_i)G}) \mapsto g'.
\label{eq-Gg38}
	\end{multline}
	Here \(\mathsf{\triangledown\centerdot}=
	\begin{pmatrix}
		1 &\dots &I
		\\
		1 &\dots &1
	\end{pmatrix}
	\:I\to\mb2\) and \(\mathsf{\centerdot\triangledown}=
	\begin{pmatrix}
		1 &\dots &I
		\\
		2 &\dots &2
	\end{pmatrix}
	\:I\to\mb2\).
\end{definition}

\begin{proposition}\label{pro-natural-V-transformations}
	The set \(\VCat((\ca_i)_{i\in I},\cc)(F,G)\) of natural $\mcv$\n-transformations \(\lambda\:F\to G\:(\ca_i)_{i\in I}\to\cc\) is in bijection with the set \(\mcv\bigl(;\int_{(A_i\in\ca_i)}\cc((A_i)_{i\in I}F,(A_i)_{i\in I}G)\bigr)\).
\end{proposition}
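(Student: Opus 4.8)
The plan is to combine the universal properties of the product and of the equalizer defining the end in \eqref{eq-int-c(FE-GE)-VQu} with the closedness bijection \eqref{eq-phi(XYZ)-iso}. Write $K=\int_{(A_i\in\ca_i)}\cc((A_i)_{i\in I}F,(A_i)_{i\in I}G)$ for the end, which by definition is the equalizer $e\colon K\to\prod_{(A_i)}\cc((A_i)_{i\in I}F,(A_i)_{i\in I}G)$ of the pair $\bigl((\pr_{(D_i)}\centerdot\beta),(\pr_{(A_i)}\centerdot\gamma)\bigr)$ in $\mcv$. First I would note that, by \defref{def-multicategory-small-products}, an element $h\in\mcv\bigl(;\prod_{(A_i)}\cc((A_i)_{i\in I}F,(A_i)_{i\in I}G)\bigr)$ is the same datum as a family $(\lambda_{(A_i)})_{(A_i\in\ca_i)}$ with $\lambda_{(A_i)}=h\centerdot\pr_{(A_i)}\in\mcv\bigl(;\cc((A_i)_{i\in I}F,(A_i)_{i\in I}G)\bigr)$. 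This already matches the underlying family of a natural $\mcv$\n-transformation, and since \defref{def-Natural-V-transformation} imposes no unit axiom, the family together with the naturality squares is the entire structure.

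Next, by \defref{def-equalizers} applied with empty source sequence, an element $h$ as above factors (uniquely) through $e$ as some $q\in\mcv(;K)$ precisely when $h$ equalizes the two parallel maps. Applying \defref{def-multicategory-small-products} once more to the target product, this equalizing condition holds iff for every pair $(A_i,D_i)_{i\in I}$ one has $\lambda_{(D_i)}\centerdot\beta=\lambda_{(A_i)}\centerdot\gamma$ in $\mcv\bigl(;\und\mcv((\ca_i(A_i,D_i))_{i\in I};\cc((A_i)_{i\in I}F,(D_i)_{i\in I}G))\bigr)$, where I have used associativity of composition to regroup $h\centerdot(\pr_{(D_i)}\centerdot\beta)=(h\centerdot\pr_{(D_i)})\centerdot\beta$ and likewise for $\gamma$. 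Thus the problem reduces to showing that this family of equations is equivalent to the naturality squares.

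The decisive step is to identify these two composites under the closedness bijection. I would apply $\varphi_{(\ca_i(A_i,D_i))_{i\in I};();\cc((A_i)_{i\in I}F,(D_i)_{i\in I}G)}$ of \eqref{eq-phi(XYZ)-iso} (the case $J=\varnothing$) to each side. Since $\beta$ is by construction the adjunct of $\beta^\dagger$ from \eqref{eq-beta-dagger}, the associativity of \figref{dia-assoc-mu-multi} shows that $\varphi(\lambda_{(D_i)}\centerdot\beta)$ is the morphism obtained by feeding $\lambda_{(D_i)}$ into the $\cc((D_i)_{i\in I}F,(D_i)_{i\in I}G)$\n-input of $\beta^\dagger$; unfolding $\beta^\dagger=\mu_{\mathsf{\triangledown I}}(F_{(A_i),(D_i)},1,\centerdot)$ and using $1\centerdot\lambda_{(D_i)}=\lambda_{(D_i)}$, this is exactly the element $b'$ of \eqref{eq-Fb38}. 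Symmetrically, starting from \eqref{eq-gamma-dagger}, $\varphi(\lambda_{(A_i)}\centerdot\gamma)$ unfolds to the element $g'$ of \eqref{eq-Gg38}. As $\varphi$ is a bijection, $\lambda_{(D_i)}\centerdot\beta=\lambda_{(A_i)}\centerdot\gamma$ holds iff $b'=g'$, i.e.\ iff the naturality square of \defref{def-Natural-V-transformation} commutes.

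Assembling these equivalences, the assignment $h\mapsto q$ restricts to a bijection between families satisfying all naturality squares---that is, natural $\mcv$\n-transformations $F\to G$---and elements of $\mcv(;K)$, which is the assertion. I expect the only delicate point to be the bookkeeping in the decisive step: matching the index maps $\mathsf{\triangledown I}$ and $\mathsf{\triangledown\centerdot}$ occurring in \eqref{eq-beta-dagger}, \eqref{eq-Fb38} (and dually $\mathsf{\triangledown I\centerdot X}$, $\mathsf{\centerdot\triangledown}$ for $\gamma$, $g'$), and verifying that the relevant instance of \figref{dia-assoc-mu-multi} yields precisely $b'$ and $g'$ rather than a permuted variant. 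Everything else is a routine application of the three universal properties.
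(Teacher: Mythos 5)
Your proposal is correct and follows essentially the same route as the paper: unfold $\mcv(;K)$ via the product and equalizer universal properties into the fork condition $\lambda_{(D_i)}\centerdot\beta=\lambda_{(A_i)}\centerdot\gamma$, transfer it across the closedness bijection to the adjoint condition involving $\beta^\dagger,\gamma^\dagger$, and observe that the resulting equation coincides identically with $b'=g'$ of \defref{def-Natural-V-transformation}. The "decisive step" you flag is exactly the point the paper handles by its chain of "equivalently" rewritings ending in the $\mu_{\inj_1}$ expansion, so no new idea is needed.
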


\begin{proof}
	The latter set is
	\begin{multline*}
		\mcv\biggl(;\int_{(A_i\in\ca_i)}\cc((A_i)_{i\in I}F,(A_i)_{i\in I}G)\biggr)
		\\
		=\Bigl\{ \lambda=(\lambda_{(A_i)}) \in \prod_{(A_i\in\ca_i)} \mcv\bigl(;\cc((A_i)_{i\in I}F,(A_i)_{i\in I}G)\bigr) \mid 
		\\
		\begin{diagram}[inline,h=1.9em]
			() &\rTTo^{\lambda_{(D_i)}} &\cc((D_i)_{i\in I}F,(D_i)_{i\in I}G)
			\\
			\dTTo<{\lambda_{(A_i)}} &= &\dTTo>\beta
			\\
			\cc((A_i)_{i\in I}F,(A_i)_{i\in I}G) &\rTTo^\gamma &\und\mcv\bigl((\ca_i(A_i,D_i))_{i\in I};\cc((A_i)_{i\in I}F,(D_i)_{i\in I}G)\bigr)
		\end{diagram}
		\hspace*{4em} \Bigr\}
	\end{multline*}
	Equivalently the condition can be written as: for all families of objects \((A_j,E_j\in\cE_j)_{j\in J}\)
	\begin{diagram}[h=1.9em]
		(\ca_i(A_i,D_i))_{i\in I} &\rTTo^{(1)_I,\lambda_{(D_i)}} &(\ca_i(A_i,D_i))_{i\in I},\cc((D_i)_{i\in I}F,(D_i)_{i\in I}G)
		\\
		\dTTo<{(1)_I,\lambda_{(A_i)}} &= &\dTTo>{\beta^\dagger}
		\\
		(\ca_i(A_i,D_i))_{i\in I},\cc((A_i)_{i\in I}F,(A_i)_{i\in I}G) &\rTTo^{\gamma^\dagger} &\cc((A_i)_{i\in I}F,(D_i)_{i\in I}G)
	\end{diagram}
	Equivalently, $tr=lb$ where:
	\begin{multline}
		\prod_{i\in I}\mcv\bigl(\ca_i(A_i,D_i);\ca_i(A_i,D_i)\bigr)	\times \mcv\bigl(;\cc((D_i)_{i\in I}F,(D_i)_{i\in I}G)\bigr)
		\\
		\times \mcv\bigl((\ca_i(A_i,D_i))_{i\in I},\cc((D_i)_{i\in I}F,(D_i)_{i\in I}G);\cc((A_i)_{i\in I}F, (D_i)_{i\in I}G)\bigr) 
		\\
		\rTTo^{\mu_{\inj_1\:I\hookrightarrow I\sqcup\mb1}} \mcv\bigl((\ca_i(A_i,D_i))_{i\in I};\cc((A_i)_{i\in I}F, (D_i)_{i\in I}G)\bigr),
		\\
		\bigl((1_{\ca_i(A_i,D_i)})_{i\in I},\lambda_{(D_i)},\beta^\dagger\bigr) \mapsto tr,
\label{eq-1ADD-tr}
	\end{multline}
	\begin{multline}
		\prod_{i\in I}\mcv\bigl(\ca_i(A_i,D_i);\ca_i(A_i,D_i)\bigr)	\times \mcv\bigl(;\cc((A_i)_{i\in I}F,(A_i)_{i\in I}G)\bigr)
		\\
		\times \mcv\bigl((\ca_i(A_i,D_i))_{i\in I},\cc((A_i)_{i\in I}F,(A_i)_{i\in I}G);\cc((A_i)_{i\in I}F, (D_i)_{i\in I}G)\bigr) 
		\\
		\rTTo^{\mu_{\inj_1\:I\hookrightarrow I\sqcup\mb1}} \mcv\bigl((\ca_i(A_i,D_i))_{i\in I};\cc((A_i)_{i\in I}F, (D_i)_{i\in I}G)\bigr),
		\\
		\bigl((1_{\ca_i(A_i,D_i)})_{i\in I},\lambda_{(A_i)},\gamma^\dagger\bigr) \mapsto lb.
\label{eq-1ADA-lb}
	\end{multline}
In more detail from \eqref{eq-1ADD-tr} we obtain the left path of the following diagram.
Applying the associativity property from \figref{dia-assoc-mu-multi} for maps \(I \rto{\inj_1} I\sqcup\mb1 \rto{\mathsf{\triangledown I}} \mb2\) we get another expression for $tr$ via the right path of (see \eqref{eq-Fb38}):
\[ \hspace*{-1.6em}
\begin{diagram}[h=3.8em,inline]
	\begin{array}{l}
\prod_{i\in I}\mcv\bigl(\ca_i(A_i,D_i);\ca_i(A_i,D_i)\bigr)	\times \mcv\bigl(;\cc((D_i)_{i\in I}F,(D_i)_{i\in I}G)\bigr)
\\
\times \mcv\bigl((\ca_i(A_i,D_i))_{i\in I}; \cc((A_i)_{i\in I}F,(D_i)_{i\in I}F)\bigr)
\\
\times \mcv\bigl(\cc((D_i)_{i\in I}F,(D_i)_{i\in I}G);\cc((D_i)_{i\in I}F,(D_i)_{i\in I}G)\bigr)
\\
\times \mcv\bigl(\cc((A_i)_{i\in I}F,(D_i)_{i\in I}F),\cc((D_i)_{i\in I}F,(D_i)_{i\in I}G);
\\
\cc((A_i)_{i\in I}F, (D_i)_{i\in I}G)\bigr)
	\end{array}
	\hspace*{-9em}
	\\
\dTTo<{1\times1\times\mu_{\mathsf{\triangledown I}}} &\rdTTo^{\mu_{\id_I}\times\mu_{\varnothing\to\mb1}\times1} &
	\\
	\begin{array}{l}
\prod_{i\in I}\mcv\bigl(\ca_i(A_i,D_i);\ca_i(A_i,D_i)\bigr)
\\
\times \mcv\bigl(;\cc((D_i)_{i\in I}F,(D_i)_{i\in I}G)\bigr) \times
\\
\mcv\bigl((\ca_i(A_i,D_i))_{i\in I},\cc((D_i)_{i\in I}F,(D_i)_{i\in I}G);
\\
\hfill \cc((A_i)_{i\in I}F, (D_i)_{i\in I}G)\bigr) 
	\end{array}
	\hspace*{-0em} &&\hspace*{-0em}
	\begin{array}{l}
\mcv\bigl((\ca_i(A_i,D_i))_{i\in I}; \cc((A_i)_{i\in I}F,(D_i)_{i\in I}F)\bigr)
\\
\times \mcv\bigl(;\cc((D_i)_{i\in I}F,(D_i)_{i\in I}G)\bigr) \times
\\
\mcv\bigl(\cc((A_i)_{i\in I}F,(D_i)_{i\in I}F),\cc((D_i)_{i\in I}F,(D_i)_{i\in I}G);
\\
\hfill \cc((A_i)_{i\in I}F,(D_i)_{i\in I}G)\bigr) 
	\end{array}
	\\
&\rdTTo<{\mu_{\inj_1\:I\hookrightarrow I\sqcup\mb1}} &\dTTo>{\mu_{\triangledown\centerdot\:I\to\mb2}}
	\\
&&\hspace*{-0em} \mcv\bigl((\ca_i(A_i,D_i))_{i\in I};\cc((A_i)_{i\in I}F, (D_i)_{i\in I}G)\bigr)
\end{diagram}
\]
On elements:
\begin{diagram}[h=2em,nobalance,LaTeXeqno]
\bigl((1_{\ca_i(A_i,D_i)})_{i\in I},\lambda_{(D_i)},F_{(A_i),(D_i)},1_{\cc((D_i)_{i\in I}F,(D_i)_{i\in I}G)},\centerdot\bigr) &\rMapsTo &\bigl(F_{(A_i),(D_i)},\lambda_{(D_i)},\centerdot\bigr)
	\\
\dMapsTo &&\dMapsTo
	\\
\bigl((1_{\ca_i(A_i,D_i)})_{i\in I},\lambda_{(D_i)},\beta^\dagger\bigr) &\rMapsTo &tr=b'
\label{eq-tr-11mm}
\end{diagram}

Similarly from \eqref{eq-1ADA-lb} we obtain the left path of the following diagram.
Applying the associativity property from \figref{dia-assoc-mu-multi} for maps \(I \rto{\inj_1} I\sqcup\mb1 \rto{\mathsf{\triangledown I\centerdot X}} \mb2\) we get another expression for $lb$ via the right path of (see \eqref{eq-Gg38}):
\[ \hspace*{-1.6em}
\begin{diagram}[h=3.8em,inline]
	\begin{array}{l}
\prod_{i\in I}\mcv\bigl(\ca_i(A_i,D_i);\ca_i(A_i,D_i)\bigr) \times \mcv\bigl(;\cc((A_i)_{i\in I}F,(A_i)_{i\in I}G)\bigr)
\\
\times \mcv\bigl(\cc((A_i)_{i\in I}F,(A_i)_{i\in I}G);\cc((A_i)_{i\in I}F,(A_i)_{i\in I}G)\bigr)
\\
\times \mcv\bigl((\ca_i(A_i,D_i))_{i\in I}; \cc((A_i)_{i\in I}G,(D_i)_{i\in I}G)\bigr)
\\
\times \mcv\bigl(\cc((A_i)_{i\in I}F,(A_i)_{i\in I}G),\cc((A_i)_{i\in I}G,(D_i)_{i\in I}G);
\\
\cc((A_i)_{i\in I}F, (D_i)_{i\in I}G)\bigr) 
	\end{array}
	\hspace*{-9em}
	\\
\dTTo<{1\times1\times\mu_{\mathsf{\triangledown I\centerdot X}}} &\rdTTo^{\mu_{\varnothing\to\mb1}\times\mu_{\id_I}\times1} &
	\\
	\begin{array}{l}
\prod_{i\in I}\mcv\bigl(\ca_i(A_i,D_i);\ca_i(A_i,D_i)\bigr)
\\
\times \mcv\bigl(;\cc((A_i)_{i\in I}F,(A_i)_{i\in I}G)\bigr) \times
\\
\mcv\bigl((\ca_i(A_i,D_i))_{i\in I},\cc((A_i)_{i\in I}F,(A_i)_{i\in I}G);
\\
\hfill \cc((A_i)_{i\in I}F, (D_i)_{i\in I}G)\bigr) 
	\end{array}
	\hspace*{-0em} &&\hspace*{-0em}
	\begin{array}{l}
\mcv\bigl(;\cc((A_i)_{i\in I}F,(A_i)_{i\in I}G)\bigr)
\\
\times \mcv\bigl((\ca_i(A_i,D_i))_{i\in I}; \cc((A_i)_{i\in I}G,(D_i)_{i\in I}G)\bigr) \times
\\
\mcv\bigl(\cc((A_i)_{i\in I}F,(A_i)_{i\in I}G),\cc((A_i)_{i\in I}G,(D_i)_{i\in I}G);
\\
\hfill \cc((A_i)_{i\in I}F,(D_i)_{i\in I}G)\bigr) 
	\end{array}
	\\
&\rdTTo<{\mu_{\inj_1\:I\hookrightarrow I\sqcup\mb1}} &\dTTo>{\mu_{\centerdot\triangledown\:I\to\mb2}}
	\\
&&\hspace*{-0em} \mcv\bigl((\ca_i(A_i,D_i))_{i\in I};\cc((A_i)_{i\in I}F, (D_i)_{i\in I}G)\bigr)
\end{diagram}
\]
On elements:
\begin{diagram}[h=2em,nobalance,LaTeXeqno]
\bigl((1_{\ca_i(A_i,D_i)})_{i\in I},\lambda_{(A_i)},1_{\cc((A_i)_{i\in I}F,(A_i)_{i\in I}G)},G_{(A_i),(D_i)},\centerdot\bigr) &\rMapsTo &\bigl((\lambda_{(A_i)},G_{(A_i),(D_i)},\centerdot\bigr)
	\\
	\dMapsTo &&\dMapsTo
	\\
\bigl((1_{\ca_i(A_i,D_i)})_{i\in I},\lambda_{(A_i)},\gamma^\dagger\bigr) &\rMapsTo &lb=g'
	\label{eq-lb-11mm}
\end{diagram}
Thus, equations $tr=lb$ and $b'=g'$ from \eqref{eq-Fb38} and \eqref{eq-Gg38} coincide identically.
\end{proof}

\subsection{Closedness of the multicategory of \texorpdfstring{$\mcV$}V-categories}
\begin{proposition}\label{con-evVCat-VCat}
Let $\mcv$ be a locally small symmetric closed complete multicategory.
Let \((\ca_i)_{i\in I}\), \(I\in\Ob\cs_\sk\), $\cc$, be (a family of) small $\mcv$\n-categories.
Then
\[ \ev^\VCat=\ev^\VQu_{(\ca_i)_{i\in I};\cc}\vert\in\VCat\bigl((\ca_i)_{i\in I},\und\VCat((\ca_i)_{i\in I};\cc);\cc\bigr).
\]
\end{proposition}

\begin{proof}
We have to prove that \(lb=tr\), where
\begin{multline}
\mu_{\triangledown\triangledown}\: \mcv\bigl((\ca_i(A_i,D_i))_{i\in I},[(\ca_i)_{i\in I};\cc](F,G);\cc((A_i)_{i\in I}F,(D_i)_{i\in I}G)\bigr)
\\
\times \mcv\bigl((\ca_i(D_i,E_i))_{i\in I},[(\ca_i)_{i\in I};\cc](G,H);\cc((D_i)_{i\in I}G,(E_i)_{i\in I}H)\bigr)
\\
\times
\mcv\bigl(\cc((A_i)_{i\in I}F,(D_i)_{i\in I}G),\cc((D_i)_{i\in I}G,(E_i)_{i\in I}H);\cc((A_i)_{i\in I}F,(E_i)_{i\in I}H)\bigr) \to
\\
\mcv\bigl((\ca_i(A_i,D_i))_{i\in I},[(\ca_i)_{i\in I};\cc](F,G),(\ca_i(D_i,E_i))_{i\in I},[(\ca_i)_{i\in I};\cc](G,H); 
\cc((A_i)_{i\in I}F,(E_i)_{i\in I}H)\bigr),
\\
(\ev^\VQu_{(A_i),F,(D_i),G},\ev^\VQu_{(D_i),G,(E_i),H},\centerdot) \mapsto lb,
\label{eq-ev-ev-lb}
\end{multline}
\begin{multline}
\mu_\chi\: \prod_{i\in I} \mcv\bigl(\ca_i(A_i,D_i),\ca_i(D_i,E_i);\ca_i(A_i,E_i)\bigr)
\\
\times \mcv\bigl([(\ca_i)_{i\in I};\cc](F,G),[(\ca_i)_{i\in I};\cc](G,H);[(\ca_i)_{i\in I};\cc](F,H)\bigr)
	\\
\times \mcv\bigl((\ca_i(A_i,E_i))_{i\in I},[(\ca_i)_{i\in I};\cc](F,H);\cc((A_i)_{i\in I}F,(E_i)_{i\in I}H)\bigr) \to
	\\
\mcv\bigl((\ca_i(A_i,D_i))_{i\in I},[(\ca_i)_{i\in I};\cc](F,G),(\ca_i(D_i,E_i))_{i\in I},[(\ca_i)_{i\in I};\cc](G,H);
\cc((A_i)_{i\in I}F,(E_i)_{i\in I}H)\bigr),
	\\
\bigl((\kappa_{A_i,D_i,E_i})_{i\in I},\centerdot,\ev^\VQu_{(A_i),F,(E_i),H}\bigl) \mapsto tr.
\label{eq-kappa-ev-tr}
\end{multline}
In detail \eqref{eq-ev-ev-lb} can be described as the left map in the following diagram.
Using the associativity property from \figref{dia-assoc-mu-multi} for maps \((I\sqcup\mb1)\sqcup(I\sqcup\mb1)\rTTo^{(\triangledown\mathsf{I})\sqcup(\triangledown\mathsf{I})} \mb2\sqcup\mb2\rto{\triangledown\triangledown} \mb2\) we get the right map in:
\begin{gather*}
\begin{array}{c}
	\mcv\bigl((\ca_i(A_i,D_i))_{i\in I};\cc((A_i)_{i\in I}F,(D_i)_{i\in I}F)\bigr)
	\times \mcv\bigl([(\ca_i)_{i\in I};\cc](F,G);\cc((D_i)_{i\in I}F,(D_i)_{i\in I}G)\bigr)
	\\
\times \mcv\bigl(\cc((A_i)_{i\in I}F,(D_i)_{i\in I}F),\cc((D_i)_{i\in I}F,(D_i)_{i\in I}G);\cc((A_i)_{i\in I}F,(D_i)_{i\in I}G)\bigr)
	\\
\times \mcv\bigl((\ca_i(D_i,E_i))_{i\in I};\cc((D_i)_{i\in I}G,(E_i)_{i\in I}G)\bigr)
\times \mcv\bigl([(\ca_i)_{i\in I};\cc](G,H);\cc((E_i)_{i\in I}G,(E_i)_{i\in I}H)\bigr)
	\\
	\times \mcv\bigl(\cc((D_i)_{i\in I}G,(E_i)_{i\in I}G),\cc((E_i)_{i\in I}G,(E_i)_{i\in I}H);\cc((D_i)_{i\in I}G,(E_i)_{i\in I}H)\bigr)
	\\
	\times \mcv\bigl(\cc((A_i)_{i\in I}F,(D_i)_{i\in I}G),\cc((D_i)_{i\in I}G,(E_i)_{i\in I}H);\cc((A_i)_{i\in I}F,(E_i)_{i\in I}H)\bigr)
\end{array}
\\
\begin{tanglec}
\hh \step \id \step \id \step \id \step[2.5] \id \step \id \step \id \step[1.5] \id 
\\
\hh \ffbox4{\mu_{\triangledown\mathsf{I}\:\!\!I\sqcup\mb1\to\mb2}} \hstep \ffbox4{\mu_{\triangledown\mathsf{I}\:\!\!I\sqcup\mb1\to\mb2}} \hstep \id
\\
\hh \Step \id \step[4.5] \id \step[2.5] \id
\\
\hh \Step \ffbox8{\mu_{\triangledown\triangledown\:(I\sqcup\mb1)\sqcup(I\sqcup\mb1)\to\mb2}}
\\
\hh \Step \id 
\end{tanglec}
=
\begin{tanglec}
\id \step \id \step \XX \step[3] \ne2 \step \id \step \id
	\\
\id \step \id \step \id \Step \XX \step[3] \id \step \id
\\
\hh \hstep \id \step \id \step \id \Step \id \step[1.5] \ffbox5{\mu_{\triangledown\triangledown\:\mb2\sqcup\mb2\to\mb2}}
	\\
\hh \id \step \id \step \id \Step \id \step[4] \id \Step
	\\
\hh \ffbox{10}{\mu_{(\triangledown\mathsf{I})\sqcup(\triangledown\mathsf{I})\:\!\!(I\sqcup\mb1)\sqcup(I\sqcup\mb1)\to\mb2\sqcup\mb2}} \step
	\\
\hh \id \step
\end{tanglec}
\\
\mcv\bigl((\ca_i(A_i,D_i))_{i\in I},[(\ca_i)_{i\in I};\cc](F,G),(\ca_i(D_i,E_i))_{i\in I},[(\ca_i)_{i\in I};\cc](G,H);
\cc((A_i)_{i\in I}F,(E_i)_{i\in I}H)\bigr)
\end{gather*}
On elements
\begin{diagram}[h=2em,nobalance,LaTeXeqno]
(F_{(A_i),(D_i)},p_{(D_i)_{i\in I}},\centerdot,G_{(D_i),(E_i)},p_{(E_i)_{i\in I}},\centerdot,\centerdot) &\rMapsTo &(F_{(A_i),(D_i)},p_{(D_i)_{i\in I}},G_{(D_i),(E_i)},p_{(E_i)_{i\in I}},\centerdot^{(4)})
	\\
	\dMapsTo &&\dMapsTo
	\\
(\ev^\VQu_{(A_i),F,(D_i),G},\ev^\VQu_{(D_i),G,(E_i),H},\centerdot) &\rMapsTo &lb
\label{eq-FpGp-lb}
\end{diagram}

Expanding the expression for $tr$ from \eqref{eq-kappa-ev-tr} we get the left map in the following diagram.
Using \figref{dia-assoc-mu-multi} for maps \((I\sqcup\mb1)\sqcup(I\sqcup\mb1) \rTTo^{\chi_{I\sqcup\mb1}} I\sqcup\mb1 \rTTo^{\triangledown\mathsf{I}} \mb2\) we rewrite the composition as the second map.
Let us use the fact that $F$ is a multi-entry $\mcv$\n-functor (see \defref{def-multi-entry-V-functor}).
Also we use explicit form of composition in $\mcv$\n-category \([(\ca_i)_{i\in I};\cc]\) given by diagram~\eqref{dia-composition-undVCat}.
This gives the third map.
Using \figref{dia-assoc-mu-multi} for maps \(I\sqcup\mb1\sqcup I\sqcup\mb1 \rTTo^{\triangledown\mathsf{I}\triangledown\mathsf{I}\centerdot(23)} \mb2\sqcup\mb2 \rTTo^{\triangledown\triangledown} \mb2\) we rewrite the composition as the right map in
\begin{gather*}
\hspace*{-0.8em}
\begin{array}{r}
	\prod_{i\in I} \mcv\bigl(\ca_i(A_i,D_i),\ca_i(D_i,E_i);\ca_i(A_i,E_i)\bigr)
	\\
	\times \mcv\bigl([(\ca_i)_{i\in I};\cc](F,G),[(\ca_i)_{i\in I};\cc](G,H);
	\\
	{}[(\ca_i)_{i\in I};\cc](F,H)\bigr) 
	\\
	\times \mcv\bigl((\ca_i(A_i,E_i))_{i\in I}; \cc((A_i)_{i\in I}F,(E_i)_{i\in I}F)\bigr)
	\\
	\times \mcv\bigl([(\ca_i)_{i\in I};\cc](F,H);\cc((E_i)_{i\in I}F,(E_i)_{i\in I}H)\bigr) \times
	\\
	\mcv\bigl(\cc((A_i)_{i\in I}F,(E_i)_{i\in I}F),\cc((E_i)_{i\in I}F,(E_i)_{i\in I}H);
	\\
	\cc((A_i)_{i\in I}F,(E_i)_{i\in I}H)\bigr)
\end{array}
\hspace*{-0.5em}
\begin{array}{l}
	\mcv\bigl((\ca_i(A_i,D_i))_{i\in I};\cc((A_i)_{i\in I}F,(D_i)_{i\in I}F)\bigr)
	\\
	\times \mcv\bigl((\ca_i(D_i,E_i))_{i\in I};\cc((D_i)_{i\in I}F,(E_i)_{i\in I}F)\bigr)
	\\
	\times \mcv\bigl([(\ca_i)_{i\in I};\cc](F,G);\cc((E_i)_{i\in I}F,(E_i)_{i\in I}G)\bigr)
	\\
	\times \mcv\bigl([(\ca_i)_{i\in I};\cc](G,H);\cc((E_i)_{i\in I}G,(E_i)_{i\in I}H)\bigr)
	\\
	\times \mcv\bigl(\cc((A_i)_{i\in I}F,(D_i)_{i\in I}F),\cc((D_i)_{i\in I}F,(E_i)_{i\in I}F);
	\\
	\hfill \cc((A_i)_{i\in I}F,(E_i)_{i\in I}F)\bigr)
	\\
	\times \mcv\bigl(\cc((E_i)_{i\in I}F,(E_i)_{i\in I}G),\cc((E_i)_{i\in I}G,(E_i)_{i\in I}H);
	\\
	\hfill \cc((E_i)_{i\in I}F,(E_i)_{i\in I}H)\bigr)
	\\
	\times \mcv\bigl(\cc((A_i)_{i\in I}F,(E_i)_{i\in I}F),\cc((E_i)_{i\in I}F,(E_i)_{i\in I}H);
	\\
	\hfill \cc((A_i)_{i\in I}F,(E_i)_{i\in I}H)\bigr)
\end{array}
\\
\begin{tanglec}
\hh \id \hstep \id \step[1.5] \id \step \id \step \id \hstep
\\
\hh \hstep \id \hstep \id \hstep \ffbox4{\mu_{\mathsf{\triangledown I}\:\!\!I\sqcup\mb1\to\mb2}}
\\
\hh \id \hstep \id \step[2.5] \id \step[1.5]
\\
\hh \ffbox4{\mu_{\chi_{I\sqcup\mb1}}} \step[1.5]
\\
\hh \id \step[1.5]
\end{tanglec}
=
\begin{tanglec}
\id \step[3] \XX \step \id \step \id
\\
\hh \ffbox4{\mu_{\chi\:I\sqcup I\to I}} \step \ffbox2{\mu_{\mb2\to\!\mb1}} \hstep \id \hstep
\\
\hh \step[1.5] \id \step[4] \id \step[1.5] \id
\\
\hh \ffbox8{\mu_{\mathsf{\triangledown I\triangledown I}\centerdot\chi_{\mb2}\:I\sqcup\mb1\sqcup I\sqcup\mb1\to\mb2}}
\\
\hh \id
\end{tanglec}
\qquad
\begin{tanglec}
\hstep \id \step \id \step[2.5] \id \Step \XX \step \id \step \id
	\\
\hstep \id \step \id \step[2.5] \XX \Step \id \step \id \step \id
\\
\hh \ffbox5{\mu_{\triangledown\triangledown\:I\sqcup I\to\mb2}} \step \ffbox4{\mu_{1\:\mb2\to\mb2}} \hstep \id \hstep
	\\
\hh \Step \id \step[5.5] \id \step[2.5] \id
	\\
\hh \Step \ffbox9{\mu_{\mathsf{\triangledown I\triangledown I}\centerdot\chi_{\mb2}\:I\sqcup\mb1\sqcup I\sqcup\mb1\to\mb2}}
	\\
\hh \Step \id
\end{tanglec}
=
\begin{tanglec}
\hh \id \hstep \id \step \id \hstep \id \step[2.5] \id \step \id \Step \id \hstep
	\\
\hh \hstep \id \hstep \id \step \id \hstep \id \step[1.5] \ffbox5{\mu_{\triangledown\triangledown\:\mb2\sqcup\mb2\to\mb2}}
	\\
\hh \id \hstep \id \step \id \hstep \id \step[4] \id \Step
	\\
\hh \ffbox9{\mu_{\mathsf{\triangledown I\triangledown I}\centerdot(23)\:\!\!I\sqcup\mb1\sqcup I\sqcup\mb1\to\mb2\sqcup\mb2}}
	\\
\hh \id
\end{tanglec}
\\
\mcv\bigl((\ca_i(A_i,D_i))_{i\in I},[(\ca_i)_{i\in I};\cc](F,G),(\ca_i(D_i,E_i))_{i\in I},[(\ca_i)_{i\in I};\cc](G,H);
\cc((A_i)_{i\in I}F,(E_i)_{i\in I}H)\bigr)
\end{gather*}
On elements:
	\begin{diagram}[LaTeXeqno]
\bigl((\kappa_{A_i,D_i,E_i})_{i\in I},\centerdot,F_{(A_i),(E_i)},p_{(E_i)}\centerdot\bigl) &\rMapsTo &\bigl(b,\nu,\centerdot\bigl) &\lMapsTo &\bigl(F_{(A_i),(D_i)},F_{(D_i),(E_i)},p_{(E_i)},p_{(E_i)},\centerdot,\centerdot,\centerdot\bigl)
		\\
\dMapsTo &&\dMapsTo &&\dMapsTo
		\\
\bigl((\kappa_{A_i,D_i,E_i})_{i\in I},\centerdot,\ev^\VQu_{(A_i),F,(E_i),H}\bigl) &\rMapsTo &tr &\lMapsTo &\bigl(F_{(A_i),(D_i)},F_{(D_i),(E_i)},p_{(E_i)},p_{(E_i)},\centerdot^{(4)}\bigl)
\label{eq-FFpp-tr}
	\end{diagram}

Recall the morphism \(\ev^\VQu\) defined as the diagonal of the commutative square~\eqref{dia-Ai-VQu-B}.
Using it we define a morphism
\begin{multline*}
M =\bigl[ (\ca_i(A_i,D_i))_{i\in I},(\ca_i(D_i,E_i))_{i\in I},[(\ca_i)_{i\in I};\cc](F,G),[(\ca_i)_{i\in I};\cc](G,H)
\\
\rTTo^{F_{(A_i),(D_i)},\ev^\VQu,p_{(E_i)_{i\in I}}}
\cc((A_i)_{i\in I}F,(D_i)_{i\in I}F),\cc((D_i)_{i\in I}F,(E_i)_{i\in I}G),\cc((E_i)_{i\in I}G,(E_i)_{i\in I}H)
\\
\rto{\centerdot^{(3)}} \cc((A_i)_{i\in I}F,(E_i)_{i\in I}H) \bigr].
\end{multline*}
In detail:
\begin{multline}
\mcv\bigl((\ca_i(A_i,D_i))_{i\in I};\cc((A_i)_{i\in I}F,(D_i)_{i\in I}F)\bigr)
\\
\times \mcv\bigl((\ca_i(D_i,E_i))_{i\in I},[(\ca_i)_{i\in I};\cc](F,G);\cc((D_i)_{i\in I}F,(E_i)_{i\in I}G)\bigr)
\\
\times \mcv\bigl([(\ca_i)_{i\in I};\cc](G,H);\cc((E_i)_{i\in I}G,(E_i)_{i\in I}H)\bigr)
	\\
\hskip\multlinegap \times \mcv\bigl(\cc((A_i)_{i\in I}F,(D_i)_{i\in I}F),\cc((D_i)_{i\in I}F,(E_i)_{i\in I}G),\cc((E_i)_{i\in I}G,(E_i)_{i\in I}H); \hfill
\\
\hfill \cc((A_i)_{i\in I}F,(E_i)_{i\in I}H)\bigr) \rTTo^{\mu_{\mathsf{\triangledown\triangledown I}\:I\sqcup(I\sqcup\mb1)\sqcup\mb1\to\mb3}} \hskip\multlinegap
	\\
\mcv\bigl((\ca_i(A_i,D_i))_{i\in I},(\ca_i(D_i,E_i))_{i\in I},[(\ca_i)_{i\in I};\cc](F,G),[(\ca_i)_{i\in I};\cc](G,H);
\cc((A_i)_{i\in I}F,(E_i)_{i\in I}H)\bigr),
	\\
\bigl(F_{(A_i),(D_i)},\ev^\VQu,p_{(E_i)},\centerdot^{(3)}\bigl) \mapsto M.
\label{eq-F-evVQu-p}
\end{multline}
Recall that \(\ev^\VQu\) has two presentations: \eqref{eq-ev-VQu-E} and \eqref{eq-ev-VQu-A}.
The first one gives
\begin{multline*}
\mcv\bigl((\ca_i(A_i,D_i))_{i\in I};\cc((A_i)_{i\in I}F,(D_i)_{i\in I}F)\bigr)
\times\mcv\bigl((\ca_i(D_i,E_i))_{i\in I};\cc((D_i)_{i\in I}F,(E_i)_{i\in I}F)\bigr)
	\\
\times \mcv\bigl([(\ca_i)_{i\in I};\cc](F,G);\cc((E_i)_{i\in I}F,(E_i)_{i\in I}G)\bigr)
\times \mcv\bigl([(\ca_i)_{i\in I};\cc](G,H);\cc((E_i)_{i\in I}G,(E_i)_{i\in I}H)\bigr)
	\\
\hskip\multlinegap \times \mcv\bigl(\cc((A_i)_{i\in I}F,(D_i)_{i\in I}F),\cc((D_i)_{i\in I}F,(E_i)_{i\in I}F),\cc((E_i)_{i\in I}F,(E_i)_{i\in I}G), \hfill
	\\
\hfill \cc((E_i)_{i\in I}G,(E_i)_{i\in I}H);\cc((A_i)_{i\in I}F,(E_i)_{i\in I}H)\bigr)
\rTTo^{\mu_{\mathsf{\triangledown\triangledown II}\:I\sqcup I\sqcup\mb1\sqcup\mb1\to\mb4}} \hskip\multlinegap
	\\
\mcv\bigl((\ca_i(A_i,D_i))_{i\in I},(\ca_i(D_i,E_i))_{i\in I},[(\ca_i)_{i\in I};\cc](F,G),[(\ca_i)_{i\in I};\cc](G,H);
\cc((A_i)_{i\in I}F,(E_i)_{i\in I}H)\bigr),
	\\
\bigl(F_{(A_i),(D_i)},F_{(D_i),(E_i)},p_{(E_i)},p_{(E_i)},\centerdot^{(4)}\bigl) \mapsto M.
\end{multline*}

Apply \propref{pro-equivariance-property} to the square
\begin{diagram}[LaTeXeqno]
I\sqcup\mb1\sqcup I\sqcup\mb1 &\rTTo^{\triangledown\mathsf{I\triangledown I}} &\mb4
	\\
\dTTo<{1\sqcup\varpi\sqcup1}>{=\pi} &\rdTTo[hug]^{\triangledown\mathsf{I\triangledown I}\centerdot(23)} &\dTTo>{(23)}
	\\
I\sqcup I\sqcup\mb1\sqcup\mb1 &\rTTo^{\triangledown\triangledown\mathsf{II}} &\mb4
\label{dia-DI-X-pi-ID4}
\end{diagram}
where \(\varpi=(I+1I\dots21)\) and \(\varpi^{-1}=(12\dots II+1)\).
We have \(\pi_k=1\), \(k=1,2,3,4\).
Equation~\eqref{eq-mur-miao} gives, in particular,
\begin{multline*}
\mu_{\triangledown\mathsf{I\triangledown I}\centerdot(23)} =\bigl[ \mcv((X_i)_{i\in I};Y_1) \times \mcv((U_i)_{i\in I};Y_2) \times \mcv(Z_3;Y_3) \times \mcv(Z_4;Y_4) \times \mcv(Y_1,Y_2,Y_3,Y_4;W)
\\
\rTTo^{\mu_{\triangledown\triangledown\mathsf{II}}} \mcv((X_i)_{i\in I},(U_i)_{i\in I},Z_3,Z_4;W) \rto{r_\pi} \mcv((X_i)_{i\in I},Z_2,(U_i)_{i\in I},Z_4;W) \bigr].
\end{multline*}
This implies \(tr=(M)r_{1\sqcup\varpi\sqcup1}\).

The value of \((M)r_{1\sqcup\varpi\sqcup1}\) is found via presentation~\eqref{eq-ev-VQu-A} of \(\ev^\VQu\) together with \eqref{eq-F-evVQu-p}
\begin{multline}
\mcv\bigl((\ca_i(A_i,D_i))_{i\in I};\cc((A_i)_{i\in I}F,(D_i)_{i\in I}F)\bigr)
\times \mcv\bigl([(\ca_i)_{i\in I};\cc](F,G);\cc((D_i)_{i\in I}F,(D_i)_{i\in I}G)\bigr)
	\\
	\times \mcv\bigl((\ca_i(D_i,E_i))_{i\in I};\cc((D_i)_{i\in I}G,(E_i)_{i\in I}G)\bigr)
	\\
	\times \mcv\bigl(\cc((D_i)_{i\in I}F,(D_i)_{i\in I}G),\cc((D_i)_{i\in I}G,(E_i)_{i\in I}G);\cc((D_i)_{i\in I}F,(E_i)_{i\in I}G)\bigr)
	\\
\times \mcv\bigl([(\ca_i)_{i\in I};\cc](G,H);\cc((E_i)_{i\in I}G,(E_i)_{i\in I}H)\bigr) \times
	\\
\mcv\bigl(\cc((A_i)_{i\in I}F,(D_i)_{i\in I}F),\cc((D_i)_{i\in I}F,(E_i)_{i\in I}G),\cc((E_i)_{i\in I}G,(E_i)_{i\in I}H);
\cc((A_i)_{i\in I}F,(E_i)_{i\in I}H)\bigr)
	\\
	\rTTo^{1\times\mu_{\mathsf{\triangledown I\centerdot X}}\times1\times1}
	\mcv\bigl((\ca_i(A_i,D_i))_{i\in I};\cc((A_i)_{i\in I}F,(D_i)_{i\in I}F)\bigr)
	\\
	\times \mcv\bigl((\ca_i(D_i,E_i))_{i\in I},[(\ca_i)_{i\in I};\cc](F,G);\cc((D_i)_{i\in I}F,(E_i)_{i\in I}G)\bigr)
	\\
\times \mcv\bigl([(\ca_i)_{i\in I};\cc](G,H);\cc((E_i)_{i\in I}G,(E_i)_{i\in I}H)\bigr)
	\\
\hskip\multlinegap \times \mcv\bigl(\cc((A_i)_{i\in I}F,(D_i)_{i\in I}F),\cc((D_i)_{i\in I}F,(E_i)_{i\in I}G),\cc((E_i)_{i\in I}G,(E_i)_{i\in I}H); \hfill
\\
\hfill \cc((A_i)_{i\in I}F,(E_i)_{i\in I}H)\bigr) \rTTo^{\mu_{\mathsf{\triangledown\triangledown I}\:I\sqcup(I\sqcup\mb1)\sqcup\mb1\to\mb3}} \hskip\multlinegap
	\\
\hskip\multlinegap \mcv\bigl((\ca_i(A_i,D_i))_{i\in I},(\ca_i(D_i,E_i))_{i\in I},[(\ca_i)_{i\in I};\cc](F,G),[(\ca_i)_{i\in I};\cc](G,H); \hfill
\\
\hfill \cc((A_i)_{i\in I}F,(E_i)_{i\in I}H)\bigr) \rto{r_{1\sqcup\varpi\sqcup1}} \hskip\multlinegap
\\
\mcv\bigl((\ca_i(A_i,D_i))_{i\in I},[(\ca_i)_{i\in I};\cc](F,G),(\ca_i(D_i,E_i))_{i\in I},[(\ca_i)_{i\in I};\cc](G,H);
\cc((A_i)_{i\in I}F,(E_i)_{i\in I}H)\bigr),
	\\
\bigl(F_{(A_i),(D_i)},p_{(D_i)_{i\in I}},G_{(D_i),(E_i)},\centerdot,p_{(E_i)},\centerdot^{(3)}\bigl) \mapsto
\bigl(F_{(A_i),(D_i)},\ev^\VQu,p_{(E_i)},\centerdot^{(3)}\bigl)
\\
\mapsto M \mapsto (M)r_{1\sqcup\varpi\sqcup1}.
\label{eq-11V}
\end{multline}
Apply \propref{pro-equivariance-property} to the square
\begin{diagram}
I\sqcup(\mb1\sqcup I)\sqcup\mb1 &\rTTo^{\triangledown\triangledown\mathsf{I}} &\mb3
	\\
\dTTo<{1\sqcup\varpi\sqcup1}>{=\pi} &\rdTTo[hug]^{\triangledown\triangledown\mathsf{I}} &\dEq
	\\
I\sqcup(I\sqcup\mb1)\sqcup\mb1 &\rTTo^{\triangledown\triangledown\mathsf{I}} &\mb3
\end{diagram}
where \(\varpi=(I+1I\dots21)\) and \(\varpi^{-1}=(12\dots II+1)\).
We have \(\pi_1=1\), \(\pi_2=\varpi\:\mb1\sqcup I\to I\sqcup\mb1\), \(\pi_3=1\).
Equation~\eqref{eq-mur-miao} gives, in particular,
\begin{multline*}
\bigl[ \mcv((X_i)_{i\in I};Y_1) \times \mcv((U_i)_{i\in I},Z;Y_2) \times \mcv(Q;Y_3) \times \mcv(Y_1,Y_2,Y_3;W) \rTTo^{1\times r_{\pi_2}\times1\times1} 
\\
\mcv((X_i)_{i\in I};Y_1) \times \mcv(Z,(U_i)_{i\in I};Y_2) \times \mcv(Q;Y_3) \times \mcv(Y_1,Y_2,Y_3;W) \rTTo^{\mu_{\triangledown\triangledown\mathsf{I}\:I\sqcup(\mb1\sqcup I)\sqcup\mb1\to\mb3}}
\\
\hfill \mcv((X_i)_{i\in I},Z,(U_i)_{i\in I},Q;W) \bigr] \hskip\multlinegap
\\
\hskip\multlinegap =\bigl[ \mcv((X_i)_{i\in I};Y_1) \times \mcv((U_i)_{i\in I},Z;Y_2) \times \mcv(Q;Y_3) \times \mcv(Y_1,Y_2,Y_3;W) \rTTo^{\mu_{\triangledown\triangledown\mathsf{I}\:I\sqcup(\mb1\sqcup I)\sqcup\mb1\to\mb3}} \hfill
\\
\mcv((X_i)_{i\in I},(U_i)_{i\in I},Z,Q;W) \rTTo^{r_\pi} \mcv((X_i)_{i\in I},Z,(U_i)_{i\in I},Q;W) \bigr].
\end{multline*}
Using this we can transform expression~\eqref{eq-11V} to the left map of
\begin{gather}
\begin{array}{c}
	\mcv\bigl((\ca_i(A_i,D_i))_{i\in I};\cc((A_i)_{i\in I}F,(D_i)_{i\in I}F)\bigr)
	\\
	\times \mcv\bigl([(\ca_i)_{i\in I};\cc](F,G);\cc((D_i)_{i\in I}F,(D_i)_{i\in I}G)\bigr)
	\\
\times \mcv\bigl((\ca_i(D_i,E_i))_{i\in I};\cc((D_i)_{i\in I}G,(E_i)_{i\in I}G)\bigr)
	\\
\times \mcv\bigl(\cc((D_i)_{i\in I}F,(D_i)_{i\in I}G),\cc((D_i)_{i\in I}G,(E_i)_{i\in I}G);\cc((D_i)_{i\in I}F,(E_i)_{i\in I}G)\bigr)
	\\
\times \mcv\bigl([(\ca_i)_{i\in I};\cc](G,H);\cc((E_i)_{i\in I}G,(E_i)_{i\in I}H)\bigr) 
	\\
\hspace*{-0.4em} \times \mcv\bigl(\cc((A_i)_{i\in I}F,(D_i)_{i\in I}F),\cc((D_i)_{i\in I}F,(E_i)_{i\in I}G),\cc((E_i)_{i\in I}G,(E_i)_{i\in I}H);\cc((A_i)_{i\in I}F,(E_i)_{i\in I}H)\bigr)
\end{array}
\notag
\\
\begin{tanglec}
\hh \id \Step \id \step \id \step \id \step \id \step \id
\\
\hh \id \step[1.5] \ffbox3{\mu_{\mathsf{\triangledown I\centerdot X}}} \hstep \id \step \id
\\
\hh \id \step[3] \id \Step \id \step \id
\\
\hh \id \Step \ffbox2{r_{\pi_2}} \step \id \step \id
\\
\hh \id \step[3] \id \Step \id \step \id
\\
\hh \ffbox7{\mu_{\triangledown\triangledown\mathsf{I}\:\!\!I\sqcup(\mb1\sqcup I)\sqcup\mb1\to\mb3}}
\\
\hh \id
\end{tanglec}
\;=\;
\begin{tanglec}
	\hh \id \Step \id \step \id \step \id \step \id \step \id
	\\
\hh \id \step[1.5] \ffbox3{\mu_{\mathsf{I}\triangledown}} \hstep \id \step \id
	\\
	\hh \id \step[3] \id \Step \id \step \id
	\\
	\hh \ffbox7{\mu_{\triangledown\triangledown\mathsf{I}\:\!\!I\sqcup(\mb1\sqcup I)\sqcup\mb1\to\mb3}}
	\\
	\hh \id
\end{tanglec}
\label{eq-14V}
\\
\mcv\bigl((\ca_i(A_i,D_i))_{i\in I},[(\ca_i)_{i\in I};\cc](F,G),(\ca_i(D_i,E_i))_{i\in I},[(\ca_i)_{i\in I};\cc](G,H);\cc((A_i)_{i\in I}F,(E_i)_{i\in I}H)\bigr), \notag
\end{gather}

On elements we have
\begin{multline*}
\bigl(F_{(A_i),(D_i)},p_{(D_i)_{i\in I}},G_{(D_i),(E_i)},\centerdot,p_{(E_i)},\centerdot^{(3)}\bigl) \mapsto
\bigl(F_{(A_i),(D_i)},\ev^\VQu,p_{(E_i)},\centerdot^{(3)}\bigl)
\\
\mapsto \bigl(F_{(A_i),(D_i)},?,p_{(E_i)},\centerdot^{(3)}\bigl) \mapsto (M)r_{1\sqcup\varpi\sqcup1} =tr.
\end{multline*}

In order to justify passage from the left to the right map, let us analyse a piece of the left map
\begin{multline*}
\mcv\bigl([(\ca_i)_{i\in I};\cc](F,G);\cc((D_i)_{i\in I}F,(D_i)_{i\in I}G)\bigr)
\times \mcv\bigl((\ca_i(D_i,E_i))_{i\in I};\cc((D_i)_{i\in I}G,(E_i)_{i\in I}G)\bigr)
	\\
\times \mcv\bigl(\cc((D_i)_{i\in I}F,(D_i)_{i\in I}G),\cc((D_i)_{i\in I}G,(E_i)_{i\in I}G);\cc((D_i)_{i\in I}F,(E_i)_{i\in I}G)\bigr)
	\\
\rTTo^{\mu_{\mathsf{\triangledown I\centerdot X}}}
\mcv\bigl((\ca_i(D_i,E_i))_{i\in I},[(\ca_i)_{i\in I};\cc](F,G);\cc((D_i)_{i\in I}F,(E_i)_{i\in I}G)\bigr)
	\\
\rTTo^{r_{\varpi\:\mb1\sqcup I\to I\sqcup\mb1}} 
\mcv\bigl([(\ca_i)_{i\in I};\cc](F,G),(\ca_i(D_i,E_i))_{i\in I};\cc((D_i)_{i\in I}F,(E_i)_{i\in I}G)\bigr),
	\\
\bigl(p_{(D_i)_{i\in I}},G_{(D_i),(E_i)},\centerdot\bigl) \mapsto ? \mapsto ??.
\end{multline*}
Apply \propref{pro-equivariance-property} to the square
\begin{diagram}
\mb1\sqcup I &\rTTo^{\mathsf I\triangledown} &\mb2
	\\
\dTTo<{\varpi} &\rdTTo^{\mathsf I\triangledown} &\dEq
	\\
I\sqcup\mb1 &\rTTo^{\triangledown\mathsf{I\centerdot X}} &\mb2
\end{diagram}
We have \(\varpi_1=1_I\), \(\varpi_2=1_{\mb1}\).
Equation~\eqref{eq-mur-miao} gives, in particular, that the above composition equals
\begin{multline*}
\mcv\bigl([(\ca_i)_{i\in I};\cc](F,G);\cc((D_i)_{i\in I}F,(D_i)_{i\in I}G)\bigr)
\times \mcv\bigl((\ca_i(D_i,E_i))_{i\in I};\cc((D_i)_{i\in I}G,(E_i)_{i\in I}G)\bigr)
	\\
	\times \mcv\bigl(\cc((D_i)_{i\in I}F,(D_i)_{i\in I}G),\cc((D_i)_{i\in I}G,(E_i)_{i\in I}G);\cc((D_i)_{i\in I}F,(E_i)_{i\in I}G)\bigr)
	\\
\rTTo^{\mu_{\mathsf{I}\triangledown\:\mb1\sqcup I\to\mb2}}
\mcv\bigl([(\ca_i)_{i\in I};\cc](F,G),(\ca_i(D_i,E_i))_{i\in I};\cc((D_i)_{i\in I}F,(E_i)_{i\in I}G)\bigr),
	\\
\bigl(p_{(D_i)_{i\in I}},G_{(D_i),(E_i)},\centerdot\bigl) \mapsto ??.
\end{multline*}
Hence, \eqref{eq-14V} sends \(\bigl(F_{(A_i),(D_i)},p_{(D_i)_{i\in I}},G_{(D_i),(E_i)},\centerdot,p_{(E_i)},\centerdot^{(3)}\bigl)\) to \((M)r_{1\sqcup\varpi\sqcup1}=tr\).
Due to associativity of composition in $\cc$ this map coincides with the right vertical map in \eqref{eq-FpGp-lb}.
Therefore, $lb=tr$.

Let us prove coherence of \(\ev^\VQu\) with the units \eqref{eq-coherence-with-units}:
\begin{multline}
\bigl[ () \rTTo^{(\id_{A_i})_{i\in I},\id_F} (\ca_i(A_i,A_i))_{i\in I},[(\ca_i)_{i\in I};\cc](F,F) \rTTo^{\ev^\VQu} \cc((A_i)_{i\in I}F,(A_i)_{i\in I}F) \bigr]
\\
=\id_{(A_i)_{i\in I}F}.
\label{eq-coherence-ev-with-units}
\end{multline}
Recall that \(\id_F=\bigl(\id_{(A_i)_{i\in I}F}\:()\to\cc((A_i)_{i\in I}F,(A_i)_{i\in I}F)\bigr)_{(A_i\in\ca_i)_{i\in I}}\).
Using \eqref{eq-ev-VQu-A} we conclude that the left hand side ($lhs$) is obtained via the left map in the following diagram.
Using associativity equation at \figref{dia-assoc-mu-multi} for maps \(\varnothing\to I\sqcup\mb1 \rTTo^{\mathsf{\triangledown I\centerdot X}} \mb2\) we get the right map in
\begin{gather*}
\begin{array}{c}
	\prod_{i\in I} \mcv\bigl(;\ca_i(A_i,A_i)\bigr) \times \mcv\bigl(;[(\ca_i)_{i\in I};\cc](F,F)\bigr)
	\\
	\times \mcv\bigl([(\ca_i)_{i\in I};\cc](F,F);\cc((A_i)_{i\in I}F,(A_i)_{i\in I}F)\bigr)
	\\
	\times \mcv\bigl((\ca_i(A_i,A_i))_{i\in I}; \cc((A_i)_{i\in I}F,(A_i)_{i\in I}F)\bigr) 
	\\
\times \mcv\bigl(\cc((A_i)_{i\in I}F,(A_i)_{i\in I}F),\cc((A_i)_{i\in I}F,(A_i)_{i\in I}F);\cc((A_i)_{i\in I}F,(A_i)_{i\in I}F)\bigr)
\end{array}
\\
\begin{tanglec}
	\hh \id \step \id \step \id \step \id \step \id
	\\
\hh \hstep \id \step \id \hstep \ffbox3{\mu_{\mathsf{\triangledown I\centerdot X}}}
	\\
\hh \id \step \id \Step \id \step
	\\
\hh \ffbox4{\mu_{\varnothing\to I\sqcup\mb1}} \step
	\\
\hh \id \step
\end{tanglec}
\;=\;
\begin{tanglec}
\XX \Step \id \Step \id \step \id
	\\
\id \Step \XX \Step \id \step \id
	\\
\hh \ffbox3{\mu_{\varnothing\to\mb1}} \step \ffbox3{\mu_{\varnothing\to I}} \hstep \id \hstep
	\\
\hh \step \id \step[4] \id \Step \id
	\\
\hh \step \ffbox7{\mu_{\varnothing\to\mb2}}
	\\
\hh \step \id
\end{tanglec}
\\
\mcv\bigl(;\cc((A_i)_{i\in I}F,(A_i)_{i\in I}F)\bigr)
\end{gather*}
On elements we have
\begin{diagram}
\bigl((\id_{A_i})_{i\in I},\id_F,p_{(A_i)_{i\in I}},F_{(A_i),(A_i)},\centerdot\bigr) &\rMapsTo &\bigl(\id_{(A_i)_{i\in I}F},\id_{(A_i)_{i\in I}F},\centerdot\bigr)
	\\
	\dMapsTo &&\dMapsTo
	\\
\bigl((\id_{A_i})_{i\in I},\id_F,\ev^\VQu\bigr) &\rMapsTo &lhs =\id_{(A_i)_{i\in I}F}
\end{diagram}
This proves equation~\eqref{eq-coherence-ev-with-units}.
\end{proof}

\begin{example}
Assume that $\cv$ is a complete closed symmetric monoidal category.
For \(\mcv=\wh\cv\) (see \cite[Proposition~3.22]{BesLyuMan-book}) we get
\begin{multline*}
	\cv\bigl(\tens^{i\in I}(\ca_i(A_i,D_i)),\cc((A_i)_{i\in I}F,(D_i)_{i\in I}F)\bigr)
\times \cv\bigl(\tens^{i\in I}(\ca_i(D_i,E_i)),\cc((D_i)_{i\in I}F,(E_i)_{i\in I}F)\bigr) 
\\
\times \cv\bigl([(\ca_i)_{i\in I};\cc](F,G),\cc((E_i)_{i\in I}F,(E_i)_{i\in I}G)\bigr)
	\\
\times \cv\bigl(\cc((D_i)_{i\in I}F,(E_i)_{i\in I}F)\tens\cc((E_i)_{i\in I}F,(E_i)_{i\in I}G),\cc((D_i)_{i\in I}F,(E_i)_{i\in I}G)\bigr)
	\\
\times \cv\bigl([(\ca_i)_{i\in I};\cc](G,H),\cc((E_i)_{i\in I}G,(E_i)_{i\in I}H)\bigr) \times
	\\
\cv\bigl(\cc((A_i)_{i\in I}F,(D_i)_{i\in I}F)\tens\cc((D_i)_{i\in I}F,(E_i)_{i\in I}G)\tens\cc((E_i)_{i\in I}G,(E_i)_{i\in I}H),
\cc((A_i)_{i\in I}F,(E_i)_{i\in I}H)\bigr)
	\\
\rTTo^{1\times\mu_{\mathsf{I\triangledown\centerdot X}\:\mb1\sqcup I\to\mb2}\times1\times1}
\cv\bigl(\tens^{i\in I}(\ca_i(A_i,D_i)),\cc((A_i)_{i\in I}F,(D_i)_{i\in I}F)\bigr)
	\\
\times \cv\bigl([(\ca_i)_{i\in I};\cc](F,G)\tens\tens^{i\in I}(\ca_i(D_i,E_i)),\cc((D_i)_{i\in I}F,(E_i)_{i\in I}G)\bigr)
	\\
	\times \cv\bigl([(\ca_i)_{i\in I};\cc](G,H),\cc((E_i)_{i\in I}G,(E_i)_{i\in I}H)\bigr)
	\\
\hskip\multlinegap \times \cv\bigl(\cc((A_i)_{i\in I}F,(D_i)_{i\in I}F)\tens\cc((D_i)_{i\in I}F,(E_i)_{i\in I}G)\tens\cc((E_i)_{i\in I}G,(E_i)_{i\in I}H), \hfill
\\
\hfill \cc((A_i)_{i\in I}F,(E_i)_{i\in I}H)\bigr) \rTTo^{\mu_{\triangledown\triangledown\mathsf{I}\:I\sqcup(\mb1\sqcup I)\sqcup\mb1\to\mb3}} \hskip\multlinegap
	\\
\hskip\multlinegap \cv\bigl(\tens^{i\in I}(\ca_i(A_i,D_i))\tens[(\ca_i)_{i\in I};\cc](F,G)\tens\tens^{i\in I}(\ca_i(D_i,E_i))\tens[(\ca_i)_{i\in I};\cc](G,H), \hfill
\\
\hfill \cc((A_i)_{i\in I}F,(E_i)_{i\in I}H)\bigr), \hskip\multlinegap
	\\
	(F_{(A_i),(D_i)},F_{(D_i),(E_i)},p_{(E_i)_{i\in I}},\centerdot,p_{(E_i)_{i\in I}},\centerdot^{(3)}) \mapsto
	(F_{(A_i),(D_i)},a,p_{(E_i)_{i\in I}},\centerdot^{(3)}) \mapsto lb,
\end{multline*}
Here
\begin{multline*}
a =\bigl[ [(\ca_i)_{i\in I};\cc](F,G)\tens\tens^{i\in I}(\ca_i(D_i,E_i)) \rto c \tens^{i\in I}(\ca_i(D_i,E_i))\tens[(\ca_i)_{i\in I};\cc](F,G)
\\
\rTTo^{F_{(A_i),(D_i)}\tens p_{(E_i)_{i\in I}}} \cc((D_i)_{i\in I}F,(E_i)_{i\in I}F)\tens\cc((E_i)_{i\in I}F,(E_i)_{i\in I}G)
\rto\centerdot \cc((D_i)_{i\in I}F,(E_i)_{i\in I}G) \bigr].
\end{multline*}
Hence,
\begin{multline*}
lb =\bigl[ \tens^{i\in I}(\ca_i(A_i,D_i))\tens[(\ca_i)_{i\in I};\cc](F,G)\tens\tens^{i\in I}(\ca_i(D_i,E_i))\tens[(\ca_i)_{i\in I};\cc](G,H)
\\
\rTTo^{F_{(A_i),(D_i)}\tens a\tens p_{(E_i)_{i\in I}}}
\cc((A_i)_{i\in I}F,(D_i)_{i\in I}F)\tens\cc((D_i)_{i\in I}F,(E_i)_{i\in I}G)\tens\cc((E_i)_{i\in I}G,(E_i)_{i\in I}H)
\\
\hfill \rto{\centerdot^{(3)}} \cc((A_i)_{i\in I}F,(E_i)_{i\in I}H) \bigr] \hskip\multlinegap
\\
=\bigl[ \tens^{i\in I}(\ca_i(A_i,D_i))\tens[(\ca_i)_{i\in I};\cc](F,G)\tens\tens^{i\in I}(\ca_i(D_i,E_i))\tens[(\ca_i)_{i\in I};\cc](G,H)
\\
\rTTo^{1\tens c\tens1} \tens^{i\in I}(\ca_i(A_i,D_i))\tens\tens^{i\in I}(\ca_i(D_i,E_i))\tens[(\ca_i)_{i\in I};\cc](F,G)\tens[(\ca_i)_{i\in I};\cc](G,H)
\\
\rTTo^{F_{(A_i),(D_i)}\tens F_{(D_i),(E_i)}\tens p_{(E_i)_{i\in I}}\tens p_{(E_i)_{i\in I}}}
\\
\cc((A_i)_{i\in I}F,(D_i)_{i\in I}F)\tens\cc((D_i)_{i\in I}F,(E_i)_{i\in I}F)\tens\cc((E_i)_{i\in I}F,(E_i)_{i\in I}G)
\tens\cc((E_i)_{i\in I}G,(E_i)_{i\in I}H)
\\
\rto{\centerdot\tens\centerdot} \cc((A_i)_{i\in I}F,(E_i)_{i\in I}F)\tens\cc((E_i)_{i\in I}F,(E_i)_{i\in I}H) \rto\centerdot \cc((A_i)_{i\in I}F,(E_i)_{i\in I}H) \bigr]
\\
=\bigl[ \tens^{i\in I}(\ca_i(A_i,D_i))\tens[(\ca_i)_{i\in I};\cc](F,G)\tens\tens^{i\in I}(\ca_i(D_i,E_i))\tens[(\ca_i)_{i\in I};\cc](G,H) 
\\
\rTTo^{F_{(A_i),(D_i)}\tens p_{(E_i)_{i\in I}}\tens F_{(D_i),(E_i)}\tens p_{(E_i)_{i\in I}}}
\\
\cc((A_i)_{i\in I}F,(D_i)_{i\in I}F)\tens\cc((E_i)_{i\in I}F,(E_i)_{i\in I}G)\tens\cc((D_i)_{i\in I}F,(E_i)_{i\in I}F)
\tens\cc((E_i)_{i\in I}G,(E_i)_{i\in I}H)
\\
\hskip\multlinegap \rTTo^{1\tens c\tens1} \cc((A_i)_{i\in I}F,(D_i)_{i\in I}F)\tens\cc((D_i)_{i\in I}F,(E_i)_{i\in I}F)\tens\cc((E_i)_{i\in I}F,(E_i)_{i\in I}G) \hfill
\\
\hfill \tens\cc((E_i)_{i\in I}G,(E_i)_{i\in I}H) \hskip\multlinegap
\\
\rto{\centerdot\tens\centerdot} \cc((A_i)_{i\in I}F,(E_i)_{i\in I}F)\tens\cc((E_i)_{i\in I}F,(E_i)_{i\in I}H) \rto\centerdot \cc((A_i)_{i\in I}F,(E_i)_{i\in I}H) \bigr]
\\
=\bigl[ \tens^{i\in I}(\ca_i(A_i,D_i))\tens[(\ca_i)_{i\in I};\cc](F,G)\tens\tens^{i\in I}(\ca_i(D_i,E_i))\tens[(\ca_i)_{i\in I};\cc](G,H)
\\
\rTTo^{1\tens c\tens1} \tens^{i\in I}(\ca_i(A_i,D_i))\tens\tens^{i\in I}(\ca_i(D_i,E_i))\tens[(\ca_i)_{i\in I};\cc](F,G)\tens[(\ca_i)_{i\in I};\cc](G,H)
\\
\rTTo^{\lambda^{\chi\:\mb2I\to I}\tens\centerdot} \tens^{i\in I}(\ca_i(A_i,D_i)\tens\ca_i(D_i,E_i))\tens[(\ca_i)_{i\in I};\cc](F,H)
\\
\rTTo^{\tens^I(\centerdot)\tens p_{(E_i)_{i\in I}}}
\tens^{i\in I}(\ca_i(A_i,E_i))\tens\cc((E_i)_{i\in I}F,(E_i)_{i\in I}H) \rTTo^{F_{(A_i),(E_i)}\tens1} 
\\
\cc((A_i)_{i\in I}F,(E_i)_{i\in I}F)\tens\cc((E_i)_{i\in I}F,(E_i)_{i\in I}H) \rto\centerdot \cc((A_i)_{i\in I}F,(E_i)_{i\in I}H) \bigr] =tr.
\end{multline*}
\end{example}

\begin{proposition}\label{pro-VCat-closed}
Let $\mcv$ be a locally small symmetric closed complete multicategory.
The symmetric multicategory $\VCat$ is closed.
\end{proposition}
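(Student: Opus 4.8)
The plan is to produce the internal hom and the evaluation from the already-constructed data and then to verify the adjunction bijection \eqref{eq-phi(XYZ)-iso} by transporting it across the inclusion $\VCat\hookrightarrow\VQu$. For the internal hom I set $\und\VCat((\ca_i)_{i\in I};\cc)$ to be the full $\mcv$\n-subcategory of $\und\VQu((\ca_i)_{i\in I};\cc)$ whose objects are those multi-entry $\mcv$\n-quiver morphisms $(\ca_i)_{i\in I}\to\cc$ which are in fact multi-entry $\mcv$\n-functors. The Lemma established inside the proof of \propref{pro-und-VQu} shows that $\und\VQu((\ca_i)_{i\in I};\cc)$ is a $\mcv$\n-category, and restricting its composition and identities to a subset of objects keeps it a $\mcv$\n-category, so $\und\VCat((\ca_i)_{i\in I};\cc)$ is a small $\mcv$\n-category. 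This choice is forced: the case $J=\varnothing$ of \eqref{eq-phi(XYZ)-iso} together with \exaref{exa-varnothing} requires $\Ob\und\VCat((\ca_i)_{i\in I};\cc)\cong\VCat((\ca_i)_{i\in I};\cc)$. The evaluation is the morphism $\ev^\VCat$ of \propref{con-evVCat-VCat}, which is the restriction of $\ev^\VQu$ and was proved there to be a multi-entry $\mcv$\n-functor.

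It remains to show that the composition \eqref{eq-phi(XYZ)-iso} with $\ev^\VCat$,
\[ \varphi^\VCat\colon \VCat\bigl((\cb_j)_{j\in J};\und\VCat((\ca_i)_{i\in I};\cc)\bigr) \to \VCat\bigl((\ca_i)_{i\in I},(\cb_j)_{j\in J};\cc\bigr),
\]
is a bijection for every $(\cb_j)_{j\in J}$. I first reconstruct, at the level of quivers, the adjunct of a given $g\in\VQu((\ca_i)_{i\in I},(\cb_j)_{j\in J};\cc)$ under the composition map \eqref{eq-phi(XYZ)-iso-VQu}, which I denote $\varphi^\VQu$: the object $(B_j)_{j\in J}f$ is the quiver morphism $(A_i)_{i\in I}\mapsto((A_i)_{i\in I},(B_j)_{j\in J})g$ whose hom components are read off from those of $g$ by inserting the identities $1_{\cb_j(B_j,B_j)}$ in the $\cb$\n-slots, while the hom component $f_{(B_j),(E_j)}$ is the morphism into the end $\und\VQu((\ca_i)_{i\in I};\cc)((B_j)f,(E_j)f)$ determined, through the universal property of the equalizer \eqref{eq-int-c(FE-GE)-VQu} and the closed-structure bijections of $\mcv$, by the $\cb$\n-direction components of $g$. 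That this family equalizes the pair $(\beta,\gamma)$ is exactly the equality of the two presentations \eqref{eq-(B)f-V(p(D))} and \eqref{eq-V(p(A))-(E)f}. Every component of $f$ is forced by $g$ through the bijections of the closed multicategory $\mcv$ and the monomorphism $\iota$ into the product, which is the recovery recorded in \eqref{eq-mu-in1I-I1}; this yields uniqueness of $f$, hence injectivity of $\varphi^\VQu$ and a fortiori of $\varphi^\VCat$.

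Finally I would show that $\varphi^\VQu$ and its inverse preserve the property of being a multi-entry $\mcv$\n-functor, so that restricting the quiver-level bijection gives $\varphi^\VCat$. Concretely, $g=\varphi^\VQu(f)$ satisfies the functoriality equation $lb=tr$ of \propref{pro-multi-entry-V-functors} over $I\sqcup J$ if and only if two partial conditions hold: each object $(B_j)_{j\in J}f$ is a $\mcv$\n-functor $(\ca_i)_{i\in I}\to\cc$, so that $f$ lands in $\und\VCat$, which is functoriality of $g$ in the $\ca_i$\n-variables; and $f$ itself is a $\mcv$\n-functor $(\cb_j)_{j\in J}\to\und\VCat((\ca_i)_{i\in I};\cc)$, which — since the composition in $\und\VCat((\ca_i)_{i\in I};\cc)$ is computed slotwise over $(A_i)_{i\in I}$ from the composition in $\cc$ by diagram \eqref{dia-composition-undVCat} — is functoriality of $g$ in the $\cb_j$\n-variables. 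The main obstacle is the interchange between these two blocks: one must match the single functoriality square for $g$ over the index set $I\sqcup J$ against the two block conditions over $I$ and over $J$, reordering the $\ca$\n- and $\cb$\n-slots. This is precisely the equivariance bookkeeping of \propref{pro-equivariance-property}, carried out exactly as in the proof of \propref{con-evVCat-VCat} with the shuffles $\varpi$; once it is in place, associativity of composition in $\cc$ closes the argument and $\varphi^\VCat$ is a bijection.
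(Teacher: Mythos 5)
Your proposal is correct and follows essentially the same route as the paper: the same internal hom (the full subcategory of $\und\VQu((\ca_i)_{i\in I};\cc)$ on the multi-entry $\mcv$\n-functors), the same evaluation $\ev^\VCat=\ev^\VQu|$, an explicit construction of the adjunct of $g$ whose landing in the equalizer is exactly the equality of \eqref{eq-(B)f-V(p(D))} and \eqref{eq-V(p(A))-(E)f}, and the block decomposition of functoriality over $I\sqcup J$ handled by \propref{pro-equivariance-property} — this is precisely the paper's pair $\varPhi$, $\varPsi$ with $\varPhi\centerdot\varPsi=\id$ and $\varPsi\centerdot\varPhi=\id$. The only caution is that your recovery of $f$ from $g$ (hence "injectivity of $\varphi^\VQu$") uses the identities $\id_{A_i}$, $\id_{B_j}$ and unitality of $\cc$, so it is valid here where all sources are $\mcv$\n-categories but should not be read as a closedness statement for $\VQu$ with arbitrary quiver sources.
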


\begin{proof}
Let \((\ca_i)_{i\in I}\), $\cc$ be (families of) small $\mcv$\n-categories.
Define a full $\mcv$\n-subquiver \(\und\VCat\bigl((\ca_i)_{i\in I};\cc\bigr)\) of a small $\mcv$\n-quiver \(\und\VQu\bigl((\ca_i)_{i\in I};\cc\bigr)\) introduced in \propref{pro-und-VQu}:
\begin{myitemize}
\item[---] \(\Ob\und\VCat\bigl((\ca_i)_{i\in I};\cc\bigr)=\VCat\bigl((\ca_i)_{i\in I};\cc\bigr)\);
\item[---] \(\und\VCat\bigl((\ca_i)_{i\in I};\cc\bigr)(F,G)=\und\VQu\bigl((\ca_i)_{i\in I};\cc\bigr)(F,G)={\displaystyle\int_{(A_i\in\ca_i)_{i\in I}}}\hspace*{-1.4em}\cc((A_i)_{i\in I}F,(A_i)_{i\in I}G)\), the equalizer in multicategory $\mcv$ of pair of morphisms \eqref{eq-int-c(FE-GE)-VQu}.
\end{myitemize}

Use the multi-entry $\mcv$\n-functor (see \propref{con-evVCat-VCat})
\begin{align*}
\ev^\VCat =\ev^\VQu\big| \: (\ca_i)_{i\in I}, \und\VCat\bigl((\ca_i)_{i\in I};\cc\bigr) &\longrightarrow \cc
\\
\bigl((A_i)_{i\in I};F\bigr) &\longmapsto (A_i)_{i\in I}F.
\end{align*}

Restricting \eqref{eq-ev-VQu-E} to $\VCat$ we get that the evaluation element can be obtained via
\begin{multline}
\mcv\bigl((\ca_i(A_i,E_i))_{i\in I}; \cc((A_i)_{i\in I}F,(E_i)_{i\in I}F)\bigr)
\times \mcv\bigl(\und\VCat\bigl((\ca_i)_{i\in I};\cc\bigr)(F,G);\cc((E_i)_{i\in I}F,(E_i)_{i\in I}G)\bigr)
\\
\times \mcv\bigl(\cc((A_i)_{i\in I}F,(E_i)_{i\in I}F),\cc((E_i)_{i\in I}F,(E_i)_{i\in I}G);\cc((A_i)_{i\in I}F, (E_i)_{i\in I}G)\bigr)
\\
\rTTo^{\mu_{\mathsf{\triangledown I}}} \mcv\bigl((\ca_i(A_i,E_i))_{i\in I},\und\VCat\bigl((\ca_i)_{i\in I};\cc\bigr)(F,G);\cc((A_i)_{i\in I}F,(E_i)_{i\in I}G)\bigr),
\\
\bigl(F_{(A_i),(E_i)},p_{(E_i)_{i\in I}},\centerdot\bigr) \mapsto \ev^\VCat.
\label{eq-evVCat-E}
\end{multline}

Looking at another path of commutative diagram \eqref{dia-Ai-VQu-B} we get another presentation of $\ev^\VCat$.
Restricting \eqref{eq-ev-VQu-A} we conclude that the evaluation element can be obtained via
\begin{multline}
\mcv\bigl(\und\VCat\bigl((\ca_i)_{i\in I};\cc\bigr)(F,G);\cc((A_i)_{i\in I}F,(A_i)_{i\in I}G)\bigr)
\times \mcv\bigl((\ca_i(A_i,E_i))_{i\in I}; \cc((A_i)_{i\in I}G,(E_i)_{i\in I}G)\bigr)
\\
\times \mcv\bigl(\cc((A_i)_{i\in I}F,(A_i)_{i\in I}G),\cc((A_i)_{i\in I}G,(E_i)_{i\in I}G);\cc((A_i)_{i\in I}F, (E_i)_{i\in I}G)\bigr)
\\
\rTTo^{\mu_{\mathsf{\triangledown I\centerdot X}}} \mcv\bigl((\ca_i(A_i,E_i))_{i\in I},\und\VCat\bigl((\ca_i)_{i\in I}; \cc\bigr)(F,G);\cc((A_i)_{i\in I}F,(E_i)_{i\in I}G)\bigr),
\\
\bigl(p_{(A_i)_{i\in I}},G_{(A_i),(E_i)},\centerdot\bigr) \mapsto \ev^\VCat.
\label{eq-evVCat-A}
\end{multline}
Thus, \eqref{eq-evVCat-E} and \eqref{eq-evVCat-A} are giving the same element \(\ev^\VCat\).

Let \((\ca_i)_{i\in I}\), \((\cb_j)_{j\in J}\), $\cc$ be (families of) small $\mcv$\n-categories.
According to \propref{pro-und-VQu} there is a map
\[ \varPhi\:\VQu\bigl((\cb_j)_{j\in J};\und\VCat((\ca_i)_{i\in I};\cc)\bigr)\to\VQu\bigl((\ca_i)_{i\in I},(\cb_j)_{j\in J};\cc\bigr).
\]
Let us provide a map in the other direction
\[ \varPsi\:\VCat\bigl((\ca_i)_{i\in I},(\cb_j)_{j\in J};\cc\bigr)\to\VQu\bigl((\cb_j)_{j\in J};\und\VCat((\ca_i)_{i\in I};\cc)\bigr).
\]
Let \(g\:(\ca_i)_{i\in I},(\cb_j)_{j\in J}\to\cc\in\VCat\).
For any family of objects \(B_j\in\Ob\cb_j\), $j\in J$, define a multi-entry $\mcv$\n-functor
\begin{equation}
(B_j)_{j\in J}f =\bigl[ (\ca_i)_{i\in I} \rTTo^{(\Id)_I,(\ddot B_j)_{j\in J}} (\ca_i)_{i\in I},(\cb_j)_{j\in J} \rto g \cc \bigr] \in\VCat.
\label{eq-Bf-A-AB-C-VCat}
\end{equation}
In detail:
\begin{multline*}
\hspace*{-0.5em} \bigl[ \prod_{i\in I}\VCat(\ca_i;\ca_i) \bigr] \times  \bigl[ \prod_{j\in J}\VCat(;\cb_j) \bigr] \times \VCat\bigl((\ca_i)_{i\in I},(\cb_j)_{j\in J};\cc\bigr)
\rTTo^{\mu_{\inj_1\:I\hookrightarrow I\sqcup J}} \VCat\bigl((\ca_i)_{i\in I};\cc\bigr)
\\
\bigl((\Id_{\ca_i})_{i\in I},(\ddot B_j)_{j\in J},g\bigr) \mapsto (B_j)_{j\in J}f.
\end{multline*}
This defines a map \(\Ob f\:\prod_{j\in J}\Ob\cb_j\to\Ob\und\VCat\bigl((\ca_i)_{i\in I};\cc\bigr)=\VCat\bigl((\ca_i)_{i\in I};\cc\bigr)\).
On morphisms we have
\begin{multline*}
(B_j)_{j\in J}f_{(A_i),(D_i)}=\bigl[(\ca_i(A_i,D_i))_{i\in I} \rTTo^{(1)_I,(\id)_J} (\ca_i(A_i,D_i))_{i\in I},(\cb_j(B_j,B_j))_{j\in J}
\\
\rTTo^{g_{(A_i),(B_j),(D_i),(B_j)}} \cc(((A_i)_{i\in I},(B_j)_{j\in J})g,((D_i)_{i\in I},(B_j)_{j\in J})g)
\bigr].
\end{multline*}
In detail:
\begin{multline}
\mu_{\inj_1\:I\hookrightarrow I\sqcup J}\: \prod_{i\in I}\mcv\bigl(\ca_i(A_i,D_i);\ca_i(A_i,D_i)\bigr) \times \prod_{j\in J}\mcv(;\cb_j(B_j,B_j))
\\
\times \mcV\bigl((\ca_i(A_i,D_i))_{i\in I},(\cb_j(B_j,B_j))_{j\in J};\cc(((A_i)_{i\in I},(B_j)_{j\in J})g,((D_i)_{i\in I},(B_j)_{j\in J})g)\bigr)
\\
\hfill \to \mcV\bigl((\ca_i(A_i,D_i))_{i\in I};\cc(((A_i)_{i\in I},(B_j)_{j\in J})g,((D_i)_{i\in I},(B_j)_{j\in J})g)\bigr) \hskip\multlinegap
\\
\bigl((1_{\ca_i(A_i,D_i)})_{i\in I},(\id_{B_j})_{j\in J},g_{(A_i),(B_j),(D_i),(B_j)}\bigr) \mapsto 
(B_j)_{j\in J}f_{(A_i),(D_i)}.
\label{eq-(Bj)f}
\end{multline}

Let us introduce a $\mcv$\n-quiver \(\ovl\VCat\bigl((\ca_i)_{i\in I};\cc\bigr)\) with
\begin{myitemize}
	\item[---] \(\Ob\ovl\VCat\bigl((\ca_i)_{i\in I};\cc\bigr)=\VCat\bigl((\ca_i)_{i\in I};\cc\bigr)\),
\item[---] \(\ovl\VCat\bigl((\ca_i)_{i\in I};\cc\bigr)(F,G)=\prod_{(A_i\in\ca_i)_{i\in I}}\cc((A_i)_{i\in I}F,(A_i)_{i\in I}G)\).
\end{myitemize}

With $g$ we are given elements
\begin{multline*}
g_{(A_i),(B_j),(D_i),(E_j)}
\\
\in\mcv\bigl((\ca_i(A_i,D_i))_{i\in I},(\cb_j(B_j,E_j))_{j\in J};\cc(((A_i)_{i\in I},(B_j)_{j\in J})g,((D_i)_{i\in I},(E_j)_{j\in J})g)\bigr).
\end{multline*}
Using them we define elements
\begin{multline}
\mu_{\inj_2\:J\hookrightarrow I\sqcup J}\: \bigl[ \prod_{i\in I}\mcv\bigl(;\ca_i(A_i,A_i)\bigr) \bigr] \times \bigl[ \prod_{j\in J}\mcv(\cb_j(B_j,E_j);\cb_j(B_j,E_j)) \bigr]
	\\
\times \mcV\bigl((\ca_i(A_i,A_i))_{i\in I},(\cb_j(B_j,E_j))_{j\in J};\cc(((A_i)_{i\in I},(B_j)_{j\in J})g,((A_i)_{i\in I},(E_j)_{j\in J})g)\bigr)
\\
\hfill \to \mcV\bigl((\cb_j(B_j,E_j))_{j\in J};\cc(((A_i)_{i\in I},(B_j)_{j\in J})g,((A_i)_{i\in I},(E_j)_{j\in J})g)\bigr) \hskip\multlinegap
	\\
\bigl((\id_{A_i})_{i\in I},(1_{\cb_j(B_j,E_j)})_{j\in J},g_{(A_i),(B_j),(A_i),(E_j)}\bigr) \mapsto (A_i)_{i\in I}\bar f_{(B_j),(E_j)}.
\label{eq-(Ai)fbar}
\end{multline}
So we define \(\bar f\:(\cb_j)_{j\in J}\to\ovl\VCat\bigl((\ca_i)_{i\in I};\cc\bigr)\), \((B_j)_{j\in J}\mapsto(B_j)_{j\in J}f\) as
\begin{multline*}
\bar f_{(B_j),(E_j)} =((A_i)_{i\in I}\bar f_{(B_j),(E_j)})_{(A_i\in\ca_i)_{i\in I}}
\\
\in \prod_{(A_i\in\ca_i)_{i\in I}} \mcV\bigl((\cb_j(B_j,E_j))_{j\in J};\cc(((A_i)_{i\in I},(B_j)_{j\in J})g,((A_i)_{i\in I},(E_j)_{j\in J})g)\bigr)
\\
\cong \mcV\bigl((\cb_j(B_j,E_j))_{j\in J};\ovl\VCat((\ca_i)_{i\in I};\cc)((B_j)_{j\in J}f,(E_j)_{j\in J}f)\bigr).
\end{multline*}
Let us show that this element is sent by the following two maps to the same element:
\begin{multline*}
\mcv\bigl((\cb_j(B_j,E_j))_{j\in J};\hspace*{-0.5em} \prod_{(A_i\in\ca_i)_{i\in I}} \hspace*{-0.5em} \cc(((A_i)_{i\in I},(B_j)_{j\in J})g,((A_i)_{i\in I},(E_j)_{j\in J})g)\bigr)
\pile{\rTTo^{\mcv((1)_J;(\pr_{(D_i)}\centerdot\beta))} \\ \rTTo_{\mcv((1)_J;(\pr_{(A_i)}\centerdot\gamma))}}
\\
\mcv\bigl((\cb_j(B_j,E_j))_{j\in J};\hspace*{-1.2em} \prod_{(A_i,D_i\in\ca_i)_{i\in I}} \hspace*{-1.2em} \und\mcV\bigl((\ca_i(A_i,D_i))_{i\in I};\cc(((A_i)_{i\in I},(B_j)_{j\in J})g,((D_i)_{i\in I},(E_j)_{j\in J})g)\bigr),
\end{multline*}
Equivalently, for any $A_i,D_i\in\Ob\ca_i$, $i\in I$, \(B_j,E_j\in\Ob\cb_j\), $j\in J$, the following square is commutative:
\begin{diagram}[nobalance]
\bigl(\cb_j(B_j,E_j)\bigr)_{j\in J} &\rTTo^{\hspace*{-2.5em}(D_i)_{i\in I}\bar f_{(B_j),(E_j)}} &\cc(((D_i)_{i\in I},(B_j)_{j\in J})g,((D_i)_{i\in I},(E_j)_{j\in J})g)
\\
\dTTo<{(A_i)_{i\in I}\bar f_{(B_j),(E_j)}} &&\dTTo>\beta
\\
\cc(((A_i)_{i\in I},(B_j)_{j\in J})g,((A_i)_{i\in I},(E_j)_{j\in J})g)\hspace*{-2.6em} &&
\\
&\rdTTo(2,1)^\gamma
&\hspace*{-8.5em}\und\mcV\bigl((\ca_i(A_i,D_i))_{i\in I};\cc(((A_i)_{i\in I},(B_j)_{j\in J})g,((D_i)_{i\in I},(E_j)_{j\in J})g)\bigr)
\end{diagram}
By closedness of $\mcv$ this is equivalent to commutativity of
\begin{diagram}
\bigl(\ca_i(A_i,D_i)\bigr)_{i\in I},\bigl(\cb_j(B_j,E_j)\bigr)_{j\in J}
\\
\dTTo<{(1)_I,(A_i)_{i\in I}\bar f_{(B_j),(E_j)}} &\rdTTo(2,1)>{(1)_I,(D_i)_{i\in I}\bar f_{(B_j),(E_j)}}
&\hspace*{-7em} \bigl(\ca_i(A_i,D_i)\bigr)_{i\in I},\cc\bigl(((D_i)_{i\in I},(B_j)_{j\in J})g,((D_i)_{i\in I},(E_j)_{j\in J})g\bigr)
	\\
\bigl(\ca_i(A_i,D_i)\bigr)_{i\in I},\cc\bigl(((A_i)_{i\in I},(B_j)_{j\in J})g,((A_i)_{i\in I},(E_j)_{j\in J})g\bigr) \hspace*{-7em} &&\dTTo>{\beta^\dagger}
	\\
&\rdTTo(2,1)~{\gamma^\dagger} &\cc\bigl(((A_i)_{i\in I},(B_j)_{j\in J})g,((D_i)_{i\in I},(E_j)_{j\in J})g\bigr)
\end{diagram}
where \(\beta^\dagger\) is given by \eqref{eq-beta-dagger} and \(\gamma^\dagger\) is given by \eqref{eq-gamma-dagger}.
In detail, $tr=lb$ where these elements are obtained as follows.
$tr$ comes from
\begin{gather*}
\begin{array}{c}
\bigl[\prod_{i\in I}\mcv\bigl(\ca_i(A_i,D_i);\ca_i(A_i,D_i)\bigr)\bigr] \times \bigl[\prod_{i\in I}\mcv\bigl(;\ca_i(D_i,D_i)\bigr)\bigr] \times \bigl[ \prod_{j\in J}\mcv\bigl(\cb_j(B_j,E_j);\cb_j(B_j,E_j)\bigr) \bigr]
	\\
	\times \mcV\bigl((\ca_i(D_i,D_i))_{i\in I},(\cb_j(B_j,E_j))_{j\in J};\cc(((D_i)_{i\in I},(B_j)_{j\in J})g,((D_i)_{i\in I},(E_j)_{j\in J})g)\bigr)
	\\
\times \mcv\bigl((\ca_i(A_i,D_i))_{i\in I};\cc(((A_i)_{i\in I},(B_j)_{j\in J})g,((D_i)_{i\in I},(B_j)_{j\in J})g)\bigr) 
	\\
\times \mcv\bigl(\cc(((D_i)_{i\in I},(B_j)_{j\in J})g,((D_i)_{i\in I},(E_j)_{j\in J})g);\cc(((D_i)_{i\in I},(B_j)_{j\in J})g,((D_i)_{i\in I},(E_j)_{j\in J})g)\bigr)
	\\
	\times \mcv\bigl(\cc(((A_i)_{i\in I},(B_j)_{j\in J})g,((D_i)_{i\in I},(B_j)_{j\in J})g), \hfill
	\\
	\cc(((D_i)_{i\in I},(B_j)_{j\in J})g,((D_i)_{i\in I},(E_j)_{j\in J})g);\cc(((A_i)_{i\in I},(B_j)_{j\in J})g,((D_i)_{i\in I},(E_j)_{j\in J})g)\bigr)
\end{array}
\\
\begin{tanglec}
\hh \id \step \id \Step \id \Step \id \step \id \step \id \step \id
	\\
\hh \id \hstep \ffbox5{\mu_{\inj_2\:J\hookrightarrow I\sqcup J}} \hstep \id \step \id \step \id
\\
\hh \id \step[3] \id \step[3] \id \step \id \step \id
\\
\hh \hstep \id \step[3] \id \step[1.5] \ffbox4{\mu_{\mathsf{\triangledown I}\:\!\!I\sqcup\mb1\to\mb2}}
	\\
\hh \id \step[3] \id \step[3.5] \id \step[1.5]
	\\
\hh \ffbox8{\mu_{1\sqcup\triangledown\:I\sqcup J\to I\sqcup\mb1}} \step
	\\
\hh \id \step
\end{tanglec}
\quad =\quad
\begin{tanglec}
\hh \id \step \id \Step \id \Step \id \step \id \Step \id \step \id
	\\
\hh \id \hstep \ffbox5{\mu_{\inj_2\:J\hookrightarrow I\sqcup J}} \hstep \id \Step \id \step \id
	\\
\id \step[4] \XX \Step \id \step \id
	\\
\hh \ffbox5{\mu_{1_I}} \step \ffbox3{\mu_{J\to\mb1}} \hstep \id \hstep
	\\
\hh \Step \id \step[5] \id \Step \id
	\\
\hh \Step \ffbox8{\mu_{\triangledown\triangledown\:I\sqcup J\to\mb2}}
	\\
\hh \Step \id
\end{tanglec}
\\
\mcV\bigl((\ca_i(A_i,D_i))_{i\in I},(\cb_j(B_j,E_j))_{j\in J};\cc(((A_i)_{i\in I},(B_j)_{j\in J})g,((D_i)_{i\in I},(E_j)_{j\in J})g)\bigr)
\end{gather*}
Here we have used associativity equation at \figref{dia-assoc-mu-multi} for maps \(I\sqcup J \rTTo^{\id\sqcup\triangledown} I\sqcup\mb1 \rTTo^{\mathsf{\triangledown I}} \mb2\).
The above maps define $tr$ via
\begin{diagram}[h=2.6em,LaTeXeqno,top]
\begin{array}{r}
\bigl( (1_{\ca_i(A_i,D_i)})_{i\in I},(\id_{D_i})_{i\in I},(1_{\cb_j(B_j,E_j)})_{j\in J},g_{(D_i),(B_j),(D_i),(E_j)},
\\
(B_j)_{j\in J}f_{(A_i),(D_i)},1_{\cc(((D_i)_{i\in I},(B_j)_{j\in J})g,((D_i)_{i\in I},(E_j)_{j\in J})g)},\centerdot \bigr)
\end{array}
\hspace*{-4em}
\\
\dMapsTo &&
\\
\begin{array}{r}
\bigl( (1_{\ca_i(A_i,D_i)})_{i\in I},(D_i)_{i\in I}\bar f_{(B_j),(E_j)},(B_j)_{j\in J}f_{(A_i),(D_i)}, \hfill
\\
\hfill 1_{\cc(((D_i)_{i\in I},(B_j)_{j\in J})g,((D_i)_{i\in I},(E_j)_{j\in J})g)},\centerdot \bigr)
\end{array}
&\rMapsTo &
\bigl((B_j)_{j\in J}f_{(A_i),(D_i)},(D_i)_{i\in I}\bar f_{(B_j),(E_j)},\centerdot\bigr) \hspace*{-5em}
	\\
\dMapsTo &&\dMapsTo
	\\
\bigl((1_{\ca_i(A_i,D_i)})_{i\in I}, (D_i)_{i\in I}\bar f_{(B_j),(E_j)},\beta^\dagger \bigr) &\rMapsTo &tr
\label{dia-tr-41}
\end{diagram}

$lb$ comes from the following commutative diagram.
First via the left map, then using associativity equation at \figref{dia-assoc-mu-multi} for maps \(I\sqcup J \rTTo^{\id\sqcup\triangledown} I\sqcup\mb1 \rTTo^{\mathsf{\triangledown I\centerdot X}} \mb2\) we get $lb$ via the right map
\begin{gather*}
	\begin{array}{c}
\bigl[\prod_{i\in I}\mcv\bigl(\ca_i(A_i,D_i);\ca_i(A_i,D_i)\bigr)\bigr] \times \bigl[\prod_{i\in I}\mcv\bigl(;\ca_i(A_i,A_i)\bigr)\bigr] \times \bigl[ \prod_{j\in J}\mcv\bigl(\cb_j(B_j,E_j);\cb_j(B_j,E_j)\bigr) \bigr]
	\\
	\times \mcV\bigl((\ca_i(A_i,A_i))_{i\in I},(\cb_j(B_j,E_j))_{j\in J};\cc(((A_i)_{i\in I},(B_j)_{j\in J})g,((A_i)_{i\in I},(E_j)_{j\in J})g)\bigr) 
	\\
\times \mcv\bigl(\cc(((A_i)_{i\in I},(B_j)_{j\in J})g,((A_i)_{i\in I},(E_j)_{j\in J})g);\cc(((A_i)_{i\in I},(B_j)_{j\in J})g,((A_i)_{i\in I},(E_j)_{j\in J})g)\bigr)
	\\
\times \mcv\bigl((\ca_i(A_i,D_i))_{i\in I};\cc(((A_i)_{i\in I},(E_j)_{j\in J})g,((D_i)_{i\in I},(E_j)_{j\in J})g)\bigr) 
	\\
\times \mcv\bigl(\cc(((A_i)_{i\in I},(B_j)_{j\in J})g,((A_i)_{i\in I},(E_j)_{j\in J})g),\cc(((A_i)_{i\in I},(E_j)_{j\in J})g,((D_i)_{i\in I},(E_j)_{j\in J})g);
	\\
\hfill \cc(((A_i)_{i\in I},(B_j)_{j\in J})g,((D_i)_{i\in I},(E_j)_{j\in J})g)\bigr)
\end{array}
\\
\begin{tanglec}
	\hh \id \step \id \Step \id \Step \id \step \id \step \id \step \id
	\\
	\hh \id \hstep \ffbox5{\mu_{\inj_2\:J\hookrightarrow I\sqcup J}} \hstep \id \step \id \step \id
	\\
	\hh \id \step[3] \id \step[3] \id \step \id \step \id
	\\
\hh \hstep \id \step[3] \id \hstep \ffbox5{\mu_{\mathsf{\triangledown I\centerdot X}\:I\sqcup\mb1\to\mb2}}
	\\
	\hh \id \step[3] \id \step[3.5] \id \step[1.5]
	\\
	\hh \ffbox8{\mu_{1\sqcup\triangledown\:I\sqcup J\to I\sqcup\mb1}} \step
	\\
	\hh \id \step
\end{tanglec}
\quad =\quad
\begin{tanglec}
\hh \id \step[1.5] \id \step[1.5] \id \step[1.5] \id \step[1.5] \id \step \id \step \id
	\\
\hh \id \hstep \ffbox5{\mu_{\inj_2\:J\hookrightarrow I\sqcup J}} \hstep \id \step \id \step \id
	\\
\XX \step[3] \ne2 \step \id \step \id
	\\
\id \Step \XX \step[3] \id \step \id
\\
\hh \ffbox3{\mu_{J\to\mb1}} \step \ffbox4{\mu_{1_I}} \hstep \id \hstep
	\\
\hh \step \id \step[4.5] \id \step[2.5] \id
	\\
\hh \step \ffbox8{\mu_{\triangledown\triangledown\centerdot\mathsf X\:I\sqcup J\to\mb2}}
	\\
\hh \step \id
\end{tanglec}
\\
\mcV\bigl((\ca_i(A_i,D_i))_{i\in I},(\cb_j(B_j,E_j))_{j\in J};\cc(((A_i)_{i\in I},(B_j)_{j\in J})g,((D_i)_{i\in I},(E_j)_{j\in J})g)\bigr)
\end{gather*}
The above maps define $lb$ via
\begin{diagram}[h=1.3em,LaTeXeqno,top,nobalance]
	\begin{array}{r}
\bigl( (1_{\ca_i(A_i,D_i)})_{i\in I},(\id_{A_i})_{i\in I},(1_{\cb_j(B_j,E_j)})_{j\in J},g_{(A_i),(B_j),(A_i),(E_j)},
\\
1_{\cc(((A_i)_{i\in I},(B_j)_{j\in J})g,((A_i)_{i\in I},(E_j)_{j\in J})g)},(E_j)_{j\in J}f_{(A_i),(D_i)},\centerdot \bigr)
 	\end{array}
	\hspace*{-4em}
&&\phantom{\bigl((A_i)_{i\in I}\bar f_{(B_j),(E_j)},(E_j)_{j\in J}f_{(A_i),(D_i)},\centerdot \bigr)AA}
\\
\\
\dMapsTo &&\hspace*{-6em} \bigl((A_i)_{i\in I}\bar f_{(B_j),(E_j)},(E_j)_{j\in J}f_{(A_i),(D_i)},\centerdot \bigr)
\\
&\ruMapsTo &
\\
	\begin{array}{l}
\bigl( (1_{\ca_i(A_i,D_i)})_{i\in I},(A_i)_{i\in I}\bar f_{(B_j),(E_j)},
\\
1_{\cc(((A_i)_{i\in I},(B_j)_{j\in J})g,((A_i)_{i\in I},(E_j)_{j\in J})g)},(E_j)_{j\in J}f_{(A_i),(D_i)},\centerdot \bigr) 
	\end{array}
\hspace*{-4em} &&\dMapsTo
\\
\\
\dMapsTo &&lb
\\
&\ruMapsTo &
\\
\bigl((1_{\ca_i(A_i,D_i)})_{i\in I}, (A_i)_{i\in I}\bar f_{(B_j),(E_j)},\gamma^\dagger \bigr) &&
\label{dia-lb-41}
\end{diagram}

Combining \eqref{dia-tr-41} with \eqref{eq-(Bj)f} and \eqref{eq-(Ai)fbar} we obtain the left map in the following diagram for $tr$.
Applying the associativity property from \figref{dia-assoc-mu-multi} for maps \(I\sqcup J \rTTo^{\inj_1\sqcup\inj_2} (I\sqcup J)\sqcup(I\sqcup J) \rto{\triangledown\triangledown} \mb2\) we get the second map.
\begin{gather}
\begin{array}{c}
\prod_{i\in I}\mcv\bigl(\ca_i(A_i,D_i);\ca_i(A_i,D_i)\bigr) \times \prod_{j\in J}\mcv(;\cb_j(B_j,B_j))
	\\
\times \prod_{i\in I}\mcv\bigl(;\ca_i(D_i,D_i)\bigr) \times \prod_{j\in J}\mcv(\cb_j(B_j,E_j);\cb_j(B_j,E_j))
	\\
\times \mcV\bigl((\ca_i(A_i,D_i))_{i\in I},(\cb_j(B_j,B_j))_{j\in J};
\cc(((A_i)_{i\in I},(B_j)_{j\in J})g,((D_i)_{i\in I},(B_j)_{j\in J})g)\bigr)
	\\
\times \mcV\bigl((\ca_i(D_i,D_i))_{i\in I},(\cb_j(B_j,E_j))_{j\in J};
\cc(((D_i)_{i\in I},(B_j)_{j\in J})g,((D_i)_{i\in I},(E_j)_{j\in J})g)\bigr)
	\\
\times \mcv\bigl(\cc(((A_i)_{i\in I},(B_j)_{j\in J})g,((D_i)_{i\in I},(B_j)_{j\in J})g),
\cc(((D_i)_{i\in I},(B_j)_{j\in J})g,((D_i)_{i\in I},(E_j)_{j\in J})g);
	\\
\hfill \cc(((A_i)_{i\in I},(B_j)_{j\in J})g,((D_i)_{i\in I},(E_j)_{j\in J})g)\bigr)
\end{array}
\notag
\\
\begin{tanglec}
\id \Step \id \Step \id \Step \XX \Step \id \step \id
	\\
\id \Step \id \Step \XX \Step \id \Step \id \step \id
\\
\hh \ffbox5{\mu_{\inj_1\:I\hookrightarrow I\sqcup J}} \step \ffbox5{\mu_{\inj_2\:J\hookrightarrow I\sqcup J}} \hstep \id \hstep 
	\\
\hh \Step \id \step[6] \id \step[3] \id
	\\
\hh \Step \ffbox{10}{\mu_{\triangledown\triangledown\:I\sqcup J\to\mb2}}
	\\
\hh \Step \id
\end{tanglec}
\;=\;
\begin{tanglec}
\hh \id \hstep \id \hstep \id \hstep \id \step[1.5] \id \step[3] \id \step[3] \id \step
	\\
\hh \id \hstep \id \hstep \id \hstep \id \hstep \ffbox8{\mu_{\triangledown\triangledown\:(I\sqcup J)\sqcup(I\sqcup J)\to\mb2}}
	\\
\hh \id \hstep \id \hstep \id \hstep \id \step[4.5] \id \step[4]
\\
\hh \ffbox{10}{\mu_{\inj_1\sqcup\inj_2\:\!\!I\sqcup J\to(I\sqcup J)\sqcup(I\sqcup J)}} \step
	\\
\hh \id \step
\end{tanglec}
\label{eq-AAD-BBE-1}
\\
\mcV\bigl((\ca_i(A_i,D_i))_{i\in I},(\cb_j(B_j,E_j))_{j\in J};\cc(((A_i)_{i\in I},(B_j)_{j\in J})g,((D_i)_{i\in I},(E_j)_{j\in J})g)\bigr) \notag
\end{gather}
Due to $g$ being a multi-entry $\mcv$\n-functor (see \defref{def-multi-entry-V-functor}) we get the third map.
Applying the associativity property from \figref{dia-assoc-mu-multi} for maps \(I\sqcup J \rTTo^{\inj_1\sqcup\inj_2} (I\sqcup J)\sqcup(I\sqcup J) \rto\chi I\sqcup J\) we get the right map in
\begin{gather}
\begin{array}{c}
\prod_{i\in I}\mcv\bigl(\ca_i(A_i,D_i);\ca_i(A_i,D_i)\bigr) \times \prod_{j\in J}\mcv(;\cb_j(B_j,B_j))
	\\
\times \prod_{i\in I}\mcv\bigl(;\ca_i(D_i,D_i)\bigr) \times \prod_{j\in J}\mcv(\cb_j(B_j,E_j);\cb_j(B_j,E_j))
	\\
\times \prod_{i\in I}\mcv\bigl(\ca_i(A_i,D_i),\ca_i(D_i,D_i);\ca_i(A_i,D_i)\bigr)
\times \prod_{j\in J}\mcv(\cb_j(B_j,B_j),\cb_j(B_j,E_j);\cb_j(B_j,E_j))
	\\
\times \mcV\bigl((\ca_i(A_i,D_i))_{i\in I},(\cb_j(B_j,E_j))_{j\in J};
\cc(((A_i)_{i\in I},(B_j)_{j\in J})g,((D_i)_{i\in I},(E_j)_{j\in J})g)\bigr)
\end{array}
\notag
\\
\begin{tanglec}
	\hh \id \hstep \id \hstep \id \hstep \id \step[1.5] \id \step[3] \id \step[3] \id \step
	\\
	\hh \id \hstep \id \hstep \id \hstep \id \hstep \ffbox8{\mu_{\chi\:(I\sqcup J)\sqcup(I\sqcup J)\to I\sqcup J}}
	\\
	\hh \id \hstep \id \hstep \id \hstep \id \step[4.5] \id \step[4]
	\\
	\hh \ffbox{10}{\mu_{\inj_1\sqcup\inj_2\:\!\!I\sqcup J\to(I\sqcup J)\sqcup(I\sqcup J)}} \step
	\\
\hh \id \step
\end{tanglec}
=
\begin{tanglec}
	\step \id \step \XX \Step \XX \step \id \step \id
	\\
	\step \id \step \id \Step \XX \Step \id \step \id \step \id
	\\
	\hstep \ffbox4{\prod_{i\in I}\mu_{\mathsf{I}\centerdot}} \step \ffbox4{\prod_{j\in J}\mu_{\centerdot\mathsf{I}}} \hstep \id 
	\\
	\hh \step[2.5] \id \step[5] \id \step[2.5] \id
	\\
	\hh \Step \ffbox{9}{\mu_{1\:I\sqcup J\to I\sqcup J}}
	\\
	\hh \Step \id
\end{tanglec}
\label{eq-AAD-BBE-2}
\\
\mcV\bigl((\ca_i(A_i,D_i))_{i\in I},(\cb_j(B_j,E_j))_{j\in J};\cc(((A_i)_{i\in I},(B_j)_{j\in J})g,((D_i)_{i\in I},(E_j)_{j\in J})g)\bigr) \notag
\end{gather}
On elements \eqref{eq-AAD-BBE-1} and \eqref{eq-AAD-BBE-2} give:
\begin{diagram}[h=1.3em,nobalance]
&&\hspace*{-1em}
\begin{array}{c}
	\bigl((1_{\ca_i(A_i,D_i)})_{i\in I},(\id_{B_j})_{j\in J},
	\\
	(\id_{D_i})_{i\in I},(1_{\cb_j(B_j,E_j)})_{j\in J},
	\\
	(g_{(A_i),(B_j),(D_i),(B_j)}, \hfill
	\\
	g_{(D_i),(B_j),(D_i),(E_j)})\kappa\bigr)
\end{array}
\hspace*{-1em} &&\phantom{\bigl((1_{\ca_i(A_i,D_i)})_{i\in I},(\id_{B_j})_{j\in J},(\id_{D_i})_{i\in I},}
\\
&\ruMapsTo(2,3) &&\luMapsTo(2,3) &
\\
\\
\begin{array}{r}
\bigl((1_{\ca_i(A_i,D_i)})_{i\in I},(\id_{B_j})_{j\in J},
\\
(\id_{D_i})_{i\in I},(1_{\cb_j(B_j,E_j)})_{j\in J},
\\
g_{(A_i),(B_j),(D_i),(B_j)},
\\
g_{(D_i),(B_j),(D_i),(E_j)},\centerdot \bigr)
\end{array}
&&&&
\begin{array}{r}
\bigl((1_{\ca_i(A_i,D_i)})_{i\in I},(\id_{B_j})_{j\in J},(\id_{D_i})_{i\in I},
\\
(1_{\cb_j(B_j,E_j)})_{j\in J},(\kappa_{A_i,D_i,D_i})_{i\in I},
\\
(\kappa_{B_j,B_j,E_j})_{j\in J},g_{(A_i),(B_j),(D_i),(E_j)} \bigr)
\end{array}
\\
\\
\dMapsTo &&\dMapsTo &&\dMapsTo
\\
\\
\begin{array}{l}
\bigl((B_j)_{j\in J}f_{(A_i),(D_i)},
\\
(D_i)_{i\in I}\bar f_{(B_j),(E_j)},\centerdot\bigr)
\end{array}
&&&&\hspace*{-0.6em}
\begin{array}{r}
\bigl((1_{\ca_i(A_i,D_i)})_{i\in I},(1_{\cb_j(B_j,E_j)})_{j\in J},
\\
g_{(A_i),(B_j),(D_i),(E_j)}\bigr)
\end{array}
\\
&\rdMapsTo &&\ldMapsTo 
\\
&&tr =g_{(A_i),(B_j),(D_i),(E_j)} 
\end{diagram}
Thus, \(tr=g_{(A_i),(B_j),(D_i),(E_j)}\).

Combining \eqref{dia-lb-41} with \eqref{eq-(Bj)f} and \eqref{eq-(Ai)fbar} we obtain the left map in the following diagram for $lb$.
Applying the associativity property from \figref{dia-assoc-mu-multi} for maps \(I\sqcup J \rTTo^{\XX\centerdot(\inj_2\sqcup\inj_1)} (I\sqcup J)\sqcup(I\sqcup J) \rto{\triangledown\triangledown} \mb2\), where \(\XX\:I\sqcup J\to J\sqcup I\) denotes the block-order-changing bijection, we get the second map.
\begin{gather*}
\begin{array}{c}
	\prod_{i\in I}\mcv\bigl(;\ca_i(A_i,A_i)\bigr) \times \prod_{j\in J}\mcv(\cb_j(B_j,E_j);\cb_j(B_j,E_j))
	\\
	\times \prod_{i\in I}\mcv\bigl(\ca_i(A_i,D_i);\ca_i(A_i,D_i)\bigr) \times \prod_{j\in J}\mcv(;\cb_j(E_j,E_j))
	\\
\times \mcV\bigl((\ca_i(A_i,A_i))_{i\in I},(\cb_j(B_j,E_j))_{j\in J};
\cc(((A_i)_{i\in I},(B_j)_{j\in J})g,((A_i)_{i\in I},(E_j)_{j\in J})g)\bigr)
	\\
\times \mcV\bigl((\ca_i(A_i,D_i))_{i\in I},(\cb_j(E_j,E_j))_{j\in J};
\cc(((A_i)_{i\in I},(E_j)_{j\in J})g,((D_i)_{i\in I},(E_j)_{j\in J})g)\bigr)
	\\
\times \mcv\bigl(\cc(((A_i)_{i\in I},(B_j)_{j\in J})g,((A_i)_{i\in I},(E_j)_{j\in J})g),
\cc(((A_i)_{i\in I},(E_j)_{j\in J})g,((D_i)_{i\in I},(E_j)_{j\in J})g); 
	\\
\hfill \cc(((A_i)_{i\in I},(B_j)_{j\in J})g,((D_i)_{i\in I},(E_j)_{j\in J})g)\bigr)
\end{array}
\\
\begin{tanglec}
\id \Step \id \Step \id \Step \XX \Step \id \step \id
	\\
\id \Step \id \Step \XX \Step \id \Step \id \step \id
	\\
\hh \ffbox5{\mu_{\inj_2\:J\hookrightarrow I\sqcup J}} \step \ffbox5{\mu_{\inj_1\:I\hookrightarrow I\sqcup J}} \hstep \id \hstep
	\\
\hh \Step \id \step[6] \id \step[3] \id
	\\
\hh \Step \ffbox{10}{\mu_{\triangledown\triangledown\centerdot\mathsf X\:I\sqcup J\to\mb2}}
	\\
	\hh \Step \id
\end{tanglec}
\;=\;
\begin{tanglec}
	\hh \id \hstep \id \hstep \id \hstep \id \step[1.5] \id \step[3] \id \step[3] \id \step
	\\
\hh \id \hstep \id \hstep \id \hstep \id \hstep \ffbox8{\mu_{\triangledown\triangledown\:(I\sqcup J)\sqcup(I\sqcup J)\to\mb2}}
	\\
	\hh \id \hstep \id \hstep \id \hstep \id \step[4.5] \id \step[4]
	\\
\hh \ffbox{12}{\mu_{\mathsf{X\mkern-6mu X}\centerdot(\inj_2\sqcup\inj_1)\:I\sqcup J\to(I\sqcup J)\sqcup(I\sqcup J)}}
	\\
\hh \id
\end{tanglec}
\\
\mcV\bigl((\ca_i(A_i,D_i))_{i\in I},(\cb_j(B_j,E_j))_{j\in J};\cc(((A_i)_{i\in I},(B_j)_{j\in J})g,((D_i)_{i\in I},(E_j)_{j\in J})g)\bigr)
\end{gather*}
Due to $g$ being a multi-entry $\mcv$\n-functor (see \defref{def-multi-entry-V-functor}) we get the third map.
Applying the associativity property from \figref{dia-assoc-mu-multi} for maps \(I\sqcup J \rTTo^{\XX\centerdot(\inj_2\sqcup\inj_1)} (I\sqcup J)\sqcup(I\sqcup J) \rto\chi I\sqcup J\) we get the right map in
\begin{gather*}
\begin{array}{c}
\prod_{i\in I}\mcv\bigl(;\ca_i(A_i,A_i)\bigr) \times \prod_{j\in J}\mcv(\cb_j(B_j,E_j);\cb_j(B_j,E_j))
	\\
\times \prod_{i\in I}\mcv\bigl(\ca_i(A_i,D_i);\ca_i(A_i,D_i)\bigr) \times \prod_{j\in J}\mcv(;\cb_j(E_j,E_j))
	\\
\times \prod_{i\in I}\mcv\bigl(\ca_i(A_i,A_i),\ca_i(A_i,D_i);\ca_i(A_i,D_i)\bigr)
\times \prod_{j\in J}\mcv(\cb_j(B_j,E_j),\cb_j(E_j,E_j);\cb_j(B_j,E_j))
	\\
\times \mcV\bigl((\ca_i(A_i,D_i))_{i\in I},(\cb_j(B_j,E_j))_{j\in J};
\cc(((A_i)_{i\in I},(B_j)_{j\in J})g,((D_i)_{i\in I},(E_j)_{j\in J})g)\bigr)
\end{array}
\\
\begin{tanglec}
	\hh \id \hstep \id \hstep \id \hstep \id \step[1.5] \id \step[3] \id \step[3] \id \step
	\\
\hh \id \hstep \id \hstep \id \hstep \id \hstep \ffbox8{\mu_{\chi\:(I\sqcup J)\sqcup(I\sqcup J)\to I\sqcup J}}
	\\
	\hh \id \hstep \id \hstep \id \hstep \id \step[4.5] \id \step[4]
	\\
	\hh \ffbox{12}{\mu_{\mathsf{X\mkern-6mu X}\centerdot(\inj_2\sqcup\inj_1)\:I\sqcup J\to(I\sqcup J)\sqcup(I\sqcup J)}}
	\\
	\hh \id
\end{tanglec}
\;=
\begin{tanglec}
	\step \id \step \XX \Step \XX \step \id \step \id
	\\
	\step \id \step \id \Step \XX \Step \id \step \id \step \id
	\\
\hstep \ffbox4{\prod_{i\in I}\mu_{\centerdot\mathsf{I}}} \step \ffbox4{\prod_{j\in J}\mu_{\mathsf{I}\centerdot}} \hstep \id 
	\\
	\hh \step[2.5] \id \step[5] \id \step[2.5] \id
	\\
	\hh \Step \ffbox{9}{\mu_{1\:I\sqcup J\to I\sqcup J}}
	\\
	\hh \Step \id
\end{tanglec}
\\
\mcV\bigl((\ca_i(A_i,D_i))_{i\in I},(\cb_j(B_j,E_j))_{j\in J};\cc(((A_i)_{i\in I},(B_j)_{j\in J})g,((D_i)_{i\in I},(E_j)_{j\in J})g)\bigr)
\end{gather*}
On elements the above four maps give:
\begin{diagram}[h=1.3em,nobalance]
	&&\hspace*{-1em}
	\begin{array}{c}
	\bigl((\id_{A_i})_{i\in I},(1_{\cb_j(B_j,E_j)})_{j\in J},
	\\
	(1_{\ca_i(A_i,D_i)})_{i\in I},(\id_{E_j})_{j\in J},
	\\
	(g_{(A_i),(B_j),(A_i),(E_j)}, \hfill
	\\
	g_{(A_i),(E_j),(D_i),(E_j)})\kappa \bigr)
\end{array}
\hspace*{-1em} &&\phantom{(1_{\ca_i(A_i,D_i)})_{i\in I},(\id_{E_j})_{j\in J},\kappa_{A_i,A_i,D_i},}
\\
&\ruMapsTo(2,3) &&\luMapsTo(2,3) &
\\
\\
	\begin{array}{r}
\bigl((\id_{A_i})_{i\in I},(1_{\cb_j(B_j,E_j)})_{j\in J},
\\
(1_{\ca_i(A_i,D_i)})_{i\in I},(\id_{E_j})_{j\in J},
\\
g_{(A_i),(B_j),(A_i),(E_j)},
\\
g_{(A_i),(E_j),(D_i),(E_j)},\centerdot \bigr)
	\end{array}
&&&&
	\begin{array}{r}
\bigl((\id_{A_i})_{i\in I},(1_{\cb_j(B_j,E_j)})_{j\in J},
\\
(1_{\ca_i(A_i,D_i)})_{i\in I},(\id_{E_j})_{j\in J},\kappa_{A_i,A_i,D_i},
\\
\kappa_{B_j,E_j,E_j},g_{(A_i),(B_j),(D_i),(E_j)}\bigr)
	\end{array}
\\
\\
\dMapsTo &&\dMapsTo &&\dMapsTo
\\
\\
	\begin{array}{l}
\bigl((A_i)_{i\in I}F_{(B_j),(E_j)},
\\
(E_j)_{j\in J}f_{(A_i),(D_i)},\centerdot \bigr)
	\end{array}
&&&&\hspace*{-0.6em}
	\begin{array}{r}
\bigl((1_{\ca_i(A_i,D_i)})_{i\in I},(1_{\cb_j(B_j,E_j)})_{j\in J},
\\
g_{(A_i),(B_j),(D_i),(E_j)}\bigr)
	\end{array}
\\
&\rdMapsTo &&\ldMapsTo 
\\
&&lb =g_{(A_i),(B_j),(D_i),(E_j)}
\end{diagram}
Thus, \(lb=g_{(A_i),(B_j),(D_i),(E_j)}=tr\). 
Therefore, $\varPsi$ is a map
\[ \varPsi\:\VCat\bigl((\ca_i)_{i\in I},(\cb_j)_{j\in J};\cc\bigr)\to \VCat\bigl((\cb_j)_{j\in J};\und\VCat((\ca_i)_{i\in I};\cc)\bigr).
\]

\propref{con-evVCat-VCat} implies that composition~\eqref{eq-phi(XYZ)-iso} takes the form
\begin{multline}
\varPhi =\bigl[ \VCat\bigl((\cb_j)_{j\in J};\und\VCat((\ca_i)_{i\in I};\cc)\bigr) \rTTo^{\dot\Id_{\ca_1}\times\dots\times\dot\Id_{\ca_I}\times\id\times\dot\ev_{(\ca_i)_{i\in I};\cc}}
	\\
\hskip\multlinegap \bigl[\prod_{i\in I}\VCat(\ca_i;\ca_i)\bigr]\times\VCat\bigl((\cb_j)_{j\in J};\und\VCat((\ca_i)_{i\in I};\cc)\bigr) \hfill
\\
\hfill \times\VCat\bigl((\ca_i)_{i\in I},\und\VCat((\ca_i)_{i\in I};\cc);\cc\bigr)
\rTTo^{\mu^\VCat_{\id\sqcup\con\:I\sqcup J\to I\sqcup\mb1}} \VCat\bigl((\ca_i)_{i\in I},(\cb_j)_{j\in J};\cc\bigr) \bigr], \hskip\multlinegap
\\
\hskip\multlinegap f \mapsto \bigl((\Id_{\ca_i})_{i\in I},f,\ev^\VCat\bigr) \hfill
\\
\mapsto f\varPhi =\bigl[ (\ca_i)_{i\in I},(\cb_j)_{j\in J} \rTTo^{(\Id_{\ca_i})_{i\in I},f} (\ca_i)_{i\in I},\und\VCat((\ca_i)_{i\in I};\cc) \rTTo^{\ev^\VCat} \cc \bigr].
\label{eq-fvarPhi-VCat}
\end{multline}
On objects \(f\varPhi\:\bigl((A_i)_{i\in I},(B_j)_{j\in J}\bigr)\mapsto((A_i)_{i\in I},(B_j)_{j\in J}f)\mapsto(A_i)_{i\in I}(B_j)_{j\in J}f\).
On morphisms
\begin{multline}
\prod_{i\in I}\mcv\bigl(\ca_i(A_i,D_i);\ca_i(A_i,D_i)\bigr)
\times \mcv\bigl((\cb_j(B_j,E_j))_{j\in J};\und\VCat((\ca_i)_{i\in I};\cc)((B_j)_{j\in J}f,(E_j)_{j\in J}f)\bigr) \times
\\
\mcV\bigl((\ca_i(A_i,D_i))_{i\in I},\und\VCat((\ca_i)_{i\in I};\cc)((B_j)_{j\in J}f,(E_j)_{j\in J}f);
\cc((A_i)_{i\in I}(B_j)_{j\in J}f,(D_i)_{i\in I}(E_j)_{j\in J}f)\bigr)
\\
\rTTo^{\mu^\mcv_{\id\sqcup\con\:I\sqcup J\to I\sqcup\mb1}} 
\mcV\bigl((\ca_i(A_i,D_i))_{i\in I},(\cb_j(B_j,E_j))_{j\in J};\cc((A_i)_{i\in I}(B_j)_{j\in J}f,(D_i)_{i\in I}(E_j)_{j\in J}f)\bigr),
\\
\bigl((1_{\ca_i(A_i,D_i)})_{i\in I},f_{(B_j),(E_j)},\ev^\VCat\bigr) \mapsto (f\varPhi)_{(A_i),(B_j),(D_i),(E_j)}.
\label{eq-fF-AA-BACBfEf}
\end{multline}
In place of \(\ev^\VCat\) we can substitute formula~\eqref{eq-ev-VQu-E} or \eqref{eq-ev-VQu-A}.
First \eqref{eq-ev-VQu-E}, which gives the left map in the following diagram.
Applying the associativity property from \figref{dia-assoc-mu-multi} for maps \(I\sqcup J \rTTo^{\id\sqcup\con} I\sqcup\mb1 \rto{\mathsf{\triangledown I}} \mb2\) we get the right map in
\begin{gather*}
\begin{array}{l}
\prod_{i\in I}\mcv\bigl(\ca_i(A_i,D_i);\ca_i(A_i,D_i)\bigr)
\times \mcv\bigl((\ca_i(A_i,D_i))_{i\in I};\cc((A_i)_{i\in I}(B_j)_{j\in J}f,(D_i)_{i\in I}(B_j)_{j\in J}f)\bigr)
	\\
	\times \mcv\bigl((\cb_j(B_j,E_j))_{j\in J};\und\VCat((\ca_i)_{i\in I};\cc)((B_j)_{j\in J}f,(E_j)_{j\in J}f)\bigr)
	\\
\times \mcv\bigl(\und\VCat\bigl((\ca_i)_{i\in I};\cc\bigr)((B_j)_{j\in J}f,(E_j)_{j\in J}f);
\cc((D_i)_{i\in I}(B_j)_{j\in J}f,(D_i)_{i\in I}(E_j)_{j\in J}f)\bigr)
	\\
	\times \mcv\bigl(\cc((A_i)_{i\in I}(B_j)_{j\in J}f,(D_i)_{i\in I}(B_j)_{j\in J}f),
\cc((D_i)_{i\in I}(B_j)_{j\in J}f,(D_i)_{i\in I}(E_j)_{j\in J}f); \hfill
	\\
	\hfill \cc((A_i)_{i\in I}(B_j)_{j\in J}f, (D_i)_{i\in I}(E_j)_{j\in J}f)\bigr) 
\end{array}
\\
\begin{tanglec}
\id \step \XX \step \id \step \id
	\\
\hh \hstep \id \step \id \step[1.5] \ffbox3{\mu_{\mathsf{\triangledown I}}}
	\\
\hh \id \step \id \step[3] \id \step
	\\
\hh \ffbox6{\mu_{1\sqcup\con\:I\sqcup J\to I\sqcup\mb1}}
	\\
\hh \id
\end{tanglec}
\;=\;
\begin{tanglec}
\hh \id \Step \id \step[1.5] \id \Step \id \step \id
	\\
\hh \ffbox3{\mu_{1_I}} \hstep \ffbox3{\mu_{\triangledown\:\!\!J\to\mb1\!}} \hstep \id \hstep
	\\
\hh \step \id \step[3.5] \id \Step \id
	\\
\hh \hstep \ffbox7{\mu_{\con\sqcup\con\:I\sqcup J\to\mb2}}
	\\
\hh \hstep \id
\end{tanglec}
\\
\mcV\bigl((\ca_i(A_i,D_i))_{i\in I},(\cb_j(B_j,E_j))_{j\in J};\cc((A_i)_{i\in I}(B_j)_{j\in J}f,(D_i)_{i\in I}(E_j)_{j\in J}f)\bigr)
\end{gather*}

Due to \eqref{eq-axiom-unit-multi2} applied to unit elements it simplifies to the map
\begin{gather*}
\begin{array}{l}
	\mcv\bigl((\ca_i(A_i,D_i))_{i\in I};\cc((A_i)_{i\in I}(B_j)_{j\in J}f,(D_i)_{i\in I}(B_j)_{j\in J}f)\bigr)
	\\
\times \mcv\bigl((\cb_j(B_j,E_j))_{j\in J};
\und\VCat((\ca_i)_{i\in I};\cc)((B_j)_{j\in J}f,(E_j)_{j\in J}f)\bigr)
	\\
\times \mcv\bigl(\und\VCat\bigl((\ca_i)_{i\in I};\cc\bigr)((B_j)_{j\in J}f,(E_j)_{j\in J}f);
\cc((D_i)_{i\in I}(B_j)_{j\in J}f,(D_i)_{i\in I}(E_j)_{j\in J}f)\bigr)
	\\
\times \mcv\bigl(\cc((A_i)_{i\in I}(B_j)_{j\in J}f,(D_i)_{i\in I}(B_j)_{j\in J}f),
\cc((D_i)_{i\in I}(B_j)_{j\in J}f,(D_i)_{i\in I}(E_j)_{j\in J}f);
	\\
	\hfill \cc((A_i)_{i\in I}(B_j)_{j\in J}f, (D_i)_{i\in I}(E_j)_{j\in J}f)\bigr)
\end{array}
\\
\begin{tanglec}
\hh \id \step \id \Step \id \step \id
	\\
\hh \id \hstep \ffbox3{\mu_{\triangledown\:\!J\to\mb1\!}} \hstep \id
	\\
\hh \id \Step \id \Step \id
	\\
\hh \ffbox5{\mu_{\con\sqcup\con\:\!I\sqcup J\to\mb2}}
	\\
\hh \id
\end{tanglec}
	\\
\mcV\bigl((\ca_i(A_i,D_i))_{i\in I},(\cb_j(B_j,E_j))_{j\in J};\cc((A_i)_{i\in I}(B_j)_{j\in J}f,(D_i)_{i\in I}(E_j)_{j\in J}f)\bigr)
\end{gather*}
On elements we obtain:
\begin{diagram}[nobalance,LaTeXeqno,bottom]
\bigl((1_{\ca_i(A_i,D_i)})_{i\in I},[(B_j)_{j\in J}f]_{(A_i),(D_i)},f_{(B_j),(E_j)},p_{(D_i)_{i\in I}},\centerdot\bigr) &\  &\bigl([(B_j)_{j\in J}f]_{(A_i),(D_i)},f_{(B_j),(E_j)},p_{(D_i)_{i\in I}},\centerdot\bigr)
\\
\dMapsTo &\rdMapsTo &\dMapsTo
\\
\bigl((1_{\ca_i(A_i,D_i)})_{i\in I},f_{(B_j),(E_j)},\ev^\VCat\bigr) &&\bigl([(B_j)_{j\in J}f]_{(A_i),(D_i)},f_{(B_j),(E_j)}\centerdot p_{(D_i)_{i\in I}},\centerdot\bigr)
\\
&\rdMapsTo &\dMapsTo
\\
&&(f\varPhi)_{(A_i),(B_j),(D_i),(E_j)}
\label{eq-(fvarPhi)-V}
\end{diagram}

Now to get \(f\varPhi\) we use \eqref{eq-ev-VQu-A}, obtaining the left map of the following diagram.
Applying the associativity property from \figref{dia-assoc-mu-multi} for maps \(I\sqcup J \rTTo^{\id\sqcup\con} I\sqcup\mb1 \rTTo^{\mathsf{\triangledown I\centerdot X}} \mb2\) we get the right expression
\begin{gather*}
\begin{array}{l}
	\mcv\bigl((\cb_j(B_j,E_j))_{j\in J};\und\VCat((\ca_i)_{i\in I};\cc)((B_j)_{j\in J}f,(E_j)_{j\in J}f)\bigr)
	\\
\times \mcv\bigl(\und\VCat((\ca_i)_{i\in I};\cc)((B_j)_{j\in J}f,(E_j)_{j\in J}f);
\cc((A_i)_{i\in I}(B_j)_{j\in J}f,(A_i)_{i\in I}(E_j)_{j\in J}f)\bigr)
	\\
\times \prod_{i\in I}\mcv\bigl(\ca_i(A_i,D_i);\ca_i(A_i,D_i)\bigr) 
	\\
	\times \mcv\bigl((\ca_i(A_i,D_i))_{i\in I}; \cc((A_i)_{i\in I}(E_j)_{j\in J}f,(D_i)_{i\in I}(E_j)_{j\in J}f)\bigr)
	\\
\times \mcv\bigl(\cc((A_i)_{i\in I}(B_j)_{j\in J}f,(A_i)_{i\in I}(E_j)_{j\in J}f),
\cc((A_i)_{i\in I}(E_j)_{j\in J}f,(D_i)_{i\in I}(E_j)_{j\in J}f);
	\\
\hfill \cc((A_i)_{i\in I}(B_j)_{j\in J}f, (D_i)_{i\in I}(E_j)_{j\in J}f)\bigr)
\end{array}
\\
\begin{tanglec}
\id \Step \XX \step \id \step \id
	\\
\XX \Step \id \step \id \step \id
\\
\hh \hstep \id \Step \id \step[1.5] \ffbox3{\mu_{\mathsf{\triangledown I\centerdot X}}}
	\\
\hh \id \Step \id \step[3] \id \step
	\\
\hh \ffbox6{\mu_{1\sqcup\con\:I\sqcup J\to I\sqcup\mb1}} \step
	\\
\hh \id \step
\end{tanglec}
\;=\;
\begin{tanglec}
	\hh \id \Step \id \step[1.5] \id \Step \id \step \id
	\\
\hh \ffbox3{\mu_{\triangledown\:\!J\to\mb1\!}} \hstep \ffbox3{\mu_{1_I}} \hstep \id \hstep
	\\
	\hh \step \id \step[3.5] \id \Step \id
	\\
\hh \hstep \ffbox7{\mu_{\con\con\centerdot\mathsf X\:I\sqcup J\to\mb2}}
	\\
	\hh \hstep \id
\end{tanglec}
\\
\mcV\bigl((\ca_i(A_i,D_i))_{i\in I},(\cb_j(B_j,E_j))_{j\in J};\cc((A_i)_{i\in I}(B_j)_{j\in J}f,(D_i)_{i\in I}(E_j)_{j\in J}f)\bigr)
\end{gather*}
Due to \eqref{eq-axiom-unit-multi2} it simplifies to the map
\begin{gather*}
\begin{array}{l}
	\mcv\bigl((\cb_j(B_j,E_j))_{j\in J};\und\VCat((\ca_i)_{i\in I};\cc)((B_j)_{j\in J}f,(E_j)_{j\in J}f)\bigr)
	\\
\times \mcv\bigl(\und\VCat((\ca_i)_{i\in I};\cc)((B_j)_{j\in J}f,(E_j)_{j\in J}f);
\cc((A_i)_{i\in I}(B_j)_{j\in J}f,(A_i)_{i\in I}(E_j)_{j\in J}f)\bigr)
	\\
\times \mcv\bigl((\ca_i(A_i,D_i))_{i\in I};\cc((A_i)_{i\in I}(E_j)_{j\in J}f,(D_i)_{i\in I}(E_j)_{j\in J}f)\bigr)
	\\
\times \mcv\bigl(\cc((A_i)_{i\in I}(B_j)_{j\in J}f,(A_i)_{i\in I}(E_j)_{j\in J}f),
\cc((A_i)_{i\in I}(E_j)_{j\in J}f,(D_i)_{i\in I}(E_j)_{j\in J}f);
	\\
	\hfill \cc((A_i)_{i\in I}(B_j)_{j\in J}f, (D_i)_{i\in I}(E_j)_{j\in J}f)\bigr) 
\end{array}
\\
\begin{tanglec}
\hh \id \Step \id \step \id \step \id
	\\
\hh \ffbox3{\mu_{\triangledown\:\!J\to\mb1\!}} \hstep \id \step \id \hstep
	\\
\hh \step \id \Step \id \step \id
	\\
\hh \ffbox5{\mu_{\con\con\centerdot\mathsf X\:\!\!I\sqcup J\to\mb2}}
	\\
\hh \id
\end{tanglec}
\\
\mcV\bigl((\ca_i(A_i,D_i))_{i\in I},(\cb_j(B_j,E_j))_{j\in J};\cc((A_i)_{i\in I}(B_j)_{j\in J}f,(D_i)_{i\in I}(E_j)_{j\in J}f)\bigr)
\end{gather*}
On elements we obtain:
\begin{diagram}[nobalance,bottom]
\bigl(f_{(B_j),(E_j)},p_{(A_i)_{i\in I}},(1_{\ca_i(A_i,D_i)})_{i\in I},[(E_j)_{j\in J}f]_{(A_i),(D_i)},\centerdot\bigr) &\  &\bigl(f_{(B_j),(E_j)},p_{(A_i)_{i\in I}},[(E_j)_{j\in J}f]_{(A_i),(D_i)},\centerdot\bigr)
	\\
	\dMapsTo &\rdMapsTo &\dMapsTo
	\\
\bigl((1_{\ca_i(A_i,D_i)})_{i\in I},f_{(B_j),(E_j)},\ev^\VCat\bigr) &&\bigl(f_{(B_j),(E_j)}\centerdot p_{(A_i)_{i\in I}},[(E_j)_{j\in J}f]_{(A_i),(D_i)},\centerdot\bigr)
	\\
	&\rdMapsTo &\dMapsTo
	\\
	&&(f\varPhi)_{(A_i),(B_j),(D_i),(E_j)}
\end{diagram}

Start from \(f\:(\cb_j)_{j\in J}\to\und\VCat((\ca_i)_{i\in I};\cc)\).
Produce \(g=f\varPhi\) and \(f'=g\varPsi\).
Then\\
\(\Ob f'\:\prod_{j\in J}\Ob\cb_j\to\VCat((\ca_i)_{i\in I};\cc)\) is given by
\begin{multline*}
\prod_{i\in I}\VCat(\ca_i;\ca_i) \times \prod_{j\in J}\VCat(;\cb_j) \times \VCat((\ca_i)_{i\in I},(\cb_j)_{j\in J};\cc)
\rTTo^{\mu_{\inj_1\:I\hookrightarrow I\sqcup J}} \VCat((\ca_i)_{i\in I};\cc),
\\
(B_j)_{j\in J} \mapsto \bigl((\Id_{\ca_i})_{i\in I},(\ddot B_j)_{j\in J},f\varPhi\bigr) \mapsto (B_j)_{j\in J}f'.
\end{multline*}
\begin{align*}
(B_j)_{j\in J}f' &=\bigl[ (\ca_i)_{i\in I} \rTTo^{(\Id_{\ca_i})_{i\in I},(\ddot B_j)_{j\in J}} (\ca_i)_{i\in I},(\cb_j)_{j\in J}
\\
&\hspace*{6em} \rTTo^{(\Id_{\ca_i})_{i\in I},f} (\ca_i)_{i\in I},\und\VCat((\ca_i)_{i\in I};\cc) \rTTo^{\ev^\VCat} \cc \bigr]
\\
&=\bigl[ (\ca_i)_{i\in I} \rTTo^{(\Id_{\ca_i})_{i\in I},(\ddot B_j)_{j\in J}\centerdot f} (\ca_i)_{i\in I},\und\VCat((\ca_i)_{i\in I};\cc) \rTTo^{\ev^\VCat} \cc \bigr]
\\
&=\bigl[ (\ca_i)_{i\in I} \rTTo^{(\Id_{\ca_i})_{i\in I},\ddot h} (\ca_i)_{i\in I},\und\VCat((\ca_i)_{i\in I};\cc) \rTTo^{\ev^\VCat} \cc \bigr] =h,
\end{align*}
where \(h=(B_j)_{j\in J}f\in\VCat((\ca_i)_{i\in I};\cc)\).
Notice that \((\ddot B_j)_{j\in J}\centerdot f=\ddot h\) due to \exaref{exa-varnothing}.
The last equation follows from the

\begin{lemma}
For an arbitrary multi-entry $\mcv$\n-functor $h\:(\ca_i)_{i\in I}\to\cc$ \eqref{eq-mu-in1I-I1} holds for \(\mcC=\VCat\):
\[ \bigl[ (\ca_i)_{i\in I} \rTTo^{(\Id_{\ca_i})_{i\in I},\ddot h} (\ca_i)_{i\in I},\und\VCat((\ca_i)_{i\in I};\cc) \rTTo^{\ev^\VCat} \cc \bigr] =h.
\]
\end{lemma}

\begin{proof}
The left hand side sends a tuple of objects \((A_i)_{i\in I}\) to \(\bigl((A_i)_{i\in I},h\bigr)\mapsto(A_i)_{i\in I}h\), thus, it acts on objects like $\Ob h$.
On morphisms the left hand side is a particular case of map \(\varPhi\) for \(J=\varnothing\) (see \eqref{eq-fvarPhi-VCat}):
\begin{multline*}
\varPhi_0 =\bigl[ \VCat\bigl((\ca_i)_{i\in I};\cc)\bigr) \rTTo^{(\dot\Id_{\ca_i})_{i\in I}\times\id\times\dot\ev^\VCat_{(\ca_i)_{i\in I};\cc}}
\\
\prod_{i\in I}\VCat(\ca_i;\ca_i)\times\VCat\bigl((\ca_i)_{i\in I};\cc\bigr) \times\VCat\bigl((\ca_i)_{i\in I},\und\VCat((\ca_i)_{i\in I};\cc);\cc\bigr)
\\
\hfill \rTTo^{\mu^\VCat_{\inj_1\:I\hookrightarrow I\sqcup\mb1}} \VCat\bigl((\ca_i)_{i\in I};\cc\bigr) \bigr], \hskip\multlinegap
\\
h \mapsto \bigl((\Id_{\ca_i})_{i\in I},h,\ev^\VCat\bigr)
\mapsto h\varPhi_0 =\bigl[ (\ca_i)_{i\in I} \rTTo^{(\Id_{\ca_i})_{i\in I},\ddot h} (\ca_i)_{i\in I},\und\VCat((\ca_i)_{i\in I};\cc) \rTTo^{\ev^\VCat} \cc \bigr].
\end{multline*}
We have to prove that \(\varPhi_0=\id\).
Equation \eqref{eq-fvarPhi-VCat} on morphisms transforms to \eqref{eq-(fvarPhi)-V}.
As a consequence we can write \(h\varPhi_0\) on morphisms as
\begin{multline*}
\prod_{i\in I}\mcv\bigl(\ca_i(A_i,D_i);\ca_i(A_i,D_i)\bigr) \times \mcv\bigl(;\und\VCat((\ca_i)_{i\in I};\cc)(h,h)\bigr) \times
	\\
	\mcV\bigl((\ca_i(A_i,D_i))_{i\in I},\und\VCat((\ca_i)_{i\in I};\cc)(h,h);\cc((A_i)_{i\in I}h,(D_i)_{i\in I}h)\bigr)
\\
	\rTTo^{\mu^\mcv_{\inj_1\:I\hookrightarrow I\sqcup\mb1}} 
	\mcV\bigl((\ca_i(A_i,D_i))_{i\in I};\cc((A_i)_{i\in I}h,(D_i)_{i\in I}h)\bigr),
	\\
	\bigl((1_{\ca_i(A_i,D_i)})_{i\in I},\id_h,\ev^\VCat\bigr) \mapsto (h\varPhi_0)_{(A_i),(D_i)},
\end{multline*}
which transforms further to
\begin{multline*}
\mcv\bigl((\ca_i(A_i,D_i))_{i\in I};\cc((A_i)_{i\in I}h,(D_i)_{i\in I}h)\bigr) \times \mcv\bigl(;\und\VCat((\ca_i)_{i\in I};\cc)(h,h)\bigr) 
\\
\times \mcv\bigl(\und\VCat\bigl((\ca_i)_{i\in I};\cc\bigr)(h,h);\cc((D_i)_{i\in I}h,(D_i)_{i\in I}h)\bigr)
\\
\times \mcv\bigl(\cc((A_i)_{i\in I}h,(D_i)_{i\in I}h),\cc((D_i)_{i\in I}h,(D_i)_{i\in I}h);\cc((A_i)_{i\in I}h, (D_i)_{i\in I}h)\bigr) \rTTo^{1\times\mu^\mcv_{\varnothing\to\mb1}\times1}
\\
\mcv\bigl((\ca_i(A_i,D_i))_{i\in I};\cc((A_i)_{i\in I}h,(D_i)_{i\in I}h)\bigr) \times \mcv\bigl(;\cc((D_i)_{i\in I}h,(D_i)_{i\in I}h)\bigr)
\\
\times \mcv\bigl(\cc((A_i)_{i\in I}h,(D_i)_{i\in I}h),\cc((D_i)_{i\in I}h,(D_i)_{i\in I}h);\cc((A_i)_{i\in I}h,(D_i)_{i\in I}h)\bigr)
\\
\rTTo^{\mu^\mcv_{\con\centerdot\inj_1\:I\to\mb2}} 
\mcV\bigl((\ca_i(A_i,D_i))_{i\in I};\cc((A_i)_{i\in I}h,(D_i)_{i\in I}h)\bigr),
\\
\bigl(h_{(A_i),(D_i)},\id_h,p_{(D_i)_{i\in I}},\centerdot\bigr) \mapsto \bigl(h_{(A_i),(D_i)},\id_{(D_i)_{i\in I}h},\centerdot\bigr) 
\mapsto (h\varPhi_0)_{(A_i),(D_i)}.
\end{multline*}
Due to \eqref{eq-1idYk1} \((h\varPhi_0)_{(A_i),(D_i)}=h_{(A_i),(D_i)}\).
Thus, \(h\varPhi_0=h\).
\end{proof}

We conclude that \(\Ob f'=\Ob f\).

On morphisms \(f'_{(B_j),(E_j)}p_{(A_i)}\) is determined by \eqref{eq-(Ai)fbar}:
\begin{multline*}
\prod_{i\in I}\mcv\bigl(;\ca_i(A_i,A_i)\bigr) \times \prod_{j\in J}\mcv(\cb_j(B_j,E_j);\cb_j(B_j,E_j))
	\\
\times \mcV\bigl((\ca_i(A_i,A_i))_{i\in I},(\cb_j(B_j,E_j))_{j\in J};\cc(((A_i)_{i\in I},(B_j)_{j\in J})g,((A_i)_{i\in I},(E_j)_{j\in J})g)\bigr)
	\\
\hfill \rTTo^{\mu_{\inj_2\:J\hookrightarrow I\sqcup J}} \mcV\bigl((\cb_j(B_j,E_j))_{j\in J};\cc(((A_i)_{i\in I},(B_j)_{j\in J})g,((A_i)_{i\in I},(E_j)_{j\in J})g)\bigr) \hskip\multlinegap
	\\
\bigl((\id_{A_i})_{i\in I},(1_{\cb_j(B_j,E_j)})_{j\in J},g_{(A_i),(B_j),(A_i),(E_j)}\bigr) \mapsto f'_{(B_j),(E_j)}p_{(A_i)},
\end{multline*}
where \(g=f\varPhi\) and \(f'=g\varPsi\).
Due to \eqref{eq-fF-AA-BACBfEf} we can write \(f'_{(B_j),(E_j)}p_{(A_i)}\) as the left map in the following diagram.
Applying the associativity property from \figref{dia-assoc-mu-multi} for maps \(J \rTTo^{\inj_2} I\sqcup J \rTTo^{\id\sqcup\triangledown} I\sqcup\mb1\) we get the second map.
Expanding \(\ev^\VCat\) using \eqref{eq-ev-VQu-E} we get the third map.
Applying the associativity property from \figref{dia-assoc-mu-multi} for maps \(J \rTTo^{\triangledown\centerdot\inj_2} I\sqcup\mb1 \rTTo^{\triangledown\mathsf{I}} \mb2\) we obtain the right map in
\begin{gather*}
\hspace*{-0.4em}
\begin{array}{r}
	\prod_{i\in I}\bigl[\mcv\bigl(;\ca_i(A_i,A_i)\bigr) \times \mcv\bigl(\ca_i(A_i,A_i);\ca_i(A_i,A_i)\bigr)\bigr]
	\\
	\times \prod_{j\in J}\mcv(\cb_j(B_j,E_j);\cb_j(B_j,E_j)) \times
	\\
	\mcv\bigl((\cb_j(B_j,E_j))_{j\in J}; \hfill
	\\
	\und\VCat((\ca_i)_{i\in I};\cc)((B_j)_{j\in J}f,(E_j)_{j\in J}f)\bigr) 
	\\
	\times \mcV\bigl((\ca_i(A_i,A_i))_{i\in I}, \hfill
	\\
	\hfill \und\VCat((\ca_i)_{i\in I};\cc)((B_j)_{j\in J}f,(E_j)_{j\in J}f); \hfill
	\\
	\hfill \cc((A_i)_{i\in I}(B_j)_{j\in J}f,(A_i)_{i\in I}(E_j)_{j\in J}f)\bigr) 
\end{array}
\hspace*{-0.5em}
\begin{array}{l}
	\prod_{i\in I}\mcv\bigl(;\ca_i(A_i,A_i)\bigr) \times \mcv\bigl((\cb_j(B_j,E_j))_{j\in J};
	\\
	\hfill \und\VCat((\ca_i)_{i\in I};\cc)((B_j)_{j\in J}f,(E_j)_{j\in J}f)\bigr)
	\\
	\times \mcv\bigl((\ca_i(A_i,A_i))_{i\in I};
	\\
	\hfill \cc((A_i)_{i\in I}(B_j)_{j\in J}f,(A_i)_{i\in I}(B_j)_{j\in J}f)\bigr)
	\\
	\times \mcv\bigl(\und\VCat\bigl((\ca_i)_{i\in I};\cc\bigr)((B_j)_{j\in J}f,(E_j)_{j\in J}f);
	\\
	\hfill \cc((A_i)_{i\in I}(B_j)_{j\in J}f,(A_i)_{i\in I}(E_j)_{j\in J}f)\bigr)
	\\
	\times \mcv\bigl(\cc((A_i)_{i\in I}(B_j)_{j\in J}f,(A_i)_{i\in I}(B_j)_{j\in J}f),
	\\
	\hfill \cc((A_i)_{i\in I}(B_j)_{j\in J}f,(A_i)_{i\in I}(E_j)_{j\in J}f); \hfill
	\\
	\hfill \cc((A_i)_{i\in I}(B_j)_{j\in J}f, (A_i)_{i\in I}(E_j)_{j\in J}f)\bigr)
\end{array}
\\
\begin{tanglec}
\id \hstep \XX \Step \id \Step \id
	\\
\hh \hstep \id \hstep \id \hstep \ffbox6{\mu_{1\sqcup\con\:I\sqcup J\to I\sqcup\mb1}}
	\\
\hh \id \hstep \id \step[3.5] \id \step[2.5]
	\\
\hh \ffbox6{\mu_{\inj_2\:J\hookrightarrow I\sqcup J}} \step[1.5]
	\\
\hh \id \step[1.5]
\end{tanglec}
\;=
\begin{tanglec}
\hh \step \id \Step \id \Step \id \step \id \step \id
	\\
\ffbox4{\prod_I\mu_{\varnothing\to\mb1}} \hstep \ffbox2{\mu_{1_J}} \hstep \id
	\\
\hh \Step \id \step[3.5] \id \step[1.5] \id
	\\
\hh \Step \ffbox6{\mu_{\triangledown\centerdot\inj_2\:J\to I\sqcup\mb1}}
	\\
\hh \Step \id
\end{tanglec}
\qquad
\begin{tanglec}
	\hh \id \step \id \step \id \step \id \step \id
	\\
\hh \hstep \id \step \id \hstep \ffbox3{\mu_{\mathsf{\triangledown I}}}
	\\
	\hh \id \step \id \Step \id \step
	\\
\hh \ffbox6{\mu_{\triangledown\centerdot\inj_2\:J\to I\sqcup\mb1}}
	\\
\hh \id
\end{tanglec}
\;=\;
\begin{tanglec}
\id \Step \XX \Step \id \step \id
	\\
\hh \ffbox3{\mu_{\varnothing\to I}} \step \ffbox3{\mu_{\triangledown\:\!\!J\to\mb1\!}} \hstep \id \hstep
	\\
\hh \step \id \step[4] \id \Step \id
	\\
\hh \step \ffbox7{\mu_{\triangledown\centerdot\inj_2\:J\to\mb2}}
	\\
\hh \step \id
\end{tanglec}
\\
\mcV\bigl((\cb_j(B_j,E_j))_{j\in J};\cc((A_i)_{i\in I}(B_j)_{j\in J}f,(A_i)_{i\in I}(E_j)_{j\in J}f)\bigr)
\end{gather*}
On elements:
\begin{diagram}[h=2.5em]
	\begin{array}{l}
\bigl((\id_{A_i},1_{\ca_i(A_i,A_i)})_{i\in I},
\\
(1_{\cb_j(B_j,E_j)})_{j\in J},f_{(B_j),(E_j)},\ev^\VCat\bigr)
	\end{array}
	&\rMapsTo &
	\begin{array}{l}
\bigl((\id_{A_i})_{i\in I},
\\
f_{(B_j),(E_j)},\ev^\VCat\bigr)
	\end{array}
	&\lMapsTo &
	\begin{array}{l}
\bigl((\id_{A_i})_{i\in I},f_{(B_j),(E_j)},
\\
{}[(B_j)_{j\in J}f]_{(A_i),(A_i)},p_{(A_i)_{i\in I}},\centerdot\bigr)
	\end{array}
	\\
	\dMapsTo &&\dMapsTo &&\dMapsTo
	\\
	\begin{array}{r}
\bigl((\id_{A_i})_{i\in I},(1_{\cb_j(B_j,E_j)})_{j\in J},
\\
g_{(A_i),(B_j),(A_i),(E_j)}\bigr)
	\end{array}
&\rMapsTo &f'_{(B_j),(E_j)}p_{(A_i)} &\lMapsTo &\hspace*{-0.6em}
	\begin{array}{l}
\bigl(\id_{(A_i)_{i\in I}(B_j)_{j\in J}f},
\\
f_{(B_j),(E_j)},p_{(A_i)_{i\in I}},\centerdot\bigr)
	\end{array}
\end{diagram}
Therefore, \(f'=f\) and \(\varPhi\centerdot\varPsi=id\).

Start from \(g\:(\ca_i)_{i\in I},(\cb_j)_{j\in J}\to\cc\).
Produce \(f=g\varPsi\) and \(g''=f\varPhi=g\varPsi\varPhi\).
Then\\
\(\Ob g''\:\prod_{i\in I}\Ob\ca_i\times\prod_{j\in J}\Ob\cb_j\to\Ob\cc\) is given by
\begin{multline*}
g''=\bigl[ (\ca_i)_{i\in I},(\cb_j)_{j\in J} \rTTo^{(\Id_{\ca_i})_{i\in I},g\varPsi} (\ca_i)_{i\in I},\und\VCat((\ca_i)_{i\in I};\cc) \rTTo^{\ev^\VCat} \cc \bigr],
\\
\hskip\multlinegap \bigl((A_i)_{i\in I},(B_j)_{j\in J}\bigr) \mapsto \bigl((A_i)_{i\in I},\bigl[ (\ca_i)_{i\in I} \rTTo^{(\Id)_I,(\ddot B_j)_{j\in J}} (\ca_i)_{i\in I},(\cb_j)_{j\in J} \rto g \cc \bigr] \bigr) \hfill
\\
\mapsto \bigl((A_i)_{i\in I},(B_j)_{j\in J}\bigr)g.
\end{multline*}
Thus, \(\Ob g''=\Ob g\).

In order to describe $g''$ on morphisms let us rewrite \eqref{eq-fF-AA-BACBfEf} substituting \eqref{eq-Bf-A-AB-C-VCat} into it and using \eqref{eq-evVCat-E} for \(\ev^\VCat\).
We get the left map of the following diagram.
Applying the associativity property from \figref{dia-assoc-mu-multi} for maps \(I\sqcup J \rTTo^{\id\sqcup\triangledown} I\sqcup\mb1 \rTTo^{\triangledown\mathsf{I}} \mb2\) we get the right map in
\begin{gather*}
\begin{array}{r}
\prod_{i\in I}\mcv\bigl(\ca_i(A_i,D_i);\ca_i(A_i,D_i)\bigr) 
	\\
\times \mcv\bigl((\cb_j(B_j,E_j))_{j\in J};\und\VCat((\ca_i)_{i\in I};\cc)
\bigl(\bigl[ (\ca_i)_{i\in I} \rto{(\Id)_I,(\ddot B_j)_{j\in J}} (\ca_i)_{i\in I},(\cb_j)_{j\in J} \rto g \cc \bigr], \hfill
\\
\bigl[ (\ca_i)_{i\in I} \rto{(\Id)_I,(\ddot E_j)_{j\in J}} (\ca_i)_{i\in I},(\cb_j)_{j\in J} \rto g \cc \bigr])\bigr)
	\\
\times \mcv\bigl((\ca_i(A_i,D_i))_{i\in I}; \cc(((A_i)_{i\in I},(B_j)_{j\in J})g,((D_i)_{i\in I},(B_j)_{j\in J})g)\bigr) 
	\\
\times \mcv\bigl(\und\VCat\bigl((\ca_i)_{i\in I};\cc\bigr)(\bigl[ (\ca_i)_{i\in I} \rto{(\Id)_I,(\ddot B_j)_{j\in J}} (\ca_i)_{i\in I},(\cb_j)_{j\in J} \rto g \cc \bigr], \hfill
	\\
	\bigl[ (\ca_i)_{i\in I} \rto{(\Id)_I,(\ddot E_j)_{j\in J}} (\ca_i)_{i\in I},(\cb_j)_{j\in J} \rto g \cc \bigr]);\cc(((D_i)_{i\in I},(B_j)_{j\in J})g,((D_i)_{i\in I},(E_j)_{j\in J})g)\bigr)
	\\
	\times \mcv\bigl(\cc(((A_i)_{i\in I},(B_j)_{j\in J})g,((D_i)_{i\in I},(B_j)_{j\in J})g), \hfill
	\\
	\cc(((D_i)_{i\in I},(B_j)_{j\in J})g,((D_i)_{i\in I},(E_j)_{j\in J})g);\cc(((A_i)_{i\in I},(B_j)_{j\in J})g, ((D_i)_{i\in I},(E_j)_{j\in J})g)\bigr)
\end{array}
\\
\begin{tanglec}
\hh \id \hstep \id \step[1.5] \id \step \id \step \id \step
	\\
\hh \id \hstep \id \hstep \ffbox4{\mu_{\mathsf{\triangledown I}\:\!\!I\sqcup\mb1\to\mb2}}
	\\
\hh \id \hstep \id \step[2.5] \id \Step
	\\
\hh \ffbox6{\mu_{1\sqcup\con\:I\sqcup J\to I\sqcup\mb1}}
	\\
\hh \id
\end{tanglec}
\;=\;
\begin{tanglec}
	\id \Step \XX \Step \id \step \id
	\\
\hh \ffbox3{\mu_{1_I}} \step \ffbox3{\mu_{\triangledown\:\!J\to\mb1\!}} \hstep \id \hstep
	\\
	\hh \step \id \step[4] \id \Step \id
	\\
\hh \step \ffbox7{\mu_{\con\con\:I\sqcup J\to\mb2}}
	\\
	\hh \step \id
\end{tanglec}
\\
\mcV\bigl((\ca_i(A_i,D_i))_{i\in I},(\cb_j(B_j,E_j))_{j\in J};\cc(((A_i)_{i\in I},(B_j)_{j\in J})g,((D_i)_{i\in I},(E_j)_{j\in J})g)\bigr)
\end{gather*}
On elements
\begin{diagram}[h=2em,nobalance]
\bigl((1_{\ca_i(A_i,D_i)})_{i\in I},f_{(B_j),(E_j)},(B_j)_{j\in J}f_{(A_i),(D_i)},p_{(D_i)},\centerdot\bigr) &\rMapsTo &\bigl((B_j)_{j\in J}f_{(A_i),(D_i)},f_{(B_j),(E_j)}\centerdot p_{(D_i)},\centerdot\bigr)
	\\
	\dMapsTo &&\dMapsTo
	\\
\bigl((1_{\ca_i(A_i,D_i)})_{i\in I},f_{(B_j),(E_j)},\ev^\VCat\bigr) &\rMapsTo &g''_{(A_i),(B_j),(D_i),(E_j)}
\end{diagram}

Let us use the embedding
\begin{multline*}
\und\VCat\bigl((\ca_i)_{i\in I};\cc\bigr)
\\
(\bigl[ (\ca_i)_{i\in I} \rTTo^{(\Id)_I,(\ddot B_j)_{j\in J}} (\ca_i)_{i\in I},(\cb_j)_{j\in J} \rto g \cc \bigr], \bigl[ (\ca_i)_{i\in I} \rTTo^{(\Id)_I,(\ddot E_j)_{j\in J}} (\ca_i)_{i\in I},(\cb_j)_{j\in J} \rto g \cc \bigr])
\\
\subset \prod_{(X_i\in\cA_i)_{i\in I}}\cc\bigl(((X_i)_{i\in I},(B_j)_{j\in J})g,((X_i)_{i\in I},(E_j)_{j\in J})g\bigr).
\end{multline*}
Together with \eqref{eq-axiom-unit-multi2} it allows to rewrite the above replacing \((B_j)_{j\in J}f_{(A_i),(D_i)}\) with its definition \eqref{eq-(Bj)f} and \((D_i)_{i\in I}\bar f_{(B_j),(E_j)}\) with appropriately modified \eqref{eq-(Ai)fbar}.
We get the left map in equation~\eqref{eq-AAD-BBE-1}.
Applying the associativity property from \figref{dia-assoc-mu-multi} for maps \(I\sqcup J \rTTo^{\inj_1\sqcup\inj_2} (I\sqcup J)\sqcup(I\sqcup J) \rto{\triangledown\triangledown} \mb2\) we get the second map.
Using the fact that $g$ is a multi-entry $\mcv$\n-functor (see \defref{def-multi-entry-V-functor}), written for 3 tuples -- for \(((A_i)_{i\in I},(B_j)_{j\in J})\), for \(((D_i)_{i\in I},(B_j)_{j\in J})\) and for \(((D_i)_{i\in I},(E_j)_{j\in J})\) we get the left hand side of equation~\eqref{eq-AAD-BBE-2}.
Applying the associativity property from \figref{dia-assoc-mu-multi} for maps \(I\sqcup J \rTTo^{\inj_1\sqcup\inj_2} (I\sqcup J)\sqcup(I\sqcup J) \rto{\chi} I\sqcup J\) we get another expression for \(g''_{(A_i),(B_j),(D_i),(E_j)}\) from the right hand side of equation~\eqref{eq-AAD-BBE-2}.
On elements:
\begin{diagram}[h=1.3em,nobalance]
	&&\hspace*{-1em}
\begin{array}{l}
	\bigl((1_{\ca_i(A_i,D_i)})_{i\in I},(\id_{B_j})_{j\in J},
	\\
	(\id_{D_i})_{i\in I},(1_{\cb_j(B_j,E_j)})_{j\in J},?\bigr)
\end{array}
\hspace*{-1em} &&\phantom{(\centerdot)_I,(\centerdot)_J,g_{(A_i),(B_j),(D_i),(E_j)}\bigr)}
\\
&\ruMapsTo(2,3) &&\luMapsTo(2,3) &
\\
\\
	\begin{array}{l}
\bigl((1_{\ca_i(A_i,D_i)})_{i\in I},(\id_{B_j})_{j\in J},
\\
(\id_{D_i})_{i\in I},(1_{\cb_j(B_j,E_j)})_{j\in J},
\\
g_{(A_i),(B_j),(D_i),(B_j)},g_{(D_i),(B_j),(D_i),(E_j)},\centerdot\bigr)
	\end{array}
\hspace*{-3em}
&&&&\;\;
	\begin{array}{l}
\bigl((1_{\ca_i(A_i,D_i)})_{i\in I},(\id_{B_j})_{j\in J},
\\
(\id_{D_i})_{i\in I},(1_{\cb_j(B_j,E_j)})_{j\in J},
\\
(\centerdot)_I,(\centerdot)_J,g_{(A_i),(B_j),(D_i),(E_j)}\bigr)
	\end{array}
\\
\\
\dMapsTo &&\dMapsTo &&\dMapsTo
\\
\\
	\begin{array}{l}
\bigl((B_j)_{j\in J}f_{(A_i),(D_i)},
\\
(D_i)_{i\in I}\bar f_{(B_j),(E_j)},\centerdot\bigr)
	\end{array}
&&&&\;\;
	\begin{array}{l}
\bigl((1_{\ca_i(A_i,D_i)})_{i\in I},(1_{\cb_j(B_j,E_j)})_{j\in J},
\\
g_{(A_i),(B_j),(D_i),(E_j)}\bigr)
	\end{array}
\\
&\rdMapsTo &&\ldMapsTo 
\\
&&\;\;g''_{(A_i),(B_j),(D_i),(E_j)}
\end{diagram}
Therefore, \(g''_{(A_i),(B_j),(D_i),(E_j)}=g_{(A_i),(B_j),(D_i),(E_j)}\).
Hence, $g''=g$ and $\varPsi\centerdot\varPhi=\id$.
\end{proof}

Recall that the closed symmetric multicategory \(\VCat\) gives rise to a symmetric multicategory \(\und\VCat\) enriched in \(\VCat\) \cite[Proposition~4.10]{BesLyuMan-book}.
In particular, for each map \(\phi\:I\to J\) in \(\Mor\cs\) and \(\ca_i,\cb_j,\cc\in\Ob\VCat\), \(i\in I\), \(j\in J\), there exists a unique morphism
\[ \mu^{\und\VCat}_\phi\:\bigl(\und\VCat((\ca_i)_{i\in\phi^{-1}j};\cb_j)\bigr)_{j\in J}, 
\und\VCat((\cb_j)_{j\in J};\cc)\to\und\VCat((\ca_i)_{i\in I};\cc).
\]
This generalizes the horizontal composition of $\mcv$\n-transformations as discussed in \secref{sec-Compositions}.

\subsection{Completeness of the multicategory of \texorpdfstring{$\mcV$}V-categories}
\begin{proposition}\label{pro-VCat-has-products}
Let $\mcv$ be a locally small symmetric complete multicategory.
The multicategory $\VCat$ has small products.
\end{proposition}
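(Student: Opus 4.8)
The plan is to construct products in $\VCat$ \emph{componentwise}, using the products of $\mcv$ guaranteed by \defref{def-multicategory-small-products}. Given a small family $(\cc_j)_{j\in J}$ of small $\mcv$\n-categories, I would define a small $\mcv$\n-category $\prod_{j\in J}\cc_j$ with object set $\Ob\prod_{j\in J}\cc_j=\prod_{j\in J}\Ob\cc_j$ and with hom-objects
\[ \bigl(\prod_{j\in J}\cc_j\bigr)\bigl((X_j)_{j\in J},(Y_j)_{j\in J}\bigr) =\prod_{j\in J}\cc_j(X_j,Y_j),
\]
the product being taken in $\mcv$. The product projections then assemble into the morphism parts $\pr_k:\prod_j\cc_j(X_j,Y_j)\to\cc_k(X_k,Y_k)$ of $\mcv$\n-functors $\pr_k:\prod_j\cc_j\to\cc_k$.

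For the composition I would invoke the universal property of the product $\prod_j\cc_j(X_j,Z_j)$ \emph{in the multicategory} $\mcv$ applied to the two-entry family
\[ \bigl[ \prod_j\cc_j(X_j,Y_j),\prod_j\cc_j(Y_j,Z_j) \rTTo^{\pr_k,\pr_k} \cc_k(X_k,Y_k),\cc_k(Y_k,Z_k) \rTTo^{\kappa^{\cc_k}} \cc_k(X_k,Z_k) \bigr]_{k\in J},
\]
obtaining a unique $\kappa:\prod_j\cc_j(X_j,Y_j),\prod_j\cc_j(Y_j,Z_j)\to\prod_j\cc_j(X_j,Z_j)$ with $\kappa\centerdot\pr_k=(\pr_k,\pr_k)\centerdot\kappa^{\cc_k}$ for all $k$. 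The identities $\id_{(X_j)}:()\to\prod_j\cc_j(X_j,X_j)$ arise likewise from the empty-source ($I=\varnothing$) universal property applied to $(\id^{\cc_k}_{X_k})_{k\in J}$. Associativity \eqref{dia-assoc-V-cat} and unitality \eqref{eq-idX1k1}, \eqref{eq-1idYk1} then hold by uniqueness in the universal property: both sides of each axiom, composed with every $\pr_k$, reduce to the corresponding axiom in $\cc_k$, which holds. The same uniqueness shows each $\pr_k$ is compatible with $\kappa$ and with the units, hence is a genuine $\mcv$\n-functor.

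It remains to establish the universal property of \defref{def-multicategory-small-products} in $\VCat$ itself. Given a family of multi-entry $\mcv$\n-functors $\bigl(F^k:(\ca_i)_{i\in I}\to\cc_k\bigr)_{k\in J}$, I would define $F:(\ca_i)_{i\in I}\to\prod_j\cc_j$ by $\Ob F=(\Ob F^k)_{k\in J}$ and, for each pair of object-tuples, by letting $F_{(A_i),(E_i)}$ be the unique morphism
\[ (\ca_i(A_i,E_i))_{i\in I}\to\prod_{k\in J}\cc_k\bigl((A_i)_{i\in I}F,(E_i)_{i\in I}F\bigr)
\]
whose composite with $\pr_k$ equals $F^k_{(A_i),(E_i)}$, again via the product property in $\mcv$. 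That $F$ satisfies the $\mcv$\n-functor axioms of \propref{pro-multi-entry-V-functors} (the equation $lb=tr$ and the unit coherence \eqref{eq-coherence-with-units}) follows by postcomposing with each $\pr_k$: using that $F^k$ is a functor, together with the defining equations of $\kappa$ and $\id$ in $\prod_j\cc_j$, one sees that the two sides agree after every projection, whence they are equal by uniqueness. By construction $F\centerdot\pr_k=F^k$, and any $G$ with this property has $\Ob G=\Ob F$ and $G_{(A_i),(E_i)}$ sharing all components with $F_{(A_i),(E_i)}$, so $G=F$ by uniqueness. Taking $I=\mb1$ here recovers the statement that $\prod_j\cc_j$ is a product in the underlying category $\VCat_1$.

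The main obstacle is not any single computation but the recognition that the \emph{multicategory} product of \defref{def-multicategory-small-products}, and not merely a product in the underlying category $\mcv_1$, is exactly what is needed to define the two-entry morphism $\kappa$ and the multi-entry morphism parts $F_{(A_i),(E_i)}$: both have a genuine multi-entry source, so the universal property of $\mcv$ must be available for arbitrary input sequences $(X_i)_{i\in I}$, not only for single objects. Once this is in place, every verification is a routine appeal to uniqueness, reducing each identity in $\prod_j\cc_j$ to the corresponding one in the factors $\cc_j$.
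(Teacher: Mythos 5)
Your proposal is correct and follows essentially the same route as the paper: the product $\mcv$\n-category is built componentwise with $\Ob\prod_j\cc_j=\prod_j\Ob\cc_j$ and hom-objects $\prod_j\cc_j(X_j,Y_j)$, composition and identities obtained from the multi-entry universal property of products in $\mcv$ (the paper phrases this as the bijection $\prod_{i\in J}\mcV(-;\ca_i(A_i,E_i))\cong\mcV(-;\prod_{i\in J}\ca_i(A_i,E_i))$), and the projections shown to be $\mcv$\n-functors exhibiting the product in $\VCat$. Your write-up is in fact somewhat more explicit than the paper's about verifying the universal property against arbitrary multi-entry $\mcv$\n-functors, but the underlying argument is identical.
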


\begin{proof}
Let \((\ca_i)_{i\in J}\) be a family of $\mcV$\n-categories, $J\in\Set$.
Then there is a $\mcV$\n-quiver $\ca$ with \(\Ob\ca=\prod_{i\in J}\Ob\ca_i\), \(\ca\bigl((A_i)_{i\in J},(D_i)_{i\in J}\bigr)=\prod_{i\in J}\ca_i(A_i,D_i)\).
Equip it with identity morphisms via
\begin{diagram}
\prod_{i\in J}\mcV\bigl(;\ca_i(A_i,A_i)\bigr) &\cong \mcV\bigl(;\prod_{i\in J}\ca_i(A_i,A_i)\bigr) = &\mcV\bigl(;\ca((A_i)_{i\in J},(A_i)_{i\in J})\bigr)\\
(\id^{\ca_i}_{A_i})_{i\in J} &\rMapsTo &\id^\ca_{(A_i)_{i\in J}}
\end{diagram}
and with composition via
\begin{multline*}
\prod_{i\in J}\mcV\bigl(\ca_i(A_i,D_i),\ca_i(D_i,E_i);\ca_i(A_i,E_i)\bigr) \rTTo^{\prod_{i\in J}\mcV(\pr_i,\pr_i;1)}
\\
\prod_{i\in J}\mcV\bigl(\prod_{j\in J}\ca_j(A_j,D_j),\prod_{k\in J}\ca_k(D_k,E_k);\ca_i(A_i,E_i)\bigr)
\\
\cong \mcV\bigl(\prod_{j\in J}\ca_j(A_j,D_j),\prod_{k\in J}\ca_k(D_k,E_k);\prod_{i\in J}\ca_i(A_i,E_i)\bigr),
\\
(\kappa_{A_i,D_i,E_i})_{i\in J} \mapsto ((\pr_i,\pr_i)\centerdot\kappa_{A_i,D_i,E_i})_{i\in J} \mapsto \kappa^\ca_{(A_i),(D_i),(E_i)}.
\end{multline*}
In detail:
\begin{multline}
\prod_{i\in J}\mcV\bigl(\ca_i(A_i,D_i),\ca_i(D_i,E_i);\ca_i(A_i,E_i)\bigr) \rTTo^{\prod_{i\in J}(\dot\pr_i\times\dot\pr_i\times1)}
\\
\hskip\multlinegap \prod_{i\in J}\bigl[ \mcV\bigl(\prod_{j\in J}\ca_j(A_j,D_j);\ca_i(A_i,D_i)\bigr)
\times \mcV\bigl(\prod_{k\in J}\ca_k(D_k,E_k);\ca_i(D_i,E_i)\bigr) \hfill
\\[-0.5em]
\hfill \times \mcV\bigl(\ca_i(A_i,D_i),\ca_i(D_i,E_i);\ca_i(A_i,E_i)\bigr)\bigr] \hskip\multlinegap
\\
\rTTo^{\prod_{i\in J}\mu_{\mathsf{II}}} \prod_{i\in J}\mcV\bigl(\prod_{j\in J}\ca_j(A_j,D_j),\prod_{k\in J}\ca_k(D_k,E_k);\ca_i(A_i,E_i)\bigr)
\\
\cong \mcV\bigl(\prod_{j\in J}\ca_j(A_j,D_j),\prod_{k\in J}\ca_k(D_k,E_k);\prod_{i\in J}\ca_i(A_i,E_i)\bigr),
\\
(\kappa_{A_i,D_i,E_i})_{i\in J} \mapsto (\pr_i,\pr_i,\kappa_{A_i,D_i,E_i})_{i\in J} \mapsto ((\pr_i,\pr_i)\centerdot\kappa_{A_i,D_i,E_i})_{i\in J} \mapsto \kappa^\ca_{(A_i),(D_i),(E_i)}.
\label{eq-kA(Ai)(Di)(Ei)}
\end{multline}

We have for all $i\in J$
\begin{diagram}[h=2.2em]
\prod_{j\in J}\ca_j(A_j,D_j),\prod_{k\in J}\ca_k(D_k,E_k) &\rTTo^{\kappa^\ca_{(A_j),(D_j),(E_j)}} &\prod_{n\in J}\ca_n(A_n,E_n)
\\
\dTTo<{\pr_i,\pr_i} &= &\dTTo>{\pr_i}
\\
\ca_i(A_i,D_i),\ca_i(D_i,E_i) &\rTTo^{\kappa_{A_i,D_i,E_i}} &\ca_i(A_i,E_i)
\end{diagram}
In detail, $tr=lb$ where:
\begin{multline*}
\mcV\bigl(\prod_{j\in J}\ca_j(A_j,D_j),\prod_{k\in J}\ca_k(D_k,E_k);\prod_{n\in J}\ca_n(A_n,E_n)\bigr) \times \mcV\bigl(\prod_{n\in J}\ca_n(A_n,E_n);\ca_i(A_i,E_i)\bigr) 
\\
\hfill \rTTo^{\mu_{\mathsf{V}}} \mcV\bigl(\prod_{j\in J}\ca_j(A_j,D_j),\prod_{k\in J}\ca_k(D_k,E_k);\ca_i(A_i,E_i)\bigr) \hskip\multlinegap
\\
(\kappa^\ca_{(A_j),(D_j),(E_j)},\pr_i) \mapsto tr,
\end{multline*}
\begin{multline}
\mcV\bigl(\prod_{j\in J}\ca_j(A_j,D_j);\ca_i(A_i,D_i)\bigr) \times \mcV\bigl(\prod_{k\in J}\ca_k(D_k,E_k);\ca_i(D_i,E_i)\bigr)
\\
\times \mcV\bigl(\ca_i(A_i,D_i),\ca_i(D_i,E_i);\ca_i(A_i,E_i)\bigr)
	\\
\hfill \rTTo^{\mu_{\mathsf{II}}} \mcV\bigl(\prod_{j\in J}\ca_j(A_j,D_j),\prod_{k\in J}\ca_k(D_k,E_k);\ca_i(A_i,E_i)\bigr) \hskip\multlinegap
	\\
(\pr_i,\pr_i,\kappa_{A_i,D_i,E_i}) \mapsto lb.
\label{eq-AjAkAi}
\end{multline}
Clearly, \eqref{eq-kA(Ai)(Di)(Ei)} is the unique solution of the above equation $tr=lb$.

Let us prove the first of unitality identities \eqref{eq-idX1k1}.
Denote by $lhs$ the composition
\begin{multline*}
\bigl[ \ca\bigl((A_i)_{i\in J},(D_i)_{i\in J}\bigr) \rTTo^{\id_{(A_i)},1} \ca\bigl((A_i)_{i\in J},(A_i)_{i\in J}\bigr),\ca\bigl((A_i)_{i\in J},(D_i)_{i\in J}\bigr)
\\
\rTTo^{\kappa_{(A_i),(A_i),(D_i)}} \ca\bigl((A_i)_{i\in J},(D_i)_{i\in J}\bigr) \bigr].
\end{multline*}
Explicitly,
\begin{multline*}
\mcV\bigl(;\ca((A_i)_{i\in J},(A_i)_{i\in J})\bigr) \times \mcV\bigl(\ca((A_i)_{i\in J},(D_i)_{i\in J});\ca((A_i)_{i\in J},(D_i)_{i\in J})\bigr)
\\
\times \mcV\bigl(\ca((A_i)_{i\in J},(A_i)_{i\in J}),\ca((A_i)_{i\in J},(D_i)_{i\in J});\ca((A_i)_{i\in J},(D_i)_{i\in J})\bigr)
\\
\rTTo^{\mu_{\mathsf{\centerdot I}}} \mcV\bigl(\ca((A_i)_{i\in J},(D_i)_{i\in J});\ca((A_i)_{i\in J},(D_i)_{i\in J})\bigr),
\\
\bigl(\id^\ca_{(A_i)_{i\in J}},1_{\ca((A_i)_{i\in J},(D_i)_{i\in J})},\kappa^\ca_{(A_i),(A_i),(D_i)}\bigr) \mapsto lhs.
\end{multline*}
Equation $lhs=1$ is equivalent to the left path of the following diagram.
In order to prove that the last obtained element is $\pr_i$ we use the associativity property from \figref{dia-assoc-mu-multi} for maps \(\mb1 \rto{\mathsf{\centerdot I}} \mb2 \rto{\mathsf{II}} \mb2\) and get the right path in
\begin{equation*}
	\hspace*{-0.3em}
	\begin{diagram}[h=3.6em,inline]
		\begin{array}{r}
\mcV\bigl(;\prod_{j\in J}\ca_j(A_j,A_j)\bigr) \times
\\
\mcV\bigl(\prod_{l\in J}\ca_l(A_l,D_l);\prod_{k\in J}\ca_k(A_k,D_k)\bigr)
\\
\times \mcV\bigl(\prod_{j\in J}\ca_j(A_j,A_j);\ca_i(A_i,A_i)\bigr)
\\
\times \mcV\bigl(\prod_{k\in J}\ca_k(A_k,D_k);\ca_i(A_i,D_i)\bigr) \times
\\
\mcV\bigl(\ca_i(A_i,A_i),\ca_i(A_i,D_i);\ca_i(A_i,D_i)\bigr)
		\end{array}
&\lTTo^{\cong\times\cong\times\dot\pr_i\times\dot\pr_i\times1} &
		\begin{array}{l}
\prod_{j\in J}\mcV\bigl(;\ca_j(A_j,A_j)\bigr) \times
	\\
\prod_{k\in J}\mcV\bigl(\prod_{l\in J}\ca_l(A_l,D_l);\ca_k(A_k,D_k)\bigr)
	\\
\times \mcV\bigl(\ca_i(A_i,A_i),\ca_i(A_i,D_i);\ca_i(A_i,D_i)\bigr)
		\end{array}
		\\
\dTTo<{1\times1\times\mu_{\mathsf{II}}} &\rdTTo^{\mu_{\varnothing\to\mb1}\times\mu_{\mb1\to\mb1}\times1} &
		\\
		\begin{array}{r}
	\mcV\bigl(;\!\prod_{j\in J}\!\ca_j(A_j,A_j)\bigr)
	\\
	\times \mcV\bigl(\prod_{l\in J}\!\ca_l(A_l,D_l);\!\prod_{k\in J}\!\ca_k(A_k,D_k)\bigr) \times
	\\
	\mcV\bigl(\prod_{j\in J}\ca_j(A_j,A_j),\prod_{k\in J}\ca_k(A_k,D_k);\ca_i(A_i,D_i)\bigr)
		\end{array}
\hspace*{-6em} &&
		\begin{array}{l}
\mcV\bigl(;\ca_i(A_i,A_i)\bigr)
\\
\times \mcV\bigl(\prod_{l\in J}\ca_l(A_l,D_l);\ca_i(A_i,D_i)\bigr) \times
\\
\mcV\bigl(\ca_i(A_i,A_i),\ca_i(A_i,D_i);\ca_i(A_i,D_i)\bigr)
		\end{array}
		\\
&\rdTTo<{\mu_{\mathsf{\centerdot I}}} &\dTTo>{\mu_{\mathsf{\centerdot I}}}
		\\
&&\hspace*{-5em} \mcV\bigl(\prod_{l\in J}\ca_l(A_l,D_l);\ca_i(A_i,D_i)\bigr)
	\end{diagram}
\end{equation*}
On elements
\begin{diagram}[h=2em,nobalance]
\bigl( \id^\ca_{(A_j)_{j\in J}},1_{\prod_{k\in J}\ca_k(A_k,D_k)},\pr_i,\pr_i,\kappa_{A_i,A_i,D_i} \bigr) &\lMapsTo &
\bigl( (\id_{A_j})_{j\in J},(\pr_k)_{k\in J},\kappa_{A_i,A_i,D_i} \bigr)
\\
\dMapsTo &\rdMapsTo &
\\
\bigl( \id^\ca_{(A_j)_{j\in J}},1_{\prod_{k\in J}\ca_k(A_k,D_k)},(\pr_i,\pr_i)\centerdot\kappa_{A_i,A_i,D_i} \bigr) &&\bigl( \id_{A_i},\pr_i,\kappa_{A_i,A_i,D_i} \bigr)
\\
&\rdMapsTo<? &\dMapsTo
	\\
&&\pr_i
\end{diagram}

Let us prove the second of unitality identities \eqref{eq-1idYk1}.
Denote by $lhs$ the composition
\begin{multline*}
\bigl[ \ca\bigl((A_i)_{i\in J},(D_i)_{i\in J}\bigr) \rTTo^{1,\id_{(D_i)}} \ca\bigl((A_i)_{i\in J},(D_i)_{i\in J}\bigr),\ca\bigl((D_i)_{i\in J},(D_i)_{i\in J}\bigr)
\\
\rTTo^{\kappa_{(A_i),(D_i),(D_i)}} \ca\bigl((A_i)_{i\in J},(D_i)_{i\in J}\bigr) \bigr].
\end{multline*}
Explicitly,
\begin{multline*}
\mcV\bigl(\ca((A_i)_{i\in J},(D_i)_{i\in J});\ca((A_i)_{i\in J},(D_i)_{i\in J})\bigr) \times \mcV\bigl(;\ca((D_i)_{i\in J},(D_i)_{i\in J})\bigr)  
	\\
\times \mcV\bigl(\ca((A_i)_{i\in J},(D_i)_{i\in J}),\ca((D_i)_{i\in J},(D_i)_{i\in J});\ca((A_i)_{i\in J},(D_i)_{i\in J})\bigr)
	\\
\rTTo^{\mu_{\mathsf{I\centerdot}}} \mcV\bigl(\ca((A_i)_{i\in J},(D_i)_{i\in J});\ca((A_i)_{i\in J},(D_i)_{i\in J})\bigr),
	\\
\bigl(1_{\ca((A_i)_{i\in J},(D_i)_{i\in J})},\id^\ca_{(A_i)_{i\in J}},\kappa^\ca_{(A_i),(D_i),(D_i)}\bigr) \mapsto lhs.
\end{multline*}
Equation $lhs=1$ is equivalent to the left path of the following diagram.
In order to prove that the last obtained element is $\pr_i$ we use the associativity property from \figref{dia-assoc-mu-multi} for maps \(\mb1 \rto{\mathsf{I\centerdot}} \mb2 \rto{\mathsf{II}} \mb2\) and get the right path in
\begin{equation*}
	\hspace*{-0.3em}
	\begin{diagram}[h=3.6em,inline]
		\begin{array}{r}
\mcV\bigl(\prod_{l\in J}\ca_l(A_l,D_l);\prod_{k\in J}\ca_k(A_k,D_k)\bigr)
\\
\times \mcV\bigl(;\prod_{j\in J}\ca_j(D_j,D_j)\bigr)
\\
\times \mcV\bigl(\prod_{k\in J}\ca_k(A_k,D_k);\ca_i(A_i,D_i)\bigr)
\\
\times \mcV\bigl(\prod_{j\in J}\ca_j(D_j,D_j);\ca_i(D_i,D_i)\bigr)
\\
\times \mcV\bigl(\ca_i(A_i,D_i),\ca_i(D_i,D_i);\ca_i(A_i,D_i)\bigr)
		\end{array}
&\lTTo^{\cong\times\cong\times\dot\pr_i\times\dot\pr_i\times1} &
		\begin{array}{l}
\prod_{k\in J}\mcV\bigl(\prod_{l\in J}\ca_l(A_l,D_l);\ca_k(A_k,D_k)\bigr)
\\
\times \prod_{j\in J}\mcV\bigl(;\ca_j(D_j,D_j)\bigr)
\\
\times \mcV\bigl(\ca_i(A_i,D_i),\ca_i(D_i,D_i);\ca_i(A_i,D_i)\bigr)
		\end{array}
		\\
\dTTo<{1\times1\times\mu_{\mathsf{II}}} &\rdTTo^{\mu_{\mb1\to\mb1}\times\mu_{\varnothing\to\mb1}\times1} &
		\\
		\begin{array}{r}
\mcV\bigl(\prod_{l\in J}\ca_l(A_l,D_l);\prod_{k\in J}\ca_k(A_k,D_k)\bigr)
\\
\times \mcV\bigl(;\prod_{j\in J}\ca_j(D_j,D_j)\bigr) \times
\\
\mcV\bigl(\prod_{k\in J}\ca_k(A_k,D_k),\prod_{j\in J}\ca_j(D_j,D_j);\ca_i(A_i,D_i)\bigr)
		\end{array}
		\hspace*{-6em} &&
		\begin{array}{l}
\mcV\bigl(\prod_{l\in J}\ca_l(A_l,D_l);\ca_i(A_i,D_i)\bigr)
\\
\times \mcV\bigl(;\ca_i(A_i,D_i)\bigr) \times
\\
\mcV\bigl(\ca_i(A_i,D_i),\ca_i(D_i,D_i);\ca_i(A_i,D_i)\bigr)
		\end{array}
		\\
&\rdTTo<{\mu_{\mathsf{I\centerdot}}} &\dTTo>{\mu_{\mathsf{I\centerdot}}}
		\\
		&&\hspace*{-5em} \mcV\bigl(\prod_{l\in J}\ca_l(A_l,D_l);\ca_i(A_i,D_i)\bigr)
	\end{diagram}
\end{equation*}
On elements
\begin{diagram}[h=2em,nobalance]
\bigl( 1_{\prod_{k\in J}\ca_k(A_k,D_k)},\id^\ca_{(D_j)_{j\in J}},\pr_i,\pr_i,\kappa_{A_i,D_i,D_i} \bigr) &\lMapsTo &\bigl( (\pr_k)_{k\in J},(\id_{D_j})_{j\in J},\kappa_{A_i,D_i,D_i} \bigr)
	\\
	\dMapsTo &\rdMapsTo &
	\\
\bigl( 1_{\prod_{k\in J}\ca_k(A_k,D_k)},\id^\ca_{(D_j)_{j\in J}},(\pr_i,\pr_i)\centerdot\kappa_{A_i,D_i,D_i} \bigr) &&\bigl( \pr_i,\id^\ca_{(D_j)_{j\in J}},\kappa_{A_i,D_i,D_i} \bigr)
	\\
	&\rdMapsTo<? &\dMapsTo
	\\
	&&\pr_i
\end{diagram}

The associativity of $\ca$ is expressed by diagram~\eqref{dia-assoc-V-cat} which is a shorthand of equation $tr=lb$, where
\begin{multline*}
\mcV\bigl(\ca((A_i)_{i\in J},(C_i)_{i\in J});\ca((A_i)_{i\in J},(C_i)_{i\in J})\bigr)
\\
\times \mcV\bigl(\ca((C_i)_{i\in J},(D_i)_{i\in J}),\ca((D_i)_{i\in J},(E_i)_{i\in J});\ca((C_i)_{i\in J},(E_i)_{i\in J})\bigr)
\\
\times \mcV\bigl(\ca((A_i)_{i\in J},(C_i)_{i\in J}),\ca((C_i)_{i\in J},(E_i)_{i\in J});\ca((A_i)_{i\in J},(E_i)_{i\in J})\bigr) \rTTo^{\mu_{\mathsf{IV}\:\mb3\to\mb2}}
	\\
\mcV\bigl(\ca((A_i)_{i\in J},(C_i)_{i\in J}),\ca((C_i)_{i\in J},(D_i)_{i\in J}),\ca((D_i)_{i\in J},(E_i)_{i\in J});\ca((A_i)_{i\in J},(E_i)_{i\in J})\bigr),
	\\
\bigl(1_{\ca((A_i)_{i\in J},(C_i)_{i\in J})},\kappa^\ca_{(C_i),(D_i),(E_i)},\kappa^\ca_{(A_i),(C_i),(E_i)}\bigr) \mapsto tr,
\end{multline*}
\begin{multline*}
\mcV\bigl(\ca((A_i)_{i\in J},(C_i)_{i\in J}),\ca((C_i)_{i\in J},(D_i)_{i\in J});\ca((A_i)_{i\in J},(D_i)_{i\in J})\bigr) 
\\
\times \mcV\bigl(\ca((D_i)_{i\in J},(E_i)_{i\in J});\ca((D_i)_{i\in J},(E_i)_{i\in J})\bigr)
	\\
\times \mcV\bigl(\ca((A_i)_{i\in J},(D_i)_{i\in J}),\ca((D_i)_{i\in J},(E_i)_{i\in J});\ca((A_i)_{i\in J},(E_i)_{i\in J})\bigr) \rTTo^{\mu_{\mathsf{VI}\:\mb3\to\mb2}}
	\\
\mcV\bigl(\ca((A_i)_{i\in J},(C_i)_{i\in J}),\ca((C_i)_{i\in J},(D_i)_{i\in J}),\ca((D_i)_{i\in J},(E_i)_{i\in J});\ca((A_i)_{i\in J},(E_i)_{i\in J})\bigr),
	\\
\bigl(\kappa^\ca_{(A_i),(C_i),(D_i)},1_{\ca((D_i)_{i\in J},(E_i)_{i\in J})},\kappa^\ca_{(A_i),(D_i),(E_i)}\bigr) \mapsto lb.
\end{multline*}
Fix $i\in J$ and consider the projection $\pr_i$ to $i$th factor.
The required equation is equivalent to equation \(tr\centerdot\pr_i\equiv tr_i=lb_i\equiv lb\centerdot\pr_i\) between elements which are obtained below.
First of all we get $tr_i$ via the left map of the following diagram.
These expressions can be transformed using the associativity property of \figref{dia-assoc-mu-multi} for maps \(\mb3 \rto{\mathsf{IV}} \mb2 \rto{\id} \mb2\) giving the right map in
\begin{gather*}
\begin{array}{r}
\prod_{m\in J}\mcV\bigl(\prod_{j\in J}\ca_j(A_j,C_j);\ca_m(A_m,C_m)\bigr) \times
\prod_{n\in J}\mcV\bigl(\ca_n(C_n,D_n),\ca_n(D_n,E_n);\ca_n(C_n,E_n)\bigr)
	\\
	\times \mcV\bigl(\ca_i(A_i,C_i),\ca_i(C_i,E_i);\ca_i(A_i,E_i)\bigr)
\end{array}
\\
\begin{tanglec}
\hh \id \step[5] \id \step[4.5] \id
\\
\ffbox1\cong \hstep \ffbox8{\prod_{n\in J}(\dot\pr_n\!\times\!\dot\pr_n\!\times\!1)} \hstep \id \hstep
\\
\hh \id \step[4] \id \step \id \step \id \step[3.5] \id
\\
\id \step[3] \ffbox4{\prod_{n\in J}\mu_{\mathsf{II}}} \step[2.5] \id
\\
\hh \id \step[3.5] \id \step[6] \id
\\
\hh \id \step[3] \ffbox1\cong \hstep \ffbox2{\dot\pr_i} \hstep \ffbox2{\dot\pr_i} \hstep \id
\\
\hh \id \step[3.5] \id \Step \id \step[2.5] \id \step[1.5] \id
\\
\hh \hstep \id \step[3.5] \id \step[1.5] \ffbox5{\mu_{\mathsf{II}}}
\\
\hh \id \step[3.5] \id \step[4] \id \Step
\\
\hh \ffbox9{\mu_{\mathsf{IV}\:\mb3\to\mb2}} \step[1.5]
\\
\hh \id \step[1.5]
\end{tanglec}
\;=\;
\begin{tanglec}
	\hh \id \step[5] \id \step[4.5] \id
	\\
	\ffbox1\cong \hstep \ffbox8{\prod_{n\in J}(\dot\pr_n\!\times\!\dot\pr_n\!\times\!1)} \hstep \id \hstep
	\\
	\hh \id \step[4] \id \step \id \step \id \step[3.5] \id
	\\
	\id \step[3] \ffbox4{\prod_{n\in J}\mu_{\mathsf{II}}} \step[2.5] \id
	\\
	\hh \id \step[3.5] \id \step[6] \id
	\\
	\hh \id \step[3] \ffbox1\cong \hstep \ffbox2{\dot\pr_i} \hstep \ffbox2{\dot\pr_i} \hstep \id
\\
\id \step[3.5] \XX \step[2.5] \id \step[1.5] \id
\\
\hh \ffbox5{\mu_{\mathsf{I}}} \step \ffbox4{\mu_{\mathsf{V}}} \hstep \id \step
	\\
\hh \step[1.5] \id \step[5.5] \id \step[2.5] \id
	\\
\hh \step[1.5] \ffbox9{\mu_{\mathsf{IV}\:\mb3\to\mb2}}
	\\
\hh \step[1.5] \id
\end{tanglec}
\\
\mcV\bigl(\prod_{j\in J}\ca_j(A_j,C_j),\prod_{k\in J}\ca_k(C_k,D_k),\prod_{l\in J}\ca_l(D_l,E_l);\ca_i(A_i,E_i)\bigr)
\end{gather*}
On elements
\begin{diagram}[h=2em,nobalance]
\bigl((\pr_m)_{m\in J},(\kappa_{C_n,D_n,E_n})_{n\in J},\kappa_{A_i,C_i,E_i}\bigr) &&\phantom{,(\pr_n,\pr_n,\kappa_{C_n,D_n,E_n})_{n\in J},\kappa_{A_i,C_i,E_i}\bigr)}
\\
&\rdMapsTo &
	\\
&&\hspace*{-4em} \bigl(1_{\prod_{j\in J}\ca_j(A_j,C_j)},(\pr_n,\pr_n,\kappa_{C_n,D_n,E_n})_{n\in J},\kappa_{A_i,C_i,E_i}\bigr)
\\
&&\dMapsTo
\\
&&\hspace*{-5em} \bigl(1_{\prod_{j\in J}\ca_j(A_j,C_j)},((\pr_n,\pr_n)\centerdot\kappa_{C_n,D_n,E_n})_{n\in J},\kappa_{A_i,C_i,E_i}\bigr)
\\
&\ldMapsTo &
	\\
\bigl(1_{\prod_{j\in J}\ca_j(A_j,C_j)},\kappa^\ca_{(C_i),(D_i),(E_i)},\pr_i,\pr_i,\kappa_{A_i,C_i,E_i}\bigr) &&
\\
\dMapsTo &\rdMapsTo &
\\
\bigl(1_{\prod_{j\in J}\ca_j(A_j,C_j)},\kappa^\ca_{(C_i),(D_i),(E_i)},(\pr_i,\pr_i)\centerdot\kappa_{A_i,C_i,E_i}\bigr) \hspace*{-1.4em} &&\bigl(\pr_i,(\pr_i,\pr_i)\centerdot\kappa_{C_i,D_i,E_i},\kappa_{A_i,C_i,E_i}\bigr)
	\\
	&\rdMapsTo<? &\dMapsTo
	\\
&&tr_i
\end{diagram}

Secondly, we obtain $lb_i$ via the left map of the following diagram.
These expressions can be transformed using the associativity property of \figref{dia-assoc-mu-multi} for maps \(\mb3 \rto{\mathsf{VI}} \mb2 \rto{\id} \mb2\) giving the right map in
\begin{gather*}
\begin{array}{r}
\prod_{n\in J}\mcV\bigl(\ca_n(A_n,C_n),\ca_n(C_n,D_n);\ca_n(A_n,D_n)\bigr)
\times \prod_{m\in J}\mcV\bigl(\prod_{l\in J}\ca_l(D_l,E_l);\ca_m(D_m,E_m)\bigr) 
	\\
	\times \mcV\bigl(\ca_i(A_i,D_i),\ca_i(D_i,E_i);\ca_i(A_i,E_i)\bigr)
\end{array}
\\
\begin{tanglec}
\hh \step[4] \id \step[5] \id \step[2.5] \id
	\\
\ffbox8{\prod_{n\in J}(\dot\pr_n\!\times\!\dot\pr_n\!\times\!1)} \hstep \ffbox1\cong \Step \id 
	\\
\hh \step \id \step \id \step \id \step[6] \id \step[2.5] \id
	\\
\ffbox4{\prod_{n\in J}\mu_{\mathsf{II}}} \step[4] \ne3 \step[2.5] \id
	\\
\hh \Step \id \step[4] \id \step[5.5] \id
	\\
\hh \step[1.5] \ffbox1\cong \step[3.5] \id \hstep \ffbox2{\dot\pr_i} \hstep \ffbox2{\dot\pr_i} \hstep \id
	\\
\hh \Step \id \step[4] \id \step[1.5] \id \step[2.5] \id \step[1.5] \id
	\\
\hh \step[2.5] \id \step[4] \id \step \ffbox5{\mu_{\mathsf{II}}}
	\\
\hh \id \step[4] \id \step[3.5] \id
	\\
\hh \ffbox9{\mu_{\mathsf{VI}\:\mb3\to\mb2}} \hstep
	\\
\hh \id \hstep
\end{tanglec}
\;=\;
\begin{tanglec}
	\hh \step[4] \id \step[5] \id \step[2.5] \id
	\\
	\ffbox8{\prod_{n\in J}(\dot\pr_n\!\times\!\dot\pr_n\!\times\!1)} \hstep \ffbox1\cong \Step \id 
	\\
	\hh \step \id \step \id \step \id \step[6] \id \step[2.5] \id
	\\
	\ffbox4{\prod_{n\in J}\mu_{\mathsf{II}}} \step[4] \ne3 \step[2.5] \id
	\\
	\hh \Step \id \step[4] \id \step[5.5] \id
	\\
	\hh \step[1.5] \ffbox1\cong \step[3.5] \id \hstep \ffbox2{\dot\pr_i} \hstep \ffbox2{\dot\pr_i} \hstep \id
	\\
\Step \id \step[4] \XX \Step \id \step[1.5] \id
	\\
\hh \step[1.5] \ffbox5{\mu_{\mathsf{V}}} \step \ffbox3{\mu_{\mathsf{I}}} \step \id
	\\
\hh \step[4] \id \step[5] \id \step[2.5] \id
	\\
\hh \step[3.5] \ffbox9{\mu_{\mathsf{VI}\:\mb3\to\mb2}}
	\\
\hh \step[3.5] \id
\end{tanglec}
\\
\mcV\bigl(\prod_{j\in J}\ca_j(A_j,C_j),\prod_{k\in J}\ca_k(C_k,D_k),\prod_{l\in J}\ca_l(D_l,E_l);\ca_i(A_i,E_i)\bigr)
\end{gather*}
On elements
\begin{diagram}[h=2em,nobalance]
\bigl((\kappa_{A_n,C_n,D_n})_{n\in J},(\pr_m)_{m\in J},\kappa_{A_i,D_i,E_i}\bigr) &&\phantom{,\kappa_{A_n,C_n,D_n})_{n\in J},1_{\prod_{l\in J}\ca_l(D_l,E_l)},\kappa_{A_i,D_i,E_i}\bigr)}
\\
&\rdMapsTo &
\\
&&\hspace*{-4em} \bigl((\pr_n,\pr_n,\kappa_{A_n,C_n,D_n})_{n\in J},1_{\prod_{l\in J}\ca_l(D_l,E_l)},\kappa_{A_i,D_i,E_i}\bigr)
	\\
&&\dMapsTo
	\\
&&\hspace*{-5em} \bigl(((\pr_n,\pr_n)\centerdot\kappa_{A_n,C_n,D_n})_{n\in J},1_{\prod_{l\in J}\ca_l(D_l,E_l)},\kappa_{A_i,D_i,E_i}\bigr)
\\
&\ldMapsTo
\\
\bigl(\kappa^\ca_{(A_i),(C_i),(D_i)},1_{\prod_{l\in J}\ca_l(D_l,E_l)},\pr_i,\pr_i,\kappa_{A_i,D_i,E_i}\bigr) &&
	\\
	\dMapsTo &\rdMapsTo &
	\\
\bigl(\kappa^\ca_{(A_i),(C_i),(D_i)},1_{\prod_{l\in J}\ca_l(D_l,E_l)},(\pr_i,\pr_i)\centerdot\kappa_{A_i,D_i,E_i}\bigr) \hspace*{-1.4em} &&\bigl((\pr_i,\pr_i)\centerdot\kappa_{A_i,C_i,D_i},\pr_i,\kappa_{A_i,D_i,E_i}\bigr)
	\\
	&\rdMapsTo<? &\dMapsTo
	\\
&&lb_i
\end{diagram}

The expressions for $tr_i$ and $lb_i$ can be obtained in a different way, using equation $tr=lb$ from \eqref{eq-AjAkAi}.
First the left path in the following diagram for $tr_i$.
Applying the associativity property from \figref{dia-assoc-mu-multi} for maps \(\mb3 \rto\id \mb3 \rto{\mathsf{IV}} \mb2\) we get the right path in
\begin{diagram}[h=4.2em]
\begin{array}{r}
\mcV\bigl(\prod_{j\in J}\ca_j(A_j,C_j);\ca_i(A_i,C_i)\bigr)
\\
\times \mcV\bigl(\ca_i(A_i,C_i);\ca_i(A_i,C_i)\bigr)
\\
\times \mcV\bigl(\prod_{k\in J}\ca_k(C_k,D_k);\ca_i(C_i,D_i)\bigr)
\\
\times \mcV\bigl(\prod_{l\in J}\ca_l(D_l,E_l);\ca_i(D_i,E_i)\bigr) \times
\\
\mcV\bigl(\ca_i(C_i,D_i),\ca_i(D_i,E_i);\ca_i(C_i,E_i)\bigr)
\\
\times \mcV\bigl(\ca_i(A_i,C_i),\ca_i(C_i,E_i);\ca_i(A_i,E_i)\bigr)
\end{array}
&\rTTo^{1\times1\times1\times\mu_{\mathsf{IV}\:\mb3\to\mb2}} &
\begin{array}{l}
\mcV\bigl(\prod_{j\in J}\ca_j(A_j,C_j);\ca_i(A_i,C_i)\bigr)
\\
\times \mcV\bigl(\prod_{k\in J}\ca_k(C_k,D_k);\ca_i(C_i,D_i)\bigr)
\\
\times \mcV\bigl(\prod_{l\in J}\ca_l(D_l,E_l);\ca_i(D_i,E_i)\bigr)
\\
\times \mcV\bigl(\ca_i(A_i,C_i),\ca_i(C_i,D_i),
\\
\hfill \ca_i(D_i,E_i);\ca_i(A_i,E_i)\bigr)
\end{array}
\\
\dTTo<{\mu_{\mathsf{I}}\times\mu_{\mathsf{II}}\times1} &&\dTTo>{\mu_{\mathsf{III}}}
\\
\begin{array}{r}
\mcV\bigl(\prod_{j\in J}\ca_j(A_j,C_j);\ca_i(A_i,C_i)\bigr) \times
\\
\mcV\bigl(\prod_{k\in J}\ca_k(C_k,D_k);\prod_{l\in J}\ca_l(D_l,E_l);
\\
\ca_i(C_i,E_i)\bigr)
\\
\times \mcV\bigl(\ca_i(A_i,C_i),\ca_i(C_i,E_i);\ca_i(A_i,E_i)\bigr)
\end{array}
&\rTTo^{\mu_{\mathsf{IV}\:\mb3\to\mb2}} &
\begin{array}{r}
\mcV\bigl(\prod_{j\in J}\ca_j(A_j,C_j),\prod_{k\in J}\ca_k(C_k,D_k),
\\
\prod_{l\in J}\ca_l(D_l,E_l);\ca_i(A_i,E_i)\bigr)
\end{array}
\end{diagram}
On elements
\begin{diagram}
\bigl(\pr_i,1_{\ca_i(A_i,C_i)},\pr_i,\pr_i,\kappa_{C_i,D_i,E_i},\kappa_{A_i,C_i,E_i}\bigr) &\rMapsTo &(\pr_i,\pr_i,\pr_i,\kappa_{A_i,C_i,D_i,E_i})
	\\
\dMapsTo &&\dMapsTo
	\\
\bigl(\pr_i,(\pr_i,\pr_i)\centerdot\kappa_{C_i,D_i,E_i},\kappa_{A_i,C_i,E_i}\bigr) &\rMapsTo &tr_i=(\pr_i,\pr_i,\pr_i)\centerdot\kappa_{A_i,C_i,D_i,E_i}
\end{diagram}
see \eqref{eq-V(CC)V(CCC)V(CCC)-V(CCCC)}.

Second the left path in the following diagram for $lb_i$.
Applying the associativity property from \figref{dia-assoc-mu-multi} for maps \(\mb3 \rto\id \mb3 \rto{\mathsf{VI}} \mb2\) we get the right path in
\begin{diagram}[h=4.2em]
	\begin{array}{r}
\mcV\bigl(\prod_{j\in J}\ca_j(A_j,C_j);\ca_i(A_i,C_i)\bigr)
\\
\times \mcV\bigl(\prod_{k\in J}\ca_k(C_k,D_k);\ca_i(C_i,D_i)\bigr) \times
\\
\mcV\bigl(\ca_i(A_i,C_i),\ca_i(C_i,D_i);\ca_i(A_i,D_i)\bigr)
\\
\times \mcV\bigl(\prod_{l\in J}\ca_l(D_l,E_l);\ca_i(D_i,E_i)\bigr) 
\\
\times \mcV\bigl(\ca_i(D_i,E_i);\ca_i(D_i,E_i)\bigr) \times
\\
\mcV\bigl(\ca_i(A_i,D_i),\ca_i(D_i,E_i);\ca_i(A_i,E_i)\bigr) 
	\end{array}
&\rTTo^{1\times1\times1\times\mu_{\mathsf{VI}\:\mb3\to\mb2}} &
	\begin{array}{l}
\mcV\bigl(\prod_{j\in J}\ca_j(A_j,C_j);\ca_i(A_i,C_i)\bigr)
\\
\times \mcV\bigl(\prod_{k\in J}\ca_k(C_k,D_k);\ca_i(C_i,D_i)\bigr)
\\
\times \mcV\bigl(\prod_{l\in J}\ca_l(D_l,E_l);\ca_i(D_i,E_i)\bigr)
\\
\times \mcV\bigl(\ca_i(A_i,C_i),\ca_i(C_i,D_i),
\\
\hfill \ca_i(D_i,E_i);\ca_i(A_i,E_i)\bigr)
	\end{array}
	\\
\dTTo<{\mu_{\mathsf{II}}\times\mu_{\mathsf{I}}\times1} &&\dTTo>{\mu_{\mathsf{III}}}
	\\
	\begin{array}{r}
\mcV\bigl(\prod_{j\in J}\ca_j(A_j,C_j),\prod_{k\in J}\ca_k(C_k,D_k);
\\
\ca_i(A_i,D_i)\bigr)
\\
\times \mcV\bigl(\prod_{l\in J}\ca_l(D_l,E_l);\ca_i(D_i,E_i)\bigr) 
\\
\times \mcV\bigl(\ca_i(A_i,D_i),\ca_i(D_i,E_i);\ca_i(A_i,E_i)\bigr) 
	\end{array}
&\rTTo^{\mu_{\mathsf{VI}\:\mb3\to\mb2}} &
	\begin{array}{r}
\mcV\bigl(\prod_{j\in J}\ca_j(A_j,C_j),\prod_{k\in J}\ca_k(C_k,D_k),
\\
\prod_{l\in J}\ca_l(D_l,E_l);\ca_i(A_i,E_i)\bigr)
	\end{array}
\end{diagram}
On elements
\begin{diagram}
(\pr_i,\pr_i,\kappa_{A_i,C_i,D_i},\pr_i,1_{\ca_i(D_i,E_i)},\kappa_{A_i,D_i,E_i}) &\rMapsTo &(\pr_i,\pr_i,\pr_i,\kappa_{A_i,C_i,D_i,E_i})
	\\
	\dMapsTo &&\dMapsTo
	\\
\bigl((\pr_i,\pr_i)\centerdot\kappa_{A_i,C_i,D_i},\pr_i,\kappa_{A_i,D_i,E_i}\bigr) &\rMapsTo &lb_i=(\pr_i,\pr_i,\pr_i)\centerdot\kappa_{A_i,C_i,D_i,E_i}
\end{diagram}
see \eqref{eq-V(CCC)V(CC)V(CCC)-V(CCCC)}.
Thus \(tr_i=lb_i\) and associativity of multiplication in $\ca$ is proven.

Equation $tr=lb$ from \eqref{eq-AjAkAi} together with the definition of \(\id^\ca\) show that \(\pr_i\:\ca\to\ca_i\) is a $\mcv$\n-functor, see \exaref{exa-I1}.

Let us show that \((\pr_j\:\ca\to\ca_j)_{j\in J}\) is a product in $\VCat$ of a family \((\ca_j)_{j\in J}\).
Here $\ca$ is constructed from this family as above.
Let $\cx_i$, \(i\in I\in\cs_\sk\), be $\mcv$\n-categories and let \(f^j\:(\cx_i)_{i\in I}\to\ca_j\) be multi-entry $\mcv$\n-functors, $j\in J$.
Consider a multi-entry $\mcv$\n-quiver morphism \(f\:(\cx_i)_{i\in I}\to\ca\) which consists of 
\begin{myitemize}
\item[---] the function \(f=\Ob f\:(X_i)_{i\in I}\mapsto\bigl((X_i)_{i\in I}f^j\bigr)_{j\in J}\);
\item[---] the collection of elements \(f_{(X_i),(Y_i)}\) from
\end{myitemize}
\begin{align*}
\prod_{j\in J}\mcv\bigl((\cx_i(X_i,Y_i))_{i\in I};\ca_j((X_i)_{i\in I}f^j,(Y_i)_{i\in I}f^j)\bigr)
&\cong\mcv\bigl((\cx_i(X_i,Y_i))_{i\in I};\prod_{j\in J}\ca_j((X_i)_{i\in I}f^j,(Y_i)_{i\in I}f^j)\bigr)
\\[-0.6em]
\bigl(f^j_{(X_i),(Y_i)}\bigr)_{j\in J} &\mapsto f_{(X_i),(Y_i)}.
\end{align*}
It is a unique multi-entry $\mcv$\n-quiver morphism $f$ with the property \(f\centerdot\pr_j=f^j\), $j\in J$.

The morphism $f$ is coherent with units, since
\[ \bigl[() \rTTo^{(\id_{X_i})_{i\in I}} (\cx_i(X_i,X_i))_{i\in I} \rTTo^{f_{(X_i),(X_i)}} \ca((X_i)_{i\in I}f,(X_i)_{i\in I}f)\bigr] =\id_{(X_i)_{i\in I}f}.
\]
Indeed, composing with \(\pr_j\:\ca\to\ca_j\) we come to the valid identity:
\[ \bigl[() \rTTo^{(\id_{X_i})_{i\in I}} (\cx_i(X_i,X_i))_{i\in I} \rTTo^{f^j_{(X_i),(X_i)}} \ca_j((X_i)_{i\in I}f^j,(X_i)_{i\in I}f^j)\bigr] =\id_{(X_i)_{i\in I}f^j}.
\]

Compare now $lb$ and $tr$ from
\begin{multline*}
\mcv\bigl(\bigl(\cx_i(X_i,Y_i)\bigr)_{i\in I};\!\prod_{j\in J}\!\ca_j\bigl((X_i)_{i\in I}f^j,(Y_i)_{i\in I}f^j\bigr)\bigr)
\times \mcv\bigl(\bigl(\cx_i(Y_i,Z_i)\bigr)_{i\in I};\!\prod_{k\in J}\!\ca_k\bigl((Y_i)_{i\in I}f^k,(Z_i)_{i\in I}f^k\bigr)\bigr) 
	\\
\times \mcv\bigl(\prod_{j\in J}\ca_j\bigl((X_i)_{i\in I}f^j,(Y_i)_{i\in I}f^j\bigr),\prod_{k\in J}\ca_k\bigl((Y_i)_{i\in I}f^k,(Z_i)_{i\in I}f^k\bigr); \prod_{l\in J}\ca_l\bigl((X_i)_{i\in I}f^l, (Z_i)_{i\in I}f^l\bigr)\bigr)
	\\
\rTTo^{\mu_{\triangledown\triangledown}} \mcv\bigl(\bigl(\cx_i(X_i,Y_i)\bigr)_{i\in I},\bigl(\cx_i(Y_i,Z_i)\bigr)_{i\in I};\prod_{l\in J}\ca_l\bigl((X_i)_{i\in I}f^l, (Z_i)_{i\in I}f^l\bigr)\bigr),
	\\
\hfill \bigl(f_{(X_i),(Y_i)},f_{(Y_i),(Z_i)},\kappa^\ca\bigr) \mapsto lb, \hskip\multlinegap
	\\
\hskip\multlinegap \prod_{i\in I}\mcv\bigl(\cx_i(X_i,Y_i),\cx_i(Y_i,Z_i);\cx_i(X_i,Z_i)\bigr) 
\times \mcv\bigl(\bigl(\cx_i(X_i,Z_i)\bigr)_{i\in I};\prod_{l\in J}\ca_l\bigl((X_i)_{i\in I}f^l,(Z_i)_{i\in I}f^l\bigr)\bigr) \hfill
	\\
\rTTo^{\mu_\chi} \mcv\bigl(\bigl(\cx_i(X_i,Y_i)\bigr)_{i\in I},\bigl(\cx_i(Y_i,Z_i)\bigr)_{i\in I};\prod_{l\in J}\ca_l\bigl((X_i)_{i\in I}f^l,(Z_i)_{i\in I}f^l\bigr)\bigr),
	\\
\bigl((\kappa_{X_i,Y_i,Z_i})_{i\in I},f_{(X_i),(Z_i)}\bigr) \mapsto tr.
\end{multline*}
Fix $l\in J$ and consider the projection to $l$th factor.
The equation $tr=lb$ is equivalent to equations \(tr\centerdot\pr_l\equiv tr_l=lb_l\equiv lb\centerdot\pr_l\) between elements which are obtained from these formulas:
\begin{multline}
\prod_{i\in I}\mcv\bigl(\cx_i(X_i,Y_i),\cx_i(Y_i,Z_i);\cx_i(X_i,Z_i)\bigr)
\times\mcv\bigl(\bigl(\cx_i(X_i,Z_i)\bigr)_{i\in I};\ca_l\bigl((X_i)_{i\in I}f^l,(Z_i)_{i\in I}f^l\bigr)\bigr)
\\
\rTTo^{\mu_\chi} \mcv\bigl(\bigl(\cx_i(X_i,Y_i)\bigr)_{i\in I},\bigl(\cx_i(Y_i,Z_i)\bigr)_{i\in I};\ca_l\bigl((X_i)_{i\in I}f^l,(Z_i)_{i\in I}f^l\bigr)\bigr),
\\
\bigl((\kappa_{X_i,Y_i,Z_i})_{i\in I},f^l_{(X_i),(Z_i)}\bigr) \mapsto tr_l.
\label{eq-trl}
\end{multline}
The expression for $lb_l$ given by the left path of the following diagram is transformed using the associativity property from \figref{dia-assoc-mu-multi} written for maps \(I\sqcup I \rTTo^{\triangledown\triangledown} \mb2 \rto\id \mb2\) to the right path:
\begin{gather*}
\begin{array}{c}
	\prod_{j\in J}\!\mcv\bigl(\bigl(\cx_i(X_i,Y_i)\bigr)_{i\in I};\ca_j\bigl((X_i)_{i\in I}f^j,(Y_i)_{i\in I}f^j\bigr)\bigr)
	\\
	\times \prod_{k\in J}\!\mcv\bigl(\bigl(\cx_i(Y_i,Z_i)\bigr)_{i\in I};\ca_k\bigl((Y_i)_{i\in I}f^k,(Z_i)_{i\in I}f^k\bigr)\bigr)
	\\
\times \mcv\bigl(\ca_l\bigl((X_i)_{i\in I}f^l,(Y_i)_{i\in I}f^l\bigr),\ca_l\bigl((Y_i)_{i\in I}f^l,(Z_i)_{i\in I}f^l\bigr);
\ca_l\bigl((X_i)_{i\in I}f^l, (Z_i)_{i\in I}f^l\bigr)\bigr)
\end{array}
\\
\begin{tanglec}
\hh \id \step[1.5] \id \step[6] \id
\\
\hh \ffbox1\cong \hstep \ffbox1\cong \hstep \ffbox2{\dot\pr_l} \hstep \ffbox2{\dot\pr_l} \hstep \id \hstep
\\
\hh \id \step[1.5] \id \Step \id \step[2.5] \id \step[1.5] \id
	\\
\hh \hstep \id \step[1.5] \id \step[1.5] \ffbox5{\mu_{\mathsf{II}}}
	\\
\hh \id \step[1.5] \id \step[4] \id \Step
	\\
\hh \ffbox7{\mu_{\triangledown\triangledown}} \step[1.5]
	\\
\hh \id \step[1.5]
\end{tanglec}
\;=\;
\begin{tanglec}
\hh \id \step[1.5] \id \step[6] \id
	\\
\hh \ffbox1\cong \hstep \ffbox1\cong \hstep \ffbox2{\dot\pr_l} \hstep \ffbox2{\dot\pr_l} \hstep \id \hstep
	\\
\id \step[1.5] \XX \step[2.5] \id \step[1.5] \id
	\\
\hh \ffbox3{\mu_{\triangledown\:\!I\to1}} \hstep \ffbox4{\mu_{\triangledown\:I\to1}} \hstep \id \hstep
	\\
\hh \step \id \step[4] \id \step[2.5] \id 
	\\
\hh \hstep \ffbox8{\mu_{\triangledown\triangledown}}
	\\
\hh \hstep \id
\end{tanglec}
\\
\mcv\bigl(\bigl(\cx_i(X_i,Y_i)\bigr)_{i\in I},\bigl(\cx_i(Y_i,Z_i)\bigr)_{i\in I};\ca_l\bigl((X_i)_{i\in I}f^l, (Z_i)_{i\in I}f^l\bigr)\bigr)
\end{gather*}
On elements
\begin{diagram}[h=1.2em,nobalance]
\bigl((f^j_{(X_i),(Y_i)})_{j\in J},(f^k_{(Y_i),(Z_i)})_{k\in J},\kappa_{(X_i)_{i\in I}f^l,(Y_i)_{i\in I}f^l,(Z_i)_{i\in I}f^l}\bigr) &&\phantom{,\kappa_{(X_i)_{i\in I}f^l,(Y_i)_{i\in I}f^l,(Z_i)_{i\in I}f^l}\bigr)}
\\
\\
\dMapsTo &&\hspace*{-6em} \bigl(f^l_{(X_i),(Y_i)},f^l_{(Y_i),(Z_i)},\kappa_{(X_i)_{i\in I}f^l,(Y_i)_{i\in I}f^l,(Z_i)_{i\in I}f^l}\bigr)
\\
&\ruMapsTo &
\\
\bigl(f_{(X_i),(Y_i)},f_{(Y_i),(Z_i)},\pr_l,\pr_l,\kappa_{(X_i)_{i\in I}f^l,(Y_i)_{i\in I}f^l,(Z_i)_{i\in I}f^l}\bigr) &&
\\
\dMapsTo &&\dMapsTo
\\
	\\
\bigl(f_{(X_i),(Y_i)},f_{(Y_i),(Z_i)},(\pr_l,\pr_l)\centerdot\kappa_{(X_i)_{i\in I}f^l,(Y_i)_{i\in I}f^l,(Z_i)_{i\in I}f^l}\bigr) &\rMapsTo &lb_l
\end{diagram}

Since $f^l$ is a multi-entry $\mcv$\n-functor, the morphism $lb_l$ coincides with $tr_l$ from formula~\eqref{eq-trl}.
Therefore, $lb=tr$ and $f$ is a multi-entry $\mcv$\n-functor.
We conclude that \((\pr_j\:\ca\to\ca_j)_{j\in J}\) is a product in $\VCat$ of a family \((\ca_j)_{j\in J}\).
\end{proof}

\begin{proposition}\label{pro-VCat-has-equalizers}
Let $\mcv$ be a locally small symmetric complete multicategory.
The multicategory $\VCat$ has equalizers.
\end{proposition}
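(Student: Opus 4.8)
The plan is to mimic the construction of products in \propref{pro-VCat-has-products}, building the equalizer level\n-wise out of equalizers in $\mcv$. Let $F,G\colon\ca\rightrightarrows\cb$ be a parallel pair in the underlying category $(\VCat)_1$, i.e. two $\mcv$\n-functors. First I would define a $\mcv$\n-category $\cK$ by
\[ \Ob\cK =\{X\in\Ob\ca\mid XF=XG\}, \qquad \cK(X,Y) =\eq\bigl(F_{X,Y},G_{X,Y}\bigr),
\]
where for $X,Y\in\Ob\cK$ the two morphisms $F_{X,Y},G_{X,Y}\colon\ca(X,Y)\to\cb(XF,YF)=\cb(XG,YG)$ are parallel (their targets coincide because $XF=XG$ and $YF=YG$), so their equalizer in $\mcv$ exists by assumption; write $e_{X,Y}\colon\cK(X,Y)\to\ca(X,Y)$ for the equalizer morphism.

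Next I would equip $\cK$ with composition and identities through the universal property of \defref{def-equalizers}. Using the functoriality square~\eqref{dia-AAABBB} for $F$ and for $G$ together with the equalizing identities $e_{X,Y}\centerdot F_{X,Y}=e_{X,Y}\centerdot G_{X,Y}$, one checks that $(e_{X,Y},e_{Y,Z})\centerdot\kappa^\ca_{X,Y,Z}\colon\cK(X,Y),\cK(Y,Z)\to\ca(X,Z)$ equalizes $F_{X,Z}$ and $G_{X,Z}$; since this is a \emph{two}\n-entry morphism, the multicategorical universal property of the equalizer $\cK(X,Z)$ yields a unique $\kappa^\cK_{X,Y,Z}$ with $\kappa^\cK_{X,Y,Z}\centerdot e_{X,Z}=(e_{X,Y},e_{Y,Z})\centerdot\kappa^\ca_{X,Y,Z}$. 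Likewise $\id^\ca_X$ equalizes $F_{X,X},G_{X,X}$ by the unit coherence~\eqref{eq-id-AAA-BAFAF} and $XF=XG$, producing $\id^\cK_X$. Because each $e_{X,Z}$ is a monomorphism (equalizers are monic), associativity and unitality of $\kappa^\cK$ follow from those of $\kappa^\ca$ after postcomposing with $e$, so $\cK$ is a $\mcv$\n-category and, by the very equations defining $\kappa^\cK,\id^\cK$, the family $(e_{X,Y})$ assembles into a $\mcv$\n-functor $e\colon\cK\to\ca$ with $e\centerdot F=e\centerdot G$.

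It remains to verify the universal property for an arbitrary multi\n-entry $\mcv$\n-functor. Let $h\colon(\ca_i)_{i\in I}\to\ca$ satisfy $h\centerdot F=h\centerdot G$ in $\VCat$. On objects this says $((A_i)_{i\in I}h)F=((A_i)_{i\in I}h)G$, i.e. $(A_i)_{i\in I}h\in\Ob\cK$, so $\Ob h$ corestricts to $\Ob\cK$; on morphisms it says $h_{(A_i),(D_i)}\centerdot F_{(A_i)h,(D_i)h}=h_{(A_i),(D_i)}\centerdot G_{(A_i)h,(D_i)h}$, i.e. each $h_{(A_i),(D_i)}$ equalizes the pair defining $\cK((A_i)h,(D_i)h)$. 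The multicategorical clause of \defref{def-equalizers} then furnishes a unique $q_{(A_i),(D_i)}\colon(\ca_i(A_i,D_i))_{i\in I}\to\cK((A_i)h,(D_i)h)$ with $q_{(A_i),(D_i)}\centerdot e=h_{(A_i),(D_i)}$. I would assemble these into $q\colon(\ca_i)_{i\in I}\to\cK$, check that $q$ satisfies the functoriality equation $lb=tr$ and the unit coherence of \propref{pro-multi-entry-V-functors} by postcomposing with the monic $e$ and invoking the corresponding properties of $h$, and read off $h=q\centerdot e$. Uniqueness of $q$ is immediate from the uniqueness built into each factorization $q_{(A_i),(D_i)}$.

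Routine bookkeeping aside, the one point that really needs the hypotheses is the use of the full multicategorical universal property of equalizers in $\mcv$ (\defref{def-equalizers}), rather than merely equalizers in $\mcv_1$: it is invoked once to build the two\n-entry composition $\kappa^\cK$ and again to factor the $I$\n-entry morphism $h$. Everything else --- the $\mcv$\n-category axioms for $\cK$ and the $\mcv$\n-functor axioms for $e$ and $q$ --- reduces to monicity of the equalizer morphisms $e_{X,Y}$ together with the already established identities in $\ca$ and $\cb$. The ordinary equalizer property (the case $I=\mb1$) is the particular instance that makes $e$ an equalizer in $(\VCat)_1$, as required by \defref{def-equalizers}.
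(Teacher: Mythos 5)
Your proposal is correct and follows essentially the same route as the paper's proof: the same object set, the same levelwise equalizers $\cK(X,Y)\to\ca(X,Y)\rightrightarrows\cb(XF,YF)$, the same use of the multicategorical equalizer property to induce $\kappa^\cK$ and $\id^\cK$, and monicity of the $e_{X,Y}$ for the axioms. The only difference is that you spell out the final universal-property verification for an arbitrary multi-entry $h$, which the paper dispatches with a ``clearly''.
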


\begin{proof}
Let \(\cA\pile{\rTTo^f\\ \rTTo_g}\cb\in\VCat\) be a pair of parallel $\mcv$\n-functors.
Define a subset $\Ob\cK=\{X\in\Ob\ca\mid Xf=Xg\}$.
Denote by \(\Ob e\:\Ob\cK\to\Ob\ca\) the inclusion map.
For $X,Y\in\Ob\cK$ define an object \(\cK(X,Y)\in\mcv\) and a morphism \(e_{X,Y}\in\mcv\) via an equalizer diagram (in multicategory $\mcv$)
\[ \cK(X,Y) \rTTo^{e_{X,Y}} \ca(X,Y) \pile{\rTTo^{f_{X,Y}}\\ \rTTo_{g_{X,Y}}} \cb(Xf=Xg,Yf=Yg).
\]
This defines a $\mcv$\n-quiver $\cK$.
Let us show that the $\mcv$\n-subquiver \(\cK\subset\ca\) is a $\mcv$\n-subcategory.

Identity morphism for $X\in\Ob\cK$ is obtained via equalizer property for the empty family.
Given \(\id^\ca_X\) factorizes in a unique way as shown on the diagram from \defref{def-equalizers}
\begin{diagram}[h=1.8em,w=4em,LaTeXeqno]
&&\cK(X,X)
	\\
&\ruTTo^{\id^\cK_X} &\dTTo>{e_{X,X}}
	\\
() &\rTTo_{\id^\ca_X} &\ca(X,X) &\pile{\rTTo^{f_{X,X}}\\ \rTTo_{g_{X,X}}} &\cb(Xf,Xf)
\label{dia-idK}
\end{diagram}

The left-bottom path in the following diagram is a fork, that is, \((e_{X,Y},e_{Y,Z})\centerdot\kappa^\ca\centerdot f_{X,Z}=(e_{X,Y},e_{Y,Z})\centerdot\kappa^\ca\centerdot g_{X,Z}\),
\begin{diagram}[w=5em,LaTeXeqno]
\cK(X,Y),\cK(Y,Z) &\rTTo^{\exists!\kappa^\cK_{X,Y,Z}} &\cK(X,Z)
\\
\dTTo<{e_{X,Y},e_{Y,Z}} &= &\dTTo>{e_{X,Z}}
\\
\ca(X,Y),\ca(Y,Z) &\rTTo^{\kappa^\ca_{X,Y,Z}} &\ca(X,Z) &\pile{\rTTo^{f_{X,Z}}\\ \rTTo_{g_{X,Z}}} &\cb(Xf,Zf)
\label{dia-kappa-K}
\end{diagram}
In fact, due to \eqref{dia-AAABBB} for $f$ and $g$ the left-bottom path composes to the same parallel arrows as
\begin{equation*}
	\cK(X,Y),\cK(Y,Z) \rTTo^{e_{X,Y},e_{Y,Z}} \ca(X,Y),\ca(Y,Z) \pile{\rTTo^{f_{X,Y},f_{Y,Z}}\\ \rTTo_{g_{X,Y},g_{Y,Z}}} \cb(Xf,Yf),\cb(Yf,Zf) 
	\rto{\kappa^\cb} \cb(Xf,Zf).
\end{equation*} 
Therefore, there is a unique top arrow \(\kappa^\cK_{X,Y,Z}\) in this diagram which makes it commutative.
We take this arrow as a composition in $\cK$.
It is associative and unital since \(e_{-,-}\) are monomorphisms, more precisely, enjoy the property of \defref{def-equalizers}.
Furthermore, diagrams \eqref{dia-idK} and \eqref{dia-kappa-K} show that $e$ is a $\mcv$\n-functor (compare with \eqref{eq-id-AAA-BAFAF} and diagram~\eqref{dia-AAABBB}).
Clearly, \(e\:\cK\to\ca\) is an equalizer of \((f,g)\) as required in \defref{def-equalizers}.
\end{proof}

\subsection{Summary}
\begin{theorem}\label{thm-machine}
Let $\mcv$ be a locally small symmetric closed complete multicategory.
Then so is $\VCat$, the multicategory of small $\mcv$\n-categories and multi-entry $\mcv$\n-functors.
\end{theorem}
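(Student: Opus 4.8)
The plan is to assemble \thmref{thm-machine} purely as a summary, invoking the structural results that have been proved one by one throughout the paper. Since the statement asserts that $\VCat$ is again \emph{locally small}, \emph{symmetric}, \emph{closed}, and \emph{complete}, I would treat each adjective as a separate bullet to be discharged by citing the appropriate earlier proposition. First I would recall that $\VCat$ is a locally small symmetric multicategory by \propref{pro-locally-small-symmetric-multicategory}; this handles both local smallness (the hom-sets $\VCat((\ca_i)_{i\in I};\cb)$ are small subsets of the small sets $\VQu((\ca_i)_{i\in I};\cb)$) and the symmetric multicategory structure (composition $\mu_\phi^\VCat$ and identities were constructed there, with associativity reduced to \figref{dia-assoc-mu-multi} for $\mcv$).

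Next I would dispatch closedness and completeness. Closedness is exactly \propref{pro-VCat-closed}: the internal hom is the full $\mcv$\n-subquiver $\und\VCat((\ca_i)_{i\in I};\cc)$ of $\und\VQu((\ca_i)_{i\in I};\cc)$, whose hom-objects are the ends $\int_{(A_i)}\cc((A_i)_{i\in I}F,(A_i)_{i\in I}G)$ furnished by the completeness of $\mcv$ (\propref{pro-und-VQu}), and the evaluation multi-entry $\mcv$\n-functor $\ev^\VCat$ is supplied by \propref{con-evVCat-VCat}; the bijectivity of $\varPhi$ was verified through the mutually inverse maps $\varPhi,\varPsi$. For completeness I would combine \propref{pro-VCat-has-products} (small products, built objectwise as $\ca((A_i),(D_i))=\prod_{i\in J}\ca_i(A_i,D_i)$ using products in $\mcv$) with \propref{pro-VCat-has-equalizers} (equalizers, built hom-wise as equalizers in $\mcv$ over the subset $\Ob\cK=\{X\mid Xf=Xg\}$). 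By the Corollary to \defref{def-equalizers}, having small products and equalizers in the multicategorical sense yields all small limits, i.e. completeness. I would note that all four inputs require precisely the hypothesis that $\mcv$ itself be locally small, symmetric, closed, and complete, so that the construction can be iterated.

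Since every constituent statement has already been proved, there is essentially no genuine obstacle left at this stage; the only thing to be careful about is the bookkeeping of which property of $\mcv$ feeds which property of $\VCat$. The single point worth emphasizing is that the multicategorical notions of product and equalizer (Definitions \ref{def-multicategory-small-products} and \ref{def-equalizers}) are strictly stronger than their underlying-category counterparts, so the reader should be reminded that \propref{pro-VCat-has-products} and \propref{pro-VCat-has-equalizers} indeed establish the multi-entry universal properties, not merely the ones in $\VCat_1$; this is what licenses the Corollary and hence the word \emph{complete}. Accordingly the proof is a one-line assembly:

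\begin{proof}
Local smallness and the symmetric multicategory structure of $\VCat$ are \propref{pro-locally-small-symmetric-multicategory}.
Closedness is \propref{pro-VCat-closed}, with internal homs $\und\VCat((\ca_i)_{i\in I};\cc)$ from \propref{pro-und-VQu} and evaluation $\ev^\VCat$ from \propref{con-evVCat-VCat}.
By \propref{pro-VCat-has-products} the multicategory $\VCat$ has small products, and by \propref{pro-VCat-has-equalizers} it has equalizers; hence, by the Corollary to \defref{def-equalizers}, $\VCat$ is complete.
Therefore $\VCat$ is a locally small symmetric closed complete multicategory.
\end{proof}
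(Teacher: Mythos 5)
Your proposal is correct and matches the paper's own proof, which simply cites Propositions \ref{pro-locally-small-symmetric-multicategory}, \ref{pro-VCat-closed}, \ref{pro-VCat-has-products} and \ref{pro-VCat-has-equalizers}. Your additional remarks on the multicategorical (rather than merely underlying-category) universal properties and the role of the Corollary to \defref{def-equalizers} are accurate glosses but do not change the argument.
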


\begin{proof}
This is proven in Propositions \ref{pro-locally-small-symmetric-multicategory}, \ref{pro-VCat-closed}, \ref{pro-VCat-has-products} and \ref{pro-VCat-has-equalizers}.
\end{proof}

\section{First examples}
\subsection{Compositions and whiskerings}\label{sec-Compositions-whiskerings}
\begin{lemma}\label{lem-id1ev=p}
	Let \(F,G:(\ca_i)_{i\in I}\to\cc\) be multi-entry $\mcv$\n-functors.
	Then
	\begin{multline*}
\mu^\mcv_{\inj_2\:\mb1\hookrightarrow I\sqcup\mb1}\: \bigl[ \prod_{i\in I} \mcv\bigl(;\ca_i(A_i,A_i)\bigr) \bigr]
\times \mcv\bigl(\und\VCat\bigl((\ca_i)_{i\in I};\cc\bigr)(F,G);\und\VCat\bigl((\ca_i)_{i\in I};\cc\bigr)(F,G)\bigr)
\\
\times \mcv\bigl(\bigl(\ca_i(A_i,A_i)\bigr)_{i\in I},\und\VCat\bigl((\ca_i)_{i\in I};\cc\bigr)(F,G);\cc\bigl((A_i)_{i\in I}F;(A_i)_{i\in I}G\bigr)\bigr)
\\
\hfill \to \mcv\bigl(\und\VCat\bigl((\ca_i)_{i\in I};\cc\bigr)(F,G);\cc\bigl((A_i)_{i\in I}F;(A_i)_{i\in I}G\bigr)\bigr), \hskip\multlinegap
\\
\bigl( (\id_{A_i})_{i\in I},1, (\ev^\VCat_{(\ca_i)_{i\in I};\cc})_{(A_i)_{i\in I},F,(A_i)_{i\in I},G} \bigr) \mapsto p_{(A_i)_{i\in I}}.
	\end{multline*}
\end{lemma}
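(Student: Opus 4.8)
The plan is to substitute the explicit presentation \eqref{eq-evVCat-E} of the evaluation element and then to reduce using the unitality of $F$ together with the left unit axiom of the target $\mcv$\n-category $\cc$. Specializing \eqref{eq-evVCat-E} to $E_i=A_i$ exhibits $(\ev^\VCat)_{(A_i),F,(A_i),G}$ as $\mu_{\mathsf{\triangledown I}}$ applied to the triple $\bigl(F_{(A_i),(A_i)},\,p_{(A_i)_{i\in I}},\,\kappa_{(A_i)F,(A_i)F,(A_i)G}\bigr)$, where $\kappa$ is the composition of $\cc$. Under $\mathsf{\triangledown I}:\mb{n+1}\to\mb2$ the block of $\ca_i(A_i,A_i)$\n-inputs feeds the factor $F_{(A_i),(A_i)}$ (the value $1$), while the $\und\VCat\bigl((\ca_i)_{i\in I};\cc\bigr)(F,G)$\n-input feeds the factor $p_{(A_i)}$ (the value $2$).

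First I would merge $\mu^\mcv_{\inj_2:\mb1\hookrightarrow I\sqcup\mb1}$ with this inner $\mu_{\mathsf{\triangledown I}}$ by the associativity property of \figref{dia-assoc-mu-multi}, applied to the composable pair $\mb1\rto{\inj_2}I\sqcup\mb1\rto{\mathsf{\triangledown I}}\mb2$. The merged indexing map is $\inj_2\centerdot\mathsf{\triangledown I}=\dot2:\mb1\to\mb2$, so the inner composition groups the nullary identities $(\id_{A_i})_{i\in I}$ together with $F_{(A_i),(A_i)}$ over the value $1$, and leaves $p_{(A_i)}$ alone over the value $2$ (the datum $1$ on the hom\n-object acts trivially on it). Consequently the whole expression equals $\mu_{\dot2}\bigl(q,\,p_{(A_i)},\,\kappa_{(A_i)F,(A_i)F,(A_i)G}\bigr)$ with
\[ q =\bigl[ () \rTTo^{(\id_{A_i})_{i\in I}} (\ca_i(A_i,A_i))_{i\in I} \rTTo^{F_{(A_i),(A_i)}} \cc((A_i)_{i\in I}F,(A_i)_{i\in I}F) \bigr].
\]

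Then coherence of the multi-entry $\mcv$\n-functor $F$ with the units, equation~\eqref{eq-coherence-with-units}, would yield $q=\id_{(A_i)_{i\in I}F}$. After substituting, $\mu_{\dot2}(\id_{(A_i)F},p_{(A_i)},\kappa)$ is precisely the element of $\cc((A_i)F,(A_i)G)$ obtained by first projecting through $p_{(A_i)}$ and then precomposing $\kappa_{(A_i)F,(A_i)F,(A_i)G}$ with $\id_{(A_i)F}$ in its first argument. The left unit axiom~\eqref{eq-idX1k1} of $\cc$, taken for $X=(A_i)_{i\in I}F$ and $Y=(A_i)_{i\in I}G$, would collapse this last composite to the identity of $\cc((A_i)F,(A_i)G)$, leaving exactly $p_{(A_i)_{i\in I}}$, as asserted.

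I expect the only real obstacle to be the bookkeeping in the associativity step: one must verify that merging $\inj_2$ with $\mathsf{\triangledown I}$ routes the $\id_{A_i}$\n-slots into the $F$\n-factor rather than into $p_{(A_i)}$. This is exactly the computation $\inj_2\centerdot\mathsf{\triangledown I}=\dot2$ together with $(\inj_2\centerdot\mathsf{\triangledown I})^{-1}(1)=\varnothing$ and $(\inj_2\centerdot\mathsf{\triangledown I})^{-1}(2)=\mb1$; once these fibres are identified, both unit reductions are immediate.
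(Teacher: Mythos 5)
Your argument is correct; the paper gives no proof of this lemma (it is explicitly left to the reader), and what you wrote is exactly the intended verification: substitute presentation \eqref{eq-evVCat-E} with $E_i=A_i$, apply the associativity of \figref{dia-assoc-mu-multi} to $\mb1\rto{\inj_2}I\sqcup\mb1\rto{\mathsf{\triangledown I}}\mb2$ (whose composite is $\dot2:\mb1\to\mb2$ with empty fibre over $1$), and reduce via \eqref{eq-coherence-with-units} and \eqref{eq-idX1k1}. The only step worth making explicit is your penultimate sentence: the identification $\mu_{\dot2}(\id_{(A_i)F},p_{(A_i)},\kappa)=p_{(A_i)}\centerdot\mu_{\dot2}(\id_{(A_i)F},1,\kappa)$ is itself one more instance of \figref{dia-assoc-mu-multi}, for the pair $\mb1\rto{\id}\mb1\rto{\dot2}\mb2$ together with the unit axioms \eqref{eq-axiom-unit-multi1etaZ}--\eqref{eq-axiom-unit-multi2}; once that is recorded, \eqref{eq-idX1k1} finishes the proof as you say.
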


\begin{proof}
Let us write the left map of the following diagram using presentation~\eqref{eq-ev-VQu-E} of \(\ev^\VCat\), where $x$ is to be determined.
Applying the associativity property at \figref{dia-assoc-mu-multi} for maps \(\mb1 \rMono^{\inj_2} I\sqcup\mb1 \rto{\mathsf{\triangledown I}} \mb2\) we rewrite this as the right map in
\begin{gather*}
	\begin{array}{c}
\bigl[ \prod_{i\in I} \mcv\bigl(;\ca_i(A_i,A_i)\bigr) \bigr] \times
\mcv\bigl(\und\VCat\bigl((\ca_i)_{i\in I};\cc\bigr)(F,G);\und\VCat\bigl((\ca_i)_{i\in I};\cc\bigr)(F,G)\bigr)
	\\
	\times \mcv\bigl((\ca_i(A_i,A_i))_{i\in I};\cc((A_i)_{i\in I}F,(A_i)_{i\in I}F)\bigr)
	\\
	\times \mcv\bigl(\und\VCat\bigl((\ca_i)_{i\in I};\cc\bigr)(F,G);\cc((A_i)_{i\in I}F,(A_i)_{i\in I}G)\bigr) 
	\\
\times \mcv\bigl(\cc((A_i)_{i\in I}F,(A_i)_{i\in I}F),\cc((A_i)_{i\in I}F,(A_i)_{i\in I}G);
\cc((A_i)_{i\in I}F, (A_i)_{i\in I}G)\bigr)
\end{array}
\\
\begin{tanglec}
\hh	\id \hstep \id \step[1.5] \id \step \id \step \id 
	\\
\hh \step \id \hstep \id \hstep \ffbox4{\mu_{\mathsf{\triangledown I}\:\!\!I\sqcup\mb1\to\mb2}}
	\\
\hh \id \hstep \id \step[2.5] \id \step
	\\
\hh \ffbox5{\mu_{\inj_2\:\mb1\hookrightarrow I\sqcup\mb1}}
	\\
\hh \id
\end{tanglec}
\;=\;
\begin{tanglec}
\id \Step \XX \step \id \step \id 
	\\
\hh \ffbox3{\mu_{\varnothing\hookrightarrow I}} \step \ffbox2{\mu_{1_{\mb1}}} \hstep \id \hstep 
	\\
\hh \step \id \step[3.5] \id \step[1.5] \id 
	\\
\hh \step \ffbox6{\mu_{\mathsf{\centerdot I}\:\mb1\hookrightarrow\mb2}}
	\\
\hh \step \id
\end{tanglec}
\\
\mcv\bigl(\und\VCat\bigl((\ca_i)_{i\in I};\cc\bigr)(F,G);\cc\bigl((A_i)_{i\in I}F;(A_i)_{i\in I}G\bigr)\bigr)
\end{gather*}
On elements
\begin{diagram}
\bigl( (\id_{A_i})_{i\in I},1, F_{(A_i),(A_i)},p_{(A_i)_{i\in I}},\centerdot \bigr) &\rMapsTo &\bigl( \id_{(A_i)_{i\in I}F},p_{(A_i)_{i\in I}},\centerdot \bigr)
	\\
	\dMapsTo &&\dMapsTo
	\\
\bigl( (\id_{A_i})_{i\in I},1, (\ev^\VCat_{(\ca_i)_{i\in I};\cc})_{(A_i)_{i\in I},F,(A_i)_{i\in I},G} \bigr) &\rMapsTo &x=p_{(A_i)_{i\in I}}
\end{diagram}
	This proves the lemma.
\end{proof}

\subsubsection{Compositions}\label{sec-Compositions}
Let $\mcC$ be a closed symmetric multicategory.
As noticed in \cite[Proposition~4.10]{BesLyuMan-book} for each map \(\phi\:I\to J\) in \(\Mor\cs\) and \(X_i,Y_j,Z\in\Ob\mcC\), \(i\in I\), \(j\in J\), there exists a unique morphism
\[ \mu^{\und\mcC}_\phi\:\bigl(\und\mcC((X_i)_{i\in\phi^{-1}j};Y_j)\bigr)_{j\in J},\und\mcC\bigl((Y_j)_{j\in J};Z\bigr) \to\und\mcC\bigl((X_i)_{i\in I};Z\bigr)
\]
that makes the bottom square in diagram
\begin{diagram}[nobalance,LaTeXeqno,bottom]
(X_i)_{i\in I} &
\\
\dTTo<{(1_{X_i})_{i\in I},(\dot F^j)_{j\in J},\dot G} 
\\
(X_i)_{i\in I},\bigl(\und\mcC((X_i)_{i\in\phi^{-1}j};Y_j)\bigr)_{j\in J},\und\mcC\bigl((Y_j)_{j\in J};Z\bigr) &\rTTo^{(1_{X_i})_{i\in I},\mu^{\und\mcC}_\phi} &(X_i)_{i\in I},\und\mcC\bigl((X_i)_{i\in I};Z\bigr)
	\\
\dTTo<{(\ev^\mcC_{(X_i)_{i\in\phi^{-1}j};Y_j})_{j\in J},1^\mcC_{\und\mcC((Y_j)_{j\in J};Z)}} &= &\dTTo>{\ev^\mcC_{(X_i)_{i\in I};Z}}
	\\
(Y_j)_{j\in J},\und\mcC\bigl((Y_j)_{j\in J};Z\bigr) & \rTTo^{\ev^\mcC_{(Y_j)_{j\in J};Z}} &Z
	\label{equ-mu-undC-def}
\end{diagram}
commute.
Here \(F^j\:(X_i)_{i\in\phi^{-1}j}\to Y_j\), $j\in J$, \(G\:(Y_j)_{j\in J}\to Z\) are morphisms in $\mcC$.
This composition law turns \(\und\mcC\) into a $\mcC$\n-multicategory.
As a corollary \(ttr=tlb\), where
\begin{multline*}
\bigl[\prod_{i\in I}\mcC(X_i;X_i)\bigr] \times \bigl[\prod_{i\in I}\mcC(X_i;X_i)\bigr] \times \bigl[\prod_{j\in J}\mcC\bigl(;\und\mcC\bigl((X_i)_{i\in\phi^{-1}j};Y_j\bigr)\bigr)\bigr] \times \mcC\bigl(;\und\mcC\bigl((Y_j)_{j\in J};Z\bigr)\bigr)
\\
\times \mcC\bigl(\bigl(\und\mcC((X_i)_{i\in\phi^{-1}j};Y_j)\bigr)_{j\in J},\und\mcC\bigl((Y_j)_{j\in J};Z\bigr);\und\mcC\bigl((X_i)_{i\in I};Z\bigr)\bigr) \times \mcC\bigl((X_i)_{i\in I},\und\mcC\bigl((X_i)_{i\in I};Z\bigr);Z\bigr)
\\
\rTTo^{(\prod_I\mu_{\id_{\mb1}})\times\mu_{\varnothing\to\mb2}\times1}
\bigl[\prod_{i\in I}\mcC(X_i;X_i)\bigr] \times \mcC\bigl(;\und\mcC((X_i)_{i\in I};Z)\bigr) \times \mcC\bigl((X_i)_{i\in I},\und\mcC\bigl((X_i)_{i\in I};Z\bigr);Z\bigr)
\\
\hfill \rTTo^{\mu_{\inj_1\:I\to I\sqcup\mb1}} \mcC\bigl((X_i)_{i\in I};Z\bigr), \hskip\multlinegap
\\
\bigl((1_{X_i})_{i\in I},(1_{X_i})_{i\in I},(\dot F^j)_{j\in J},\dot G,\mu^{\und\mcC}_\phi,\ev^\mcC_{(X_i)_{i\in I};Z}\bigr) \mapsto \bigl((1_{X_i})_{i\in I},[(\dot F^j)_{j\in J},\dot G]\mu^{\und\mcC}_\phi,\ev^\mcC_{(X_i)_{i\in I};Z}\bigr)
\\
\mapsto \{[(\dot F^j)_{j\in J},\dot G]\mu^{\und\mcC}_\phi\}\varphi_{(X_i)_{i\in I};;Z} =ttr.
\end{multline*}
At the last step \eqref{eq-mu-in1I-I1} is used.
Also
\begin{multline*}
\bigl[\prod_{i\in I}\mcC(X_i;X_i)\bigr] \times \bigl[\prod_{j\in J}\mcC\bigl(;\und\mcC\bigl((X_i)_{i\in\phi^{-1}j};Y_j\bigr)\bigr)\bigr] \times \bigl[\prod_{j\in J}\mcC\bigl((X_i)_{i\in\phi^{-1}j},\und\mcC\bigl((X_i)_{i\in\phi^{-1}j};Y_j\bigr);Y_j\bigr)\bigr]
\\
\times \mcC\bigl(;\und\mcC\bigl((Y_j)_{j\in J};Z\bigr)\bigr) \times \mcC\bigl(\und\mcC\bigl((Y_j)_{j\in J};Z\bigr);\und\mcC\bigl((Y_j)_{j\in J};Z\bigr)\bigr) \times \mcC\bigl((Y_j)_{j\in J},\und\mcC\bigl((Y_j)_{j\in J};Z\bigr);Z\bigr)
\\
\rTTo^{(\prod_{j\in J}\mu_{\inj_1\:\phi^{-1}j\hookrightarrow\phi^{-1}j\sqcup\mb1})\times\mu_{\varnothing\to\mb1}\times1}
\bigl[\prod_{j\in J}\mcC\bigl((X_i)_{i\in\phi^{-1}j};Y_j\bigr)\bigr] \times \mcC\bigl(;\und\mcC\bigl((Y_j)_{j\in J};Z\bigr)\bigr)
\\
\hfill \times \mcC\bigl((Y_j)_{j\in J},\und\mcC\bigl((Y_j)_{j\in J};Z\bigr);Z\bigr) \rTTo^{\mu_{\phi\centerdot\inj_1\:I\to J\sqcup\mb1}} \mcC\bigl((X_i)_{i\in I};Z\bigr), \hskip\multlinegap
\\
\hskip\multlinegap \bigl((1_{X_i})_{i\in I},(\dot F^j)_{j\in J},(\ev^\mcC_{(X_i)_{i\in\phi^{-1}j};Y_j})_{j\in J},\dot G,1_{\und\mcC((Y_j)_{j\in J};Z)},\ev^\mcC_{(Y_j)_{j\in J};Z}\bigr) \hfill
\\
\mapsto \bigl((F^j)_{j\in J},\dot G,\ev^\mcC_{(Y_j)_{j\in J};Z}\bigr) \mapsto (F^j)_{j\in J}\centerdot_\phi G =tlb.
\end{multline*}
\eqref{eq-mu-in1I-I1} is used again twice.
Hence, \([(\dot F^j)_{j\in J},\dot G]\mu^{\und\mcC}_\phi=[(F^j)_{j\in J}\centerdot_\phi G]\dot{\;}\).

In particular, we can apply this discussion to the multicategory \(\mcC=\VCat\).
We deduce that on objects the $\mcv$\n-functor \(\mu^{\und\VCat}_\phi\) gives map \(\Ob\mu^{\und\VCat}_\phi\:\)
\[ \bigl[\prod_{j\in J} \Ob\und\VCat\bigl((\ca_i)_{i\in\phi^{-1}j};\cb_j\bigr)\bigr] \times \Ob\und\VCat\bigl((\cb_j)_{j\in J};\cc\bigr) \to \Ob\und\VCat\bigl((\ca_i)_{i\in I};\cc\bigr)
\]
which coincides with
\begin{equation}
\mu^\VCat_\phi\: \bigl[\prod_{j\in J} \VCat\bigl((\ca_i)_{i\in\phi^{-1}j};\cb_j\bigr)\bigr] \times \VCat\bigl((\cb_j)_{j\in J};\cc\bigr) \to \VCat\bigl((\ca_i)_{i\in I};\cc\bigr).
\label{eq-Ob-mu-VCat-phi=mu-VCat-phi}
\end{equation}

Let us study multi-entry $\mcv$\n-functor
\[ \mu^{\und\VCat}_\phi\: \bigl(\und\VCat\bigl((\ca_i)_{i\in\phi^{-1}j};\cb_j\bigr)\bigr)_{j\in J}, \und\VCat\bigl((\cb_j)_{j\in J};\cc\bigr) \to \und\VCat\bigl((\ca_i)_{i\in I};\cc\bigr).
\]

\subsubsection{Left whiskering}
Let \(F^j\:(\ca_i)_{i\in\phi^{-1}j}\to\cb_j\), $j\in J$, be multi-entry $\mcv$\n-functors.
Consider the left whiskering $\mcv$\n-functor
\begin{multline*}
LW =\bigl[\und\VCat\bigl((\cb_j)_{j\in J};\cc\bigr) \rTTo^{(\dot F^j)_{j\in J},1} \bigl(\und\VCat\bigl((\ca_i)_{i\in\phi^{-1}j};\cb_j\bigr)\bigr)_{j\in J}, \und\VCat\bigl((\cb_j)_{j\in J};\cc\bigr)
\\
\rTTo^{\mu^{\und\VCat}_\phi} \und\VCat\bigl((\ca_i)_{i\in I};\cc\bigr)\bigr].
\end{multline*}
On objects it takes \(G\:(\cb_j)_{j\in J}\to\cc\) to \((F^j)_{j\in J}\centerdot_\phi G\) as we have seen.
As a consequence of the bottom square of \eqref{equ-mu-undC-def} there is a commutative square in $\VCat$
\begin{diagram}[nobalance]
	(\ca_i)_{i\in I},\und\VCat\bigl((\cb_j)_{j\in J};\cc\bigr) &\rTTo^{(1_{\ca_i})_{i\in I},LW} &(\ca_i)_{i\in I},\und\VCat\bigl((\ca_i)_{i\in I};\cc\bigr)
	\\
	\dTTo<{(F^j)_{j\in J},1} &= &\dTTo>{\ev^\VCat_{(\ca_i)_{i\in I};\cc}}
	\\
	(\cb_j)_{j\in J},\und\VCat\bigl((\cb_j)_{j\in J};\cc\bigr) &\rTTo^{\ev^\VCat_{(\cb_j)_{j\in J};\cc}} &\cc
\end{diagram}
Hence, the morphism $LW\in\VCat$ is adjunct to the multi-entry $\mcv$\n-functor
\[ LW^\dagger =\bigl[ (\ca_i)_{i\in I},\und\VCat\bigl((\cb_j)_{j\in J};\cc\bigr) \rTTo^{(F^j)_{j\in J},1} (\cb_j)_{j\in J},\und\VCat\bigl((\cb_j)_{j\in J};\cc\bigr) \rTTo^{\ev^\VCat_{(\cb_j)_{j\in J};\cc}} \cc \bigr].
\]
More precisely,
\begin{multline*}
\mu_{\phi\sqcup1\:I\sqcup\mb1\to J\sqcup\mb1}\: \bigl[\prod_{j\in J} \VCat\bigl((\ca_i)_{i\in\phi^{-1}j};\cb_j\bigr)\bigr]
\times \VCat\bigl(\und\VCat\bigl((\cb_j)_{j\in J};\cc\bigr);\und\VCat\bigl((\cb_j)_{j\in J};\cc\bigr)\bigr) 
\\
\hfill \times \VCat\bigl((\cb_j)_{j\in J},\und\VCat\bigl((\cb_j)_{j\in J};\cc\bigr);\cc\bigr)
\to \VCat\bigl((\ca_i)_{i\in I},\und\VCat\bigl((\cb_j)_{j\in J};\cc\bigr);\cc\bigr) \hskip\multlinegap
\\
\bigl( (F^j)_{j\in J},1,\ev^\VCat_{(\cb_j)_{j\in J};\cc} \bigr) \mapsto LW^\dagger.
\end{multline*}

\begin{proposition}
	On morphisms
\[ LW\: \und\VCat\bigl((\cb_j)_{j\in J};\cc\bigr)(G,H) \to \und\VCat\bigl((\ca_i)_{i\in I};\cc\bigr)\bigl((F^j)_{j\in J}\centerdot_\phi G,(F^j)_{j\in J}\centerdot_\phi H\bigr)
\]
coincides with the morphism between ends, the top morphism in commutative square
	\begin{diagram}[LaTeXeqno,nobalance]
\int_{(B_j\in\cb_j)_{j\in J}}\hspace*{-3em} \cc\bigl((B_j)_{j\in J}G,(B_j)_{j\in J}H\bigr) &\rTTo &\int_{(A_i\in\ca_i)_{i\in I}}\hspace*{-3em} \cc\bigl((A_i)_{i\in I}\bigl((F^j)_{j\in J}\centerdot_\phi G\bigr),(A_i)_{i\in I}\bigl((F^j)_{j\in J}\centerdot_\phi H\bigr)\bigr)
		\\
\dTTo<{p_{((A_i)_{i\in\phi^{-1}j}F^j)_{j\in J}}} 
		\\
\cc\bigl(((A_i)_{i\in\phi^{-1}j}F^j)_{j\in J}G,((A_i)_{i\in\phi^{-1}j}F^j)_{j\in J}H\bigr) \hspace*{-6em} &&\dTTo>{p_{(A_i)_{i\in I}}}
\\
&\rdEq
\\
&&\cc\bigl((A_i)_{i\in I}\bigl((F^j)_{j\in J}\centerdot_\phi G\bigr),(A_i)_{i\in I}\bigl((F^j)_{j\in J}\centerdot_\phi H\bigr)\bigr)
\label{dia-p=LWp}
	\end{diagram}
\end{proposition}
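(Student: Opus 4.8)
The plan is to use that the codomain $\und\VCat((\ca_i)_{i\in I};\cc)\bigl((F^j)_{j\in J}\centerdot_\phi G,(F^j)_{j\in J}\centerdot_\phi H\bigr)$ is, by \propref{pro-und-VQu}, the end $\int_{(A_i\in\ca_i)_{i\in I}}\cc(\dots)$, that is, an equalizer in $\mcv$ of the pair \eqref{eq-int-c(FE-GE)-VQu}. By \defref{def-equalizers} a morphism of $\mcv$ landing in such an equalizer is uniquely determined by its composites with the projections $p_{(A_i)_{i\in I}}$, and diagram~\eqref{dia-p=LWp} \emph{defines} its top arrow as the unique morphism whose composite with $p_{(A_i)_{i\in I}}$ equals the displayed bottom path. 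Hence it suffices to establish, for every family $(A_i\in\ca_i)_{i\in I}$, the single equation
\[ LW\centerdot p_{(A_i)_{i\in I}} =p_{((A_i)_{i\in\phi^{-1}j}F^j)_{j\in J}},
\]
the codomains being identified through the definition of $\mu^\VCat_\phi$ on objects, which gives $(A_i)_{i\in I}\bigl((F^j)_{j\in J}\centerdot_\phi G\bigr)=((A_i)_{i\in\phi^{-1}j}F^j)_{j\in J}G$ and the analogous equality for $H$.

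To compute the left-hand side I would first rewrite $p_{(A_i)_{i\in I}}$ by means of \lemref{lem-id1ev=p}, which presents it via $\mu^\mcv_{\inj_2}$ as the result of feeding the identities $(\id_{A_i})_{i\in I}$ into the $\ca_i$-legs of the hom-component of $\ev^\VCat_{(\ca_i)_{i\in I};\cc}$ while the end-leg receives the identity. Substituting this presentation and using associativity (Figure~\ref{dia-assoc-mu-multi}) and unitality of $\mcv$, the end-leg identity absorbs $LW$; consequently $LW\centerdot p_{(A_i)_{i\in I}}$ is identified with the hom-component of $LW^\dagger$, evaluated with identities on the $\ca_i$-legs. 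Here I use that $LW^\dagger$ is, by the commutative square preceding the statement together with \eqref{equ-mu-undC-def} for $\mcC=\VCat$, the adjunct of $LW$, namely the composite of $\bigl((1_{\ca_i})_{i\in I},LW\bigr)$ with $\ev^\VCat_{(\ca_i)_{i\in I};\cc}$.

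That same square also expresses $LW^\dagger$ as the composite of $\bigl((F^j)_{j\in J},1\bigr)$ with $\ev^\VCat_{(\cb_j)_{j\in J};\cc}$. Feeding the identities $(\id_{A_i})_{i\in\phi^{-1}j}$ through the hom-component of $F^j$ produces, by the unit coherence \eqref{eq-coherence-with-units}, the identity $\id_{B_j}$ with $B_j=(A_i)_{i\in\phi^{-1}j}F^j$; the composite therefore reduces to the hom-component of $\ev^\VCat_{(\cb_j)_{j\in J};\cc}$ fed with $(\id_{B_j})_{j\in J}$ on the $\cb_j$-legs. A second application of \lemref{lem-id1ev=p}, now for the family $(\cb_j)_{j\in J}$, identifies this with $p_{(B_j)_{j\in J}}=p_{((A_i)_{i\in\phi^{-1}j}F^j)_{j\in J}}$, the right-hand side of the reduced equation.

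The hard part will be the bookkeeping in the middle two steps: rewriting the composition $\mu^\mcv_{\phi\sqcup1}$ that defines $LW$ so as to expose it as an evaluation ``at the identities,'' and then matching the legs across the disjoint unions $I\sqcup\mb1$ and $J\sqcup\mb1$ along $\phi\sqcup1$. Keeping the block orderings straight when the $\ca_i$-legs are regrouped according to $\phi$ will presumably require the equivariance property \propref{pro-equivariance-property}, exactly as in the proof of \propref{con-evVCat-VCat}. Once the two applications of \lemref{lem-id1ev=p} are aligned with the two evaluations over $(\ca_i)_{i\in I}$ and $(\cb_j)_{j\in J}$, everything else is the routine unit and associativity calculus of the multicategory $\mcv$.
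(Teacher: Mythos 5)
Your proposal is correct and follows essentially the same route as the paper: the paper also reduces to checking the composite with each projection $p_{(A_i)_{i\in I}}$ (uniqueness of a map into the end), packages the computation as $LW=\varPsi(LW^\dagger)$ via \eqref{eq-(Ai)fbar} — which is exactly your ``feed $(\id_{A_i})_{i\in I}$ into the $\ca_i$-legs of $LW^\dagger$'' step — and then uses the factorization of $LW^\dagger$ through $\bigl((F^j)_{j\in J},1\bigr)$ and $\ev^\VCat_{(\cb_j)_{j\in J};\cc}$ together with unit coherence of the $F^j$ and \lemref{lem-id1ev=p}. Your explicit double application of \lemref{lem-id1ev=p} is only a more unfolded presentation of the same argument.
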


\begin{proof}
	Let us find $LW=\Psi(LW^\dagger)$ from \eqref{eq-(Ai)fbar} with $J=\mb1$, \(g=LW^\dagger\).
	The map on objects \(LW\:G\mapsto(G)f\) is obtained from \eqref{eq-Ob-mu-VCat-phi=mu-VCat-phi} or directly from \eqref{eq-Bf-A-AB-C-VCat}.
	We have for all \(G\in\VCat\bigl((\cb_j)_{j\in J};\cc\bigr)\) a multi-entry $\mcv$\n-functor
	\begin{align*}
(G)f &=\bigl[ (\ca_i)_{i\in I} \rTTo^{(\Id)_I,\dot G} (\ca_i)_{i\in I},\und\VCat\bigl((\cb_j)_{j\in J};\cc\bigr)
\rTTo^{(F^j)_{j\in J},1} (\cb_j)_{j\in J},\und\VCat\bigl((\cb_j)_{j\in J};\cc\bigr)
\\
&\hspace*{28em} \rTTo^{\ev^\VCat_{(\cb_j)_{j\in J};\cc}} \cc \bigr]
\\
	&=\bigl[ (\ca_i)_{i\in I} \rTTo^{(F^j)_{j\in J}} (\cb_j)_{j\in J} \rTTo^{(\Id)_I,\dot G} 
	(\cb_j)_{j\in J},\und\VCat\bigl((\cb_j)_{j\in J};\cc\bigr) \rTTo^{\ev^\VCat_{(\cb_j)_{j\in J};\cc}} \cc \bigr]
	\\
	&=\bigl[ (\ca_i)_{i\in I} \rTTo^{(F^j)_{j\in J}} (\cb_j)_{j\in J} \rto G \cc \bigr] =(F^j)_{j\in J}\centerdot_\phi G
	\end{align*}
	due to \eqref{eq-mu-in1I-I1}.
$LW=\Psi(LW^\dagger)$ on morphisms is found from the left map in the following diagram.
Applying the associativity property from \figref{dia-assoc-mu-multi} for maps \(\mb1 \rMono^{\inj_2} I\sqcup\mb1 \rTTo^{\phi\sqcup 1} J\sqcup\mb1\) we get the right map in
\begin{gather*}
\begin{array}{c}
\bigl[ \prod_{i\in I} \mcv\bigl(;\ca_i(A_i,A_i)\bigr) \bigr] \times
\mcv\bigl(\und\VCat\bigl((\cb_j)_{j\in J};\cc\bigr)(G,H);\und\VCat\bigl((\cb_j)_{j\in J};\cc\bigr)(G,H)\bigr)
	\\
\times \bigl[ \prod_{j\in J} \mcv\bigl(\bigl(\ca_i(A_i,A_i)\bigr)_{i\in\phi^{-1}j};
\cb_j\bigl((A_i)_{i\in\phi^{-1}j}F^j,(A_i)_{i\in\phi^{-1}j}F^j\bigr)\bigr) \bigr]
	\\
\times \mcv\bigl(\und\VCat\bigl((\cb_j)_{j\in J};\cc\bigr)(G,H);\und\VCat\bigl((\cb_j)_{j\in J};\cc\bigr)(G,H)\bigr)
	\\
\times \mcv\bigl(\bigl(\cb_j\bigl((A_i)_{i\in\phi^{-1}j}F^j,(A_i)_{i\in\phi^{-1}j}F^j\bigr)\bigr)_{j\in J},
\und\VCat\bigl((\cb_j)_{j\in J};\cc\bigr)(G,H); \hfill
	\\
\hfill \cc\bigl(((A_i)_{i\in\phi^{-1}j}F^j)_{j\in J}G,((A_i)_{i\in\phi^{-1}j}F^j)_{j\in J}H\bigr)\bigr)
\end{array}
\\
\begin{tanglec}
\hh	\id \hstep \id \step[1.5] \id \Step \id \Step \id 
	\\
\step \id \hstep \id \hstep \ffbox6{\mu_{\phi\sqcup1\:I\sqcup\mb1\to J\sqcup\mb1}}
	\\
\hh \id \hstep \id \step[3.5] \id \Step
	\\
\ffbox5{\mu_{\inj_2\:\mb1\hookrightarrow I\sqcup\mb1}} \Step
	\\
\hh \id \Step
\end{tanglec}
\;=\;
\begin{tanglec}
\id \step[4.5] \XX \step \id \step \id 
	\\
\ffbox6{\prod_{j\in J}\mu_{\varnothing\to\phi^{-1}j}} \step \ffbox2{\mu_{1_{\mb1}}} \hstep \id \step
	\\
\hh \Step \id \step[5] \id \step[1.5] \id 
	\\
\hh \step[1.5] \ffbox8{\mu_{\inj_2\:\mb1\hookrightarrow J\sqcup\mb1}}
	\\
\hh \step[1.5] \id
\end{tanglec}
\\
\mcv\bigl(\und\VCat\bigl((\cb_j)_{j\in J};\cc\bigr)(G,H);
\cc\bigl(((A_i)_{i\in\phi^{-1}j}F^j)_{j\in J}G,((A_i)_{i\in\phi^{-1}j}F^j)_{j\in J}H\bigr)\bigr)
\end{gather*}
On elements
\begin{diagram}
\begin{array}{l}
\bigl( (\id_{A_i})_{i\in I},1,(F^j_{(A_i)_{i\in\phi^{-1}j},(A_i)_{i\in\phi^{-1}j}})_{j\in J},1,
\\
(\ev^\VCat_{(\cb_j)_{j\in J};\cc})_{((A_i)_{i\in\phi^{-1}j}F^j)_{j\in J},G,((A_i)_{i\in\phi^{-1}j}F^j)_{j\in J},H}\bigr) 
\end{array}
\hspace*{-2em} &&\phantom{,G,((A_i)_{i\in\phi^{-1}j}F^j)_{j\in J},H\bigr)}
\\
\dMapsTo &\rdMapsTo &
\\
\bigl( (\id_{A_i})_{i\in I},1, LW^\dagger_{(A_i)_{i\in I},G,(A_i)_{i\in I},H}\bigr) &&\hspace*{-2em}
\begin{array}{l}
\bigl( (\id_{(A_i)_{i\in\phi^{-1}j}F^j})_{j\in J},1, 
\\
(\ev^\VCat_{(\cb_j)_{j\in J};\cc})_{((A_i)_{i\in\phi^{-1}j}F^j)_{j\in J},G,((A_i)_{i\in\phi^{-1}j}F^j)_{j\in J},H}\bigr)
\end{array}
\\
&\rdMapsTo &\dMapsTo
	\\
&&p_{((A_i)_{i\in\phi^{-1}j}F^j)_{j\in J}} =LW\centerdot p_{(A_i)_{i\in I}}
\end{diagram}
	Here we have used \lemref{lem-id1ev=p}.
	Therefore, the map $LW$ placed on the top of diagram~\eqref{dia-p=LWp} makes it commutative.
	Since there is no more than one such map, this proves the statement.
\end{proof}

\begin{corollary}
	The map of natural transformations, the top arrow in the commutative square
	\begin{diagram}[h=1.2em,nobalance]
\VCat\bigl((\cb_j)_{j\in J};\cc\bigr)(G,H) &\rTTo &\VCat\bigl((\ca_i)_{i\in I};\cc\bigr)\bigl((F^j)_{j\in J}\centerdot_\phi G,(F^j)_{j\in J}\centerdot_\phi H\bigr)
		\\
\dTTo<\cong
		\\
&&\dTTo>\cong
		\\
\mcv\Bigl(;\int_{(B_j\in\cb_j)_{j\in J}} \hspace*{-3.5em} \cc\bigl((B_j)_{j\in J}G,(B_j)_{j\in J}H\bigr)\Bigr)
\\
&\rdTTo^{\mcv(;LW)}
\\
&&\hspace*{-3.7em} \mcv\Bigl(;\int_{(A_i\in\ca_i)_{i\in I}} \hspace*{-3em} \cc\bigl(\bigl((A_i)_{i\in\phi^{-1}j}F^j\bigr)_{j\in J}G,\bigl((A_i)_{i\in\phi^{-1}j}F^j\bigr)_{j\in J}H\bigr)\Bigr)
	\end{diagram}
takes a natural transformation \(\lambda=(\lambda_{(B_j)_{j\in J}})\:G\to H\:(\cb_j)_{j\in J}\to\cc\) with the components \(\lambda_{(B_j)_{j\in J}}\in\mcv\bigl(;\cc\bigl((B_j)_{j\in J}G,(B_j)_{j\in J}H\bigr)\bigr)\) to \(\nu=(\nu_{(A_i\in\ca_i)_{i\in I}})\:(F^j)_{j\in J}\centerdot_\phi G\to(F^j)_{j\in J}\centerdot_\phi H\:(\ca_i)_{i\in I}\to\cc\), where
\[ \nu_{(A_i\in\ca_i)_{i\in I}}=\lambda_{((A_i)_{i\in\phi^{-1}j}F^j)_{j\in J}}\in\mcv\bigl(;\cc\bigl(((A_i)_{i\in\phi^{-1}j}F^j)_{j\in J}G,((A_i)_{i\in\phi^{-1}j}F^j)_{j\in J}H\bigr)\bigr).
\]
\end{corollary}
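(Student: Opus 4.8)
The plan is to reduce the statement to the description of $LW$ on hom-objects established in the preceding proposition, together with the identification of natural $\mcv$\n-transformations provided by \propref{pro-natural-V-transformations}. Under that identification the two vertical isomorphisms of the square are precisely the bijections
\[ \VCat\bigl((\cb_j)_{j\in J};\cc\bigr)(G,H) \cong \mcv\Bigl(;\int_{(B_j\in\cb_j)_{j\in J}}\cc\bigl((B_j)_{j\in J}G,(B_j)_{j\in J}H\bigr)\Bigr)
\]
and its analogue for $(F^j)_{j\in J}\centerdot_\phi G$ and $(F^j)_{j\in J}\centerdot_\phi H$, so that the top arrow of the square becomes exactly $\mcv(;LW)$, where $LW$ is the morphism between ends appearing in \eqref{dia-p=LWp}. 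Hence it suffices to chase $\lambda$ through $\mcv(;LW)$ and read off the resulting components.

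First I would recall that, by \propref{pro-natural-V-transformations}, an element of $\mcv\bigl(;\int_{(A_i)}\cc(\dots)\bigr)$ is completely determined by its components, which are recovered by post-composing with the projections $p_{(A_i)_{i\in I}}$ from the end to the corresponding factor. Writing $\nu=\lambda\centerdot LW$ for the image, I therefore only need to compute $\nu\centerdot p_{(A_i)_{i\in I}}$ for each family $(A_i\in\ca_i)_{i\in I}$.

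The key step is the commutative square \eqref{dia-p=LWp} of the preceding proposition, which asserts
\[ \bigl[ LW \centerdot p_{(A_i)_{i\in I}} \bigr] = p_{((A_i)_{i\in\phi^{-1}j}F^j)_{j\in J}},
\]
modulo the identification $(A_i)_{i\in I}\bigl((F^j)_{j\in J}\centerdot_\phi G\bigr)=((A_i)_{i\in\phi^{-1}j}F^j)_{j\in J}G$ coming from the definition of the composite $(F^j)_{j\in J}\centerdot_\phi G$ in $\VCat$ (see \propref{pro-locally-small-symmetric-multicategory}). Combining this with the previous step gives
\[ \nu_{(A_i)_{i\in I}} =\lambda\centerdot LW\centerdot p_{(A_i)_{i\in I}} =\lambda\centerdot p_{((A_i)_{i\in\phi^{-1}j}F^j)_{j\in J}} =\lambda_{((A_i)_{i\in\phi^{-1}j}F^j)_{j\in J}},
\]
which is exactly the asserted formula for the components of $\nu$.

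The only genuine obstacle is bookkeeping: one must keep track of the identification $(A_i)_{i\in I}\bigl((F^j)_{j\in J}\centerdot_\phi G\bigr)=((A_i)_{i\in\phi^{-1}j}F^j)_{j\in J}G$ (and the corresponding one for $H$) so that both projections in \eqref{dia-p=LWp} land in the same object of $\mcv$, and to confirm that ``a natural $\mcv$\n-transformation is determined by, and reconstructed from, its components through the end projections'' is precisely the content of \propref{pro-natural-V-transformations}. Once these identifications are set up, the computation above is immediate, and no further manipulation of the composition maps $\mu$ is needed.
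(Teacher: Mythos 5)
Your proposal is correct and follows essentially the same route as the paper, whose proof is the one-line remark that the corollary follows from the preceding proposition (the characterization of $LW$ via the square~\eqref{dia-p=LWp}) together with \propref{pro-natural-V-transformations}. Your component computation $\nu_{(A_i)}=\lambda\centerdot LW\centerdot p_{(A_i)_{i\in I}}=\lambda\centerdot p_{((A_i)_{i\in\phi^{-1}j}F^j)_{j\in J}}$ just makes that one-liner explicit.
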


\begin{proof}
	Follows from the above statement and \propref{pro-natural-V-transformations}.
\end{proof}

\subsubsection{Right whiskering}
Let \(H\:(\cb_j)_{j\in J}\to\cc\) be a multi-entry $\mcv$\n-functor.
Consider the right whiskering $\mcv$\n-functor
\begin{multline*}
RW =\bigl[\bigl(\und\VCat\bigl((\ca_i)_{i\in\phi^{-1}j};\cb_j\bigr)\bigr)_{j\in J}
\\
\rTTo^{(1)_{j\in J},\dot H} \bigl(\und\VCat\bigl((\ca_i)_{i\in\phi^{-1}j};\cb_j\bigr)\bigr)_{j\in J}, \und\VCat\bigl((\cb_j)_{j\in J};\cc\bigr)
\rTTo^{\mu^{\und\VCat}_\phi} \und\VCat\bigl((\ca_i)_{i\in I};\cc\bigr)\bigr].
\end{multline*}
On objects it takes \(\bigl(F^j\:(\ca_i)_{i\in\phi^{-1}j}\to\cb_j\bigr)_{j\in J}\) to \((F^j)_{j\in J}\centerdot_\phi H\) as we have seen.
As a consequence of the bottom square of \eqref{equ-mu-undC-def} there is a commutative square in $\VCat$
\begin{diagram}[nobalance]
	(\ca_i)_{i\in I},\bigl(\und\VCat\bigl((\ca_i)_{i\in\phi^{-1}j};\cb_j\bigr)\bigr)_{j\in J} &\rTTo^{(1_{\ca_i})_{i\in I},RW} &(\ca_i)_{i\in I},\und\VCat\bigl((\ca_i)_{i\in I};\cc\bigr)
	\\
	\dTTo<{(\ev^\VCat_{(\ca_i)_{i\in\phi^{-1}j};\cb_j})_{j\in J}} &= &\dTTo>{\ev^\VCat_{(\ca_i)_{i\in I};\cc}}
	\\
	(\cb_j)_{j\in J} &\rTTo^H &\cc
\end{diagram}
Hence, the morphism $RW\in\VCat$ is adjunct to the multi-entry $\mcv$\n-functor
\[ RW^\dagger =\bigl[ (\ca_i)_{i\in I},\bigl(\und\VCat\bigl((\ca_i)_{i\in\phi^{-1}j};\cb_j\bigr)\bigr)_{j\in J} \rTTo^{(\ev^\VCat_{(\ca_i)_{i\in\phi^{-1}j};\cb_j})_{j\in J}} (\cb_j)_{j\in J} \rTTo^H \cc \bigr].
\]
More precisely,
\begin{multline*}
\mu_{(\phi,\id_J)\:I\sqcup J\to J}\: \bigl[\prod_{j\in J} \VCat\bigl((\ca_i)_{i\in\phi^{-1}j},\und\VCat\bigl((\ca_i)_{i\in\phi^{-1}j};\cb_j\bigr);\cb_j\bigr)\bigr] \times \VCat\bigl((\cb_j)_{j\in J};\cc\bigr)
\\
\hfill \to \VCat\bigl((\ca_i)_{i\in I},\bigl(\und\VCat\bigl((\ca_i)_{i\in\phi^{-1}j};\cb_j\bigr)\bigr)_{j\in J};\cc\bigr) \hskip\multlinegap
\\
\bigl( \bigl(\ev^\VCat_{(\ca_i)_{i\in\phi^{-1}j};\cb_j}\bigr)_{j\in J},H \bigr) \mapsto RW^\dagger.
\end{multline*}

\begin{proposition}
	On morphisms
\[ RW\colon\! \bigl(\und\VCat\bigl((\ca_i)_{i\in\phi^{-1}j};\cb_j\bigr)(F^j,G^j)\bigr)_{j\in J}\! \to \und\VCat\bigl((\ca_i)_{i\in I};\cc\bigr)\bigl((F^j)_{j\in J}\centerdot_\phi H,(G^j)_{j\in J}\centerdot_\phi H\bigr)
\]
	coincides with the morphism between ends, the top morphism in
\begin{diagram}[h=2.3em,LaTeXeqno]
\Bigl(\int_{(A_i\in\ca_i)_{i\in\phi^{-1}j}} \hspace*{-5em} \cb_j\bigl((A_i)_{i\in\phi^{-1}j}F^j,(A_i)_{i\in\phi^{-1}j}G^j\bigr)\Bigr)_{j\in J} \hspace*{-0.5em} \to \hspace*{-0.5em} \int_{(A_i\in\ca_i)_{i\in I}} \hspace*{-3.7em} \cc\bigl((A_i)_{i\in I}\bigl((F^j)_{j\in J}\centerdot_\phi H\bigr),(A_i)_{i\in I}\bigl((G^j)_{j\in J}\centerdot_\phi H\bigr)\bigr)
\\
	\dTTo<{(p_{(A_i)_{i\in\phi^{-1}j}})_{j\in J}} \hspace*{18.3em} \dTTo>{p_{(A_i)_{i\in I}}}
\\
	\bigl(\cb_j\bigl((A_i)_{i\in\phi^{-1}j}F^j,(A_i)_{i\in\phi^{-1}j}G^j\bigr)\bigr)_{j\in J} \rto H \cc\bigl((A_i)_{i\in I}\bigl((F^j)_{j\in J}\centerdot_\phi H\bigr),(A_i)_{i\in I}\bigl((G^j)_{j\in J}\centerdot_\phi H\bigr)\bigr)
\label{dia-question-RW}
\end{diagram}
\end{proposition}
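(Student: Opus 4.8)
The plan is to reduce the statement to the universal property of the end defining the target, exactly as in the left whiskering case. Recall from \propref{pro-VCat-closed} that $RW=\varPsi(RW^\dagger)$, so I would read off $RW$ on morphisms from formula~\eqref{eq-(Ai)fbar} applied to $g=RW^\dagger$, where the $I$\n-indexed slots $(\ca_i)_{i\in I}$ are the ``held'' entries that receive the identities and the $J$\n-indexed slots $\bigl(\und\VCat((\ca_i)_{i\in\phi^{-1}j};\cb_j)\bigr)_{j\in J}$ are the variable ones. On objects $RW$ sends $(F^j)_{j\in J}$ to $(F^j)_{j\in J}\centerdot_\phi H$, which is already established via \eqref{eq-Ob-mu-VCat-phi=mu-VCat-phi} and \eqref{eq-mu-in1I-I1}. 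Since the target $\und\VCat((\ca_i)_{i\in I};\cc)\bigl((F^j)_{j\in J}\centerdot_\phi H,(G^j)_{j\in J}\centerdot_\phi H\bigr)$ is an end, embedded as an equalizer monomorphically into the product $\prod_{(A_i)}\cc(\dots)$, a multi-entry morphism into it is determined uniquely by its composites with all the projections $p_{(A_i)_{i\in I}}$ (Definitions~\ref{def-multicategory-small-products} and~\ref{def-equalizers}). Hence it suffices to compute $RW\centerdot p_{(A_i)_{i\in I}}$ for every tuple $(A_i)_{i\in I}$.

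The key computation is the following. By \eqref{eq-(Ai)fbar} the composite $RW\centerdot p_{(A_i)_{i\in I}}$ is obtained by feeding the identities $\id_{A_i}$, $i\in I$, into the $(\ca_i)$\n-slots of the morphism component of $RW^\dagger$. As $RW^\dagger$ is the composite of the evaluations $\bigl(\ev^\VCat_{(\ca_i)_{i\in\phi^{-1}j};\cb_j}\bigr)_{j\in J}$ followed by $H$, I would use \lemref{lem-id1ev=p}: feeding $\id_{A_i}$ into the $\ca_i$\n-slots of each $\ev^\VCat_{(\ca_i)_{i\in\phi^{-1}j};\cb_j}$ collapses that evaluation to the projection $p_{(A_i)_{i\in\phi^{-1}j}}$. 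Consequently the whole composite reduces to $\bigl(p_{(A_i)_{i\in\phi^{-1}j}}\bigr)_{j\in J}$ followed by the morphism component of $H$ on the objects $\bigl((A_i)_{i\in\phi^{-1}j}F^j\bigr)_{j\in J}$ and $\bigl((A_i)_{i\in\phi^{-1}j}G^j\bigr)_{j\in J}$, which is precisely the left--bottom path of \eqref{dia-question-RW}.

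To finish, I would invoke uniqueness. Because $RW$ is a $\mcv$\n-functor, its morphism part lands inside the end, and the previous paragraph shows that $RW\centerdot p_{(A_i)_{i\in I}}$ equals the prescribed composite for every $(A_i)_{i\in I}$. The top arrow of \eqref{dia-question-RW} is by construction the unique morphism into the end with this property, so $RW$ coincides with it, proving the statement. The main obstacle I anticipate is the slot bookkeeping in the second step: one must check that plugging the identities into the correct entries of the iterated composite $RW^\dagger$ passes cleanly through $H$ and turns each evaluation into a projection. This rests on the defining compatibility of $\mu^{\und\VCat}_\phi$ with evaluation recorded in the bottom square of \eqref{equ-mu-undC-def}, together with \lemref{lem-id1ev=p}; once this interchange is set up, the equalities are routine, just as in the left whiskering proof.
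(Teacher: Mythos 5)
Your proposal is correct and follows essentially the same route as the paper: identify $RW=\varPsi(RW^\dagger)$ via \eqref{eq-(Ai)fbar}, check that it makes diagram~\eqref{dia-question-RW} commute, and conclude by uniqueness of the map into the end. The only difference is that you spell out the commutativity check (reducing each evaluation to a projection via \lemref{lem-id1ev=p}) where the paper merely asserts it, which is a harmless and indeed welcome elaboration.
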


\begin{proof}
	Let us find $RW=\Psi(RW^\dagger)$ from \eqref{eq-(Ai)fbar} with \(g=RW^\dagger\).
	The map on objects \(RW\:(F^j)_{j\in J}\mapsto(F^j)_{j\in J}f\) is obtained from \eqref{eq-Ob-mu-VCat-phi=mu-VCat-phi} or directly from \eqref{eq-Bf-A-AB-C-VCat}.
	We have a multi-entry $\mcv$\n-functor
	\begin{align*}
(F^j)_{j\in J}f &=\bigl[ (\ca_i)_{i\in I} \rTTo^{(\Id)_I,(\dot F^j)_{j\in J}} (\ca_i)_{i\in I},\bigl(\und\VCat\bigl((\ca_i)_{i\in\phi^{-1}j};\cb_j\bigr)\bigr)_{j\in J}
\\
&\hspace*{14em} \rTTo^{(\ev^\VCat_{(\ca_i)_{i\in\phi^{-1}j};\cb_j})_{j\in J}} (\cb_j)_{j\in J} \rto H \cc \bigr]
\\
&=\bigl[ (\ca_i)_{i\in I} \rTTo^{(F^j)_{j\in J}} (\cb_j)_{j\in J} \rto H \cc \bigr].
	\end{align*}
$RW=\Psi(RW^\dagger)$ on morphisms is found from the left map of the following diagram.
Applying the associativity property from \figref{dia-assoc-mu-multi} for maps \(J \rTTo^{\inj_2} I\sqcup J \rTTo^{(\phi,1_J)} J\) we get the right map in
\begin{gather*}
\begin{array}{c}
\hspace*{-0.5em} \bigl[ \prod_{i\in I} \mcv\bigl(;\ca_i(A_i,A_i)\bigr) \bigr]
\times \bigl[ \prod_{j\in J} \mcv\bigl(\und\VCat\bigl((\ca_i)_{i\in\phi^{-1}j};\cb_j\bigr)(F^j,G^j);
\und\VCat\bigl((\ca_i)_{i\in\phi^{-1}j};\cb_j\bigr)(F^j,G^j)\bigr) \bigr] 
	\\
\hspace*{-0.5em} \times \bigl[ \prod_{j\in J} \mcv\bigl(\bigl(\ca_i(A_i,A_i)\bigr)_{i\in\phi^{-1}j},\und\VCat\bigl((\ca_i)_{i\in\phi^{-1}j};\cb_j\bigr)(F^j,G^j);
\cb_j\bigl((A_i)_{i\in\phi^{-1}j}F^j,(A_i)_{i\in\phi^{-1}j}G^j\bigr)\bigr) \bigr] 
	\\
\times \mcv\bigl(\bigl(\cb_j\bigl((A_i)_{i\in\phi^{-1}j}F^j,(A_i)_{i\in\phi^{-1}j}G^j\bigr)\bigr)_{j\in J};
\cc\bigl(((A_i)_{i\in\phi^{-1}j}F^j)_{j\in J}H,((A_i)_{i\in\phi^{-1}j}G^j)_{j\in J}H\bigr)\bigr)
\end{array}
\\
\begin{tanglec}
\hh	\id \hstep \id \step[1.5] \id \step[4] \id 
	\\
\hh \step \id \hstep \id \hstep \ffbox6{\mu_{(\phi,1_J)\:I\sqcup J\to J}}
	\\
\hh \id \hstep \id \step[3.5] \id \Step
	\\
\hh \ffbox5{\mu_{\inj_2\:J\hookrightarrow I\sqcup J}} \Step
	\\
\hh \id \Step
\end{tanglec}
\;=\;
\begin{tanglec}
\hh \id \step[3] \id \step[3] \id \Step \id 
	\\
\ffbox9{\prod_{j\in J}\mu_{\inj_2\:\mb1\hookrightarrow\phi^{-1}j\sqcup\mb1}} \hstep \id \step[1.5]
	\\
\hh \step[3] \id \step[5] \id 
	\\
\hh \step[3] \ffbox6{\mu_{1_J}}
	\\
\hh \step[3] \id
\end{tanglec}
\\
\mcv\bigl(\bigl(\und\VCat\bigl((\ca_i)_{i\in\phi^{-1}j};\cb_j\bigr)(F^j,G^j)\bigr)_{j\in J};
\cc\bigl(((A_i)_{i\in\phi^{-1}j}F^j)_{j\in J}H,((A_i)_{i\in\phi^{-1}j}G^j)_{j\in J}H\bigr)\bigr)
\end{gather*}
On elements
\[
\begin{diagram}[inline]
	\begin{array}{l}
\bigl( (\id_{A_i})_{i\in I},(1)_J,
\\
\bigl((\ev^\VCat_{(\ca_i)_{i\in\phi^{-1}j};\cb_j})_{(A_i)_{i\in\phi^{-1}j},F^j,(A_i)_{i\in\phi^{-1}j},G^j}\bigr)_{j\in J}, H\bigr)	\end{array}
	&\rMapsTo &
\bigl( (p_{(A_i)_{i\in\phi^{-1}j}})_{j\in J},H \bigr)
	\\
	\dMapsTo &&\dMapsTo
	\\
\bigl( (\id_{A_i})_{i\in I},(1)_J, RW^\dagger_{(A_i)_{i\in I},(F^j),(A_i)_{i\in I},(G^j)}\bigr) &\rMapsTo &RW\centerdot p_{(A_i)_{i\in I}}=(p_{(A_i)_{i\in\phi^{-1}j}})_{j\in J}\centerdot H
\end{diagram}
\]
where we use \lemref{lem-id1ev=p}.
	Therefore, the map $RW$ placed on the top of diagram~\eqref{dia-question-RW} makes it commutative.
	Since there is no more than one such map, the proposition is proved.
\end{proof}

\begin{corollary}
	The map of natural transformations
	\begin{multline*}
\prod_{j\in J} \VCat\bigl((\ca_i)_{i\in\phi^{-1}j};\cb_j\bigr)(F^j,G^j)
\rTTo^{(\prod_J\cong)\times\dot{RW}} \prod_{j\in J}
\mcv\Bigl(;\int_{(A_i\in\ca_i)_{i\in\phi^{-1}j}} \hspace*{-5em} \cb_j\bigl((A_i)_{i\in\phi^{-1}j}F^j,(A_i)_{i\in\phi^{-1}j}G^j\bigr)\Bigr) \times
\\
\mcv\Bigl(\Bigl(\int_{(A_i\in\ca_i)_{i\in\phi^{-1}j}} \hspace*{-5em} \cb_j\bigl((A_i)_{i\in\phi^{-1}j}F^j,(A_i)_{i\in\phi^{-1}j}G^j\bigr)\Bigr)_{j\in J};\!\!\int_{(A_i\in\ca_i)_{i\in I}} \hspace*{-3.8em} \cc\bigl(((A_i)_{i\in\phi^{-1}j}F^j)_{j\in J}H,((A_i)_{i\in\phi^{-1}j}G^j)_{j\in J}H\bigr)\Bigr)
\\
\rTTo^{\mu_{\varnothing\to J}} \mcv\Bigl(;\int_{(A_i\in\ca_i)_{i\in I}} \hspace*{-1em} \cc\bigl(((A_i)_{i\in\phi^{-1}j}F^j)_{j\in J}H,((A_i)_{i\in\phi^{-1}j}G^j)_{j\in J}H\bigr)\Bigr)
\\
\rTTo^\cong \VCat\bigl((\ca_i)_{i\in I};\cc\bigr)\bigl((F^j)_{j\in J}\centerdot_\phi H,(G^j)_{j\in J}\centerdot_\phi H\bigr)
	\end{multline*}
takes a tuple of natural transformations \(\bigl(\lambda^j\:F^j\to G^j\:(\ca_i)_{i\in\phi^{-1}j}\to\cb_j\bigr)_{j\in J}\) with the components \(\lambda^j_{(A_i)_{i\in\phi^{-1}j}}\in\mcv\bigl(;\cb_j\bigl((A_i)_{i\in\phi^{-1}j}F^j,(A_i)_{i\in\phi^{-1}j}G^j\bigr)\bigr)\) to \(\nu=(\nu_{(A_i\in\ca_i)_{i\in I}})\:(F^j)_{j\in J}\centerdot_\phi H\to(G^j)_{j\in J}\centerdot_\phi H\:(\ca_i)_{i\in I}\to\cc\), 
\[ \nu_{(A_i\in\ca_i)_{i\in I}}\in\mcv\bigl(;\cc\bigl(((A_i)_{i\in\phi^{-1}j}F^j)_{j\in J}H,((A_i)_{i\in\phi^{-1}j}G^j)_{j\in J}H\bigr)\bigr),
\]
where \(\nu_{(A_i\in\ca_i)_{i\in I}}=(\lambda^j_{(A_i)_{i\in\phi^{-1}j}})_{j\in J}\centerdot_\phi H_{((A_i)_{i\in\phi^{-1}j}F^j)_{j\in J},((A_i)_{i\in\phi^{-1}j}G^j)_{j\in J}}\).
\end{corollary}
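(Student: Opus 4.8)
The plan is to deduce this directly from the preceding Proposition, which identifies $RW$ on morphism objects with the top arrow of the commutative square~\eqref{dia-question-RW}, together with the bijection of \propref{pro-natural-V-transformations} between natural $\mcv$\n-transformations $F^j\to G^j$ and elements of $\mcv\bigl(;\int_{(A_i\in\ca_i)_{i\in\phi^{-1}j}}\cb_j((A_i)_{i\in\phi^{-1}j}F^j,(A_i)_{i\in\phi^{-1}j}G^j)\bigr)$. This is the exact analogue of the left whiskering corollary, the only new ingredient being the composition $\mu_{\varnothing\to J}$ of a $J$\n-tuple.

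First I would unwind the displayed composite. The leg $(\prod_J\cong)\times\dot{RW}$ sends each $\lambda^j$ to the element of the internal hom whose $(A_i)_{i\in\phi^{-1}j}$\n-component is $\lambda^j_{(A_i)_{i\in\phi^{-1}j}}$ (this is the forward direction of \propref{pro-natural-V-transformations}) and supplies $RW$ itself as the morphism of ends $(\int_j)_{j\in J}\to\int_I$. Then $\mu_{\varnothing\to J}$ composes the tuple $(\lambda^j)_{j\in J}$ with $RW$ inside $\mcv$, yielding an element of $\mcv\bigl(;\int_{(A_i\in\ca_i)_{i\in I}}\cc(\dots)\bigr)$, and the final isomorphism reads this back as a natural $\mcv$\n-transformation $\nu$.

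The crux is to compute the component $\nu_{(A_i)_{i\in I}}$ by postcomposing the output of $\mu_{\varnothing\to J}$ with the end projection $p_{(A_i)_{i\in I}}$. By the defining property of the composition $\mu_{\varnothing\to J}$ this equals $(\lambda^j)_{j\in J}$ composed with $RW\centerdot p_{(A_i)_{i\in I}}$, and here I invoke commutativity of~\eqref{dia-question-RW}, which says $RW\centerdot p_{(A_i)_{i\in I}}=(p_{(A_i)_{i\in\phi^{-1}j}})_{j\in J}\centerdot H$. Since $p_{(A_i)_{i\in\phi^{-1}j}}$ applied to the element representing $\lambda^j$ returns exactly $\lambda^j_{(A_i)_{i\in\phi^{-1}j}}$, the composite collapses to $(\lambda^j_{(A_i)_{i\in\phi^{-1}j}})_{j\in J}\centerdot_\phi H_{((A_i)_{i\in\phi^{-1}j}F^j)_{j\in J},((A_i)_{i\in\phi^{-1}j}G^j)_{j\in J}}$, which is the asserted formula for $\nu_{(A_i)_{i\in I}}$.

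I expect the only genuine subtlety to be the bookkeeping: matching the indexing of the end projections on the two sides of~\eqref{dia-question-RW} and confirming that $\mu_{\varnothing\to J}$ interacts with those projections so that \emph{project, then compose through $H$} agrees with \emph{compose, then project}. Everything else is formal and follows the left whiskering argument verbatim.
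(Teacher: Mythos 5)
Your proposal is correct and follows the same route as the paper, whose proof is simply ``Follows from the above statement and \propref{pro-natural-V-transformations}'': you invoke exactly those two ingredients and then carry out the unwinding (project with $p_{(A_i)_{i\in I}}$, apply commutativity of~\eqref{dia-question-RW}, identify $\lambda^j\centerdot p_{(A_i)_{i\in\phi^{-1}j}}$ with $\lambda^j_{(A_i)_{i\in\phi^{-1}j}}$) that the paper leaves implicit. The explicit computation of $\nu_{(A_i)_{i\in I}}$ is a faithful elaboration, not a different argument.
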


\begin{proof}
	Follows from the above statement and \propref{pro-natural-V-transformations}.
	In fact, the composition in the top--right path in
	\[ \hspace*{-3em}
	\begin{diagram}[h=1em,inline]
(\;) &&\hspace*{-1em} \int_{(A_i\in\ca_i)_{i\in I}} \hspace*{-3.4em} \cc\bigl((A_i)_{i\in I}\bigl((F^j)_{j\in J}\centerdot_\phi H\bigr),(A_i)_{i\in I}\bigl((G^j)_{j\in J}\centerdot_\phi H\bigr)\bigr)
	\\
&\ruTTo(2,4)_{RW} &
	\\
	\dTTo<{(\lambda^j)_{j\in J}}
	\\
	\\
\Bigl(\int_{(A_i\in\ca_i)_{i\in\phi^{-1}j}} \hspace*{-4.5em} \cb_j\bigl((A_i)_{i\in\phi^{-1}j}F^j,(A_i)_{i\in\phi^{-1}j}G^j\bigr)\Bigr)_{j\in J} \hspace*{-2em} &&\dTTo>{p_{(A_i)_{i\in I}}}
	\\
	\\
	\dTTo<{(p_{(A_i)_{i\in\phi^{-1}j}})_{j\in J}} &= &
	\\
	\\
\bigl(\cb_j\bigl((A_i)_{i\in\phi^{-1}j}F^j,(A_i)_{i\in\phi^{-1}j}G^j\bigr)\bigr)_{j\in J} \hspace*{-0.5em} &\rTTo^H &\cc\bigl((A_i)_{i\in I}\bigl((F^j)_{j\in J}\centerdot_\phi H\bigr),(A_i)_{i\in I}\bigl((G^j)_{j\in J}\centerdot_\phi H\bigr)\bigr)
	\end{diagram}
	\]
	equals the composition in the left--bottom path.
\end{proof}

\subsection{Representable multicategories}\label{sec-Representable-multicategories}
\begin{proposition}\label{pro-represented-symmetric-monoidal-category}
	If \(\mcv=\wh\cv\) is represented by a symmetric monoidal category \(\cv\), then \(\mcv\CAT\) is representable by a symmetric monoidal category.
\end{proposition}

\begin{proof}
	We consider $\cv$ as an unbiased (strong lax) symmetric monoidal category \((\cv,\tens_\cv^I,\lambda_\cv^f)\).
	Denote the future strong lax symmetric monoidal category representing \(\mcv\CAT\) by \((\cW,\tens_\cW^I,\lambda_\cW^f)\), where \(\cW(\ca,\cb)=\mcv\CAT(\ca;\cb)\).
	Objects of $\cW$ are $\mcv$\n-categories.
	By definition, a $\mcv$\n-category $\cc$ is
	\begin{myitemize}
		\item[---] a small set $\Ob\cc$ of objects;
		\item[---] for each pair of objects \((A,D)\in(\Ob\cc)^2\) an object \(\cc(A,D)\) of $\cv$;
		\item[---] for each triple of objects \((A,D,E)\in(\Ob\cc)^3\) a morphism \(\kappa_{A,D,E}\:\cc(A,D)\tens\cc(D,E)\to\cc(A,E)\in\cv\) -- the composition;
		\item[---] for any object \(A\in\Ob\cc\) a morphism \(\id_A\:\1\to\cc(A,A)\in\cv\) -- the identity morphism\index{$\id_A$ -- identity element of an enriched category}
	\end{myitemize}
	such that
	\begin{myitemize}
		\item[---] for each quadruple of objects \((B,A,D,E)\) of $\cc$ the associativity holds:
		\begin{diagram}[nobalance,LaTeXeqno]
			\cc(B,A)\tens\cc(A,D)\tens\cc(D,E) &\rTTo^{\hspace*{-0.7em}\lambda^{\mathsf{IV}\:\mb3\to\mb2}\hspace*{-0.3em}} &\cc(B,A)\tens\bigl(\cc(A,D)\tens\cc(D,E)\bigr) &\rTTo^{1\tens\kappa_{A,D,E}} &\cc(B,A)\tens\cc(A,E)
			\\
			\dTTo>{\lambda^{\mathsf{VI}\:\mb3\to\mb2}} &&= &&\dTTo<{\kappa_{B,A,E}}
			\\
			\bigl(\cc(B,A)\tens\cc(A,D)\bigr)\tens\cc(D,E) &\rTTo^{\kappa_{B,A,D}\tens1\hspace*{-2em}} &\cc(B,D)\tens\cc(D,E) &\rTTo^{\kappa_{B,D,E}} &\cc(B,E)
			\label{dia-assocC}
		\end{diagram}
		\item[---] for each pair of objects \((A,D)\) of $\cc$
		\begin{align}
			\bigl[ \cc(A,D) \rTTo^{\lambda^{\mathsf{\centerdot I}\:\mb1\to\mb2}} \1\tens\cc(A,D) \rTTo^{\id_A\tens1} \cc(A,A)\tens\cc(A,D) \rTTo^{\kappa_{A,A,D}} \cc(A,D) \bigr] &=1,
			\label{eq-idX-tens-1k1}
			\\
			\bigl[ \cc(A,D) \rTTo^{\lambda^{\mathsf{I\centerdot}\:\mb1\to\mb2}} \cc(A,D)\tens\1 \rTTo^{1\tens\id_D} \cc(A,D)\tens\cc(D,D) \rTTo^{\kappa_{A,D,D}} \cc(A,D) \bigr] &=1.
			\label{eq-1-tens-idYk1}
		\end{align}
	\end{myitemize}
	Thus, $\cc$ is a $\cv$\n-category.
	
	A morphism of $\cW$ is a $\mcv$\n-functor \(F\:\ca\to\cb\) (see \exaref{exa-I1}), that is,
	\begin{myitemize}
		\item[---] a function \(F=\Ob F\:\Ob\ca\to\Ob\cb\);
		\item[---] a collection of elements \(F=F_{A,E}\in\cv\bigl(\ca(A,E),\cb(AF,EF)\bigr)\);
	\end{myitemize}
	such that
	\begin{diagram}
		\ca(A,D)\tens\ca(D,E) &\rTTo^{\kappa_{A,D,E}} &\ca(A,E)
		\\
		\dTTo<{F_{A,D}\tens F_{D,E}} &= &\dTTo>{F_{A,E}}
		\\
		\cb(AF,DF)\tens\cb(DF,EF) &\rTTo^{\kappa_{AF,DF,EF}} &\cb(AF,EF)
	\end{diagram}
	and
	\begin{equation*}
		\bigl[ \1 \rTTo^{\id_A} \ca(A,A) \rTTo^{F_{A,A}} \cb(AF,AF) \bigr] =\id_{AF},
	\end{equation*}
so $F$ is just a $\cv$\n-functor and \(\cW=\cv\CAT\).
	
	We are going to prove that $\cv$\n-quiver $\ca$ with
	\begin{myitemize}
		\item[---] \(\Ob\ca=\prod_{i\in I}\Ob\ca_i\) with the convention
		\begin{equation}
			\prod_{i\in\mb1}S \text{ is identified with $S$ for an arbitrary small set $S$}
			\label{eq-convention}
		\end{equation}
		\item[---] \(\ca\bigl((A_i)_{i\in I},(D_i)_{i\in I}\bigr)=\tens_\cv^{i\in I}\ca_i(A_i,D_i)\);
		\item[---] the composition
		\begin{multline*}
			\ca\bigl((A_i)_{i\in I},(D_i)_{i\in I}\bigr)\tens\ca\bigl((D_i)_{i\in I},(E_i)_{i\in I}\bigr) =\bigl(\tens_\cv^{i\in I}\ca_i(A_i,D_i)\bigr)\tens\bigl(\tens_\cv^{i\in I}\ca_i(D_i,E_i)\bigr)
			\\
			\rTTo^{(\lambda^{\triangledown\sqcup\triangledown\:I\sqcup I\to\mb2})^{-1}} \tens_\cv^{I\sqcup I}\bigl[\bigl(\ca_i(A_i,D_i)\bigr)_{i\in I},\bigl(\ca_i(D_i,E_i)\bigr)_{i\in I}\bigr]
			\\
			\rTTo^{\lambda^{\chi\:I\sqcup I\to I}} \tens_\cv^{i\in I}\bigl[\ca_i(A_i,D_i)\tens\ca_i(D_i,E_i)\bigr] \rTTo^{\tens_\cv^{i\in I}\kappa_{A_i,D_i,E_i}\;} \tens_\cv^{i\in I}\ca_i(A_i,E_i),
		\end{multline*}
		where \(\triangledown\sqcup\triangledown\) and \(\chi\) are given respectively by \eqref{eq-triangledowntriangledown} and \eqref{eq-chi};
		\item[---] the identity morphism \(\1 \rTTo^{\lambda_\cv^{\varnothing\to I}} \tens_\cv^{i\in I}\1 \rTTo^{\tens_\cv^{i\in I}\id_{A_i}\;} \tens_\cv^{i\in I}\ca_i(A_i,A_i)=\ca\bigl((A_i)_{i\in I},(A_i)_{i\in I}\bigr)\)
	\end{myitemize}
	is a $\cv$\n-category.
	
	Let us prove associativity condition~\eqref{dia-assocC} for $\ca$.
	Due to naturality of \(\lambda^f\) the top-right path of diagram~\eqref{dia-assocC} can be presented as
	\begin{multline*}
		[\tens^{i\in I}\ca_i(B_i,A_i)]\tens[\tens^{i\in I}\ca_i(A_i,D_i)]\tens[\tens^{i\in I}\ca_i(D_i,E_i)] \rTTo^{\lambda^{\mathsf{IV}\:\mb3\to\mb2}}
		\\
		[\tens^{i\in I}\ca_i(B_i,A_i)]\tens\bigl\{[\tens^{i\in I}\ca_i(A_i,D_i)]\tens[\tens^{i\in I}\ca_i(D_i,E_i)]\bigr\} \rTTo^{1\tens(\lambda^{\triangledown\sqcup\triangledown\:I\sqcup I\to\mb2})^{-1}}
		\\
		[\tens^{i\in I}\ca_i(B_i,A_i)]\tens\tens^{I\sqcup I}\bigl[\bigl(\ca_i(A_i,D_i)\bigr)_{i\in I},\bigl(\ca_i(D_i,E_i)\bigr)_{i\in I}\bigr] \rTTo^{1\tens\lambda^{\chi\:I\sqcup I\to I}}
		\\
		[\tens^{i\in I}\ca_i(B_i,A_i)]\tens\tens^{i\in I}\bigl[\ca_i(A_i,D_i)\tens\ca_i(D_i,E_i)\bigr] \rTTo^{(\lambda^{\triangledown\sqcup\triangledown\:I\sqcup I\to\mb2})^{-1}}
		\\
		\tens^{I\sqcup I}\bigl[\bigl(\ca_i(B_i,A_i)\bigr)_{i\in I},\bigl(\ca_i(A_i,D_i)\tens\ca_i(D_i,E_i)\bigr)_{i\in I}\bigr] \rTTo^{\lambda^{\chi\:I\sqcup I\to I}}
		\\
		\tens^{i\in I}\{\ca_i(B_i,A_i)\tens[\ca_i(A_i,D_i)\tens\ca_i(D_i,E_i)]\} \rTTo^{\tens^{i\in I}[1\tens\kappa_{A_i,D_i,E_i}]\;}
		\\
		\tens^{i\in I}[\ca_i(B_i,A_i)\tens\ca_i(A_i,E_i)] \rTTo^{\tens^{i\in I}\kappa_{B_i,A_i,E_i}\;} \tens^{i\in I}\ca_i(B_i,E_i).
	\end{multline*}
	Similarly the left-bottom path of diagram~\eqref{dia-assocC} can be presented.
	Its commutativity follows from equations~\eqref{dia-assocC} for each of $\ca_i$, an identity in $\cv$
	\begin{multline*}
		\bigl\{(\tens^{i\in I}T_i)\tens(\tens^{i\in I}U_i)\tens(\tens^{i\in I}V_i) \rTTo^{\lambda^{\mathsf{IV}\:\mb3\to\mb2}}
		(\tens^{i\in I}T_i)\tens[(\tens^{i\in I}U_i)\tens(\tens^{i\in I}V_i)] \rTTo^{1\tens(\lambda^{\triangledown\sqcup\triangledown\:I\sqcup I\to\mb2})^{-1}}
		\\
		(\tens^{i\in I}T_i)\tens\tens^{I\sqcup I}[(U_i)_{i\in I},(V_i)_{i\in I}] \rTTo^{1\tens\lambda^{\chi\:I\sqcup I\to I}}
		(\tens^{i\in I}T_i)\tens\tens^{i\in I}(U_i\tens V_i) \rTTo^{(\lambda^{\triangledown\sqcup\triangledown\:I\sqcup I\to\mb2})^{-1}}
		\\
		\tens^{I\sqcup I}[(T_i)_{i\in I},(U_i\tens V_i)_{i\in I}] \rTTo^{\lambda^{\chi\:I\sqcup I\to I}}
		\tens^{i\in I}[T_i\tens(U_i\tens V_i)] \bigr\} =
		\\
		\bigl\{(\tens^{i\in I}T_i)\tens(\tens^{i\in I}U_i)\tens(\tens^{i\in I}V_i)
		\rTTo^{\lambda^{\mathsf{V\mkern-5.9mu I\mkern5mu}\:I\sqcup I\sqcup I\to I}} \tens^{i\in I}(T_i\tens U_i\tens V_i) 
		\rTTo^{\tens^{i\in I}\lambda^{\mathsf{IV}\:\mb3\to\mb2}} \tens^{i\in I}[T_i\tens(U_i\tens V_i)] \bigr\},
	\end{multline*}
	where \(\mathsf{V\mkern-8.8mu I\mkern8mu}\:I\sqcup I\sqcup I\to I\) denotes the map
	\[
	\begin{pmatrix}
		1 &\dots &I &I+1 &\dots &2I &2I+1 &\dots &3I-1 &3I
		\\
		1 &\dots &I &1   &\dots &I  &1    &\dots &I-1  &I
	\end{pmatrix}
	,
	\]
	and a similar identity with left and right switched.
	Proof of these identities follows from coherence for strong lax symmetric monoidal categories -- all expressions, which are compositions of tensor products of \(\lambda\)'s and their inverses are equal, if their sources and targets are pairwise equal.
	To prove this coherence principle notice that a strong lax symmetric monoidal category is strongly equivalent to a symmetric strictly monoidal category \cite[Theorem~3.1.6]{math.CT/0305049}, for which this coherence is obvious.
	
	Let us prove the first unitality condition \eqref{eq-idX-tens-1k1} for $\ca$:
	\begin{multline*}
		\bigl\{\tens^{i\in I}\ca_i(A_i,D_i) \rTTo^{\lambda^{\centerdot\mathsf{I}\:\mb1\to\mb2}} \1\tens[\tens^{i\in I}\ca_i(A_i,D_i)] \rTTo^{\lambda^{\varnothing\to I}\tens1} \, [\tens^{i\in I}\1]\tens[\tens^{i\in I}\ca_i(A_i,D_i)] \rTTo^{[\tens^{i\in I}\id_{A_i}]\tens1} 
		\\
		[\tens^{i\in I}\ca_i(A_i,A_i)]\tens[\tens^{i\in I}\ca_i(A_i,D_i)] \rTTo^{(\lambda^{\triangledown\sqcup\triangledown\:I\sqcup I\to\mb2})^{-1}} \tens^{I\sqcup I}\bigl[\bigl(\ca_i(A_i,A_i)\bigr)_{i\in I},\bigl(\ca_i(A_i,D_i)\bigr)_{i\in I}\bigr]
		\\
		\rTTo^{\lambda^{\chi\:I\sqcup I\to I}} \tens^{i\in I}\bigl[\ca_i(A_i,A_i)\tens\ca_i(A_i,D_i)\bigr] \rTTo^{\tens^{i\in I}\kappa_{A_i,A_i,D_i}\;} \tens^{i\in I}\ca_i(A_i,D_i) \bigr\} =1.
	\end{multline*}
	Naturality of \(\lambda\) reduces the above to the identity in $\cv$ (here \(V_i\in\Ob\cv\) for \(i\in I\)):
	\begin{multline}
		\bigl\{\tens^{i\in I}V_i \rTTo^{\lambda^{\centerdot\mathsf{I}\:\mb1\to\mb2}} \1\tens(\tens^{i\in I}V_i) \rTTo^{\lambda^{\varnothing\to I}\tens1} (\tens^{i\in I}\1)\tens(\tens^{i\in I}V_i)
		\\
		\rTTo^{(\lambda^{\triangledown\sqcup\triangledown\:I\sqcup I\to\mb2})^{-1}} \tens^{I\sqcup I}[(\1)_{i\in I},(V_i)_{i\in I}] \rTTo^{\lambda^{\chi\:I\sqcup I\to I}} \tens^{i\in I}(\1\tens V_i) \bigr\} =\tens^{i\in I}\lambda^{\centerdot\mathsf{I}\:\mb1\to\mb2}.
		\label{eq-Vi-1Vi-lambda}
	\end{multline}
	Proof of the above follows from coherence for strong lax symmetric monoidal categories.
	
	
	The proof of second unitality equation~\eqref{eq-1-tens-idYk1} also follows from coherence for strong lax symmetric monoidal categories.
	
	Let us show that for each finite ordered family of $\cv$\n-categories \((\ca_i)_{i\in I}\) the functor \(\cW\to\Set\), \(\cb\mapsto\mcv\bigl((\ca_i)_{i\in I};\cb\bigr)\) is representable.
	As the representing object we take $\ca$ described above.
	Thus condition~(1) of Definition~8.1 \cite{Hermida:MultiRep} is satisfied, see Remark~8.2 [{\it ibid}].
	We do not check condition~(2) (of Definition~8.1 [{\it ibid}]).
	Instead we deduce by \cite[Theorem~3.24]{BesLyuMan-book} that \(\mcv\CAT\) is representable by a lax monoidal category $\cW$ and we describe its lax monoidal structure explicitly.
	
	We are going to establish a natural in $\cb$ bijection \(\mcv\CAT\bigl((\cA_i)_{i\in I};\cb\bigr)\cong\cW(\ca,\cb)\), so that the object function \(\Ob F\:\prod_{i\in I}\Ob\ca_i\to\Ob\cb\) is the same for both elements of these sets, related by the bijection.
	These elements are collections of morphisms
	\[ F_{(A_i),(D_i)}\: \tens^{i\in I} \ca_i(A_i,D_i) \to \cb\bigl((A_i)_{i\in I}F,(D_i)_{i\in I}F\bigr) \in\cv,
	\]
	subject to certain equations.
An element \((F_{(A_i),(D_i)})\in\mcv\CAT\bigl((\cA_i)_{i\in I};\cb\bigr)\) has to make the exterior of diagram
	\begin{diagram}[nobalance,LaTeXeqno]
		\tens^{I\sqcup I}\bigl[\bigl(\ca_i(A_i,D_i)\bigr)_{i\in I},\bigl(\ca_i(D_i,E_i)\bigr)_{i\in I}\bigr] &\rTTo^{\lambda^{\chi\:I\sqcup I\to I}} &\tens^{i\in I}\bigl[\ca_i(A_i,D_i)\tens\ca_i(D_i,E_i)\bigr]
		\\
		\dTTo<{\lambda^{\triangledown\sqcup\triangledown\:I\sqcup I\to\mb2}} &= &\dTTo>{\tens^{i\in I}\kappa_{A_i,D_i,E_i}}
		\\
		[\tens^{i\in I}\ca_i(A_i,D_i)]\tens[\tens^{i\in I}\ca_i(D_i,E_i)] &\rTTo^{\kappa^\ca} &\tens^{i\in I}\ca_i(A_i,E_i)
		\\
		\dTTo<{F_{(A_i),(D_i)}\tens F_{(D_i),(E_i)}} &= &\dTTo>{F_{(A_i),(E_i)}}
		\\
		\cb\bigl((A_i)_{i\in I}F,(D_i)_{i\in I}F\bigr)\tens\cb\bigl((D_i)_{i\in I}F,(E_i)_{i\in I}F\bigr) &\rTTo^{\kappa^\cb} &\cb\bigl((A_i)_{i\in I}F,(E_i)_{i\in I}F\bigr)
		\label{dia-tensIIAA-B}
	\end{diagram}
	commute and to satisfy \eqref{eq-coherence-with-units} in the form:
	\begin{equation}
		\bigl[\1 \rTTo^{\lambda^{\varnothing\to I}} \tens^{i\in I}\1 \rTTo^{\tens^{i\in I}\id_{A_i}\;} \tens^{i\in I}\ca_i(A_i,A_i) \rTTo^{F_{(A_i),(A_i)}} \cb\bigl((A_i)_{i\in I}F,(A_i)_{i\in I}F\bigr)\bigr] =\id_{(A_i)_{i\in I}F}.
		\label{eq-1tens1-B}
	\end{equation}
	An element \((F_{(A_i),(D_i)})\in\cW(\cA,\cb)\) has to make the lower square of diagram~\eqref{dia-tensIIAA-B} commutative and to satisfy \eqref{eq-1tens1-B}.
	Therefore, elements of
	\[ \mcv\CAT\bigl((\cA_i)_{i\in I};\cb\bigr)\subset\prod_{(A_i),(D_i)}\cv\bigl(\tens^{i\in I}\ca_i(A_i,D_i),\cb\bigl((A_i)_{i\in I}F,(D_i)_{i\in I}F\bigr)\bigr) \supset \cW(\ca,\cb)
	\]
	satisfy equivalent systems of equations.
	Hence, an identification 
	\[ \rho^{-1}\:\mcv\CAT\bigl((\cA_i)_{i\in I};\cb\bigr) \rto\cong \cW(\ca,\cb).
	\]
	Here $\rho$ can be written as the composition
	\[ \rho = \bigl[ \cW(\ca,\cb) \cong \mb1\times\cW(\ca,\cb) \rTTo^{\tau\times1} \mcv\CAT((\ca_i)_{i\in I};\ca)\times\cW(\ca,\cb)
	\rTTo^{\mu^{\mcv\CAT}_{I\to\mb1}} \mcv\CAT\bigl((\cA_i)_{i\in I};\cb\bigr) \bigr],
	\]
	where $\tau$ is the following multi-entry $\mcv$\n-functor: 
	\(\Ob\tau=\id\:\prod_{i\in I}\Ob\ca_i\to\prod_{i\in I}\Ob\ca_i\), on morphisms
	\[ \tau=\id\in\cv\bigl(\tens_\cv^{i\in I}\ca_i(A_i,D_i),\tens_\cv^{i\in I}\ca_i(A_i,D_i)\bigr) =\bigl\{F\in\mcv\CAT\bigl((\cA_i)_{i\in I};\ca\bigr)\mid \Ob F=\id_{\Ob\ca}\bigr\}.
	\]
	In order to prove formula for $\rho$ notice that composition with the identity map is an isomorphism.
	
	According to \cite[Theorem~3.24]{BesLyuMan-book} the category $\cW$ is lax symmetric monoidal with the tensor products \(\tens_\cW^I\:\cW^I\to\cW\), \((\cA_i)_{i\in I}\mapsto\tens_\cW^{i\in I}\ca_i=\ca\) as above, which give on morphisms
	\begin{multline*}
		\tens_\cW^{i\in I} =\Bigl\{\prod_{i\in I}\cW(\ca_i,\cb_i) \cong \Bigl[\prod_{i\in I}\cW(\ca_i,\cb_i)\Bigr]\times\mb1 \rTTo^{1\times\tau} \Bigl[\prod_{i\in I}\cW(\ca_i,\cb_i)\Bigr]\times\mcv\CAT\bigl((\cb_i)_{i\in I};\tens_\cW^{i\in I}\cb_i\bigr)
		\\
		\rTTo^{\mu^{\mcv\CAT}_{\id_I}} \mcv\CAT\bigl((\ca_i)_{i\in I};\tens_\cW^{i\in I}\cb_i\bigr) \rto[\cong]{\rho^{-1}} \cW(\tens_\cW^{i\in I}\ca_i,\tens_\cW^{i\in I}\cb_i)\Bigr\}, \;
		(F^i:\ca_i\to\cb_i)_{i\in I} \mapsto F,
	\end{multline*}
	where \(\Ob F=\prod_{i\in I}\Ob F^i\), \(\Mor F=\tens_\cv^{i\in I}F^i_{A^i,D^i}\:\tens_\cv^{i\in I}\ca_i(A^i,D^i)\to\tens_\cv^{i\in I}\cb_i(A^iF^i,D^iF^i)\).
	
	The same \cite[Theorem~3.24]{BesLyuMan-book} gives that for any map \(f\:I\to J\in\cs_\sk\) the $\cv$\n-functor \(\lambda_\cW^f\) is obtained via map
	\begin{multline*}
		\Bigl\{\Bigl[\prod_{j\in J}\mcv\CAT\bigl((\ca_i)_{i\in f^{-1}j};\tens_\cW^{i\in f^{-1}j}\ca_i\bigr)\Bigr]\times \mcv\CAT\bigl((\tens_\cW^{i\in f^{-1}j}\ca_i)_{i\in I};\tens_\cW^{j\in J}\tens_\cW^{i\in f^{-1}j}\ca_i\bigr)
		\\
		\rTTo^{\mu^{\mcv\CAT}_f} \mcv\CAT\bigl((\ca_i)_{i\in I};\tens_\cW^{j\in J}\tens_\cW^{i\in f^{-1}j}\ca_i\bigr) \rto[\cong]{\rho^{-1}} \cW(\tens_\cW^{i\in I}\ca_i,\tens_\cW^{j\in J}\tens_\cW^{i\in f^{-1}j}\ca_i)\Bigr\}, \;
		\tau^{J\sqcup\mb1} \mapsto \lambda_\cW^f.
	\end{multline*}
	Easy computation gives \(\Ob\lambda_\cW^f\) as the obvious bijection \(\prod_{i\in I}\Ob\ca_i\rto\cong\prod_{j\in J}\prod_{i\in f^{-1}j}\Ob\ca_i\).
	On morphisms \(\lambda_\cW^f\) acts as \(\lambda_\cv^f\:\tens_\cv^{i\in I}\ca_i(A_i,D_i)\to\tens_\cv^{j\in J}\tens_\cv^{i\in f^{-1}j}\ca_i(A_i,D_i)\) according to \cite[Proposition~3.22]{BesLyuMan-book}.
\end{proof}

\begin{proposition}
Assume that $\cv$ is Cartesian (closed under arbitrary small products).
Equip $\cv$ with finite products as monoidal multiplication.
Then $\cv\CAT$ is also Cartesian and its monoidal structure from \propref{pro-represented-symmetric-monoidal-category} has finite products as monoidal multiplications,
\end{proposition}

\begin{proof}

	Let $\cv$ be Cartesian.
It is proven in \propref{pro-VCat-has-products} in the general case of categories enriched over a symmetric multicategory $\mcv$ that the multicategory $\VCat$ is Cartesian.
Hence \(\cv\CAT\) is Cartesian.
It is shown in the proof of the same proposition that \(\pr_i\:\ca\equiv\tens^{j\in I}\ca_j\to\ca_i\), \((A_j)_{j\in I}\mapsto A_i\), \(\pr_i\:\prod_{j\in I}\ca_j(A_j,D_j)\to\ca_i(A_i,D_i)\) are $\mcv$\n-functors.
Moreover, it is shown that \(\ca\equiv\tens^{i\in I}\ca_i\) is the product \(\prod_{i\in I}\ca_i\) of a family \((\ca_i)_{i\in I}\).
\end{proof}

As shown in \thmref{thm-machine} + \cite[Proposition~4.8]{BesLyuMan-book} for a symmetric closed complete monoidal category $\cv$, the category \(\cv\CAT\) also has all these structures.
Equivalence of closedness of $\cv$ and \(\mcv=\wh\cv\) is proven precisely in \cite[Proposition~4.8]{BesLyuMan-book}.
As we have noticed, if monoidal category $\cv$ is Cartesian, so is \(\cv\CAT\).

\subsection{Strict 2-categories}\label{sec-Strict-2-categories}
\begin{example}
Let \(\mcv=\wh\1\), final multicategory with \(\Ob\1=\mb1=\{1\}\), and \(\1((1)_{\mb n};1)=\mb1\) for all \(n\in\ZZ_{\ge0}\).
Here $\1$ is a one-object-one-morphism category.
It has the only lax (strict) monoidal structure.
Then \(\Ob:\wh\1\CAT\to\mcSet\) is an isomorphism of \(\wh\1\CAT\) to \(\mcSet\), the symmetric multicategory of small sets, represented by $\Set$, the Cartesian closed category of small sets.
	Indeed, a small \(\1\)\n-category $\cc$ is a small set $\Ob\cc$ of objects.
	The other choices are unique.
	This ensures that required equations hold true.
	A multi-entry \(\1\)\n-functor \(F\:(\ca_i)_{i\in I}\to\cb\) is a function \(F=\Ob F\:\Ob\ca_1\times\dots\times\Ob\ca_I\to\Ob\cb\), that is, a morphism in \(\mcSet\).
\end{example}

\begin{example}
Let \(\mcv=\wh{(\mcSet,\times)}\).
	This multicategory is closed with \(\und\mcSet=\mcSet\).
	Objects of \(\mcSet\CAT\) are (ordinary) small (and locally small) categories.
	Multi-entry \(\mcSet\)\n-functors \(F\:(\ca_i)_{i\in I}\to\cc\) are (ordinary) functors \(F\:\prod_{i\in I}\ca_i\to\cc\).
We obtain the Cartesian category $\Cat$.
The object of $\mcSet$\n-transformations \(F\to G\:(\ca_i)_{i\in I} \to\cc=\) the enriched in $\mcSet$ end
\[ \int_{(A_i\in\ca_i)_{i\in I}}\cc((A_i)_{i\in I}F,(A_i)_{i\in I}G),
\]
the equalizer in multicategory $\mcSet$ of pair of morphisms \eqref{eq-int-c(FE-GE)-VQu}.
	It coincides with the set of natural transformations \(\lambda\:F\to G\:\prod_{i\in I}\ca_i\to\cc\), which are, of course, elements \(\lambda_{(A_i\in\ca_i)_{i\in I}}\in\prod_{(A_i\in\ca_i)_{i\in I}}\cc\bigl((A_i)_{i\in I}F,(A_i)_{i\in I}G\bigr)\) such that
	\begin{diagram}
		(A_i)_{i\in I}F &\rTTo^{F_{(A_i),(D_i)}} &(D_i)_{i\in I}F
		\\
		\dTTo<{\lambda_{(A_i)_{i\in I}}} &= &\dTTo>{\lambda_{(D_i)_{i\in I}}}
		\\
		(A_i)_{i\in I}G &\rTTo^{G_{(A_i),(D_i)}} &(D_i)_{i\in I}G
	\end{diagram}
\end{example}

\begin{example}
	Let \(\mcv=\mcSet\CAT\).
	A $\mcv$\n-category $\ca$ is a category enriched over the Cartesian closed category $\Cat$ of small categories.
	Thus, it is the same as a strict 2\n-category.
	A $\mcv$\n-functor \(F\:\ca\to\cb\) is a map \(F=\Ob F\:\Ob\ca\to\Ob\cb\) with functors \(F=F_{A,E}\:\ca(A,E)\to\cb(AF,EF)\) such that
	\begin{diagram}
		\ca(A,D)\times\ca(D,E) &\rTTo^{\kappa_{A,D,E}} &\ca(A,E)
		\\
		\dTTo<{F_{A,D}\times F_{D,E}} &= &\dTTo>{F_{A,E}}
		\\
		\cb(AF,DF)\times\cb(DF,EF) &\rTTo^{\kappa_{AF,DF,EF}} &\cb(AF,EF)
	\end{diagram}
	and \(F_{A,A}\:\ca(A,A)\to\cb(AF,AF)\) maps the identity object to the identity object.
Thus, $F$ is a strict 2\n-functor and \(\cW=\cv\CAT\) is the Cartesian category 2-$\Cat$.
	
The subcategory \(\und{\mcSet\CAT\CAT}(\ca,\cc)(F,G)\subset\prod_{A\in\Ob\ca}\cc(AF,AG)\) (see \eqref{eq-int-c(FE-GE)-VQu}) is equipped with the functors
	\[ p_D =\bigl[ \und{\mcSet\CAT\CAT}(\ca,\cc)(F,G) \rMono \prod_{A\in\Ob\ca}\cc(AF,AG) \rTTo^{\pr_D} \cc(DF,DG) \bigr].
	\]
	By definition, it is the biggest subcategory, for which
	\begin{diagram}[nobalance,LaTeXeqno,h=2.2em]
\ca(A,D)\times\und{\mcSet\CAT\CAT}(\ca,\cc)(F,G) \rto{F_{A,D}\times p_D} \cc(AF,DF)\times\cc(DF,DG)
		\\
\dTTo<{G_{A,D}\times p_A} \hspace*{11em} = \hspace*{11em} \dTTo>\centerdot
		\\
\cc(AG,DG)\times\cc(AF,AG) \rto{c} \cc(AF,AG)\times\cc(AG,DG) \rto{\centerdot} \cc(AF,DG)
		\label{dia-und-mcSet-CAT-CAT}
	\end{diagram}
	In particular, objects of \(\und{\mcSet\CAT\CAT}(\ca,\cc)(F,G)\) are collections of 1\n-cells \(\lambda=(\lambda_A)_{A\in\Ob\ca}\), \(\lambda_A\in\Ob\cc(AF,AG)\), such that for all \(\nu\:f\to g\in\ca(A,D)\) the following square
	\begin{diagram}
		AF &\pile{\rTTo^{\hspace*{1em}fF_{A,D}\hspace*{1em}} \\ \sss\Downarrow\nu F_{A,D}\\ \rTTo_{gF_{A,D}}} &DF
		\\
		\dTTo<{\lambda_A} &= &\dTTo>{\lambda_D}
		\\
		AG &\pile{\rTTo^{\hspace*{1em}fG_{A,D}\hspace*{1em}} \\ \sss\Downarrow\nu G_{A,D}\\ \rTTo_{gG_{A,D}}} &DG
	\end{diagram}
	commutes in $\cc$, that is, 
	\[
	\begin{diagram}[inline]
	AF &\rTTo^{fF_{A,D}} &DF
	\\
	\dTTo<{\lambda_A} &= &\dTTo>{\lambda_D}
	\\
	AG &\rTTo^{fG_{A,D}} &DG
	\end{diagram}
	\ ,\qquad
	\Bigl( AF \rTTo^{\lambda_A} AG \pile{\rTTo^{fG_{A,D}} \\ \sss\Downarrow\nu G_{A,D}\\ \rTTo_{gG_{A,D}}} DG\Bigr)
	=\Bigl( AF \pile{\rTTo^{fF_{A,D}} \\ \sss\Downarrow\nu F_{A,D}\\ \rTTo_{gF_{A,D}}} DF \rTTo^{\lambda_D} DG\Bigr)
	\]
	in the sense of strong (2\n-categorical) composition in $\cc$.
	Terminology is that of Gray \cite[\S I,2.3]{Gray}.
	Here \(\lambda_A\in\Ob\cc(AF,AG)\), \(\lambda_D\in\Ob\cc(DF,DG)\), \(\nu F_{A,D}\:fF_{A,D}\to gF_{A,D}\in\cc(AF,DF)\), \(\nu G_{A,D}\:fG_{A,D}\to gG_{A,D}\in\cc(AG,DG)\), \(\lambda_A\centerdot(\nu G_{A,D})\:\lambda_A\centerdot(fG_{A,D})\to\lambda_A\centerdot(gG_{A,D})\in\cc(AF,DG)\), \((\nu F_{A,D})\centerdot\lambda_D\:(fF_{A,D})\centerdot\lambda_D\to(gF_{A,D})\centerdot\lambda_D\in\cc(AF,DG)\).
	The last equation says that \(\lambda_A\centerdot(\nu G_{A,D})=(\nu F_{A,D})\centerdot\lambda_D\).
	Therefore, the collection \(\lambda\) is a $\Cat$\n-natural transformation \cite[\S I,2.3]{Gray} =strict 2\n-natural transformation (1\n-transfor in terminology of Crans \cite{Crans:transfors}).
	
Let \(\lambda,\mu\in\Ob\und{\mcSet\CAT\CAT}(\ca,\cc)(F,G)\),
\[ m=(m_A)_{A\in\Ob\ca}\in\und{\mcSet\CAT\CAT}(\ca,\cc)(F,G)(\lambda,\mu).
\]
Then for any 1\n-cell \(f\in\Ob\ca(A,D)\) we have \(fF_{A,D}\in\Ob\cc(AF,DF)\), \(fG_{A,D}\in\Ob\cc(AG,DG)\), \(\lambda_A,\mu_A\in\Ob\cc(AF,AG)\), \(\lambda_D,\mu_D\in\Ob\cc(DF,DG)\), \(m_A\in\cc(AF,AG)(\lambda_A,\mu_A)\), and, furthermore, \(m_D\in\cc(DF,DG)(\lambda_D,\mu_D)\).
	We have also
\begin{multline*}
m_A\centerdot(fG_{A,D})\in\cc(AF,DG)(\lambda_A\centerdot(fG_{A,D}),\mu_A\centerdot(fG_{A,D}))
\\
\hfill =\cc(AF,DG)(\lambda_A\centerdot(fG_{A,D}),(fF_{A,D})\centerdot\mu_D), \hskip\multlinegap
	\\
\hskip\multlinegap (fF_{A,D})\centerdot m_D\in\cc(AF,DG)((fF_{A,D})\centerdot\lambda_D,(fF_{A,D})\centerdot\mu_D) \hfill
\\
=\cc(AF,DG)(\lambda_A\centerdot(fG_{A,D}),(fF_{A,D})\centerdot\mu_D),
\end{multline*}
	where \(\centerdot\) is the composition in 2\n-category $\cc$.
	So the condition on the collection $m$ is \(m_A\centerdot(fG_{A,D})=(fF_{A,D})\centerdot m_D\), or, in terms of pastings,
	\[
	\begin{diagram}[inline]
	AF &\rTTo^{fF_{A,D}} &DF
	\\
	\dTTo<{\lambda_A} \stackrel{m_A}{\Rightarrow} \dTTo>{\mu_A} &\ruId^= &\dTTo>{\mu_D}
	\\
	AG &\rTTo^{fG_{A,D}} &DG
	\end{diagram}
	\quad=\quad
	\begin{diagram}[inline]
	AF &\rTTo^{fF_{A,D}} &DF
	\\
	\dTTo<{\lambda_A} &\ruId^= &\dTTo<{\lambda_D} \stackrel{m_D}{\Rightarrow} \dTTo>{\mu_D}
	\\
	AG &\rTTo^{fG_{A,D}} &DG
	\end{diagram}
	\quad.
	\]
	Therefore, \(\und{\mcSet\CAT\CAT}(\ca,\cc)(F,G)(\lambda,\mu)\) consists of modifications \(m\:\lambda\to\mu\:F\to G\:\ca\to\cc\) (see e.g. \cite[\S I,2.3]{Gray}).
	On the other hand, for any 2-cell $\nu$ of $\ca$ and any modification $m$ diagram~\eqref{dia-und-mcSet-CAT-CAT} evaluated on element \((\nu,m)\) commutes (exercise).
	Thus, \(\und{\mcSet\CAT\CAT}(\ca,\cc)(F,G)\) is precisely the category of strict 2\n-natural transformations and their modifications.
\end{example}

\section{Short spaces}\label{sec-Short-spaces}
Similarly to \cite[Section~2]{Lyu-Filt-coCat} we consider a partially ordered commutative monoid $\LL$ equipped with the operation $+$ and neutral element 0.
Of course, we assume that \(a\le b\), \(c\le d\) imply \(a+c\le b+d\).
We assume that $\LL$ satisfies the following conditions:
\begin{enumerate}
	\renewcommand{\labelenumi}{(\roman{enumi})}
	\item for all $a,b\in\LL$ there is $c\in\LL$ such that $a\le c$, 
	$b\le c$ (that is, \((\LL,\le)\) is directed);
	
	\item for all $a,b\in\LL$ there is $c\in\LL$ such that $c\le a$, 
	$c\le b$ (that is, \(\LL^\op\) is directed);
	
	\item for all $a\in\LL$ there is $c\in\LL$ such that $a+c\ge0$;

\item for \(a\le b=c+d\in\LL\) there exist \(f\le c\in\LL\) and \(g\le d\in\LL\) such that \(f+g=a\).
\end{enumerate}
If $\LL$ is a directed group (satisfies (i)), then $\LL$ satisfies (ii), (iii) and (iv) as well for obvious reasons.

\subsection{First properties}
Let $\KK$ denote one of two fields, $\RR$ or $\CC$.
By a (generalised) seminorm on a $\KK$\n-vector space $V$ we mean a function \(\|\cdot\|\:V\to[0,\infty]\), such that
\begin{enumerate}
	\renewcommand{\labelenumi}{(\roman{enumi})}
	\item for \(c\in\KK\) and \(x\in V\) we have \(\|cx\|=\vert c\vert\cdot\|x\|\) (with the convention \(0\cdot\infty=0\)) (absolute homogeneity);
	
	\item \(\|x+y\|\le\|x\|+\|y\|\) for \(x,y\in V\) (triangle inequality).
\end{enumerate}

\begin{remark}\label{rem-null-space-seminormed}
	Let $(V,\|\cdot\|)$ be a seminormed $\KK$\n-vector space.
	Then the null space \(\ker\|\cdot\|=\{x\in V\mid \|x\|=0\}\) is a $\KK$\n-vector subspace.
\end{remark}

\begin{definition}
Let $\LL$ be a partially ordered commutative monoid.
A \emph{short space}\index{short space} is a $\KK$\n-vector space \((V,(\|\cdot\|_l)_{l\in\LL})\) with a family of seminorms indexed by $\LL$, such that for any $x\in V$ there is $l\in\LL$ with \(\|x\|_l<\infty\) and the inequality \(a\le b\in\LL\) implies \(\|x\|_a\le\|x\|_b\).
\end{definition}

\begin{example}\label{exa-filtered-is-short}
	Let \((V,(\cF^lV)_{l\in\LL})\) be a filtered $\KK$\n-vector space.
	With each subspace $\cF^lV$ a seminorm is associated
	\[ \|x\|_l =
	\begin{cases}
		0, &\text{ if } x\in\cF^lV,
		\\
		\infty, &\text{ if } x\in V\setminus\cF^lV.
	\end{cases}
	\]
	Thus, \(\ker\|\cdot\|_l=\cF^lV\) and \((V,(\|\cdot\|_l)_{l\in\LL})\) is a short space.
	
	Vice versa, a short space \((V,(\|\cdot\|_l)_{l\in\LL})\) with \(\|V\|_l\subset\{0,\infty\}\) for all $l\in\LL$ determines an $\LL$\n-filtered $\KK$\n-vector space \((V,(\cF^lV)_{l\in\LL})\) with \(\cF^lV=\{x\in V\mid \|x\|_l=0\}\) (see \remref{rem-null-space-seminormed}).
\end{example}

\begin{definition}\label{def-Multicategory-Short}
Symmetric multicategory $\mcShort_\LL$ has short spaces as objects.
Morphisms\index{morphism of short spaces} are short multilinear maps:
	\[ f\: \bigl(X_1,(\sS{_1}\|\cdot\|_l)_{l\in\LL}\bigr) \times \bigl(X_2,(\sS{_2}\|\cdot\|_l)_{l\in\LL}\bigr) \times\dots\times \bigl(X_n,(\sS{_n}\|\cdot\|_l)_{l\in\LL}\bigr) \to \bigl(Y,(\|\cdot\|_l)_{l\in\LL}\bigr)
	\]
	such that for all \(l_1\), \dots, \(l_n\in\LL\) and all \(x_1\in X_1\), \dots, \(x_n\in X_n\) we have
	\[ \|f(x_1,x_2,\dots,x_n)\|_{l_1+\dots+l_n} \le \sS{_1}\|x_1\|_{l_1} \cdot\ldots\cdot \sS{_n}\|x_n\|_{l_n}
	\]
	(here \(0\cdot\infty=\infty\)).
When $n=1$, $\LL=0$, $X_1$ and $Y$ are Banach spaces, short maps are defined as above and are widely used in calculus.
Composition of multilinear maps
\[ \mu_\phi\: \bigl[ \prod_{j\in J}\mcShort_\LL\bigl((X_i)_{i\in\phi^{-1}j};Y_j\bigr) \bigr] \times \mcShort_\LL\bigl((Y_j)_{j\in J};Z\bigr) \to \mcShort_\LL\bigl((X_i)_{i\in I};Z\bigr);
\]
indexed by a map \(\phi\:I\to J\in\cS_\sk\) is given by substituting the results of \((g_j\:(X_i)_{i\in\phi^{-1}j}\to Y_j)_{j\in J}\) into \(f\:(Y_j)_{j\in J}\to Z\), thus, \(\mu_\phi\:((g_j)_{j\in J},f)\mapsto((g_j)_{j\in J})f\).
The identity morphism \(1_X\in\mcShort_\LL(X;X)\) is the identity map $\id_X\:X\to X$.
\end{definition}


\begin{proposition}\label{pro-Short-closed}
The multicategory $\mcShort_\LL$ is closed: the internal hom object is a $\KK$\n-vector subspace
\[ \und{\mcShort_\LL}(X_1,\dots,X_n;Z)\subset\ML_\KK(X_1\times\dots\times X_n,Z)=\und{\wh{\KK\Vect}}(X_1,\dots,X_n;Z)
\]
of $\KK$\n-multilinear maps.
	The latter is equipped with seminorms
	\begin{multline}
		\|f\|_l =\inf\{c\in\RR_{>0} \mid \forall (x_1,\dots,x_n)\in X_1\times\dots\times X_n\quad \forall (\lambda_1,\dots,\lambda_n)\in\LL^n
		\\
		\|f(x_1,x_2,\dots,x_n)\|_{\lambda_1+\dots+\lambda_n+l} \le c\cdot\sS{_1}\|x_1\|_{\lambda_1} \cdot\ldots\cdot \sS{_n}\|x_n\|_{\lambda_n} \}.
		\label{eq-fl-map}
	\end{multline}
	The subspace \(\und{\mcShort_\LL}(X_1,\dots,X_n;Z)\) is defined as
	\[ \{f\in\ML_\KK(X_1\times\dots\times X_n,Z) \mid \exists l\in\LL \; \|f\|_l<\infty \}.
	\]
\end{proposition}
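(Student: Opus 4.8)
The plan is to verify directly the three ingredients required by the definition of a closed multicategory recorded around \eqref{eq-phi(XYZ)-iso}: that $\und{\mcShort_\LL}(X_1,\dots,X_n;Z)$ is a short space, that the ordinary evaluation multilinear map is a morphism of $\mcShort_\LL$, and that the composition \eqref{eq-phi(XYZ)-iso} is bijective. Throughout I would lean on the fact, already present in the statement, that the multicategory $\wh{\KK\Vect}$ of $\KK$-vector spaces and $\KK$-multilinear maps is closed, with internal hom $\ML_\KK(X_1\times\dots\times X_n,Z)$ and the usual currying bijection; the whole content is then to check that the short structure cuts this bijection down correctly.

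First I would show that \eqref{eq-fl-map} defines a short space. Absolute homogeneity and the triangle inequality for $\|\cdot\|_l$ are formal: a constant witnessing the bound for $f$ and one witnessing it for $g$ add to give a witness for $f+g$, while the admissible constants for $\alpha f$ are $|\alpha|$ times those for $f$, so taking infima yields both axioms. Monotonicity $\|f\|_a\le\|f\|_b$ for $a\le b$ is inherited from monotonicity of the seminorms on $Z$: since $\lambda_1+\dots+\lambda_n+a\le\lambda_1+\dots+\lambda_n+b$, every $c$ admissible at level $b$ is admissible at level $a$, so the set of admissible constants only enlarges as the index decreases. Consequently $\{l\mid\|f\|_l<\infty\}$ is a down-set, and using condition (ii) on $\LL$ (that $\LL^\op$ is directed) any two elements $f,g$ of the prescribed subspace admit a common lower index $l$ at which both seminorms are finite, whence $\|f+g\|_l<\infty$; this shows the subspace is a $\KK$-linear subspace, and by construction each of its elements has some finite seminorm.

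Next I would take for $\ev_{(X_i);Z}$ the ordinary evaluation $(x_1,\dots,x_n,f)\mapsto f(x_1,\dots,x_n)$, which is the evaluation element of $\wh{\KK\Vect}$, and check that it is short. This is immediate from the definition of $\|f\|_l$: for every $c>\|f\|_l$ the defining inequality of \eqref{eq-fl-map} holds with that $c$, and letting $c\to\|f\|_l$ gives exactly the estimate $\|f(x_1,\dots,x_n)\|_{\lambda_1+\dots+\lambda_n+l}\le\|f\|_l\cdot\sS{_1}\|x_1\|_{\lambda_1}\cdots\sS{_n}\|x_n\|_{\lambda_n}$, the conventions on $0\cdot\infty$ disposing of the degenerate factors. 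Finally, for the universal property I would unwind the currying bijection. Given $h\colon(X_i)_{i\in I},(Y_j)_{j\in J}\to Z$ its curried form is $g\colon(Y_j)_{j\in J}\to\ML_\KK$ with $g(y)=h(-,y)$, and unfolding the infimum in \eqref{eq-fl-map} turns the estimate $\|g(y)\|_{\sum m_j}\le\prod_j\sS{_j}\|y_j\|_{m_j}$ into precisely the universally quantified inequality defining shortness of $h$; hence $h$ is short iff $g$ is short. One checks along the way, by choosing for each $y_j$ an index $m_j$ with $\sS{_j}\|y_j\|_{m_j}<\infty$ and passing to $\sum_j m_j$, that $g(y)$ indeed lies in $\und{\mcShort_\LL}$. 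Thus the known bijection at the vector-space level restricts to a bijection on short morphisms, which is the bijectivity of \eqref{eq-phi(XYZ)-iso}.

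The main obstacle is not conceptual but a matter of bookkeeping: one must keep the $\LL$-degree shifts $\lambda_1+\dots+\lambda_n+l$ aligned while passing through both the composition \eqref{eq-phi(XYZ)-iso} and the currying, and must handle with care the conventions $0\cdot\infty$ together with the fact that the infimum defining $\|f\|_l$ is ``attained'' only in the limiting sense used for the evaluation. These are the points at which an otherwise purely formal argument could go wrong.
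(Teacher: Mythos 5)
Your proposal is correct and follows essentially the same route as the paper: take the currying bijection of the closed multicategory $\wh{\KK\Vect}$ and show it restricts to short morphisms, the key step being the equivalence of the two inequalities (shortness of the uncurried map versus shortness of the curried one), handled by the same $\eps$-approximation of the infimum in \eqref{eq-fl-map}. Your additional explicit check that $\und{\mcShort_\LL}(X_1,\dots,X_n;Z)$ satisfies the short-space axioms (using monotonicity and directedness of $\LL^\op$) is a point the paper leaves implicit, and is a worthwhile supplement.
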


\begin{proof}
The evaluation multi-entry functor $\ev$ is defined as
\begin{diagram}[nobalance]
\bigl[ X_1,\dots,X_n,\und{\mcShort_\LL}(X_1,\dots,X_n;Z) &\rTTo^{(1,\dots,1,\hookrightarrow)} &X_1,\dots,X_n,\und{\wh{\KK\Vect}}(X_1,\dots,X_n;Z) \rto\ev Z \bigr],
		\\
		(x_1,x_2,\dots,x_n,f) &\rMapsTo &(x_1,x_2,\dots,x_n)f.
\end{diagram}
It is a short map since \(\|(x_1,x_2,\dots,x_n)f\|_{\lambda_1+\dots+\lambda_n+l}\le\sS{_1}\|x_1\|_{\lambda_1}\cdot\ldots\cdot\sS{_n}\|x_n\|_{\lambda_n}\cdot\|f\|_l\).
	As \(\wh{\KK\Vect}\) is closed, for every \(\xi\:X_1,\dots,X_n,Y_1,\dots,Y_m\to Z\in\wh{\KK\Vect}\) there exists a unique \(\psi\:Y_1,\dots,Y_m\to\und{\wh{\KK\Vect}}(X_1,\dots,X_n;Z)\in\wh{\KK\Vect}\) such that 
	\[ \xi =\bigl[X_1,\dots,X_n,Y_1,\dots,Y_m \rTTo^{(1,\dots,1,\psi)} X_1,\dots,X_n,\und{\wh{\KK\Vect}}(X_1,\dots,X_n;Z) \rTTo^\ev Z \bigr].
	\]
	The proposition claims that $\xi$ is short iff \(\im\psi\subset\und{\mcShort_\LL}(X_1,\dots,X_n;Z)\) and
	\[ \psi\:Y_1,\dots,Y_m\to\und{\mcShort_\LL}(X_1,\dots,X_n;Z)
	\]
	is short.
	Let us prove the claim.
	We have
\[ (x_1,\dots,x_n,y_1,\dots,y_m)\xi=(x_1,\dots,x_n)(y_1,\dots,y_m)\psi.
\]
	The statement can be rephrased as equivalence of two inequalities:
\begin{multline}
		\|(x_1,\dots,x_n)(y_1,\dots,y_m)\psi\|_{\lambda_1+\dots+\lambda_n+\mu_1+\dots+\mu_m}
\\
\hfill \le \sS{_{X_1}}\|x_1\|_{\lambda_1} \cdot\ldots\cdot \sS{_{X_n}}\|x_n\|_{\lambda_n}\cdot \sS{_{Y_1}}\|y_1\|_{\mu_1} \cdot\ldots\cdot \sS{_{Y_m}}\|y_m\|_{\mu_m}, \hskip\multlinegap
		\label{eq-(x)(y)psi}
\end{multline}
\begin{equation}
\|(y_1,\dots,y_m)\psi\|_{\mu_1+\dots+\mu_m} \le \sS{_{Y_1}}\|y_1\|_{\mu_1} \cdot\ldots\cdot \sS{_{Y_m}}\|y_m\|_{\mu_m}.
		\label{eq-(y)psi}
\end{equation}		
	\eqref{eq-(x)(y)psi} implies \eqref{eq-(y)psi} because the requirement of \eqref{eq-fl-map} is satisfied by \(c=\sS{_{Y_1}}\|y_1\|_{\mu_1}\cdot\ldots\cdot\sS{_{Y_m}}\|y_m\|_{\mu_m}\).
	Vice versa, \eqref{eq-(y)psi} implies that for any \(\eps>0\)
\begin{multline*}
\|(x_1,\dots,x_n)(y_1,\dots,y_m)\psi\|_{\lambda_1+\dots+\lambda_n+\mu_1+\dots+\mu_m}
\\
\le \sS{_{X_1}}\|x_1\|_{\lambda_1} \cdot\ldots\cdot \sS{_{X_n}}\|x_n\|_{\lambda_n}\cdot (\sS{_{Y_1}}\|y_1\|_{\mu_1} \cdot\ldots\cdot \sS{_{Y_m}}\|y_m\|_{\mu_m}+\eps).
\end{multline*}
	Therefore, \eqref{eq-(x)(y)psi} holds.
\end{proof}

\begin{remark}
	The category \(\Short_\LL\) is defined as the case $n=1$ of \defref{def-Multicategory-Short}.
The category $\snS$ is defined as \(\Short_0\) for $\LL=0$.
It has seminormed spaces $(V,\|\cdot\|)$ as objects and short maps as morphisms.
Define the multicategory of seminormed spaces \(\mcsnS=\mcShort_0\), where $\LL=0$.

	\exaref{exa-filtered-is-short} gives a symmetric multifunctor 
	\(\iota\:\wh{\KK\Vect_\LL}\to\mcShort_\LL\).
	The image of $\Ob\iota$ consists of short spaces \((V,(\|\cdot\|_l)_{l\in\LL})\) with 
	\(\|V\|_l\subset\{0,\infty\}\) for all $l\in\LL$.
	Besides \(\Ob\iota\) the multifunctor consists of bijections
	\[ \iota\:\wh{\KK\Vect_\LL}(M_1,\dots,M_n;N) \to \mcShort_\LL(\iota M_1,\dots,\iota M_n;\iota N).
	\]
\end{remark}

\begin{lemma}\label{lem-b-l1ln-a-lambda1lambdan}
Let \(a\le b=l_1+\dots+l_n\in\LL\) for $n\ge1$.
Then there are elements \(\lambda_1,\dots,\lambda_n\in\LL\) such that \(\lambda_k\le l_k\) for all \(1\le k\le n\) and \(\lambda_1+\dots+\lambda_n=a\).
\end{lemma}

\begin{proof}
For $n=1$ the statement is obvious.
For $n=2$ it is the condition (iv).
The case $n>2$ reduces to the case $n-1$.
Indeed, we have \(a\le b=(l_1+\dots+l_{n-1})+l_n\).
By condition (iv) there are elements \(f\le l_1+\dots+l_{n-1}\), \(g\le l_n\), such that \(f+g=a\).
By the $n-1$ case there are \(\lambda_1,\dots,\lambda_{n-1}\in\LL\) such that \(\lambda_k\le l_k\) for \(k\in\ZZ\), \(1\le k\le n-1\) and \(\lambda_1+\dots+\lambda_{n-1}=f\).
Set \(\lambda_n=g\).
Then \(\lambda_1+\dots+\lambda_n=a\) and the lemma is proved by induction.
\end{proof}

\begin{proposition}
The category \(\Short_\LL\) has a colax symmetric monoidal structure related to the symmetric multicategory \(\mcShort_\LL\).
\end{proposition}

\begin{proof}
Given short spaces \(X_k=\bigl(X_k,(\sS{_k}\|\cdot\|_l)_{l\in\LL}\bigr)\), \(1\le k\le n\), we construct another short space \(X=\bigl(X,(\sS{_X}\|\cdot\|_l)_{l\in\LL}\bigr)\) and a map \(\Short_\LL(X,Y)\to\mcShort_\LL(X_1,\dots,X_n;Y)\), functorial in \(Y=\bigl(Y,(\sS{_Y}\|\cdot\|_l)_{l\in\LL}\bigr)\in\Short_\LL\).
As a $\KK$\n-vector space $X=X_1\tdt X_n$.
Using the free-forgetful adjunction between sets and $\KK$\n-vector spaces
\[ V\: \Set \leftrightarrows \KK\Vect\: U
\]
we may write $X$ as the quotient
\[ 0 \to W \to V\bigl(\prod_{k=1}^nX_k\bigr) \to X_1\tdt X_n \to 0,
\]
where the subspace $W$ is spanned by vectors
\begin{gather*}
c(x_1,\dots,x_k,\dots,x_n) - (x_1,\dots,cx_k,\dots,x_n), \quad 1\le k\le n, \quad c\in\KK,
\\
(x_1,\dots,x_k,\dots,x_n) + (x_1,\dots,y_k,\dots,x_n) - (x_1,\dots,x_k+y_k,\dots,x_n), \quad 1\le k\le n, \quad y_k\in X_k.
\end{gather*}
The following functions are seminorms on \(X_1\tdt X_n\) (this is left to the reader as an exercise)
\[ \|x\|_l =\inf\bigl\{ \sum_{j\in J}\sS{_1}\|x^j_1\|_{l^j_1}\cdot\ldots\cdot\sS{_n}\|x^j_n\|_{l^j_n} \mid x=\sum_{j\in J} x^j_1\tdt x^j_n, |J|<\infty, \forall l^j_k\in\LL \: \forall j\in J \; l^j_1 +\dots+ l^j_n =l \bigr\}.
\]
Here \(0\cdot\infty=\infty\).
It follows from \lemref{lem-b-l1ln-a-lambda1lambdan} that inequality \(a\le b\in\LL\) implies \(\|x\|_a\le\|x\|_b\).
Any \(x\in X\) has a presentation as a finite sum \(x=\sum_{j\in I}x^j_1\tdt x^j_n\).
For each $x^j_k$ there is \(l^j_k\in\LL\) such that \(\sS{_k}\|x^j_k\|_{l^j_k}<\infty\).
By (ii) there is \(l\in\LL\) such that \(l\le l^j_1+\dots+l^j_n\) for all \(j\in J\).
By \lemref{lem-b-l1ln-a-lambda1lambdan} there are elements \(\lambda^j_k\in\LL\) such that \(\lambda^j_k\le l^j_k\) and \(\lambda^j_1+\dots+\lambda^j_n=l\).
Hence,
\[ \|x\|_l \le \sum_{j\in I}\sS{_1}\|x^j_1\|_{l^j_1}\cdot\ldots\cdot\sS{_n}\|x^j_n\|_{l^j_n} <\infty.
\]
We deduce that \(X=\bigl(X_1\tdt X_n,(\|\cdot\|_l)_{l\in\LL})\bigr)\) is a short space.

For $n=0$ we define \(\1=\tens^0\) as $K$ with seminorms
\[ \|c\|_l =
\begin{cases}
0, &\quad \text{if } l\not\ge0,
	\\
|c|, &\quad \text{if } l\ge0.
\end{cases}
\]
Let \(f\:X_1\tdt X_n\to Y\in\Short_\LL\).
Then
\[ \|f(x_1\tdt x_n)\|_{l_1+\dots+l_n} \le \|x_1\tdt x_n\|_{l_1+\dots+l_n} \le \sS{_1}\|x_1\|_{l_1} \cdot\ldots\cdot \sS{_n}\|x_n\|_{l_n}.
\]
Hence, \(\Short_\LL(X_1\tdt X_n,Y)\subset\mcShort_\LL(X_1,\dots,X_n;Y)\).

For a map $f\:I\to K\in\cs_\sk$ we have the standard bijections \(\lambda^f\:\tens^{k\in K}\tens^{i\in f^{-1}k}X_i\to\tens^{i\in I}X_i\), exhibiting \(\KK\Vect\) as a colax strong symmetric monoidal category.
Let us prove that \(\lambda^f\in\Short_\LL\).
Let us decompose simultaneously a vector \(x\in\tens^{i\in I}X_i\) and its preimage \(x'=(\lambda^f)^{-1}(x)\in\tens^{k\in K}\tens^{i\in f^{-1}k}X_i\).
Namely, consider a decomposition \(x'=\sum_{m\in M}\tens^{k\in K}y^m_k\), where \(y^m_k\in\tens^{i\in f^{-1}k}X_i\) in its turn is decomposed as \(y^m_k=\sum_{n\in N^m_k}\tens^{i\in f^{-1}k}z^n_i\).
Hence,
\[ x' =\sum_{m\in M}\tens^{k\in K}\sum_{n\in N^m_k}\tens^{i\in f^{-1}k}z^n_i =\sum_{m\in M}\sum_{n\in\prod_{k\in K}N^m_k}\tens^{k\in K}\tens^{i\in f^{-1}k}z^n_i =\sum_{j\in J}\tens^{k\in K}\tens^{i\in f^{-1}k}x^j_i,
\]
where \(J=\coprod_{m\in M}\prod_{k\in K}N^m_k\) and \(x^j_i=z^n_i\) for \(j=(m,n)\in J\) with \(n=(n^m_k)_{k\in K}\).
If for some \((l^m_k)_{k\in K}^{m\in M}\) with \(\sum_{k\in K}l^m_k=l\) \(\forall m\in M\) and for all \(m\in M\) and \(k\in K\) \(\|y^m_k\|_{l^m_k}<\infty\), then \(\|x'\|_l<\infty\).
If for some \(m\in M\) and \(k\in K\) \(\|y^m_k\|_{l^m_k}=\infty\), then \(\sum_{m\in M}\prod_{k\in K}\|y^m_k\|_{l^m_k}=\infty\).
In both cases there exists a decomposition of $x'$ such that for an arbitrary positive $\eps$
\[ \sum_{m\in M}\prod_{k\in K}\sum_{n\in N^m_k}\prod_{i\in f^{-1}k}\|z^n_i \|_{\lambda^n_i} \le \|x'\|_l +\eps,
\]
where for all \(m\in M\), \(k\in K\) and \(n\in N^m_k\) \(\sum_{i\in f^{-1}k}\lambda^n_i=l^m_k\).

The relevant decomposition of $x$ is \(x=\sum_{j\in J}\tens^{i\in I}x^j_i\) accompanied with indices \(\ell^j_i=\lambda^n_i\), where \(j=(m,n)\in J\) with \(n=(n^m_k)_{k\in K}\).
Note that \(\sum_{i\in I}\ell^j_i=\sum_{k\in K}\sum_{i\in f^{-1}k}\lambda^n_i=\sum_{k\in K}l^m_k=l\).
So we have
\[ \|x\|_l \le \sum_{j\in J}\prod_{i\in I}\|x^j_i\|_{\ell^j_i} =\sum_{m\in M}\prod_{k\in K}\sum_{n\in N^m_k}\prod_{i\in f^{-1}k}
\|z^n_i \|_{\lambda^n_i} \le \|x'\|_l +\eps.
\]
Therefore, \(\|x\|_l\le\|x'\|_l\) and the map \(\lambda^f\) is short.

The collections \(\bigl((\lambda^f)^{-1}\bigr)_{f\:I\to K}\) satisfy conditions (i), (ii) (see \eqref{eq-lambda-fg-non-enriched}) since they equip \(\KK\Vect\) with the structure of unbiased symmetric monoidal category.
Thus, \(\Short_\LL\) is a colax symmetric monoidal category.
\end{proof}

\subsection{Completeness of the multicategory of short spaces}
\begin{proposition}
The product \(\prod_{i\in I}^{\Short_\LL}M_i\) of a family of short spaces \(\bigl((M_i,(\sS{_i}\|\cdot\|_l)_{l\in\LL})\bigr)_{i\in I}\) exists and consists of elements \(m=(m_i)_{i\in I}\in\prod_{i\in I}^{\KK\Vect}M_i\) such that for at least one $l\in\LL$ the value
	\[ \sS{_{\prod}}\|m\|_l =\sup_{i\in I}\sS{_i}\|m_i\|_l
	\]
	is finite.
	This formula defines seminorms $\sS{_{\prod}}\|\cdot\|_l$ for \(\prod_{i\in I}^{\Short_\LL}M_i\).
\end{proposition}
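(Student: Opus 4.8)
The plan is to exhibit the object explicitly and then verify the universal property of Definition~\ref{def-multicategory-small-products} directly. First I would let $P=\prod_{i\in I}^{\Short_\LL}M_i$ denote the stated subset of the plain $\KK$\n-vector space product $\prod_{i\in I}^{\KK\Vect}M_i$, namely those $m=(m_i)_{i\in I}$ for which $\sup_{i\in I}\sS{_i}\|m_i\|_l<\infty$ for at least one $l\in\LL$, and check that $P$ is a $\KK$\n-vector subspace. Stability under scalars is immediate from absolute homogeneity of each $\sS{_i}\|\cdot\|_l$. For stability under addition note that, by monotonicity $a\le b\Rightarrow\|x\|_a\le\|x\|_b$, the set of indices $l$ at which $\sup_{i}\sS{_i}\|m_i\|_l$ is finite is downward closed in $\LL$; so for $m,m'\in P$ with finiteness at $a$ and $b$ respectively I would invoke hypothesis~(ii) ($\LL^\op$ is directed) to find $c\le a$, $c\le b$, whence both sups are finite at $c$ and the triangle inequality gives $\sup_i\sS{_i}\|m_i+m_i'\|_c\le\sup_i\sS{_i}\|m_i\|_c+\sup_i\sS{_i}\|m_i'\|_c<\infty$, so $m+m'\in P$. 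The same two inequalities show that the formulas $\sS{_{\prod}}\|m\|_l=\sup_{i\in I}\sS{_i}\|m_i\|_l$ are seminorms, that they are monotone in $l$, and that each $m\in P$ has a finite value at some $l$; hence $(P,(\sS{_{\prod}}\|\cdot\|_l)_{l\in\LL})$ is a short space.

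Next I would check that the restricted projections $\pr_j:P\to M_j$ are short, since $\sS{_j}\|\pr_j(m)\|_l=\sS{_j}\|m_j\|_l\le\sup_{i}\sS{_i}\|m_i\|_l=\sS{_{\prod}}\|m\|_l$ is exactly the short\n-map condition for a single entry. For the universal property, given short multilinear maps $\bigl(f_i:(X_k)_{k\in K}\to M_i\bigr)_{i\in I}$ I would set $f\bigl((x_k)_{k\in K}\bigr)=\bigl(f_i((x_k)_{k\in K})\bigr)_{i\in I}$, the unique multilinear map with $f\centerdot\pr_i=f_i$ for all $i$. Shortness of $f$ and the fact that $f$ lands in $P$ both follow from the key observation that the bound coming from shortness of each $f_i$ does not depend on $i$: for all $l_k\in\LL$ and all $x_k\in X_k$,
\[ \sS{_{\prod}}\|f((x_k)_k)\|_{\sum_{k}l_k}=\sup_{i\in I}\sS{_i}\|f_i((x_k)_k)\|_{\sum_{k}l_k}\le\prod_{k\in K}\sS{_{X_k}}\|x_k\|_{l_k}. \]
Choosing for each $k$ an index $l_k$ with $\sS{_{X_k}}\|x_k\|_{l_k}<\infty$ (possible since the $X_k$ are short spaces and $K$ is finite) and putting $l=\sum_k l_k$ shows $f((x_k)_k)\in P$, while the displayed inequality is precisely the short\n-map condition for $f:(X_k)_{k\in K}\to P$. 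Uniqueness is immediate: any short $g$ with $g\centerdot\pr_i=f_i$ satisfies $g(x)_i=f_i(x)$ for every $i$, hence $g=f$ as a map. Taking $K=\mb1$ specialises this to the statement that $(\pr_j)_{j}$ is a product in the underlying category $\Short_\LL$, so both clauses of Definition~\ref{def-multicategory-small-products} hold.

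I expect the only genuinely delicate point to be the subspace property, where the interaction between the monotonicity of the family $(\|\cdot\|_l)_l$ and the quantifier ``for at least one $l$'' forces the use of hypothesis~(ii) on $\LL$; everything else is a routine transcription of the sup\n-norm estimates. A minor bookkeeping point to keep in mind throughout is the clash of conventions $0\cdot\infty=0$ in the homogeneity check versus $0\cdot\infty=\infty$ in the multilinearity estimate of \defref{def-Multicategory-Short}, but neither affects the finiteness arguments above.
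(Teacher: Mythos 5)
Your proof is correct, and its core estimate is the one the paper itself uses: the bound \(\le\prod_k\sS{_{X_k}}\|x_k\|_{l_k}\) supplied by shortness of each \(f_i\) is independent of \(i\), so it survives the supremum, and the image lands in the stated subset because each argument admits some \(l\) with finite seminorm. Two places where you go beyond the paper's own argument are worth recording. First, the paper proves this proposition only for single-entry morphisms \(f:N\to\prod_{i\in I}M_i\) (showing \(f\) is short iff every \(f_i\) is) and defers the multi-entry universal property of \defref{def-multicategory-small-products} to \propref{pro-multicategory-Short-has-small-products}; you verify the multi-entry property in one pass and recover the categorical product as the case \(K=\mb1\), which is harmless and in effect subsumes that subsequent proposition. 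Second, you explicitly check that the set of tuples admitting at least one finite sup-seminorm is a \(\KK\)-vector subspace: since the seminorms increase in \(l\), the finiteness locus of each element is downward closed, and hypothesis (ii) (directedness of \(\LL^\op\)) supplies a common index \(c\) at which the triangle inequality applies. The paper leaves this closure under addition implicit, and your observation correctly pinpoints where axiom (ii) on \(\LL\) is actually needed.
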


\begin{proof}
	There are embeddings of $\KK$\n-vector spaces
	\begin{multline*}
		\Short_\LL\bigl(N,\prod_{i\in I}^{\Short_\LL}M_i\bigr) \subset \KK\Vect\bigl(N,\prod_{i\in I}^{\Short_\LL}M_i\bigr) \subset \KK\Vect\bigl(N,\prod_{i\in I}^{\KK\Vect}M_i\bigr)
		\\
		\cong \prod_{i\in I}^{\KK\Vect}\KK\Vect(N,M_i) \supset \prod_{i\in I}^{\KK\Vect}\Short_\LL(N,M_i).
	\end{multline*}
	Let us consider an arbitrary \(f\:N\to\prod_{i\in I}^{\KK\Vect}M_i\in\KK\Vect\) and the corresponding family \((f_i\:N\to M_i\in\KK\Vect)_{i\in I}\).
	We have to prove that $f$ is short iff $f_i$ is short for all $i\in I$.
	
	Assume that \(f\:N\to\prod_{i\in I}^{\Short_\LL}M_i\in\Short_\LL\).
	It means that for all $n\in N$ and for all $l\in\LL$
	\[ \sup_{i\in I}\sS{_i}\|f_i(n)\|_l =\sS{_{\prod}}\|f(n)\|_l \le \sS{_N}\|n\|_l.
	\]
	Therefore, \(\sS{_i}\|f_i(n)\|_l\le\sS{_N}\|n\|_l\) for all $i\in I$, for all $n\in N$ and for all $l\in\LL$.
	Hence, \(f_i\in\Short_\LL\).
	
	Assume now that \(f_i\in\Short_\LL\) for all $i\in I$.
	Thus, \(\sS{_i}\|f_i(n)\|_l\le\sS{_N}\|n\|_l\) for all $i\in I$, for all $n\in N$ and for all $l\in\LL$.
	Therefore,
	\begin{equation}
		\sS{_{\prod}}\|f(n)\|_l =\sup_{i\in I}\sS{_i}\|f_i(n)\|_l \le \sS{_N}\|n\|_l.
		\label{eq-fnl-nl}
	\end{equation}
	For any $n\in N$ there is $l\in\LL$ such that \(\sS{_{\prod}}\|f(n)\|_l \) is finite.
	That is, \(f(N)\subset\prod_{i\in I}^{\Short_\LL}M_i\).
	Inequality \eqref{eq-fnl-nl} shows that \(f\:N\to\prod_{i\in I}^{\Short_\LL}M_i\) is short.
\end{proof}

\begin{proposition}\label{pro-multicategory-Short-has-small-products}
The multicategory $\mcShort_\LL$ has small products (see \defref{def-multicategory-small-products}).
\end{proposition}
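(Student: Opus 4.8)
The plan is to build on the preceding proposition, which already establishes that the underlying category $\Short_\LL$ has small products $\prod_{j\in J}^{\Short_\LL}M_j$ equipped with the seminorms $\sS{_\prod}\|m\|_l=\sup_{j\in J}\sS{_j}\|m_j\|_l$. By \defref{def-multicategory-small-products} it then remains only to supply the multilinear half of the universal property: given a small family of short multilinear maps $\bigl(f_j:X_1,\dots,X_n\to M_j\bigr)_{j\in J}$, I must produce a unique short multilinear $f:X_1,\dots,X_n\to\prod_{j\in J}^{\Short_\LL}M_j$ with $\pr_j\centerdot f=f_j$ for every $j\in J$.

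First I would pass to the product of underlying $\KK$-vector spaces. Coordinatewise multilinearity shows that the family $(f_j)_{j\in J}$ assembles into a unique $\KK$-multilinear map $f:X_1\times\dots\times X_n\to\prod_{j\in J}^{\KK\Vect}M_j$ satisfying $\pr_j\centerdot f=f_j$; this is the only candidate, so the required uniqueness will be automatic once shortness is checked. The heart of the argument is the single computation, valid for all $l_1,\dots,l_n\in\LL$ and all $x_k\in X_k$,
\[ \sS{_\prod}\|f(x_1,\dots,x_n)\|_{l_1+\dots+l_n} =\sup_{j\in J}\sS{_j}\|f_j(x_1,\dots,x_n)\|_{l_1+\dots+l_n} \le \sS{_1}\|x_1\|_{l_1}\cdot\ldots\cdot\sS{_n}\|x_n\|_{l_n}, \]
where the equality is the definition of the product seminorm and the inequality holds term by term precisely because each $f_j$ is short.

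This one estimate does double duty. Choosing for each $k$ an index $l_k\in\LL$ with $\sS{_k}\|x_k\|_{l_k}<\infty$ (such $l_k$ exist since the $X_k$ are short spaces) and setting $l=l_1+\dots+l_n$ shows $\sS{_\prod}\|f(x_1,\dots,x_n)\|_l<\infty$, so that $f$ indeed takes values in the short subspace $\prod_{j\in J}^{\Short_\LL}M_j$ of the vector-space product; and the same estimate is exactly the assertion that the corestricted $f$ is a short multilinear map. Finally $\pr_j\centerdot f=f_j$ holds by construction, and uniqueness is inherited from the universal property of $\prod_{j\in J}^{\KK\Vect}M_j$, since any competitor would agree with $f$ already as a map into the vector-space product. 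I do not expect a genuine obstacle here: the only points demanding care are the conversion of the supremum seminorm on the product into the term-wise shortness of the individual $f_j$, and the verification that $f$ lands in the short subspace rather than merely in the vector-space product --- both of which the displayed inequality settles at once.
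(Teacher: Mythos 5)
Your proposal is correct and follows essentially the same route as the paper: both obtain the unique multilinear map into $\prod^{\KK\Vect}_{j\in J}M_j$ from the representability (universal property) of the vector-space product, and then a single estimate $\sup_{j}\sS{_j}\|f_j(x_1,\dots,x_n)\|_{l_1+\dots+l_n}\le\sS{_1}\|x_1\|_{l_1}\cdots\sS{_n}\|x_n\|_{l_n}$ simultaneously shows that $f$ lands in the short subspace and is short. The only cosmetic difference is that you make the inheritance of uniqueness from the vector-space product explicit, which the paper leaves implicit.
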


\begin{proof}
Given a family \(\bigl(f_i\:(X_j)_{j\in\mb n}\to V_i\in\mcv\bigr)_{i\in I}\) there is a unique morphism \(f\:(X_j)_{j\in\mb n}\to\prod^{\KK\Vect}_{i\in I}V_i\) such that for all \(i\in I\)
\[ f_i =\bigl[ (X_j)_{j\in\mb n} \rto f \prod^{\KK\Vect}_{i\in I}V_i \rTTo^{\pr_i} V_i \bigr],
\]
since the multicategory \(\wh{\KK\Vect}\) is representable.
For any $n$\n-tuple of elements \((x_j\in X_j)_{j\in\mb n}\) there is an $n$\n-tuple of elements \((l_j\in\LL)_{j\in\mb n}\) such that \(\sS{_{X_j}}\|x_j\|_{l_j}<\infty\).
Then
\begin{multline*}
\sS{_{\prod}}\|f(x_1,x_2,\dots,x_n)\|_{l_1+\dots+l_n} =\sS{_{\prod}}\|(f(x_1,x_2,\dots,x_n))_{i\in I}\|_{l_1+\dots+l_n} 
\\
=\sup_{i\in I} \sS{_{V_i}}\|f(x_1,x_2,\dots,x_n)\|_{l_1+\dots+l_n} \le \sS{_{X_1}}\|x_1\|_{l_1} \cdot\ldots\cdot \sS{_{X_n}}\|x_n\|_{l_n} <\infty
\end{multline*}
Therefore, $f$ takes values in \(\prod_{i\in I}^{\Short_\LL}V_i\).
Moreover, \(f\:(X_j)_{j\in\mb n}\to\prod^{\Short_\LL}_{i\in I}V_i\) is short.
\end{proof}

\begin{proposition}\label{pro-kernel}
A morphism \(h\:B\to A\in\snS\) has a kernel (equalizer of $h$ and 0) in $\snS$, which coincides with the kernel \(K=\Ker h\) in $\KK\Vect$.
The subspace \(K\subset B\) inherits the seminorm from $B$.
\end{proposition}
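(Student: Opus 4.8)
The plan is to realise the kernel as the underlying vector-space kernel equipped with the \emph{restricted} seminorm, and then to observe that every requirement is forced by this single choice. Concretely, I would set $K=\Ker h$, the $\KK$-vector subspace of $B$ on which $h$ vanishes, and give it the seminorm $\|\cdot\|_K=\|\cdot\|_B|_K$. First I would check that $(K,\|\cdot\|_K)$ is a genuine object of $\snS$: the restriction of a seminorm to a subspace is again a seminorm, and since the short-space condition for $\LL=0$ forces $\|b\|_B<\infty$ for every $b\in B$, the same finiteness holds on $K$. The inclusion $e\colon K\to B$ is $\KK$-linear and satisfies $\|e(k)\|_B=\|k\|_K$, so it is norm-preserving, hence short, i.e.\ a morphism of $\snS$; and $e\centerdot h=0=e\centerdot0$ because $h$ vanishes on $K$ by construction, so $e$ forks $h$ and the zero map.

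Next I would verify the universal property of \defref{def-equalizers} in the case $I=\mb1$. Let $g\colon C\to B\in\snS$ satisfy $g\centerdot h=g\centerdot0$, i.e.\ $h(g(c))=0$ for all $c\in C$; then $g(c)\in\Ker h=K$, so $g$ corestricts to a $\KK$-linear map $q\colon C\to K$ with $g=q\centerdot e$, and $q$ is the unique such map because $e$ is a monomorphism in $\KK\Vect$. The only point requiring the seminorms is that $q$ lies in $\snS$, and here the choice of the subspace seminorm pays off immediately: for each $c\in C$ one has $\|q(c)\|_K=\|q(c)\|_B=\|g(c)\|_B\le\|c\|_C$, the final inequality being shortness of $g$. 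Thus $q$ is short, and $(K,e)$ is the equalizer of $h$ and $0$ in $\snS$, coinciding with the $\KK\Vect$-kernel carrying the inherited seminorm, as claimed.

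There is essentially no hard step: once $K$ is endowed with the restricted seminorm, the inclusion becomes norm-preserving and every mediating factorisation is automatically norm-nonincreasing, so both halves of the equalizer property reduce to the trivial pointwise estimate above. The only thing to keep an eye on is the finiteness clause in the definition of a short space, which is inherited from $B$ for free. I would finally remark that the identical pointwise estimate proves the full multi-entry universal property of \defref{def-equalizers}: for any short multimorphism $h''\colon(X_i)_{i\in I}\to B\in\mcsnS$ with $h''\centerdot h=h''\centerdot0$ the image of $h''$ lands in $K$, and its corestriction is short by exactly the same inequality, so $\mcsnS$ indeed has kernels.
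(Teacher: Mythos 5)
Your proposal is correct and follows essentially the same route as the paper: take the vector-space kernel with the restricted seminorm, note the inclusion is norm-preserving, and deduce that a mediating map is short exactly when its composite with the inclusion is, which is the paper's computation $\|nd\|=\|ind\|=\|jd\|\le\|d\|$. Your closing remark on the multi-entry case matches the role this proposition plays in \corolref{cor-Short-complete}.
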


\begin{proof}
	In $\KK\Vect$ the kernel \((K=\Ker h,i=\ker h)\) exists and satisfies the property which is based on the diagram
	\begin{diagram}
		K &\rMono^i &B &\pile{\rTTo^h \\ \rTTo_0} &A
		\\
		&\luTTo<n &\uTTo>j
		\\
		&&D
	\end{diagram}
	Namely,
	\[ \forall\, j\quad j\centerdot h=0 \implies (\exists!\,n\;\:\; n\centerdot i=j).
	\]
	We have to prove the same property in $\snS$.
	First of all, $i$ is short.
	Hence, if $n$ is short, then \(j=n\centerdot i\) is short as well.
	If $j$ is short, then for all $d\in D$
	\[ \sS{_K}\|nd\| =\sS{_B}\|ind\| =\sS{_B}\|jd\| \le \sS{_D}\|d\|.
	\]
	Hence, $n$ is short.
\end{proof}

\begin{corollary}
	By \cite[Corollary~V.2.2]{MacLane} the category $\snS$ (and more generally $\Short_\LL$) is complete.
	The limit of a diagram \(I\to\Short_\LL\), \(i\mapsto\bigl(M_i,(\sS{_i}\|\cdot\|_l)_{l\in\LL}\bigr)\) is
	\begin{equation}
		\lim_{i\in I}(M_i\in\Short_\LL) =\bigl(\prod_{i\in\Ob I}^{\Short_\LL}M_i\bigr)\bigcap\lim_{i\in I}(M_i\in\KK\Vect),
		\label{eq-lim-prodClim}
	\end{equation}
	where the both $\KK$\n-vector spaces are viewed as subspaces of \(\prod_{i\in\Ob I}^{\KK\Vect}M_i\).
	The seminorms on the subspace \(\lim_{i\in I}(M_i\in\Short_\LL)\subset\prod_{i\in\Ob I}^{\Short_\LL}M_i\) are induced from the latter short space.
\end{corollary}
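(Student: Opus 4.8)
The plan is to invoke \cite[Corollary~V.2.2]{MacLane}, which deduces completeness of an ordinary category from the existence of all small products together with equalizers of all parallel pairs, and then to read off the explicit description \eqref{eq-lim-prodClim} from the canonical construction of a limit as an equalizer of a pair of maps between products. Small products are supplied by \propref{pro-multicategory-Short-has-small-products} (for a one-element family of inputs these are ordinary products in $\Short_\LL$, cf.\ \defref{def-multicategory-small-products}). So it remains to produce equalizers and then to identify the resulting limit object.

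First I would extend \propref{pro-kernel} from kernels to arbitrary parallel pairs. Given short maps $f,g:B\to A\in\Short_\LL$, put $E=\{x\in B\mid f(x)=g(x)\}=\Ker(f-g)$, a $\KK$\n-vector subspace of $B$ equipped with the seminorms induced from $B$, and let $e:E\to B$ be the (short) inclusion. In contrast with the additive setting, the linear map $f-g$ need not be short — the triangle inequality yields only $\|f(x)-g(x)\|_l\le 2\|x\|_l$ — so $E$ is \emph{not} presentable as the kernel of a morphism of $\Short_\LL$; nevertheless the factorisation argument of \propref{pro-kernel} applies verbatim. Indeed, if $j:D\to B$ is short and $j\centerdot f=j\centerdot g$, then $j$ factors uniquely as $j=n\centerdot e$ in $\KK\Vect$, and $n$ is short because
\[
\sS{_E}\|nd\|_l =\sS{_B}\|end\|_l =\sS{_B}\|jd\|_l \le\sS{_D}\|d\|_l
\]
for all $l\in\LL$ and $d\in D$. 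Hence $e:E\to B$ is an equalizer of $(f,g)$ in $\Short_\LL$ in the sense of \defref{def-equalizers} with $I=\mb1$, and \cite[Corollary~V.2.2]{MacLane} now gives completeness of $\Short_\LL$, and in particular of $\snS=\Short_0$.

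To obtain \eqref{eq-lim-prodClim} I would use the standard presentation of the limit of a diagram $D:I\to\Short_\LL$, $i\mapsto M_i$, $u\mapsto D(u)$, as the equalizer of the two short maps
\[
\alpha,\beta: \prod_{i\in\Ob I}^{\Short_\LL}M_i \to \prod_{(u:i\to j)\in\Mor I}^{\Short_\LL}M_j,
\]
whose components at a morphism $u:i\to j$ are the projection $\pr_{\tgt u}$ and the composite $\pr_{\src u}\centerdot D(u)$, respectively; both are short and land in the short-space product by the universal property of \propref{pro-multicategory-Short-has-small-products} and shortness of the $D(u)$. By the previous paragraph this equalizer is, as a $\KK$\n-vector space, the subspace of $\prod_{i\in\Ob I}^{\Short_\LL}M_i$ consisting of those families $(m_i)$ with $D(u)(m_i)=m_j$ for every $u:i\to j$, carrying the seminorms induced from $\prod_{i\in\Ob I}^{\Short_\LL}M_i$. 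The compatibility condition $D(u)(m_i)=m_j$ is exactly what cuts out $\lim_{i\in I}(M_i\in\KK\Vect)$ inside $\prod_{i\in\Ob I}^{\KK\Vect}M_i$, whereas membership in $\prod_{i\in\Ob I}^{\Short_\LL}M_i$ is the boundedness condition of \propref{pro-multicategory-Short-has-small-products}. Intersecting the two conditions inside $\prod_{i\in\Ob I}^{\KK\Vect}M_i$ yields precisely \eqref{eq-lim-prodClim}, with the seminorms induced from the short-space product.

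The routine verifications — that $\alpha,\beta$ are well defined and short and that their equalizer carries the universal limit cone — are classical and I would not spell them out. The one genuinely delicate point is the failure of $\Short_\LL$ to be additive: because differences of short maps need not be short, equalizers must be built by the factorisation argument of the second paragraph rather than as kernels of a single morphism, and it is exactly this observation that makes \propref{pro-kernel} directly applicable and pins down both the underlying space and the induced seminorms appearing in \eqref{eq-lim-prodClim}.
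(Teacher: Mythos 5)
Your proof is correct and follows essentially the same route as the paper: both invoke MacLane's product–equalizer construction, present the limit as the equalizer of the two canonical short maps between products, and read off \eqref{eq-lim-prodClim} together with the induced seminorms. The only place you are more explicit than the paper is in constructing equalizers of arbitrary parallel pairs (the paper's \propref{pro-kernel} treats only kernels), and your remark that $f-g$ need not be short, so that the factorisation argument rather than a reduction to kernels is what is actually needed, is a worthwhile clarification rather than a change of method.
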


\begin{proof}
	According to \cite[Theorem~V.2.2]{MacLane} the rows of diagram
	\begin{diagram}[w=5em,h=2.9em]
		\lim_{i\in I}(M_i\in\Short_\LL) &\rMono &\prod_{i\in\Ob I}^{\Short_\LL}M_i &\pile{\rTTo^f \\ \rTTo_g} &\prod_{u\in\Mor I}^{\Short_\LL}M_{\tgt u}
		\\
		\dMono &&\dMono &&\dMono
		\\
		\lim_{i\in I}(M_i\in\KK\Vect) &\rMono &\prod_{i\in\Ob I}^{\KK\Vect}M_i &\pile{\rTTo^f \\ \rTTo_g} &\prod_{u\in\Mor I}^{\KK\Vect}M_{\tgt u}
	\end{diagram}
	(where \(\pr_u\circ f=\pr_{\tgt u}\), \(\pr_u\circ g=M_u\circ\pr_{\src u}\)) are equalizers.
	The both squares on the right (one with upper arrows and another with lower arrows) commute.
	One easily deduces \eqref{eq-lim-prodClim}.
\end{proof}

\begin{corollary}\label{cor-Short-complete}
The multicategory \(\mcShort_\LL\) is complete.
\end{corollary}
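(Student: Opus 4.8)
The plan is to use the general criterion recorded after \defref{def-equalizers}: a multicategory with small products and with equalizers of parallel pairs is automatically complete, the limit property being derived there. Accordingly I would split the proof into the two ingredients, products and equalizers, both of which are by now essentially in place.

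For products I would simply cite \propref{pro-multicategory-Short-has-small-products}, which already shows that $\mcShort_\LL$ has all small products in the sense of \defref{def-multicategory-small-products}: the product is the vector-space product $\prod_{i\in I}^{\KK\Vect}V_i$ carrying the seminorms $\sS{_{\prod}}\|\cdot\|_l=\sup_{i}\sS{_i}\|\cdot\|_l$, and a multilinear map into it is short precisely when each component is.

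For equalizers I would extend \propref{pro-kernel} from the one-variable case $\snS=\Short_0$ to multi-entry morphisms and arbitrary $\LL$. Given a parallel pair $f,g:A\to B$ in $\mcShort_\LL$, put $K=\Ker(f-g)=\{a\in A\mid f(a)=g(a)\}$, a $\KK$-vector subspace of $A$ carrying the restricted seminorms $\sS{_K}\|\cdot\|_l=\sS{_A}\|\cdot\|_l$; since every element of $A$ has a finite seminorm for some $l$ and monotonicity in $l$ is inherited, $K$ is again a short space, the inclusion $e:K\to A$ is short, and by \propref{pro-kernel} together with the ensuing corollary $e$ is the equalizer of $(f,g)$ in the underlying category $\Short_\LL$. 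It remains to verify the multicategory universal property of \defref{def-equalizers}. Let $h:(X_i)_{i\in I}\to A$ be short with $h\centerdot f=h\centerdot g$. Then on every input tuple $f$ and $g$ agree on the value of $h$, so $h$ takes values in $K$; as $e$ is injective there is a unique multilinear $q:(X_i)_{i\in I}\to K$ with $q\centerdot e=h$. Because $e$ carries the restricted seminorm, $\sS{_K}\|q(x_1,\dots,x_n)\|_l=\sS{_A}\|h(x_1,\dots,x_n)\|_l$ for every $l$, so the defining short inequality for $q$ is literally the one already satisfied by $h$; hence $q$ is short, and $K$ is the required equalizer in $\mcShort_\LL$.

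With small products and equalizers established, the corollary following \defref{def-equalizers} yields the limit property and therefore the completeness of $\mcShort_\LL$. The only point that is not completely formal is the multi-entry universal property of the equalizer, and even this reduces to the single seminorm identity $\sS{_K}\|q(\cdot)\|_l=\sS{_A}\|h(\cdot)\|_l$, which makes the short inequalities for $q$ and $h$ coincide; no new estimates beyond those already used for $\snS$ in \propref{pro-kernel} are needed, so the passage to several inputs and to general $\LL$ is pure bookkeeping.
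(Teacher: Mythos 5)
Your proof is correct, and it reaches the conclusion by a slightly different decomposition than the paper's. You verify the two hypotheses of the general criterion separately --- products via \propref{pro-multicategory-Short-has-small-products}, and multi-entry equalizers by upgrading \propref{pro-kernel} to the subspace \(K=\{a\in A\mid f(a)=g(a)\}\) with the restricted seminorms --- and then invoke the corollary following \defref{def-equalizers}. The paper never states the multicategory equalizer property for \(\mcShort_\LL\) explicitly: it first obtains completeness of the ordinary category \(\Short_\LL\) from MacLane's theorem, records the limit as the intersection \(\bigl(\prod^{\Short_\LL}_{i}M_i\bigr)\cap\lim(M_i\in\KK\Vect)\) inside the vector-space product, and then checks the multi-entry factorization property of that corollary directly for an arbitrary diagram: a cone \(h_i:(X_j)_{j\in\mb n}\to M_i\) lands in both subspaces, hence in the limit, and is short by \propref{pro-multicategory-Short-has-small-products}. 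Both arguments turn on the same observation, namely that the limit (or equalizer) carries the seminorms induced from the ambient product, so the factorization of a short multilinear map through it is automatically short. Your version makes the equalizer case explicit and leans on the abstract machinery for the rest; the paper's version is marginally shorter because it handles all limits in one stroke via the explicit formula, at the cost of leaving the multicategory equalizer implicit.
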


\begin{proof}
Given a functor \(I\to\Short_\LL\) and a family of morphisms \(h_i\:(X_j)_{j\in\mb n}\to M_i\in\mcShort_\LL\), \(i\in\Ob I\), such that
\[ h_k =\bigl[ (X_j)_{j\in\mb n} \rTTo^{h_i} M_i \to M_k \bigr]
\]
for each \(i\to k\in I\), we see that the map \(h=(h_i)\:(X_j)_{j\in\mb n}\to\prod^{\KK\Vect}_{i\in I}M_i\) takes values in each of the subspaces \(\prod^{\mcShort_\LL}_{i\in I}M_i\) (by \propref{pro-multicategory-Short-has-small-products}) and \(\lim_{i\in I}(M_i\in\KK\Vect)\).
Hence, in their intersection \(\lim_{i\in I}(M_i\in\Short_\LL)\).
Since \(h=(h_i)\:(X_j)_{j\in\mb n}\to\prod^{\Short_\LL}_{i\in I}M_i\in\mcShort_\LL\) (again by \propref{pro-multicategory-Short-has-small-products}) we have \(h\:(X_j)_{j\in\mb n}\to\lim_{i\in I}(M_i\in\Short_\LL)\in\mcShort_\LL\).
\end{proof}

\begin{exercise}
The category $\Short_\LL$ has small coproducts.
The categorical coproduct of a family \((X_i)_{i\in I}\) of short spaces is given by the subspace \(\coprod_{i\in I}X_i\subset\prod_{i\in I}X_i\), consisting of elements \((x_i)_{i\in I}\) with $x_i=0$ except a finite number of them, equipped with seminorms
\[ \|(x_i)_{i\in I}\|_l =\sum_{i\in I}\|x_i\|_l.
\]
\end{exercise}

\appendix
\section{Symmetric groups and symmetric multicategories}
\subsection{Action of symmetric groups on a symmetric multicategory}
Let \(\sigma\:J\to K\in\cs_\sk\) be a bijection.
Let \((Y_j)_{j\in J}\), \((Z_k)_{k\in K}\), $W$ be (families of) objects of a symmetric multicategory $\mcv$ such that \(Z_k=Y_{\sigma^{-1}k}\).
Similarly to \cite[Lemma~A.2.2]{math.CT/0305049} define a map
\begin{multline*}
r_\sigma =\bigl\{ \mcv\bigl((Y_{\sigma^{-1}k})_{k\in K};W\bigr) \rTTo^{(\dot1_{Y_{\sigma^{-1}k}})_{k\in K}\times1} \bigl[\prod_{k\in K}\mcV(Y_{\sigma^{-1}k};Z_k)\bigr] \times \mcv\bigl((Y_{\sigma^{-1}k})_{k\in K};W\bigr)
\\
\rto{\mu_\sigma} \mcv\bigl((Y_j)_{j\in J};W\bigr) \bigr\}.
\end{multline*}

The following statement is implied by the proof of \cite[Theorem~A.2.4]{math.CT/0305049}.

\begin{proposition}\label{pro-mu-psi-r-sigma-mu-phi}
Let, furthermore, \(\psi=(I\rto\phi J\rto\sigma K)\in\cs_\sk\) and \((X_i)_{i\in I}\) be a family of objects of $\mcv$.
Then
\begin{multline}
\mu_\psi =\bigl\{ \bigl[ \prod_{k\in K}\mcV\bigl((X_i)_{i\in\psi^{-1}k};Y_{\sigma^{-1}k}\bigr) \bigr] \times\mcv\bigl((Y_{\sigma^{-1}k})_{k\in K};W\bigr)
\\
\rTTo^{\prod_{\sigma^{-1}}\times r_\sigma} \bigl[ \prod_{j\in J}\mcV\bigl((X_i)_{i\in\phi^{-1}(j)};Y_j\bigr) \bigr] \times\mcv\bigl((Y_j)_{j\in J};W\bigr) \rTTo^{\mu_\phi} \mcv\bigl((X_i)_{i\in I};W\bigr) \bigr\}.
\label{eq-mu-psi-P-sigma-1-r-sigma-mu-phi}
\end{multline}
\end{proposition}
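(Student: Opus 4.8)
The plan is to deduce \eqref{eq-mu-psi-P-sigma-1-r-sigma-mu-phi} from the single associativity axiom for $\mcv$, namely the commutative diagram of \figref{dia-assoc-mu-multi} applied to the composable pair $I \rto\phi J \rto\sigma K$, by specialising the middle factor of that diagram to identity elements. The point that makes everything collapse is that $\sigma$ is a \emph{bijection}: for each $k\in K$ the fibre $\sigma^{-1}(k)$ is a singleton $\{\sigma^{-1}k\}$, so the restriction $\phi_k=\phi|:(\phi\sigma)^{-1}(k)=\psi^{-1}(k)\to\sigma^{-1}(k)$ is the terminal map $\triangledown:\psi^{-1}(k)\to\mb1$. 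This is exactly the shape handled by the unit axiom \eqref{eq-axiom-unit-multi1etaZ}, which is what turns the inner legs of the associativity square into a pure reindexing.

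Concretely I would evaluate \figref{dia-assoc-mu-multi} (with its $\psi$ renamed $\sigma$, so its composite $\phi\psi$ becomes our $\psi=\phi\sigma$) on the input given by: the identities $(\dot1_{Y_{\sigma^{-1}k}})_{k\in K}$ in the middle factor $\prod_{k\in K}\mcv(Y_{\sigma^{-1}k};Z_k)$ (legitimate since $Z_k=Y_{\sigma^{-1}k}$), a general $g\in\mcv((Z_k)_{k\in K};W)$ in the last factor, and, in the first factor $\prod_{j\in J}\mcv((X_i)_{i\in\phi^{-1}j};Y_j)$, the image $\prod_{\sigma^{-1}}(P)$ of a general $K$-indexed tuple $P=(p_k)_{k\in K}$ with $p_k\in\mcv((X_i)_{i\in\psi^{-1}k};Y_{\sigma^{-1}k})$. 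Along the top-right path, $1\times\mu_\sigma$ sends $g$ to $\mu_\sigma((\dot1_{Y_{\sigma^{-1}k}})_{k\in K},g)=r_\sigma(g)$ by the very definition of $r_\sigma$, and the ensuing $\mu_\phi$ produces $\mu_\phi(\prod_{\sigma^{-1}}(P),r_\sigma(g))$, the right-hand side of \eqref{eq-mu-psi-P-sigma-1-r-sigma-mu-phi}. Along the left-bottom path, the top regrouping isomorphism sorts the first factor into singleton groups indexed by $k$, pairing $p_k$ with the unit $1_{Z_k}$; each inner composition $\mu_{\phi_k}$ then returns $p_k$ unchanged by \eqref{eq-axiom-unit-multi1etaZ}, so $(\prod_k\mu_{\phi_k})\times1$ recovers $(P,g)$ and the final $\mu_{\phi\sigma}=\mu_\psi$ yields $\mu_\psi(P,g)$, the left-hand side. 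Commutativity of \figref{dia-assoc-mu-multi} then gives the desired equality.

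The main obstacle is bookkeeping rather than conceptual: one must check that the regrouping isomorphism at the top of \figref{dia-assoc-mu-multi}, combined with the singleton fibres of $\sigma$, induces precisely the bijective reindexing $\prod_{\sigma^{-1}}$ on the first factor, and that the (possibly order-reversing) scrambling carried by the bijection $\sigma$ is entirely absorbed into $\mu_\sigma$, hence into $r_\sigma$, so that no residual symmetry correction survives on the inner legs. Aligning the ordering conventions of the unit axiom \eqref{eq-axiom-unit-multi1etaZ} with those of \figref{dia-assoc-mu-multi} is the one place demanding care; once the indices are matched, the statement is a direct read-off of the associativity square restricted to identities, exactly as in the proof of \cite[Theorem~A.2.4]{math.CT/0305049}.
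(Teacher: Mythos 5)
Your proposal is correct and is essentially identical to the paper's own argument: the paper likewise evaluates the associativity diagram of \figref{dia-assoc-mu-multi} on the pair \(I\rto\phi J\rto\sigma K\) after inserting the identities \((\dot1_{Y_{\sigma^{-1}k}})_{k\in K}\) into the middle factor and the reindexing \(\prod_{\sigma^{-1}}\) into the first, so that the top--right path becomes \((\prod_{\sigma^{-1}}\times r_\sigma)\centerdot\mu_\phi\) by the definition of \(r_\sigma\) while the left--bottom path collapses to \(\mu_\psi\) via the unit axiom \eqref{eq-axiom-unit-multi1etaZ} applied to the singleton-fibre maps \(\triangledown:\psi^{-1}k\to\{\sigma^{-1}k\}\). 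No gap.
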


\begin{proof}
Applying the associativity property from \figref{dia-assoc-mu-multi} for maps \(I \rto\phi J \rto\sigma K\) we get the sought equation \vpageref{dia-Action-symmetric-multicategory}.
\begin{figure}
	\begin{center}
		\boldmath
		\resizebox{!}{.934\texthigh}{\rotatebox{90}{%
				\begin{diagram}[nobalance,inline,height=2.4em,thick]
\HmeetV &\lLine &\phantom a && \hspace*{-8.5em} \bigl[ \prod_{k\in K}\mcV\bigl((X_i)_{i\in\psi^{-1}k};Y_{\sigma^{-1}k}\bigr) \bigr] \times\mcv\bigl((Y_{\sigma^{-1}k})_{k\in K};W\bigr) &\rLine &\HmeetV
\\
&&&&\dBTo<\cong>{\prod_{\sigma^{-1}}\times1}
\\
&&&& \hspace*{-7.5em} \bigl[ \prod_{j\in J}\mcV\bigl((X_i)_{i\in\phi^{-1}(j)};Y_j\bigr) \bigr] \times\mcv\bigl((Y_{\sigma^{-1}k})_{k\in K};W\bigr)
\\
&&&&\dBTo>{1\times(\dot1_{Z_k})_{k\in K}\times1}
\\
\dLine>1 &&&& \hspace*{-15em} \bigl[ \prod_{j\in J} \mcv\bigl((X_i)_{i\in\phi^{-1}j};Y_j\bigr)\bigr] \times \bigl[ \prod_{k\in K} 	\mcv\bigl(Y_{\sigma^{-1}k};Z_k\bigr) \bigr] \times \mcv\bigl((Y_{\sigma^{-1}k})_{k\in K};W\bigr)
					\\
&&& \ldBTo_\cong &&&\dLine<{\prod_{\sigma^{-1}}\times r_\sigma}
					\\
&&\bigl[ \prod_{k\in K} \bigl( \mcv\bigl((X_i)_{i\in\phi^{-1}\sigma^{-1}k};Y_{\sigma^{-1}k}\bigr) \times \mcv\bigl(Y_{\sigma^{-1}k};Z_k\bigr) \bigr) \bigr] \times \mcv\bigl((Y_{\sigma^{-1}k})_{k\in K};W\bigr) \hspace*{-4em}
					&& \dBTo>{1\times\mu_\sigma}
					\\
&&&=
					\\
&&\dBTo<{(\prod_{k\in K}\mu_{\triangledown\:\phi^{-1}\sigma^{-1}k\to\{\sigma^{-1}k\}})\times1} && \hspace*{-5em} 
\bigl[ \prod_{j\in J} \mcv\bigl((X_i)_{i\in\phi^{-1}j};Y_j\bigr) \bigr] \times \mcv\bigl((Y_j)_{j\in J};W\bigr) &\lBTo &\HmeetV
					\\
					\\
\HmeetV & \rBTo &\bigl[ \prod_{k\in K} \mcv\bigl((X_i)_{i\in\psi^{-1}k};Z_k\bigr) \bigr] \times \mcv\bigl((Z_k)_{k\in K};W\bigr) 
&& \dBTo>{\mu_\phi} 
					\\
&&& \rdBTo^{\mu_\psi} & 
					\\
&&&& \mcv\bigl((X_i)_{i\in I};W\bigr)
				\end{diagram}
		}}
	\end{center}
	\caption{Action of symmetric groups on a symmetric multicategory
		\label{dia-Action-symmetric-multicategory}}
\end{figure}
\end{proof}

\begin{corollary}\label{cor-Action-symmetric-groups-symmetric-multicategory}
Assume that both $\phi$ and \(\sigma\) are bijections from $\cs_\sk$, \(\psi=(I\rto\phi J\rto\sigma K)\).
Then\index{action of a symmetric group on a symmetric multicategory}
\[ r_\psi =\bigl[ \mcv\bigl((Y_{\sigma^{-1}k})_{k\in K};W\bigr) \rto{r_\sigma} \mcv\bigl((Y_j)_{j\in J};W\bigr) \rto{r_\phi} \mcv\bigl((Y_{\phi i})_{i\in I};W\bigr) \bigr].
\]
\end{corollary}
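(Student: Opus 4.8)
The plan is to obtain this as a direct specialization of \propref{pro-mu-psi-r-sigma-mu-phi} to the case where $\phi$ is also a bijection, followed by the insertion of identity morphisms. First I would fix the base family: put $X_i=Y_{\phi i}$ for $i\in I$, so that the $I$\n-indexed family entering the definition of $r_\psi$ is $(X_i)_{i\in I}=(Y_{\phi i})_{i\in I}$, matching the target $\mcv\bigl((Y_{\phi i})_{i\in I};W\bigr)$ in the statement. Since $\psi=\phi\centerdot\sigma$ is a bijection, $\psi^{-1}k$ is a singleton for each $k\in K$, and one checks $X_{\psi^{-1}k}=Y_{\phi\psi^{-1}k}=Y_{\sigma^{-1}k}$; hence the source $\mcv\bigl((X_{\psi^{-1}k})_{k\in K};W\bigr)$ of $r_\psi$ is exactly $\mcv\bigl((Y_{\sigma^{-1}k})_{k\in K};W\bigr)$, as required.

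Next I would recall that by definition $r_\psi$ is the composite of $(\dot1_{X_{\psi^{-1}k}})_{k\in K}\times1$ with $\mu_\psi$. Substituting formula~\eqref{eq-mu-psi-P-sigma-1-r-sigma-mu-phi} of \propref{pro-mu-psi-r-sigma-mu-phi} for $\mu_\psi$, we get
\[ r_\psi =\bigl[(\dot1_{Y_{\sigma^{-1}k}})_{k\in K}\times1\bigr]\centerdot\bigl(\prod\nolimits_{\sigma^{-1}}\times r_\sigma\bigr)\centerdot\mu_\phi. \]
I would then compute the effect of the first two maps. The insertion of identities produces the tuple $(1_{Y_{\sigma^{-1}k}})_{k\in K}$ in the first factor, and the reindexing isomorphism $\prod_{\sigma^{-1}}$ transports it, factor by factor via $k=\sigma j$, to the tuple $(1_{Y_j})_{j\in J}$ lying in $\prod_{j\in J}\mcV(X_{\phi^{-1}j};Y_j)=\prod_{j\in J}\mcV(Y_j;Y_j)$ (here $X_{\phi^{-1}j}=Y_{\phi\phi^{-1}j}=Y_j$, again using that $\phi$ is a bijection). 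On the second factor the same two maps act by $r_\sigma$. Thus the composite of the first two arrows sends $g\in\mcv\bigl((Y_{\sigma^{-1}k})_{k\in K};W\bigr)$ to the pair $\bigl((1_{Y_j})_{j\in J},(g)r_\sigma\bigr)$.

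Finally I would recognize the remaining application of $\mu_\phi$ to such a pair as precisely $r_\phi$. Indeed, by the definition of the action with $\phi$ a bijection and base family $(X_i)_{i\in I}=(Y_{\phi i})_{i\in I}$, one has $r_\phi=\bigl[(\dot1_{Y_j})_{j\in J}\times1\bigr]\centerdot\mu_\phi$, so $\mu_\phi\bigl((1_{Y_j})_{j\in J},h\bigr)=(h)r_\phi$ for any $h\in\mcv\bigl((Y_j)_{j\in J};W\bigr)$. Taking $h=(g)r_\sigma$ yields $(g)r_\psi=\bigl((g)r_\sigma\bigr)r_\phi$, i.e.\ $r_\psi=r_\sigma\centerdot r_\phi$, which is the claim. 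I do not expect a genuine obstacle here; the only point demanding care is the index bookkeeping --- verifying the object identifications $X_{\psi^{-1}k}=Y_{\sigma^{-1}k}$ and $X_{\phi^{-1}j}=Y_j$, and confirming that $\prod_{\sigma^{-1}}$ carries $(1_{Y_{\sigma^{-1}k}})_{k\in K}$ to $(1_{Y_j})_{j\in J}$ rather than to some other relabelling. Once these are pinned down, the corollary is a formal substitution into~\eqref{eq-mu-psi-P-sigma-1-r-sigma-mu-phi} followed by unwinding the two instances of the definition of $r_{\text-}$.
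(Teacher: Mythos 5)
Your proposal is correct and follows essentially the same route as the paper: the paper likewise sets $X_i=Y_{\phi i}$, substitutes the tuple of identities $(1_{Y_{\sigma^{-1}k}})_{k\in K}$ into the first factor of equation~\eqref{eq-mu-psi-P-sigma-1-r-sigma-mu-phi}, reads off $r_\psi$ from the left-hand side, and unwinds the right-hand side into $r_\sigma\centerdot r_\phi$ using that $\prod_{\sigma^{-1}}$ carries identities to identities. The index verifications you flag ($X_{\psi^{-1}k}=Y_{\sigma^{-1}k}$, $X_{\phi^{-1}j}=Y_j$) are exactly the bookkeeping the paper performs.
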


\begin{proof}
Consider \(X_i=Y_{\phi i}\), hence, \(Y_j=X_{\phi^{-1}j}\).
Rewrite \eqref{eq-mu-psi-P-sigma-1-r-sigma-mu-phi} as
\begin{multline}
\mu_\psi =\bigl\{ \bigl[ \prod_{k\in K}\mcV\bigl(X_{\psi^{-1}k};Y_{\sigma^{-1}k}\bigr) \bigr] \times \mcv\bigl((Y_{\sigma^{-1}k})_{k\in K};W\bigr)
	\\
\rTTo^{\prod_{\sigma^{-1}}\times r_\sigma} \bigl[ \prod_{j\in J}\mcV\bigl(X_{\phi^{-1}j};Y_j\bigr) \bigr] \times  \mcv\bigl((Y_j)_{j\in J};W\bigr) \rTTo^{\mu_\phi} \mcv\bigl((X_i)_{i\in I};W\bigr) \bigr\}.
\label{eq-mu-psi-sigma-r-sigma-mu-phi}
\end{multline}
Substitute \((1_{Y_{\sigma^{-1}k}})_{k\in K}\) into the first factor.
We get from the left hand side of \eqref{eq-mu-psi-sigma-r-sigma-mu-phi}
\begin{multline*}
\bigl\{ \mcv\bigl((X_{\psi^{-1}k})_{k\in K};W\bigr) \rTTo^{(\dot1_{X_{\psi^{-1}k}})_{k\in K}\times1} \bigl[\prod_{k\in K}\mcV(X_{\psi^{-1}k};X_{\psi^{-1}k})\bigr] \times \mcv\bigl((X_{\psi^{-1}k})_{k\in K};W\bigr)
\\
\rto{\mu_\psi} \mcv\bigl((X_i)_{i\in I};W\bigr) \bigr\} =r_\psi.
\end{multline*}
From the right hand side of \eqref{eq-mu-psi-sigma-r-sigma-mu-phi} we get
\begin{multline*}
\bigl\{ \mcv\bigl((Y_{\sigma^{-1}k})_{k\in K};W\bigr) \rTTo^{(\dot1_{Y_{\sigma^{-1}k}})_{k\in K}\times1} \bigl[\prod_{k\in K}\mcV(X_{\psi^{-1}k};Y_{\sigma^{-1}k})\bigr] \times \mcv\bigl((Y_{\sigma^{-1}k})_{k\in K};W\bigr)
	\\
\hfill \rTTo^{\prod_{\sigma^{-1}}\times r_\sigma} \bigl[\prod_{j\in J}\mcV(X_{\phi^{-1}j};Y_j)\bigr] \times \mcv\bigl((Y_j)_{j\in J};W\bigr) \rto{\mu_\phi} \mcv\bigl((X_i)_{i\in I};W\bigr) \bigr\} \hskip\multlinegap
\\
\hskip\multlinegap =\bigl\{ \mcv\bigl((Y_{\sigma^{-1}k})_{k\in K};W\bigr) \rto{r_\sigma} \mcv\bigl((Y_j)_{j\in J};W\bigr) \hfill
\\
\hfill \rTTo^{(\dot1_{Y_j})_{j\in J}\times1} \bigl[\prod_{j\in J}\mcV(X_{\phi^{-1}j};Y_j)\bigr] \times \mcv\bigl((Y_j)_{j\in J};W\bigr) \rto{\mu_\phi} \mcv\bigl((X_i)_{i\in I};W\bigr) \bigr\} \hskip\multlinegap
\\
=\bigl\{ \mcv\bigl((Y_{\sigma^{-1}k})_{k\in K};W\bigr) \rto{r_\sigma} \mcv\bigl((Y_j)_{j\in J};W\bigr) \rto{r_\phi} \mcv\bigl((X_i)_{i\in I};W\bigr) \bigr\}.
\end{multline*}
Therefore, \(r_{\phi\centerdot\sigma}=r_\sigma\centerdot r_\phi\).
\end{proof}

The second identity axiom implies that \(r_{\id}=\id\).
Thus, we have an action of a symmetric group on the set of homomorphism sets of a symmetric multicategory $\mcv$.
Often this action is included in the definition of a symmetric multicategory, which we do not do.

\begin{example}
Assume that $\cv$ is a complete closed symmetric monoidal category with \(\tens^{\mb1}=\Id\).
For \(\mcv=\wh\cv\) (see \cite[Proposition~3.22]{BesLyuMan-book}) we get \(r_\sigma=\cv(\lambda^\sigma,W)\:\cv(\tens^{k\in K}Y_{\sigma^{-1}k},W)\to\cv(\tens^{j\in J}Y_j,W)\), 
where \(\lambda^\sigma\:\tens^{j\in J}Y_j\to\tens^{k\in K}Y_{\sigma^{-1}k}\) is the action of symmetric group on tensor products via symmetries.
\end{example}

The following equivariance property seems to be explicitly stated in the literature for the first time, although it should be implied by the proof of \cite[Theorem~A.2.4]{math.CT/0305049}.

\begin{proposition}\label{pro-equivariance-property}
Let the square in $S_\sk$, where vertical arrows are bijections,
\begin{diagram}
I &\rTTo^\phi &J
\\
\dTTo<\pi>\cong &&\dTTo<\cong>\sigma
\\
L &\rTTo^\psi &K
\end{diagram}
commute.
Then there is the equivariance property
\begin{multline}
\bigl\{ \bigl[ \prod_{k\in K}\mcV\bigl((X_{\pi^{-1}l})_{l\in\psi^{-1}k};Y_{\sigma^{-1}k}\bigr) \bigr] \times \mcv\bigl((Y_{\sigma^{-1}k})_{k\in K};W\bigr)
\\
\rTTo^{\prod_{\sigma^{-1}}\times1} \bigl[ \prod_{j\in J}\mcV\bigl((X_{\pi^{-1}l})_{l\in\pi\phi^{-1}j};Y_j\bigr) \bigr] \times \mcv\bigl((Y_{\sigma^{-1}k})_{k\in K};W\bigr)
\\
\hfill \rTTo^{\prod_{j\in J}r_{\varpi_j}\times r_\sigma} \bigl[ \prod_{j\in J}\mcV\bigl((X_i)_{i\in\phi^{-1}j};Y_j\bigr) \bigr] \times\mcv\bigl((Y_j)_{j\in J};W\bigr) \rto{\mu_\phi} \mcv\bigl((X_i)_{i\in I};W\bigr) \bigr\} \hskip\multlinegap
\\
\hskip\multlinegap =\bigl\{ \bigl[ \prod_{k\in K}\mcV\bigl((X_{\pi^{-1}l})_{l\in\psi^{-1}k};Y_{\sigma^{-1}k}\bigr) \bigr] \times \mcv\bigl((Y_{\sigma^{-1}k})_{k\in K};W\bigr) \hfill
\\
\hfill \rTTo^{\prod_{k\in K}r_{\pi_k}\times1} \bigl[ \prod_{k\in K}\mcV\bigl((X_i)_{i\in\pi^{-1}\psi^{-1}k};Y_{\sigma^{-1}k}\bigr) \bigr] \times\mcv\bigl((Y_{\sigma^{-1}k})_{k\in K};W\bigr) \rTTo^{\mu_{\pi\centerdot\psi}} \mcv\bigl((X_i)_{i\in I};W\bigr) \bigr\} \hskip\multlinegap
\\
=\bigl\{ \bigl[ \prod_{k\in K}\mcV\bigl((X_{\pi^{-1}l})_{l\in\psi^{-1}k};Y_{\sigma^{-1}k}\bigr) \bigr] \times \mcv\bigl((Y_{\sigma^{-1}k})_{k\in K};W\bigr)
\rto{\mu_\psi} \mcv\bigl((X_{\pi^{-1}l})_{l\in L};W\bigr)
\\
\rto{r_\pi} \mcv\bigl((X_i)_{i\in I};W\bigr) \bigr\}.
\label{eq-mur-miao}
\end{multline}
Here \(\varpi_j=\pi|\:\phi^{-1}j\to\pi\phi^{-1}j=\psi^{-1}\sigma j\) and \(\pi_k=\varpi_{\sigma^{-1}k}=\pi|\:\pi^{-1}\psi^{-1}k\to\psi^{-1}k\) are bijections.
\end{proposition}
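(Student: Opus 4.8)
The plan is to establish the two equalities in \eqref{eq-mur-miao} one at a time: first that the opening composite equals the middle one built from $\mu_{\pi\centerdot\psi}$, and then that this middle composite equals the last one built from $\mu_\psi$ and $r_\pi$. Throughout I write $\xi=\phi\centerdot\sigma=\pi\centerdot\psi\colon I\to K$ for the common diagonal of the commuting square, so that $\xi^{-1}k=\phi^{-1}\sigma^{-1}k=\pi^{-1}\psi^{-1}k$ for every $k\in K$, and I use the stated relation $\varpi_j=\pi_{\sigma j}$ (equivalently $\phi^{-1}j=\pi^{-1}\psi^{-1}\sigma j$) between the two families of restricted bijections.

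For the first equality I would simply invoke \propref{pro-mu-psi-r-sigma-mu-phi}. Since $\sigma$ is a bijection, the factorization $\xi=\phi\centerdot\sigma$ is exactly of the type treated there, so \eqref{eq-mu-psi-P-sigma-1-r-sigma-mu-phi} gives $\mu_\xi=(\prod_{\sigma^{-1}}\times r_\sigma)\centerdot\mu_\phi$. Substituting this for $\mu_{\pi\centerdot\psi}=\mu_\xi$ in the middle composite, it remains only to check the combinatorial identity $[\prod_{k\in K}r_{\pi_k}\times1]\centerdot(\prod_{\sigma^{-1}}\times r_\sigma)=(\prod_{\sigma^{-1}}\times1)\centerdot(\prod_{j\in J}r_{\varpi_j}\times r_\sigma)$. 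This holds because the reindexing $\prod_{\sigma^{-1}}$ commutes past the blockwise operators once $\varpi_j=\pi_{\sigma j}$ is used, while $r_\sigma$ is applied to the last factor after the reindexing in both composites. No multicategory axiom beyond \propref{pro-mu-psi-r-sigma-mu-phi} is needed here.

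The second equality is the heart of the matter, and I would obtain it from a single application of the associativity property in \figref{dia-assoc-mu-multi} to the composable pair $I\rto\pi L\rto\psi K$ (taking Figure's inner map to be $\pi$, its $J$ to be $L$, and its restrictions to be $\pi_k=\pi|\colon\pi^{-1}\psi^{-1}k\to\psi^{-1}k$). Setting $Y_l=X_{\pi^{-1}l}$ and $Z_k=Y_{\sigma^{-1}k}$, the apex of that diagram receives the element $\big((\dot1_{X_{\pi^{-1}l}})_{l\in L},(h_k)_{k\in K},g\big)$, where $(h_k)_k$ and $g$ are the arguments appearing in \eqref{eq-mur-miao}. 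Along the left--bottom path the restricted identity families $(1_{X_{\pi^{-1}l}})_{l\in\psi^{-1}k}$ fed into $\mu_{\pi_k}$ reproduce, by the very definition of $r_{\pi_k}$, the morphism $(h_k)r_{\pi_k}$, so that path evaluates to $[\prod_k r_{\pi_k}\times1]\centerdot\mu_{\pi\centerdot\psi}$, the middle composite; along the right path $\mu_\psi$ is applied first and the global identity family $(1_{X_{\pi^{-1}l}})_{l\in L}$ is then fed into $\mu_\pi$, which by the definition of $r_\pi$ yields $\mu_\psi\centerdot r_\pi$, the last composite. The main obstacle is precisely the index bookkeeping in this step: one must verify that, under the reindexing isomorphism at the apex of \figref{dia-assoc-mu-multi}, the singleton blocks $\pi^{-1}l$ of $\pi$ and the blocks $\pi^{-1}\psi^{-1}k$ of $\xi$ distribute the identities $\dot1_{X_i}$ correctly, so that the two readings of the diagram genuinely specialize to the definitions of $r_{\pi_k}$ and of $r_\pi$. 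This is where the unit axioms \eqref{eq-axiom-unit-multi1etaZ} and \eqref{eq-axiom-unit-multi2} enter through those definitions, exactly as in the proofs of \propref{pro-mu-psi-r-sigma-mu-phi} and \corolref{cor-Action-symmetric-groups-symmetric-multicategory}; the verification itself is routine and I would record it by an annotated copy of \figref{dia-assoc-mu-multi}.
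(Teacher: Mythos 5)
Your proof is correct. For the second equality in \eqref{eq-mur-miao} you do exactly what the paper does: apply the associativity axiom of \figref{dia-assoc-mu-multi} to the pair \(I\rto\pi L\rto\psi K\) with \(Y_l=X_{\pi^{-1}l}\), feed in the identity families, and recognize the two paths as \([\prod_k r_{\pi_k}\times1]\centerdot\mu_{\pi\centerdot\psi}\) and \(\mu_\psi\centerdot r_\pi\) via the definitions of \(r_{\pi_k}\) and \(r_\pi\). For the first equality your route is mildly but genuinely different: you quote \propref{pro-mu-psi-r-sigma-mu-phi} for the factorization \(\pi\centerdot\psi=\phi\centerdot\sigma\) to replace \(\mu_{\pi\centerdot\psi}\) by \((\prod_{\sigma^{-1}}\times r_\sigma)\centerdot\mu_\phi\), and then reduce the problem to the set-level identity \([\prod_{k}r_{\pi_k}]\centerdot\prod_{\sigma^{-1}}=\prod_{\sigma^{-1}}\centerdot[\prod_{j}r_{\varpi_j}]\), which is just naturality of the reindexing bijection given \(\varpi_j=\pi_{\sigma j}\); the paper instead re-runs the associativity diagram for \(I\rto\phi J\rto\sigma K\) wholesale in \figref{dia-equivariance-symmetric-multicategory}, absorbing both the reindexing bookkeeping and the identification of the \(r\)'s into one large diagram. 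The two arguments have identical mathematical content, since \propref{pro-mu-psi-r-sigma-mu-phi} is itself proved by that same diagram, but your factored version buys a shorter write-up at the cost of having to state and check the commutation of \(\prod_{\sigma^{-1}}\) with the blockwise \(r\)'s explicitly --- which you do correctly.
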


\begin{proof}
Denote \(Z_k=Y_{\sigma^{-1}k}\).
Applying the associativity property from \figref{dia-assoc-mu-multi} for maps \(I \rto\phi J \rto\sigma K\) 
we get the proof of the first equation from \eqref{eq-mur-miao} \vpageref{dia-equivariance-symmetric-multicategory}.
	\begin{figure}
		\begin{center}
			\boldmath
			\resizebox{!}{.934\texthigh}{\rotatebox{90}{%
					\begin{diagram}[nobalance,inline,height=2.4em,thick]
\HmeetV &\lLine &\phantom a && \hspace*{-8.5em} \bigl[ \prod_{k\in K}\mcV\bigl((X_{\pi^{-1}l})_{l\in\psi^{-1}k};Y_{\sigma^{-1}k}\bigr) \bigr] \times\mcv\bigl((Y_{\sigma^{-1}k})_{k\in K};W\bigr) &\rLine &\HmeetV
						\\
						&&&&\dBTo<\cong>{\prod_{\sigma^{-1}}\times1}
						\\
&&&& \hspace*{-7.5em} \bigl[ \prod_{j\in J}\mcV\bigl(X_{\pi^{-1}l})_{l\in\pi\phi^{-1}j};Y_j\bigr) \bigr] \times\mcv\bigl((Y_{\sigma^{-1}k})_{k\in K};W\bigr)
						\\
\dLine>{\prod_{j\in J}r_{\pi_k}\times1} &&&&\dBTo>{\prod_{j\in J}r_{\varpi_j}\times(\dot1_{Z_k})_{k\in K}\times1}
						\\
&&&& \hspace*{-15em} \bigl[ \prod_{j\in J} \mcv\bigl((X_i)_{i\in\phi^{-1}j};Y_j\bigr)\bigr] \times \bigl[ \prod_{k\in K} 	\mcv\bigl(Y_{\sigma^{-1}k};Z_k\bigr) \bigr] \times \mcv\bigl((Y_{\sigma^{-1}k})_{k\in K};W\bigr)
						\\
						&&& \ldBTo_\cong &&&\dLine<{(\prod_{\sigma^{-1}}\centerdot\prod_{j\in J}r_{\varpi_j})\times r_\sigma}
						\\
&&\bigl[ \prod_{k\in K} \bigl( \mcv\bigl((X_i)_{i\in\phi^{-1}\sigma^{-1}k};Y_{\sigma^{-1}k}\bigr) \times \mcv\bigl(Y_{\sigma^{-1}k};Z_k\bigr) \bigr) \bigr] \times \mcv\bigl((Y_{\sigma^{-1}k})_{k\in K};W\bigr) \hspace*{-4em}
						&& \dBTo>{1\times\mu_\sigma}
						\\
						&&&=
						\\
	&&\dBTo<{(\prod_{k\in K}\mu_{\triangledown\:\phi^{-1}\sigma^{-1}k\to\{\sigma^{-1}k\}})\times1} && \hspace*{-5em} 
	\bigl[ \prod_{j\in J} \mcv\bigl((X_i)_{i\in\phi^{-1}j};Y_j\bigr) \bigr] \times \mcv\bigl((Y_j)_{j\in J};W\bigr) &\lBTo &\HmeetV
						\\
						\\
\HmeetV & \rBTo &\bigl[ \prod_{k\in K} \mcv\bigl((X_i)_{i\in\pi^{-1}\psi^{-1}k};Z_k\bigr) \bigr] \times \mcv\bigl((Z_k)_{k\in K};W\bigr) 
						&& \dBTo>{\mu_\phi} 
						\\
						&&& \rdBTo_{\mu_{\pi\centerdot\psi}} & 
						\\
						&&&& \mcv\bigl((X_i)_{i\in I};W\bigr)
					\end{diagram}
			}}
		\end{center}
		\caption{Equivariance of action of symmetric groups on a symmetric multicategory
			\label{dia-equivariance-symmetric-multicategory}}
	\end{figure}

In order to prove the second equation from \eqref{eq-mur-miao} we substitute into the former expression the definition of $r$:
\begin{multline*}
\bigl[ \prod_{k\in K}\mcV\bigl((X_{\pi^{-1}l})_{l\in\psi^{-1}k};Z_k\bigr) \bigr] \times \mcv\bigl((Z_k)_{k\in K};W\bigr)
\rTTo^{\prod_{k\in K}[(\dot1_{X_{\pi^{-1}l}})_{l\in\psi^{-1}k}\times1]\times1}
\\
\prod_{k\in K} \bigl[ \prod_{l\in\psi^{-1}k}\mcV(X_{\pi^{-1}l};X_{\pi^{-1}l}) \times \mcV\bigl((X_{\pi^{-1}l})_{l\in\psi^{-1}k};Z_k\bigr) \bigr] \times \mcv\bigl((Z_k)_{k\in K};W\bigr)
\\
\rTTo^{\prod_{k\in K}\mu_{\pi_k}\times1} \bigl[ \prod_{k\in K}\mcV\bigl((X_i)_{i\in\pi^{-1}\psi^{-1}k};Z_k\bigr) \bigr] \times\mcv\bigl((Z_k)_{k\in K};W\bigr) \rTTo^{\mu_{\pi\centerdot\psi}} \mcv\bigl((X_i)_{i\in I};W\bigr).
\end{multline*}
Transforming this with the help of the associativity property from \figref{dia-assoc-mu-multi} for maps \(I \rto\pi L \rto\psi K\) we get
\begin{multline*}
\bigl\{ \bigl[ \prod_{k\in K}\mcV\bigl((X_{\pi^{-1}l})_{l\in\psi^{-1}k};Z_k\bigr) \bigr] \times \mcv\bigl((Z_k)_{k\in K};W\bigr)
\rTTo^{(\dot1_{X_{\pi^{-1}l}})_{l\in L}\times1\times1}
\\
\bigl[ \prod_{l\in L} \mcV(X_{\pi^{-1}l};X_{\pi^{-1}l}) \bigr] \times \bigl[ \prod_{k\in K} \mcV\bigl((X_{\pi^{-1}l})_{l\in\psi^{-1}k};Z_k\bigr) \bigr] \times \mcv\bigl((Z_k)_{k\in K};W\bigr)
	\\
\hfill \rTTo^{1\times\mu_\psi} \bigl[ \prod_{l\in L} \mcV(X_{\pi^{-1}l};X_{\pi^{-1}l}) \bigr] \times\mcv\bigl((X_{\pi^{-1}l})_{l\in L};W\bigr) \rTTo^{\mu_\pi} \mcv\bigl((X_i)_{i\in I};W\bigr) \bigr\} \hskip\multlinegap
\\
\hskip\multlinegap =\bigl\{ \bigl[ \prod_{k\in K}\mcV\bigl((X_{\pi^{-1}l})_{l\in\psi^{-1}k};Z_k\bigr) \bigr] \times \mcv\bigl((Z_k)_{k\in K};W\bigr) \rto{\mu_\psi} \mcv\bigl((X_{\pi^{-1}l})_{l\in L};W\bigr) \hfill
\\
\rTTo^{(\dot1_{X_{\pi^{-1}l}})_{l\in L}\times1} \bigl[ \prod_{l\in L} \mcV(X_{\pi^{-1}l};X_{\pi^{-1}l}) \bigr] \times \mcv\bigl((X_{\pi^{-1}l})_{l\in L};W\bigr) \rto{\mu_\pi} \mcv\bigl((X_i)_{i\in I};W\bigr) \bigr\}.
\end{multline*}
This is the last expression from \eqref{eq-mur-miao} with expanded $r_\pi$.
\end{proof}


\ifcase\value{whichClass}
{Institute of Mathematics, NAS Ukraine},
{3 Tereshchenkivska st.}, {Kyiv}, {01024}, {Ukraine}
\fi

\end{document}